\crefname{equation}{}{}
\let\originalleft\left
\let\originalright\right
\renewcommand{\left}{\mathopen{}\mathclose\bgroup\originalleft}
\renewcommand{\right}{\aftergroup\egroup\originalright}
\newcommand*{\claimproofname}{Proof of claim}
\newenvironment{claimproof}[1][\claimproofname]{\begin{proof}[#1]}{\end{proof}}
\crefname{algocf}{Algorithm}{Algorithms}
\crefname{equation}{}{}
\colorlet{refkey}{orange!20}
\colorlet{labelkey}{blue!30}
\crefname{algocf}{Algorithm}{Algorithms}
\numberwithin{equation}{section}
\newtheorem{theorem}{Theorem}[section]
\newaliascnt{proposition}{theorem}
\newaliascnt{lemma}{theorem}
\newtheorem{lemma}[lemma]{Lemma}
\newaliascnt{claim}{theorem}
\newtheorem{claim}[claim]{Claim}
\newaliascnt{corollary}{theorem}
\newtheorem{corollary}[corollary]{Corollary}
\newaliascnt{conjecture}{theorem}
\newtheorem{conjecture}[conjecture]{Conjecture}
\newtheorem*{claim*}{Claim}
\theoremstyle{definition}
\newaliascnt{fact}{theorem}
\newtheorem{fact}[fact]{Fact}
\newaliascnt{definition}{theorem}
\newtheorem{definition}[definition]{Definition}
\newaliascnt{problem}{theorem}
\newaliascnt{question}{theorem}
\newtheorem*{definition*}{Definition}
\newaliascnt{example}{theorem}
\newaliascnt{setup}{theorem}
\theoremstyle{remark}
\newaliascnt{remark}{theorem}
\newtheorem{remark}[remark]{Remark}
\newcommand{\mb}{\mathbb}
\newcommand{\mbf}{\mathbf}
\newcommand{\mc}{\mathcal}
\newcommand{\mr}{\mathrm}
\newcommand{\on}{\operatorname}
\newcommand{\pvec}[1]{\vec{#1}\mkern2mu\vphantom{#1}'}
\newcommand{\cH}{\mathcal{H}}
\newcommand{\cI}{\mathcal{I}}
\newcommand{\cL}{\mathcal{L}}
\newcommand{\cP}{\mathcal{P}}
\newcommand{\cQ}{\mathcal{Q}}
\newcommand{\cS}{\mathcal{S}}
\newcommand{\cT}{\mathcal{T}}
\newcommand{\cU}{\mathcal{U}}
\newcommand{\sC}{\mathscr{C}}
\newcommand{\sR}{\mathscr{R}}
\newcommand{\sS}{\mathscr{S}}
\newcommand{\bfB}{\mathbf{B}}
\newcommand{\bfG}{\mathbf{G}}
\newcommand{\bfH}{\mathbf{H}}
\newcommand{\bfL}{\mathbf{L}}
\newcommand{\bfP}{\mathbf{P}}
\newcommand{\bfR}{\mathbf{R}}
\newcommand{\bfS}{\mathbf{S}}
\newcommand{\bfT}{\mathbf{T}}
\newcommand{\bfX}{\mathbf{X}}
\newcommand{\bfY}{\mathbf{Y}}
\newcommand{\bfZ}{\mathbf{Z}}
\newcommand{\bfg}{\mathbf{g}}
\newcommand{\bfh}{\mathbf{h}}
\newcommand{\bfz}{\mathbf{z}}
\newcommand{\bflambda}{\boldsymbol{\lambda}}
\newcommand{\bfmu}{\boldsymbol{\mu}}
\newcommand{\bfnu}{\boldsymbol{\nu}}
\DeclareMathOperator{\Li}{Li}
\DeclareMathOperator{\supp}{supp}
\newcommand{\coo}{\color{orange}}
\newcommand{\trp}{\operatorname{TRP}}
\renewcommand{\Pr}{\mb P}
\title{Parities in random Latin squares}
\author[Kwan]{Matthew Kwan}
\address{Institute of Science and Technology Austria (ISTA). Am Campus 1, 3400 Klosterneuburg, Austria}
\email{matthew.kwan@ist.ac.at}
\author[Petrova]{Kalina Petrova}
\address{Institute of Science and Technology Austria (ISTA). Am Campus 1, 3400 Klosterneuburg, Austria}
\email{kalina.petrova@ist.ac.at}
\author[Sawhney]{Mehtaab Sawhney}
\address{Department of Mathematics, Columbia University, New York, NY 10027}
\email{m.sawhney@columbia.edu}
\thanks{
MK was supported by ERC Starting Grant ``RANDSTRUCT'' No.~101076777. KP was supported by the European Union’s Horizon 2020 research and innovation programme
under the Marie Skłodowska-Curie grant agreement No.~101034413 \includegraphics[width=4mm]{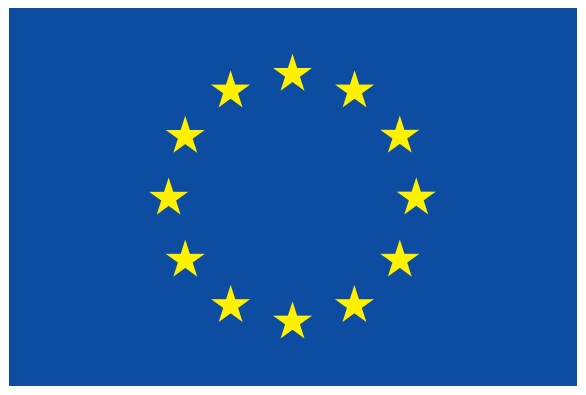}. This research was conducted during the period MS served as a Clay Research Fellow.
}
\begin{document}
\maketitle
\begin{abstract}
In a Latin square, every row can be interpreted as a permutation, and therefore has a parity (even or odd). We prove that in a uniformly random $n\times n$ Latin square, the $n$ row parities are very well approximated by a sequence of $n$ independent unbiased coin flips: for example, the total variation error of this approximation tends to zero as $n\to\infty$.
This resolves a conjecture of Cameron.
In fact, we prove a generalisation of Cameron's conjecture for the
\emph{joint} distribution of the row parities, column parities and symbol parities (the latter are defined
by the symmetry between rows, columns and symbols of a Latin square).

Along the way, we introduce several general techniques for the study
of random Latin squares, including a new re-randomisation technique via ``stable intercalate switchings'', and a new approximation theorem comparing random Latin squares with a certain independent model.
\end{abstract}

\section{Introduction}

An \emph{$n\times n$ Latin square} is an $n\times n$ array filled
with $n$ different ``symbols'' (usually taken to be the integers
$1,\dots,n$), with the property that each symbol appears exactly
once in each row and each column. For example, the multiplication
table of a group is always a Latin square; in general, Latin squares
can be interpreted as multiplication tables of a class of algebraic
structures called \emph{quasigroups}. See for example \cite{KD15}
for an introduction to this vast subject.

Each row or column of an $n\times n$ Latin square $L$ can be interpreted
as a permutation of order $n$, which can be either even or odd. Let
$N_{\mathrm{row}}(L)$ be the number of odd row permutations, and
let $N_{\mathrm{col}}(L)$ be the number of odd column permutations.
If $L$ is the multiplication table of a group, then either $N_{\mathrm{row}}(L)=N_{\mathrm{col}}(L)=0$
or $N_{\mathrm{row}}(L)=N_{\mathrm{col}}(L)=n/2$. However, for general
Latin squares, the row and column parities can have much richer behaviour,
and much is still unknown. For example, one of the most important
conjectures in this direction is the \emph{Alon--Tarsi conjecture},
which (in probabilistic language) says that if $n$ is even, and $\mathbf{L}$
is a uniformly random $n\times n$ Latin square, then
\[
\Pr\big[N_{\mathrm{row}}(\mathbf{L})\text{ is even}\big]=\Pr\big[N_{\mathrm{row}}(\mathbf{L})+N_{\mathrm{col}}(\mathbf{L})\text{ is even}\big]\ne\frac{1}{2}.
\]
(The first equality is not part of the original conjecture; it was
observed by Huang and Rota~\cite{HR94}, in a paper where they also observed that
the Alon--Tarsi conjecture has a number of surprising consequences
in seemingly unrelated areas of mathematics; see \cite{FM19} for
a modern survey). It was first proved by Alp\"oge~\cite{Alp17} that
\begin{equation}
\Pr\big[N_{\mathrm{row}}(\mathbf{L})\text{ is even}\big]=\frac{1}{2}+o(1).\label{eq:alpoge}
\end{equation}
as $n\to\infty$. In other words, if the Alon--Tarsi conjecture is
true, then it is true ``just barely''.

Going far beyond \cref{eq:alpoge}, it has been suggested by Peter
Cameron (in a variety of different sources; see for example \cite{Cam01,Cam02,Cam05,Cam07,Cam15,Cam13,Cam03})
that the row parities of a Latin square might be statistically completely
unconstrained, in the sense that one can model the $n$ row parities
of a random Latin square by simply making $n$ independent coin flips. 
\begin{conjecture}
\label{conj:cameron}Let $\mathbf{L}$ be a uniformly random $n\times n$
Latin square. Then the distribution of $N_{\mathrm{row}}(\mathbf{L})$
is approximately the binomial distribution $\operatorname{Bin}(n,1/2)$, as $n\to\infty$.
\end{conjecture}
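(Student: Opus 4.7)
The plan is to reduce Cameron's conjecture to a Fourier-type estimate on parity characters and then attack each character by an intercalate-switching argument. Write $\epsilon_i(L)\in\{\pm 1\}$ for the sign of the $i$-th row of $L$ and, for nonempty $S\subseteq[n]$, set $\chi_S(L)=\prod_{i\in S}\epsilon_i(L)$. By row-exchangeability of the uniform measure on Latin squares, $\mathbb{E}[\chi_S(\mathbf{L})]$ depends only on $|S|$, and a uniform bound $\mathbb{E}[\chi_S(\mathbf{L})]=o(1)$ (with an appropriate rate across all $S$) is enough to establish convergence of $N_{\mathrm{row}}(\mathbf{L})$ to $\mathrm{Bin}(n,1/2)$ --- for instance at the level of all integer moments of $2N_{\mathrm{row}}-n$, which suffices since $\mathrm{Bin}(n,1/2)$ is determined by its moments up to the required precision.

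To bound $\mathbb{E}[\chi_S(\mathbf{L})]$, the main tool is intercalate switching. An intercalate is a $2\times 2$ Latin subsquare, and swapping its two symbols is an involution on the set of Latin squares. Crucially, if an intercalate of $L$ uses rows $\{i,j\}$, then the swap flips both $\epsilon_i$ and $\epsilon_j$; call the intercalate \emph{$S$-crossing} when $|S\cap\{i,j\}|=1$, and let $M_S(L)$ count the $S$-crossing intercalates of $L$. Since each $S$-crossing swap pairs $L$ with a Latin square of opposite $\chi_S$, double counting yields
\[
\sum_L \chi_S(L)\,M_S(L) = 0,
\]
so that $\mathbb{E}[\chi_S(\mathbf{L})]\cdot\mathbb{E}[M_S(\mathbf{L})] = -\operatorname{Cov}(\chi_S(\mathbf{L}),M_S(\mathbf{L}))$, and hence by Cauchy--Schwarz
\[
\bigl|\mathbb{E}[\chi_S(\mathbf{L})]\bigr| \;\le\; \frac{\sqrt{\operatorname{Var}(M_S(\mathbf{L}))}}{\mathbb{E}[M_S(\mathbf{L})]}.
\]

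The expected number of intercalates in a uniformly random $n\times n$ Latin square is known to be $\Theta(n^2)$, and a positive fraction of these are $S$-crossing whenever $|S|$ and $n-|S|$ are both linear in $n$, so $\mathbb{E}[M_S(\mathbf{L})]=\Theta(n^2)$ in the generic regime. The crux of the proof is therefore a matching variance bound $\operatorname{Var}(M_S(\mathbf{L}))=o(n^4)$, uniform in $S$ and ideally quantitatively strong enough to give $|\mathbb{E}[\chi_S(\mathbf{L})]|=n^{-\Omega(1)}$. This is where I expect the main technical obstacle to lie, and where I would deploy the paper's \emph{stable intercalate switching} re-randomisation: each stable switch perturbs the Latin square locally while preserving a witness intercalate, allowing one to bound the effect of a single switch on $M_S$ and then run an Efron--Stein/martingale-type estimate. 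The companion approximation theorem comparing the random Latin square to an independent model would supply the baseline pairwise-correlation estimates between intercalate indicators needed to close the variance computation.

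Two regimes require separate attention and will constitute genuine obstacles. First, when $S=[n]$ every intercalate uses two rows of $S$, so $M_S\equiv 0$ and the switching identity is vacuous; this is precisely the Alon--Tarsi direction addressed in \cref{eq:alpoge}, and in my approach I would derive it from the independent-model approximation by reducing $\chi_{[n]}$ to a computation in a product distribution where the expected sign is visibly $o(1)$. Second, when $|S|$ or $n-|S|$ is small, $\mathbb{E}[M_S]$ may shrink and the ratio bound deteriorates, so one likely needs to enlarge the switching family (three-row ``trades'' or longer-cycle switchings) to maintain a large switching degree. Assembling the uniform variance bound on $M_S$ across all $S$, via the stable-switching re-randomisation calibrated by the independent-model comparison, is the decisive step and I expect it to consume the bulk of the argument.
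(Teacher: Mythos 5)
Your switching identity $\sum_L \chi_S(L)\,M_S(L)=0$ is correct and elegant, and the resulting bound $|\mathbb{E}[\chi_S(\mathbf L)]|\le\sqrt{\operatorname{Var}(M_S)}/\mathbb{E}[M_S]$ is valid. The problem is that the variance estimate it calls for is exactly the obstacle the paper identifies (in \cref{subsec:intercalate-switchings}) and explicitly argues is out of reach. Even a bound $\operatorname{Var}(M_S)=o(n^4)$ only gives $|\mathbb{E}[\chi_S]|=o(1)$, which is far too weak; to match even the \emph{second} moment of $2N_{\mathrm{row}}-n$ with that of $\mathrm{Bin}(n,1/2)$ you need $|\mathbb{E}[\chi_{\{i,j\}}]|=o(1/n)$, and full total-variation convergence needs much more still. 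A variance of order $n^2$ (the Poisson heuristic) would give $|\mathbb{E}[\chi_S]|=O(1/n)$ for linear $|S|$, and even that is a concentration statement for the intercalate count at scale $o(n^{3/2})$, which is precisely the regime where Janson's ``infamous upper tail'' clustering phenomenon bites. The paper's new approximation theorem (\cref{lem:random_LS_to_triangle_removal}) carries a multiplicative error of $\exp(O(n\log^2 n))$, so it can only transfer events whose triangle-removal-process probability is below $\exp(-\omega(n\log^2 n))$; plugging in a Freedman-type deviation bound, this controls the number of intercalates only up to deviations of size roughly $n^2$, i.e.\ as large as the mean itself, so no nontrivial variance bound on $M_S$ follows. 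Your hope of running ``Efron--Stein/martingale'' via stable switches does not obviously close this: stability makes $M_S$ (restricted to a stable family) \emph{invariant} under the switches, which kills the conditional variance you would estimate but gives no handle on the unconditional variance over the choice of Latin square. There is also a secondary gap in the reduction itself: convergence of all integer moments of $(2N_{\mathrm{row}}-n)/\sqrt n$ gives the central limit theorem (\cref{thm:shiny}(2)), not the total-variation statement (\cref{thm:baby}, \cref{thm:shiny}(4)), which requires a local CLT.

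The paper's actual route sidesteps $\operatorname{Var}(M_S)$ entirely. Rather than estimating a Fourier coefficient by studying the effect of one random switch, it performs \emph{many disjoint stable intercalate switches simultaneously}: the stability condition (\cref{def:stable_intercalates}) guarantees that the collection of available switches is invariant under performing any subset of them (\cref{lem:canonicity_of_stable_intercalates}), so the uniform measure on Latin squares is preserved exactly by independent $\tfrac12$-switching. The distribution of the parity vector, conditional on $\mathbf L$, is then uniform over a coset of the column space over $\mathbb F_2$ of the intercalate/row-column-symbol incidence matrix, and the intercalate-expander lemma (\cref{lemma:existence_split_stable_intercalate}) forces this coset to have codimension $O(\log^{11} n)\ll\sqrt n$, yielding the 2-near-binomial structure directly (\cref{lem:RCS}, proof of \cref{thm:main-technical}). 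This is a re-randomisation argument, not a covariance bound, and it is the step that avoids the upper-tail problem your proposal runs into.
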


(Note that exchanging rows does not affect the distribution of $\mbf L$, so the sequence of row parities $\vec \xi_{\mr{row}}(\mbf L)\in (\mb Z/2\mb Z)^n$ has a permutation-invariant distribution. This means that if we condition on $N_{\mathrm{row}}(\mathbf{L})$, then $\vec \xi_{\mr{row}}(\mbf L)\in (\mb Z/2\mb Z)^n$ is a uniformly random sequence in $(\mb Z/2\mb Z)^n$, constrained to have exactly $N_{\mathrm{row}}(\mathbf{L})$ ``1''s. That is to say, in order to understand the distribution of $\vec \xi_{\mr{row}}(\mbf L)$, it is enough to understand the distribution of  $N_{\mathrm{row}}(\mathbf{L})$.)

To elaborate on the attribution/history of \cref{conj:cameron}: the starting point seems to have been the problem session at the
\emph{British Combinatorial Conference} in 1993, where Cameron asked
a related question that was extended by Jeannette Janssen. However, at that time they did not seem to be very confident that the statement of \cref{conj:cameron} was actually true (in the BCC problem list~\cite{Cam01} they phrase the question as ``is it true that...''). Cameron later posed the problem more assertively in a 2002 survey on permutations and permutation groups~\cite{Cam02}, and it seems he first referred to it as a ``conjecture'' in a 2003 lecture on random Latin squares~\cite{Cam03}. However, as far as we can tell, he never stated the problem in a fully precise form
(always using language like ``approximately'').

There are many different ways to compare distributions, to make rigorous
sense of the word ``approximately''; in this
paper we confirm \cref{conj:cameron} in many different senses. For example, one strong way to compare distributions
is in terms of \emph{total variation distance}: for two probability
distributions $\mu,\nu$ on the same space, write $\operatorname{d}_{\mathrm{TV}}(\mu,\nu)=\sup_{A}|\mu(A)-\nu(A)|$
(so for any event $A$, the probabilities of $A$ with
respect to $\mu$ and $\nu$ differ by at most $\operatorname{d}_{\mathrm{TV}}(\mu,\nu)$).
The following theorem is a consequence of our main (technical) result.
\begin{theorem}
\label{thm:baby}Let $\mathbf{L}$ be a uniformly random $n\times n$
Latin square. Then 
\[
\lim_{n\to\infty}\operatorname{d}_{\mathrm{TV}}\!\big(N_{\mathrm{row}}(\mathbf{L}),\,\operatorname{Bin}(n,1/2)\big)=0.
\]
Equivalently, writing $\vec \xi_{\mr{row}}(\mbf L)\in (\mb Z/2\mb Z)^n$ for the sequence of parities of rows of $\mbf L$, and writing $\on{Unif}((\mb Z/2\mb Z)^n)$ for the uniform distribution on $(\mb Z/2\mb Z)^n$, we have
\[
\lim_{n\to\infty}\operatorname{d}_{\mathrm{TV}}\!\big(\vec\xi_{\mr{row}}(\mathbf{L}),\,\on{Unif}((\mb Z/2\mb Z)^n)\big)=0.
\]
\end{theorem}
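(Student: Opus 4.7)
The first step is a reduction. The remark immediately preceding the theorem shows that, conditional on $N_{\mr{row}}(\mbf L)=k$, the vector $\vec\xi_{\mr{row}}(\mbf L)$ is uniform over length-$n$ binary strings with exactly $k$ ones, and the same is true of $\on{Unif}((\mb Z/2\mb Z)^n)$ given its Hamming weight. A short calculation then shows the two total variation distances in the theorem are in fact equal, so it suffices to prove either. I would aim at the joint version via the pointwise estimate
\[
\Pr[\vec\xi_{\mr{row}}(\mbf L) = \vec\xi] = (1+o(1))\,2^{-n}\qquad\text{uniformly in }\vec\xi\in(\mb Z/2\mb Z)^n,
\]
which, summed over $\vec\xi$, gives total variation $o(1)$; by the same permutation-invariance it in fact suffices to check this for one representative per Hamming weight $k\in\{0,\dots,n\}$.

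To produce this pointwise estimate I would invoke the paper's promised ``approximation theorem comparing random Latin squares with a certain independent model''. The natural candidate is a model in which row parities are by construction independent and uniform --- for instance, an $n$-tuple of i.i.d.\ uniform random permutations of $[n]$, or an analogous model built from independent random matchings on $[n]\times[n]$. In any such model the target pointwise probability is $2^{-n}$ on the nose, so any comparison theorem yielding a multiplicative $(1+o(1))$ ratio between the two distributions on events determined by row parity alone would close the argument.

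The main obstacle is therefore the approximation theorem itself. The natural local mechanism is an \emph{intercalate switching}: the move $\begin{pmatrix} a & b \\ b & a \end{pmatrix}\mapsto\begin{pmatrix} b & a \\ a & b \end{pmatrix}$ preserves the Latin property and simultaneously flips the parities of exactly two rows, two columns, and two symbols. A single switching cannot isolate row parities, but carefully chosen combinations of switchings can arrange the net effect to be a flip of only the desired rows (with the column and symbol parity flips cancelling in pairs). The challenge is to show that the random walk generated by such moves mixes rapidly on a ``typical'' Latin square, with near-regular transition ratios, so that it genuinely re-randomises the row parities to a distribution close to that of the independent model. This is where the ``stable intercalate switching'' framework promised in the abstract should enter: one restricts to a class of Latin squares in which the relevant intercalate counts are tightly concentrated and which is closed under the switching moves, giving near-regularity of the switching graph. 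Establishing such concentration, strong enough to survive the union bound needed to transfer the parity estimate to every $\vec\xi$, is in my view the technical heart of the proof.
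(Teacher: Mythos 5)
Your overall intuition --- that intercalate switching on a suitably ``nice'' class of Latin squares is the engine, and that one should seek a comparison with a model where the target distribution is exact --- is pointed in roughly the right direction, and your flag that a ``stable'' switching framework must be the key to getting unbiased re-randomisation is a good guess. However, there are two concrete gaps.

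First, your reduction to the uniform pointwise estimate $\Pr[\vec\xi_{\mr{row}}(\mbf L)=\vec\xi]=(1+o(1))2^{-n}$ for \emph{all} $\vec\xi$ is strictly stronger than the theorem and is not what the paper proves; indeed it is very likely out of reach. By permutation invariance it is equivalent to $\Pr[N_{\mr{row}}(\mbf L)=k]=(1+o(1))\Pr[\mathrm{Bin}(n,1/2)=k]$ uniformly in $k\in\{0,\dots,n\}$, and taking $k=0$ this would give $\Pr[N_{\mr{row}}(\mbf L)=0]=(1+o(1))2^{-n}$. The paper explicitly remarks that its methods yield only the one-sided bound $\Pr[N_{\mr{row}}(\mbf L)=0]\le(1/2+o(1))^{n}$ and cannot directly produce the matching lower bound, let alone the $(1+o(1))$-precision ratio. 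To get total variation convergence one does \emph{not} need a uniform pointwise estimate: the actual proof establishes a local central limit theorem for $N_{\mr{row}}$ in the bulk (values within $O(\sqrt n)$ of $n/2$), and handles the tails simply by noting that they carry $o(1)$ mass under both distributions (via Chebyshev). Your reduction, as stated, forces you to prove a statement whose tail cases are far harder than the theorem itself.

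Second, the ``independent model'' in the paper's approximation theorem is not a model of Latin squares with independent row parities (such as an $n$-tuple of i.i.d.\ permutations); it is an Erd\H os--R\'enyi-type random hypergraph $\mb G^{(3)}(n,p)$, and the comparison (\cref{lem:random_LS_to_triangle_removal}) carries a multiplicative error of order $\exp(O(n\log^2 n))$. Such a comparison is useless for transferring an estimate of the form $(1+o(1))2^{-n}$: it only serves to show that certain \emph{overwhelmingly likely} structural events (probability $1-\exp(-\omega(n\log^2 n))$) about the Latin square hold. Concretely, it is used to show that a random Latin square has, with overwhelming probability, a rich family of \emph{stable} intercalates spanning almost all rows/columns/symbols. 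The unbiasedness then comes not from any approximate regularity of a switching graph, but from exact regularity: switching stable intercalates preserves the set of stable intercalate switches (\cref{lem:canonicity_of_stable_intercalates}), so each connected component of the switching graph is a clique and the uniform distribution on $\mc L_n$ is invariant under the random switch. Combined with an $\mb F_2$-linear-algebra argument on the incidence matrix of the stable intercalates (\cref{lem:RCS}), this gives the master theorem that the parity vector is a 2-near-binomial mixture with codimension defect $O(\log^{11}n)=o(\sqrt n)$, from which the bulk local CLT and hence TV convergence follow.
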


We can actually prove much more than \cref{thm:baby}; we will discuss this momentarily, but first it is worthwhile to briefly discuss why this conjecture took so long to be resolved (and, in our opinion, why it is so interesting). Generally speaking, it is quite easy to make plausible predictions about uniformly random Latin squares, by making various kinds of approximate independence assumptions (for example, Cameron's conjecture can be justified from the point of view that there is ``no obvious reason'' for the parities of different rows to be correlated). However, it is surprisingly difficult to rigorously prove anything nontrivial about uniformly random Latin squares, or even to study them empirically.

The main issues are that Latin squares do not enjoy any neat recursive structure, and they are very ``rigid'' objects, in the sense that there are only very limited ways to make a ``local perturbation'' to change a Latin square into another one. To highlight the difficulties here, we remark that (despite some very ambitious conjectures; see \cite[Section~4.1]{ADRA23}) we still have a rather poor understanding of the total number of $n\times n$ Latin squares (the best known upper and lower bounds differ by exponential factors; see \cite[Section~17]{vLW01}), and there is no rigorously justified way to efficiently sample a uniformly random Latin square (there are certain ergodic Markov chains on the space of $n\times n$ Latin squares~\cite{Pit97,JM96}, but these are not known to be rapidly mixing). By now, there are quite a few known results about random Latin squares; the proofs of many of these results have required fundamental new additions to a very limited toolbox of techniques, and \cref{thm:baby} is no exception (we discuss existing and new techniques in \cref{sec:outline}).

\subsection{Row, column and symbol permutations}
Our proof techniques actually allow us to go beyond Cameron's conjecture, to approximate not just the distribution of $N_{\mathrm{row}}(\mathbf{L})$ but also
the \emph{joint} distribution between $N_{\mathrm{row}}(\mathbf{L})$,
$N_{\mathrm{col}}(\mathbf{L})$ and a third parameter $N_{\mathrm{sym}}(\mathbf{L})$,
which counts the number of odd \emph{symbol permutations}. Perhaps
the most natural way to define a symbol permutation is to reinterpret
a Latin square in a way that emphasises the natural symmetry between
rows, columns and symbols: indeed, observe that an $n\times n$ Latin
square (with symbols $1,\dots,n$) can be interpreted as an $n\times n\times n$
array in which every entry is ``0'' or ``1'', such that every
two-dimensional ``slice'' of this array is a permutation matrix\footnote{The correspondence is that if the Latin square has symbol $s$ in
the $(i,j)$-entry, then we put a ``1'' in the $(i,j,s)$-entry
of the corresponding $n\times n\times n$ array.}. So, the row and column permutations correspond to slices in two
of the three possible directions, and the symbol permutations correspond
to slices in the third direction.

The row, column and symbol parities cannot vary completely freely:
it was proved by Janssen~\cite{Jan95} and Zappa~\cite{Zap96} that
for any Latin square $L$ of order $n$, we have
\[
N_{\mathrm{row}}(L)+N_{\mathrm{col}}(L)+N_{\mathrm{sym}}(L)=f(n)\pmod2,
\]
where 
\[
f(n)=\begin{cases}
0 & \text{if }n=0\text{ or 1}\pmod4\\
1 & \text{if }n=2\text{ or 3}\pmod4.
\end{cases}
\]
Subject to this constraint, we are able to prove the natural
extension of Cameron's conjecture to row, column and symbol parities.
Specifically, let $\mu^{*}$ be the conditional distribution of three
independent $\operatorname{Bin}(n,1/2)$ random variables, given that
their sum is $f(n)$ mod 2. We are able to prove that $(N_{\mathrm{row}}(\mathbf{L}),N_{\mathrm{col}}(\mathbf{L}),N_{\mathrm{sym}}(\mathbf{L}))$
approximately has the distribution $\mu^{*}$ as $n\to\infty$, in
many different senses.
\begin{theorem}
\label{thm:shiny}Let $\mathbf{L}$ be a uniformly random $n\times n$
Latin square, and let 
\[
\vec{\mathbf{X}}=(\mathbf{X}_{1},\mathbf{X}_{2},\mathbf{X}_{3})=(N_{\mathrm{row}}(\mathbf{L}),N_{\mathrm{col}}(\mathbf{L}),N_{\mathrm{sym}}(\mathbf{L})).
\]
Then, the following hold as $n\to\infty$.
\begin{enumerate}
\item (Law of large numbers) We have the convergence in probability
\[
\frac1n(\mathbf{X}_{1},\,\mathbf{X}_{2},\,\mathbf{X}_{3})\overset{p}{\to}(1/2,1/2,1/2),
\]
i.e., $\vec{\mathbf{X}}$ satisfies the same law of large numbers
as $\mu^{*}$.
\item \smallskip(Central limit theorem) We have the convergence in distribution
\[
\frac{1}{\sqrt{n/4}}(\mathbf{X}_{1}-n/2,\,\mathbf{X}_{2}-n/2,\,\mathbf{X}_{3}-n/2)\overset{d}{\to}\mathcal{N}(0,I_{3}),
\]
i.e., $\vec{\mathbf{X}}$ satisfies the same trivariate central limit
theorem as $\mu^{*}$.
\item \smallskip(Local central limit theorem) For all $\vec{x}=(x_{1},x_{2},x_{3})$
satisfying $x_{1}+x_{2}+x_{3}=f(n)$ (mod 2), we have
\begin{align*}
 & \Pr[\vec{\mathbf{X}}=\vec{x}]=2\cdot\frac{1}{(2\pi(n/4))^{3/2}}\exp\left(-\frac{(x_{1}-n/2)^{2}+(x_{2}-n/2)^{2}+(x_{3}-n/2)^{2}}{2(n/4)}\right)+o(n^{-3/2}),
\end{align*}
i.e., $\vec{\mathbf{X}}$ satisfies the same \emph{local} central
limit theorem as $\mu^{*}$. 
\item \smallskip(Total variation convergence) We have
\[
\operatorname d_{\mathrm{TV}}(\vec{\mathbf{X}},\mu^{*})\to0.
\]
\item \smallskip(Large deviation principle) Let $H_2:\alpha\mapsto-\alpha\log_2\alpha-(1-\alpha)\log_2(1-\alpha)$ 
be the base-2 binary entropy function, and let $I(x_{1},x_{2},x_{3})=3-H_2(x_{1})-H_2(x_{2})-H_2(x_{3})$.
We have 
\[
-\inf_{\vec{x}\in E^{\circ}}I(\vec{x})\le\liminf n^{-1}\log_2\Pr[n^{-1}\vec{\mathbf{X}}\in E]\le\limsup n^{-1}\log_2\Pr[n^{-1}\vec{\mathbf{X}}\in E]\le-\inf_{\vec{x}\in\overline{E}}I(\vec{x})
\]
for all Borel $E\subseteq\mb R^{3}$, where $\overline{E}$ and $E^{\circ}$
denote the closure and interior of $E$. That is to say, $\vec{\mathbf{X}}$
satisfies the same large deviation principle as $\mu^{*}$.
\end{enumerate}
\end{theorem}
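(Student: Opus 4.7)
The plan is to derive all five items from a single master pointwise estimate on the joint probability mass function of $\vec{\mathbf{X}}$, which I anticipate will be the paper's main technical result. Concretely, I would aim to establish that for every admissible triple $\vec{x}=(x_1,x_2,x_3)$ (meaning $x_1+x_2+x_3\equiv f(n)\pmod 2$),
\[
\Pr[\vec{\mathbf{X}}=\vec x] = \mu^{*}(\vec x)\cdot \exp(o(n)) \quad\text{uniformly in }\vec x,
\]
together with the sharper bulk comparison $\Pr[\vec{\mathbf{X}}=\vec x]=(1+o(1))\mu^{*}(\vec x)$ uniformly over admissible $\vec x$ with $\|\vec x-(n/2,n/2,n/2)\|_{\infty} \le C\sqrt{n\log n}$. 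The first estimate captures behaviour on the large deviation scale; the second is a Gaussian-scale pointwise comparison.

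\textbf{Deducing items 1--4.} A Stirling computation gives $\mu^{*}(\vec x) = 2\cdot (2\pi n/4)^{-3/2}\exp(-\sum_i (x_i-n/2)^2/(n/2)) + o(n^{-3/2})$ uniformly in the bulk, where the extra factor of $2$ arises from conditioning on the sum-parity constraint. Combined with the bulk master estimate this is precisely item 3. For item 4 I would split admissible $\vec x$ into a bulk region (contributing $o(1)$ total variation via the $(1+o(1))$ comparison) and a tail region (of negligible mass under $\mu^{*}$ by binomial Chernoff bounds, and hence of negligible mass under $\vec{\mathbf{X}}$ by the exponential-scale comparison). Items 1 and 2 then follow from item 4, because $\mu^{*}$ manifestly satisfies the claimed LLN and CLT (the parity conditioning only contributes a factor of $2$, which preserves both asymptotic statements).

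\textbf{LDP and the main obstacle.} Item 5 is not a consequence of total variation or local CLT bounds, as it concerns probabilities that can decay exponentially. However, it follows directly from the large-deviation-scale master estimate together with the LDP for $\mu^{*}$, which is a standard application of Cram\'er's theorem to three independent sums of i.i.d.\ Bernoulli$(1/2)$ variables, yielding rate function $\sum_i (1-H_2(x_i)) = 3-\sum_i H_2(x_i) = I(\vec x)$ (the parity conditioning contributes only a subexponential factor of $2+o(1)$). The principal obstacle is of course the master estimate itself: this is where the novel machinery advertised in the abstract must be deployed --- ``stable intercalate switchings'' to compare $\Pr[\vec{\mathbf{X}}=\vec x]$ between neighbouring admissible configurations by toggling individual parities in a controlled manner, and the approximation theorem against an independent model to supply the absolute normalisation anchoring these comparisons to the target measure $\mu^{*}$.
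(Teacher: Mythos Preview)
Your overall architecture---derive all five items from a single master result plus standard asymptotics for $\mu^*$---is exactly what the paper does. But you have guessed the wrong shape for the master result, and this matters in two places.

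The paper's master theorem (\cref{thm:main-technical}) is not a pointwise comparison $\Pr[\vec{\mathbf{X}}=\vec x]\approx\mu^*(\vec x)$. It is a \emph{mixture representation}: there is an auxiliary $\mathbf Q$ such that, on an event of probability $1-\exp(-\omega(n\log^2 n))$, the conditional law of $\vec{\mathbf X}$ given $\mathbf Q=Q$ is a triple of independent $\mathrm{Bin}(n_i,1/2)$'s (shifted by $O(\log^{11}n)$) conditioned on some parity vector $\vec v(Q)$. The stable intercalate switches are used as a \emph{re-randomisation} (random walk step preserving the uniform measure), not as a device to compare $\Pr[\vec{\mathbf X}=\vec x]$ between neighbouring $\vec x$; the paper explains in \cref{subsec:intercalate-switchings} why the ratio-comparison approach you sketch runs into the ``infamous upper tail'' barrier and cannot be pushed to the required precision.

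Two concrete consequences. First, your large-deviation pointwise lower bound $\Pr[\vec{\mathbf X}=\vec x]\ge\mu^*(\vec x)e^{-o(n)}$ is strictly stronger than what the paper obtains: the paper explicitly notes (footnote after \cref{eq:HJ}) that its methods do \emph{not} directly yield $\Pr[N_{\mathrm{row}}(\mathbf L)=0]\ge(1/2+o(1))^n$. Item~(5) is instead proved by observing that every mixture component satisfies the LDP with the same rate $I$ (Sanov/Cram\'er, \cref{lem:deM-Laplace}(ii)), and the exceptional event is sub-exponential---no pointwise control needed. Second, your bulk estimate presupposes knowing $\Pr[\vec{\mathbf X}\equiv\vec v\pmod 2]$ for each admissible $\vec v$, but the mixture representation says nothing about which $\vec v(Q)$ arises. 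The paper fills this gap by invoking the Cavenagh--Wanless theorem as a black box to get $\Pr[\vec{\mathbf X}\equiv\vec v\pmod 2]=1/4+o(1)$, and only then deduces item~(3); items~(4), (2), (1) follow from~(3) roughly as you outline.
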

Actually, all parts of \cref{thm:shiny} are really consequences of a single master theorem (\cref{thm:main-technical}), but we feel the statement of that master theorem is a little too technical for this introduction.

\subsection{Previous work}
We end this introduction with a brief discussion of previous work
on Cameron's conjecture. The first theorem in this area was due to H\"aggkvist and Janssen~\cite{HJ96}, who
proved that
\begin{equation}\Pr[N_{\mathrm{row}}(\mathbf{L})=0]\le(\sqrt{3/4}+o(1))^{n}\approx 0.87^{n}\label{eq:HJ}\end{equation}
(note that \cref{thm:shiny}(5) implies that\footnote{Curiously, the methods in this paper do not directly imply the corresponding \emph{lower} bound $\Pr[N_{\mathrm{row}}(\mathbf{L})=0]\ge(1/2+o(1))^{n}$. However, this does seem to be provable with some additional tricks; we intend to explore this in future work with Catherine Greenhill and Lenka Kopfov\'a.} $\Pr[N_{\mathrm{row}}(\mathbf{L})=0]\le(1/2+o(1))^{n}$). Second, generalising \cref{eq:alpoge}, Cavenagh and Wanless~\cite{CW16} proved that $(N_{\mathrm{row}}(\mathbf{L}),N_{\mathrm{col}}(\mathbf{L}))$
is asymptotically equidistributed modulo 2 (this is a special case of
\cref{thm:shiny}(4)). Third, Cavenagh, Greenhill and Wanless~\cite{CGW08} proved that the probability that the first two rows of $\mbf L$ have the same parity lies between $1/4-o(1)$ and $3/4+o(1)$ (note that \cref{thm:shiny}(1) implies that this probability is $1/2+o(1)$, using the invariance of the distribution of $\mbf L$ under permutations of the rows).
Until now, these were the only rigorous pieces of evidence towards
Cameron's conjecture.

Surprisingly, empirical evidence for small Latin squares has run \emph{counter}
to Cameron's conjecture (and therefore \cref{thm:shiny}): it seems
that the approximations in \cref{thm:shiny} only start to ``kick
in'' for reasonably large $n$ (though, there is a limit to what
can be done empirically, because randomly sampling Latin squares is
a notoriously difficult problem). Cameron himself \cite{Cam15} observed
that ``evidence supports this conjecture fairly well, but the tails
of the distribution seem a little heavier than it would predict'',
suggesting that there might be good reason that the H\"aggkvist--Janssen
bound mentioned above is significantly larger than $2^{-n}$. Alimohammadi,
Diaconis, Roghani and Saberi~\cite{ADRA23} attempted to test \cref{conj:cameron}
using \emph{sequential importance sampling} (in the same paper, they
gave rigorous grounding to this method). They were not able to find any evidence
of the total variation convergence\footnote{In fact, they did not even see any evidence of convergence with respect to the so-called \emph{Wasserstein metric} (Wasserstein convergence is significantly weaker than total variation convergence).} $\operatorname d_{\mr{TV}}(N_{\mathrm{row}}(\mathbf{L}),\operatorname{Bin}(n,1/2))\to 0$,
with random Latin squares of size $n\le15$.

\subsection{Notation}
We use standard asymptotic notation throughout: for functions $f=f(n)$ and $g=g(n)$, we write $f=O(g)$ or $g=\Omega(f)$ to mean that there is a constant $C$ such that $|f(n)|\le C|g(n)|$ for sufficiently large $n$, we write $f=\Theta(g)$ to mean that $f=O(g)$ and $f=\Omega(g)$, and we write $f=o(g)$ or $g=\omega(f)$ to mean that $f/g\to0$ as $n\to\infty$. We will often want to treat certain quantities as constants, for the purpose of asymptotic notation; we will always make this clear with phrasing like ``fix a constant $\alpha>0$''.

Somewhat less standardly, for $\varepsilon>0$ we write $f\pm \varepsilon$ to denote a quantity that differs from $f$ by at most $\varepsilon$.

Regarding basic mathematical notation: for a real number $x$, the floor and ceiling functions are denoted $\lfloor x\rfloor=\max(i\in \mb Z:i\le x)$ and $\lceil x\rceil =\min(i\in\mb Z:i\ge x)$. We will however generally omit floor and ceiling symbols and assume large numbers are integers, wherever divisibility considerations are not important. We write $[n]=\{1,\dots,n\}$, and all logarithms are base $e$ unless explicitly stated otherwise.

Finally, since this paper features so many objects of different types, we adopt some typographical conventions: random objects are printed in bold (e.g., a random variable $\mbf X$ or a random Latin square $\mbf L$), and  ``ordered'' objects are decorated with an arrow (e.g., vectors $\vec x$, and later in the paper we will consider ordered partial Latin squares $\vec P$).

\subsection{Acknowledgements}
The authors would like to thank Lenka Kopfov\'a for helpful feedback on an earlier draft of this paper.

\section{Discussion of proof techniques}\label{sec:outline}
In this section we give a high-level overview of the techniques which have previously been brought to bear on random Latin squares, and with this context we describe the new ideas that go into the proof of \cref{thm:main-technical}. (We end the section with an outline of the structure of the rest of the paper.)

As far as we know, all the rigorous work on random Latin squares uses
one of three general classes of techniques.
\begin{enumerate}
\item \textbf{Permutation-invariance.} The distribution
of a uniformly random $n\times n$ Latin square is invariant under random permutation
of the rows, columns, and symbols. So, one can attempt to study random Latin squares purely by considering the effect of such random permutations. This type of reasoning is rather limited, but does have some applications (see e.g.\ \cite{Cam92,LS18}).
\item\smallskip \textbf{Enumeration.} The most obvious way to study the probability
that a random Latin square satisfies a property $\mathcal{P}$ is
to simply count the number of Latin squares satisfying $\mathcal{P}$,
and divide by the total number of Latin squares. Unfortunately, we
only have very crude methods to enumerate $n\times n$ Latin squares
(the best known upper and lower bounds differ by a factor of about $\exp(n\log^2 n)$),
so generally speaking this can only be used to study properties that
are extremely unlikely (i.e., that occur with probability less than
about $\exp(-n\log^2 n)$). Even for such properties, actually
proving the necessary estimates is often a highly nontrivial matter
(and often involves consideration of auxiliary random models, as we discuss in \cref{subsec:enumeration}).
For some examples of properties of random Latin squares proved by
enumeration, see e.g.\ \cite{GK23,KSSS22,KSSS23,KS18,kwan2020almost,LL16,MW99,BM25,Cam15b}.
\item \smallskip{\textbf{Cycle switching.} A common technique in probabilistic combinatorics
is to study a random object by studying the effect of local ``switching''
operations. 
Unfortunately, it is very difficult to make a controlled local change
to a Latin square. To give some idea of the difficulty, it is an open
question to understand the minimum number $f(n)$ such that every
$n\times n$ Latin square can be transformed into some different Latin
square by changing $f(n)$ entries (all we know is that $f(n)$ has
order of magnitude between $\log n$ and $\sqrt{n}$; see \cite{CR17}).
There is only really one type of switching operation that has successfully
been used to study random Latin squares, namely \emph{cycle switchings} (and minor variants thereof). 
We will define cycle switchings properly in \cref{subsec:switchings}, but for now we just emphasise that they are very unwieldy; in general, if we want
to make a particular local change using a cycle switching, we may be forced to make far-reaching
changes to the rest of the Latin square. 
Nonetheless, since cycle
switchings are one of very few available tools, they have played a
crucial role in many of the known results about random Latin squares
(see e.g.\ \cite{HJ96,AW25,CGW08,CW16,KS18}).}
\end{enumerate}

Permutation-invariance alone is certainly not enough to prove Cameron's conjecture, since any permutation of the rows, columns and symbols affects all the row parities in the same way (though, permutation-invariance can be used to easily show that $\Pr[N_{\mathrm{row}}(\mathbf{L})\text{ is even}]=1/2$ when $n\ge 3$ is odd). At first sight, enumeration-based approaches also seem to be fundamentally unsuitable to prove Cameron's conjecture, since we are concerned with events that occur with non-negligible probability, and a multiplicative error of $\exp(n\log^2 n)$ would completely overwhelm the main term. So, cycle-switching techniques seem to be the most promising avenue towards Cameron's conjecture; indeed, such techniques underpin the results of H\"aggkvist--Janssen, Cavenagh--Wanless and Cavenagh--Greenhill--Wanless mentioned in the introduction. However, the usual type of switching analysis is far too crude to have any hope of proving anything like \cref{thm:baby} (we discuss this further in \cref{subsec:switchings,subsec:intercalate-switchings}), and some fundamental new ideas are required.

In this paper, we use a \emph{combination} of cycle-switching and enumeration-based techniques. To very briefly summarise our approach: we use enumeration-based techniques to prove a new approximation theorem, comparing a random Latin square to a certain Erd\H os--R\'enyi-type random hypergraph (in a stronger sense than previously available). This allows us to show that random Latin squares are extremely likely to have a very rich constellation of $2\times 2$ subsquares with certain strong disjointness and canonicity/stability properties. The special properties of these $2\times 2$ subsquares allow us to \emph{independently} perform many cycle switchings, to ``re-randomise'' a random Latin square without biasing its distribution. We are then able to prove the desired results using the randomness of our independent switchings, via some linear algebra over the finite field $\mb F_2$.

In the following subsections, we describe the above ideas in more detail.
\subsection{Enumeration and approximation}\label{subsec:enumeration}

Let $|\mc L_{n}|$ be the number of $n\times n$ Latin squares. This number is known to be
\[
|\mathcal{L}_{n}|=\left(\frac{n}{e^{2}}+o(n)\right)^{n^{2}},
\]
and there are two known ways to prove this.
\begin{enumerate}
\item We can build up an $n\times n$ Latin square in a \emph{row-by-row} fashion: at each step we choose an option for the next row that does not conflict with previous rows. One can use celebrated \emph{permanent estimates} of of Egorychev--Falikman~\cite{Ego81,Fal81} and Bregman~\cite{Bre73} to show that the number of choices for each row does not depend too strongly on previous choices; multiplying these bounds over all steps yields
 \[
 \exp(-O(n\log n))\le\frac{|\mathcal{L}_{n}|}{(n/e^{2})^{n^{2}}}\le\exp(O(n\log^{2}n))
 \]
(see \cite[Theorem 17.3]{vLW01} for the details).

\item{\smallskip We can instead build up an $n\times n$ Latin square in an \emph{entry-by-entry}
fashion: at each step we choose a row/column/symbol triple that does
not conflict with the partial Latin square constructed so far, and
add it. Unfortunately, the number of choices at each step can depend
quite dramatically on previous choices; in particular it is possible
to ``get stuck'', and find oneself in a position where there are no legal choices for the next row/column/symbol triple.

However, it turns out that ``on average'' (i.e., if one makes random choices at each step), this entry-by-entry process is quite well-behaved. Combining a number of very powerful tools, one can use this process to prove
\[
\exp(-O(n^{2-\varepsilon}))\le \frac{|\mathcal{L}_{n}|}{(n/e^{2})^{n^{2}}}\le\exp(O(n^{3/2})).
\]
for some tiny\footnote{It is unclear what is the optimal $\varepsilon$ with this approach, but certainly $\varepsilon$ cannot be greater than $1/2$ (this is a general barrier for techniques based on the \emph{triangle removal process}, which we intend to explore further in upcoming work).} constant $\varepsilon>0$. The details appear in \cite{KSS21}, but are really an adaptation of a general enumeration approach systematised by Keevash~\cite{Kee18}. Specifically, the upper bound is proved using the so-called \emph{entropy method} of Radhakrishnan~\cite{Rad97}, in a form pioneered by Linial and Luria~\cite{LL13}. For the lower bound, one combines an analysis of an instance of the so-called \emph{triangle removal process} (first studied by Spencer~\cite{Spe95} and R\"odl and Thoma~\cite{RT96}), with a \emph{completion theorem} of Keevash~\cite{Kee18c} (here one needs a ``second generation'' completion theorem, building on Keevash's earlier ideas in the setting of the \emph{existence of designs conjecture}~\cite{Kee14}; see also \cite{Kee24,DP,GKLO23}).
}
\end{enumerate}
The approaches in (1) and (2) suggest two different ways to study random Latin squares, by comparing them to two different auxiliary random models. First, a $k\times n$ \emph{Latin rectangle} is a $k\times n$ array filled with $n$ different symbols, such that each symbol appears exactly once in each row and at most once in each column. The ideas in (1) show that all $k\times n$ Latin rectangles can be completed to Latin squares in ``about the same number of ways'' (up to a multiplicative factor\footnote{Of course, this multiplicative factor depends on $k$, but it turns out that most of the contribution comes from the last few rows, so one cannot gain much by taking $k$ very close to $n$.} of about $\exp(n\log^2 n)$), so if we could show that some property is overwhelmingly likely to hold in a random $k\times n$ Latin rectangle, then we could deduce that that property is also very likely to hold in a random $n\times n$ Latin square.

Similarly, consider the random process in (2) (which can be interpreted as an instance of the so-called \emph{triangle removal process}), and suppose we run this process for just a few steps (until the partial Latin square is half-full, say). The ideas in (2) allow us to show that most outcomes of this random process can be completed to Latin squares in ``about the same number of ways'' (up to a multiplicative factor of about $\exp(n^{2-\varepsilon})$), so if we had some way to show that the first few steps of this random process are overwhelmingly likely to satisfy some property, then we could deduce that that property is also very likely to hold in a random $n\times n$ Latin square.

Both of these two methods have quite fundamental quantitative limitations (they can only be used to study properties that hold with overwhelming probability), but they have seen many applications. To compare the two methods: the first method has much better quantitative aspects, and it is ``more elementary'' (not depending on Keevash's sophisticated completion machinery). On the other hand, the second method is generally much easier to apply: the entry-by-entry random process can be straightforwardly coupled with an Erd\H os--R\'enyi random hypergraph, which allows one to take advantage of the huge body of techniques that have been developed to study random graphs\footnote{The second method is also much more robust, and can be applied in much more general settings than random Latin squares, though this is not relevant for the present paper.}. The only known way to directly study random Latin rectangles is via delicate switching arguments (taking advantage of the fact that it is much easier to make local changes to a Latin rectangle than a Latin square).

\subsection{A new approximation lemma}\label{subsec:new-enumeration}
One of our key contributions in this paper is a new approximation lemma (\cref{lem:random_LS_to_triangle_removal}) that combines the advantages of both the above methods (and is proved by a delicate combination of both methods). Roughly speaking, this lemma says that if one can show that the entry-by-entry random process described in the last subsection (i.e., the \emph{triangle removal process}) 
satisfies a property with probability $1-\varepsilon$, then that property holds in a random subset of a random Latin square with probability at least $1-\varepsilon\exp(O(n\log^2 n))$. The reader may also be interested in an easy-to-apply corollary of our main approximation lemma (\cref{cor:random-LS-to-binomial}) comparing random Latin squares to Erd\H os--R\'enyi random hypergraphs.

For example, these tools could have been used to remove the switching analysis (or remove the dependence on general-purpose switching estimates) from \cite{GK23,KSSS22,KS18,MW99,BM25}, and remove the dependence on Keevash's completion machinery from theorems in \cite{KSSS22,kwan2020almost}. For the present paper, our new approximation lemma is rather crucial: quantitative aspects are very important (see \cref{rem:quant}), and the properties we need to consider are so complicated that it would have been a herculean task to study them via switching on Latin rectangles. We hope that our approximation lemma will also be broadly useful in future work on random Latin squares.

\subsection{Cycle switchings}\label{subsec:switchings}
A cycle switching is specified by a pair of rows, a pair of columns
or a pair of symbols, together with an entry of the Latin square ``belonging
to that pair''. For example, consider two
rows $r_{1},r_{2}$ and an entry $(r_{1},c,s)$ (meaning that symbol
$s$ appears in row $r_1$ of column $c$). The first step of the
cycle switching is to ``move our entry into the other row'':
we put the symbol $s$ into cell $(r_2,c)$. However, there was
already another symbol $s'$ in that cell, which we need to move somewhere
else; we move it to the only available cell, which is $(r_1,c)$.
But there is already an instance of $s'$ in row $r_1$ (in some
column $c'$), so we move that instance of $s'$ to the cell $(r_2,c')$.
But there was already some symbol $s''$ in that cell, which we move
to $(r_1,c')$. At this point, if we are very lucky, we will have
$s=s''$, in which case no further moves are necessary, and we have
successfully switched to a new Latin square. If not, we can keep going,
repeatedly moving entries between $r_{1}$ and $r_{2}$ as long as
it takes to resolve all conflicts and reach a Latin square. (In the worst case, we could end
up exchanging the entirety of rows $r_{1}$ and $r_{2}$).

The smallest possible cycle switching is called an \emph{intercalate
switching}. In the above example, it corresponds to the case where
$s=s''$, so the entire switching affects just four entries of the
Latin square. Alternatively, an intercalate switching can be viewed
as ``flipping a $2\times2$ Latin subsquare''. In general, the switchings
described above (``row switchings'', where we are switching between
two rows $r_{1}$ and $r_{2}$) can be viewed as ``flipping a
$2\times k$ Latin subrectangle'', for some $k$.

Cycle switchings have an easy-to-describe impact on the parities
of rows, columns and symbols. For example, a row switching changes the parity of every column and symbol involved
in the switching, and it may or may not change the parity of the two involved rows, depending on whether the length
of the cycle switching is even or odd. More subtly, cycle switchings also have an impact on the structure of other cycle switchings (e.g., switching in a pair of rows can affect the switchings that are possible in some pair of columns), so one can consider combinations of cycle switchings with more complicated effects. It is even possible to combine multiple ``partial'' cycle switchings of different types (cf.\ the ``cross-switch'' operations in \cite{HJ96,CGW08}).

One can use these types of switchings to estimate the \emph{relative likelihoods} of various events. Indeed, given two sets of Latin squares $\mc A,\mc B$, we can design a switching operation to transform a Latin square in $\mc A$ into a Latin square in $\mc B$. Then, for $L_1\in \mc A$ we can try to estimate the number of Latin squares in $\mc B$ that can be reached by such a switching, and for $L_2\in \mc B$ we can try to estimate the number of Latin squares in $\mc A$ that can reach $L_2$ by such a switching. Dividing these two estimates gives us an estimate on $|\mc B|/|\mc A|$.

For example, for a sequence $\vec x\in \{0,1\}^k$, let $\mc L_{\vec x}$ be the set of $n\times n$ Latin squares for which the parities of the first $k$ rows are described by $\vec x$. Using the above types of ideas, H\"aggkvist and Janssen~\cite{HJ96} managed to show that if $k$ is significantly smaller than $n/2$, we have $|\mc L_{\vec x}|\le 3|\mc L_{\vec y}|$ whenever $\vec x$ and $\vec y$ differ in a single coordinate, and they iterated this estimate to deduce \cref{eq:HJ}. Obviously, this comes very far short of proving Cameron's conjecture; the condition on $k$ and the extraneous factor of 3 are basically due to the fact that one has very little control over the structure of cycle switchings in an arbitrary Latin square. To make a desired change to the row parities it is necessary to make case distinctions with different combinations of cycle switchings that interact in different ways, and it is necessary to reserve a large subset of ``junk rows'' which may be affected by the cycle switchings in unpredictable ways.

\subsection{Individual intercalate switchings}\label{subsec:intercalate-switchings}
An important point is that we do not actually need to consider cycle switchings in \emph{arbitrary} Latin squares: we can use the enumeration/approximation methods described in \cref{subsec:enumeration,subsec:new-enumeration} to prove that Latin squares are very likely to satisfy certain properties, and then take advantage of these properties in cycle switching arguments. This sounds like a natural approach, but to our knowledge it has never been employed before, because until recently the properties that could be proved using enumeration/approximation were extremely limited.

In particular, a very relevant direction is the study of intercalates ($2\times 2$ Latin subsquares) in random Latin squares. If an intercalate involves two even (respectively, odd) rows, then switching that intercalate reduces (respectively increases) the number of even rows by exactly two. So, if we had very tight control of the number of intercalates among the even rows, and the number of intercalates among the odd rows, we could hope to estimate the ratios
\begin{equation}\frac{\Pr[N_{\mr{row}}(\mbf L)=x]}{\Pr[N_{\mr{row}}(\mbf L)=x+2]}\label{eq:ratio},\end{equation} for all $x\in \mb N$, by considering all the ways to switch a single intercalate. Note that these ratios fully determine the distribution of $N_{\mr{row}}(\mbf L)$ (except for a possible bias mod 2, but this is handled by \cref{eq:alpoge}).

Recently, resolving a conjecture of McKay and Wanless~\cite{MW99}, Kwan, Sah and Sawhney~\cite{KSSS22} showed how to use enumeration/approximation techniques (together with techniques from large deviations theory) to prove that $\mbf L$ has $n^2/4+o(n^2)$ intercalates with probability $1-o(1)$. These techniques can be adapted to show that $\mbf L$ typically has $k^2/4+o(n^2)$ intercalates in any subset of $k$ rows, which allows one to estimate the ratio in \cref{eq:ratio} up to a multiplicative factor of $1+o(n/(x(n-x)))$. This is already enough to make new progress towards Cameron's conjecture: since binomial tails decay very rapidly, it is straightforward to deduce that $N_{\mr{row}}(\mbf L)=n/2+o(n)$ with probability $1-o(1)$; that is, $N_{\mr{row}}(\mbf L)$ satisfies the same law of large numbers as $\on{Bin}(n,1/2)$. This implies \cref{thm:shiny}(1), and with a bit more work it is also possible to prove the large deviation principle in \cref{thm:shiny}(5) using this type of idea. 

That is to say, by studying the effect of switching an intercalate, one can estimate the ratios \cref{eq:ratio} tightly enough to study the tails of $N_{\mr{row}}(\mbf L)$ (and $N_{\mr{col}}(\mbf L)$ and $N_{\mr{sym}}(\mbf L)$). Unfortunately, it is not feasible to study the bulk of the distribution of $N_{\mr{row}}(\mbf L)$ this way: since the fluctuations of $\on{Bin}(n,1/2)$ have order of magnitude $\sqrt n$, one would need to control the ratios \cref{eq:ratio} up to a factor of $1+o(1/\sqrt n)$; roughly speaking, this is comparable to showing that the number of intercalates $\mbf Y$ in $\mbf L$ satisfies $\Pr[|\mbf Y-\mb E\mbf Y|>t]=o(1)$ for some $t=o(n^{3/2})$. While we believe this to be true, it is beyond the reach of enumeration-based techniques: due to clustering phenomena in the upper tail of $\mbf Y$, the relevant deviation probabilities are simply not small enough\footnote{For this to work, one would need to be able to estimate the number of $n\times n$ Latin squares $|\mc L_n|$ up to a subexponential $\exp(o(n))$ multiplicative error.} to tolerate the super-exponential error terms described in \cref{subsec:enumeration}. (This is an instance of Janson's ``infamous upper tail''~\cite{janson2002infamous}, and is discussed further in \cite{KSSS22}.)

\subsection{Multiple intercalate switchings}
So, instead of studying the effect of a single intercalate switching, our approach is to make \emph{many disjoint intercalate switches at the same time}.

A na\"ive approach for this is as follows. Using our new approximation theorem, together with techniques from \cite{kwan2020almost,KSSS22}, one can show that a random $n\times n$ Latin square $\mbf L$ typically has a collection of disjoint intercalates $\mc I$ which ``robustly span almost all the rows'', in the sense that for every set $R_0$ of about\footnote{We cannot go smaller than about $\log^2 n$, due to the multiplicative error of about $\exp(n\log^2 n)$ in our enumeration estimates.} $\log^2 n$ rows, there is an intercalate in $\mc I$ which involves a row in $R_0$ and a row outside $R_0$. Some simple linear algebra over $\mb F_2$ then shows that if we were to randomly and independently switch all the intercalates in $\mc I$ (thereby obtaining a new random Latin square $\mbf L'$), the resulting distribution of the $n$ row parities would be uniform on some affine-linear subspace of $\mb F_2^n$ with codimension at most about $\log^2 n$, and it is easy to deduce a central limit theorem for $N_{\mr{row}}(\mbf L')$. (With a more sophisticated argument, one can handle the joint distribution of $(N_{\mr{row}}(\mbf L'),N_{\mr{col}}(\mbf L'),N_{\mr{sym}}(\mbf L'))$ and upgrade the central limit theorem to a \emph{local} central limit theorem.)

\begin{remark}\label{rem:quant}
    We are really proving a central limit theorem for $N_{\mr{row}}(\mbf L')$ conditioned on an outcome of $\mbf L$. So, it is important that for different outcomes of $\mbf L$, the corresponding conditional distributions of $N_{\mr{row}}(\mbf L')$ are statistically indistinguishable. This would not be true if the codimension above were greater than $\sqrt n$, since the fluctuations of $\on{Bin}(n,1/2)$ have order of magnitude $\sqrt n$. So, the quantitative aspects of our new approximation theorem are very important here. (E.g., the approximation theorem in \cite{kwan2020almost} would not have sufficed.)
\end{remark}

Unfortunately, this only characterises the distribution of $N_{\mr{row}}(\mbf L')$, which could in principle be very different to the distribution of $N_{\mr{row}}(\mbf L)$. The problem is that switching some intercalates can destroy other intercalates, or create new ones, so the probability of switching from $L$ to $L'$ could be different from the probability of switching from $L'$ to $L$ (i.e., the switching could ``push probability mass'' towards a subset of Latin squares, introducing bias to the distribution of $\mbf L'$). Trying to quantify this bias takes us back to the issues described in \cref{subsec:intercalate-switchings}.

The only way we were able to resolve this issue was to execute the entire argument above with a \emph{stable}/\emph{canonical} collection of disjoint intercalates. Specifically, we associate a collection of disjoint intercalates $\mc I(L)$ to each Latin square $L$, in such a way that if we switch any subset of these intercalates to obtain a new Latin square $L'$, then $\mc I(L)$ and $\mc I(L')$ are exactly the same (except that obviously some of these intercalates are switched). So, the probability of switching from $L$ to $L'$, and the probability of switching from $L'$ to $L$, are both exactly $2^{-|\mc I(L)|}=2^{-|\mc I(L')|}$, and $\bfL'$ is a uniformly random Latin square.

In a typical Latin square, the intercalates heavily intersect each other (and therefore cannot be independently switched). Given a pair of intersecting intercalates, how do we decide which is ``the canonical one'' that we're allowed to switch? Our approach is to \emph{randomly sparsify} the situation, to reduce the intersections between intercalates. Namely, we consider a random ``template'' $\mbf T$, which describes a sparse subset of ``switchable'' row/column pairs. Then, given a Latin square $L$, we define a collection of intercalates $\mc I_{\mbf T}(L)$ by starting with the collection of intercalates in $L$ which use only row/column pairs in $ \mbf T$, then deleting the intercalates which are ``non-canonical'' (the precise definition is a little complicated, but e.g.\ we delete all intercalates in intersecting pairs, and we delete all intercalates that could introduce a new intercalate when switched). The idea is that this deletion step should not have a very severe effect, since $ \mbf T$ is sparse. We need to show that a typical outcome of $\mbf T$ yields collections of intercalates $\mc I_{\mbf T}(L)$ which ``robustly span almost all the rows'' for almost all $L$, with which we can implement the argument described above.

Actually proving the necessary properties of $\mc I_{\mbf T}(\mbf L)$ is a very delicate matter (much more so than any previous work on random Latin squares), largely due to the aforementioned ``infamous upper tail'' issue. Specifically, because enumerative estimates have super-exponential error terms, most steps of the proof need to hold with overwhelming probability, so we constantly need to worry about large deviation behaviour. Whenever we need an upper bound on some quantity, we need to be extremely careful to avoid situations where clustering phenomena cause upper tails to be too heavy. 

\subsection{Outline of the paper} The structure of the proof of \cref{thm:shiny}, as distributed over the rest of the paper, is as follows. First (after \cref{sec:concentration}, in which we record some standard concentration inequalities that will be used throughout the paper), in \cref{sec:approximation-1} we prove our new approximation theorem (\cref{lem:random_LS_to_triangle_removal}) relating random Latin squares to the triangle removal process. In \cref{sec:approximation-2} we prove some simple lemmas relating the triangle removal process to Erd\H os--R\'enyi random hypergraphs, which will be useful in combination with \cref{lem:random_LS_to_triangle_removal}.

In \cref{sec:master}, we reduce all parts of \cref{thm:shiny} to a technical ``master theorem'' (\cref{thm:main-technical}) giving a precise description of the distribution of $(N_{\mr{row}}(\mbf L),N_{\mr{col}}(\mbf L),N_{\mr{sym}}(\mbf L))$. Then, in \cref{sec:stable}, we introduce the notion of a ``stable intercalate'', which is an intercalate which can be safely switched without affecting any other intercalates. In \cref{sec:master-from-stable} we show how to use linear-algebraic arguments with stable intercalate switchings to prove the master theorem described above, given a key lemma (\cref{lemma:existence_split_stable_intercalate}) on stable intercalates in random Latin squares. Roughly speaking, the key lemma says that there is a ``template'' $T$ (obtained via a sparse random set of row/column pairs), such that for a uniformly random Latin square, it is very likely that for any large-enough sets of rows, columns and symbols, we can find a stable intercalate (involving only entries in $T$) ``inside'' those sets.

The rest of the paper is devoted to the proof of \cref{lemma:existence_split_stable_intercalate}. In \cref{sec:setup_int_expander_theorem} we break down this proof into two lemmas. First, \cref{lemma:min_disjoint_split_ints} says that (inside any large-enough sets of rows, columns and symbols) there are likely to be many disjoint intercalates (saying nothing about whether they are stable). Second, \cref{lemma:max_entries_bad_configs} says that there are unlikely to be many entries which lie in certain ``bad'' arrangements of intercalates which could lead to our intercalates being non-stable. (Here, due to ``infamous upper tail'' issues, we cannot hope to consider the number of bad arrangements themselves, only the number of entries in them!)

The first of these lemmas (\cref{lemma:min_disjoint_split_ints}) is proved in \cref{sec:existence_disjoint_ints}, via a careful 2-step application of Freedman's martingale concentration inequality (using a ``maximum disjoint family'' technique of Bollob\'as~\cite{Boll88}).

The second lemma (\cref{lemma:max_entries_bad_configs}) is proved in \cref{sec:upper_bounding_entries_in_bad_configurations}, and is much more involved. Here, our job is to understand ``bad'' arrangements of intercalates; there is an enormous range of possibilities for the structure of such an arrangement, so the first step is a switching argument (unrelated to the main switching argument in \cref{sec:master-from-stable}), which shows that every bad arrangement of intercalates can be switched to obtain one of four ``basic types'', and therefore it suffices to restrict our attention to these types. We then carefully study how the four basic types of bad intercalate arrangements can emerge in the triangle removal process.

\section{Concentration inequalities}\label{sec:concentration}

Throughout this paper we will frequently use a general-purpose concentration inequality for functions of independent Bernoulli random variables. The following statement appears as \cite[Corollary~6]{warnke2016method}\footnote{In~\cite{warnke2016method}, \cref{freedman_inequality} is stated only for the upper tail of $f(\vec {\mathbf g})$; applying it to $-f(\vec {\mathbf g})$ yields the lower tail.}; it follows from the martingale approach of Freedman~\cite{Fre75}.
\begin{theorem}
\label{freedman_inequality}
Let $\vec{\mathbf{g}} = (\mathbf{g}_1, \dots, \mathbf{g}_N)$ be a sequence of independent Bernoulli random variables with $\mathbb{P}[\mathbf{g}_i=1]= p_i$. Let $f:\{0,1\}^N \rightarrow \mathbb{R}$ satisfy $| f(\vec {g}) - f(\pvec{g})| \leq K_i$ for all pairs $\vec{g}, \pvec{g} \in \{0,1\}^N$ differing only in the $i$-th coordinate, and suppose $K_i \leq K$ for all $i\in[N]$. Then for all $t \geq 0$, we have
\[ \mathbb{P}\Big[\big|f(\vec {\mathbf g}) - \mathbb{E}[f(\vec{ \mathbf{g}})]\big| > t\Big] \leq 2\exp\Bigg( - \frac{t^2}{ 2 \sum_{i=1}^N p_i K_i^2 + 2Kt/3 }\Bigg).\]
\end{theorem}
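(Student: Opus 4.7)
The plan is to obtain the stated inequality as a direct consequence of Freedman's martingale concentration inequality applied to the Doob martingale of $f(\vec{\mathbf{g}})$.

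First, I would set up the filtration $\mathcal{F}_k = \sigma(\mathbf{g}_1,\dots,\mathbf{g}_k)$ and the Doob martingale $M_k = \mathbb{E}[f(\vec{\mathbf{g}}) \mid \mathcal{F}_k]$ for $k=0,1,\dots,N$, so that $M_0 = \mathbb{E}[f(\vec{\mathbf{g}})]$ and $M_N = f(\vec{\mathbf{g}})$. Writing $D_k = M_k - M_{k-1}$ for the martingale differences, the whole argument reduces to controlling $|D_k|$ and $\mathbb{E}[D_k^2 \mid \mathcal{F}_{k-1}]$ in a way that feeds cleanly into Freedman's inequality.

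For each $k$, define $h_k(x) = \mathbb{E}[f(\vec{\mathbf{g}}) \mid \mathcal{F}_{k-1}, \mathbf{g}_k = x]$ for $x \in \{0,1\}$. By the Lipschitz hypothesis, coupling the two conditional expectations coordinate by coordinate yields $|h_k(1) - h_k(0)| \leq K_k$. Then
\[
D_k = h_k(\mathbf{g}_k) - \bigl(p_k h_k(1) + (1-p_k) h_k(0)\bigr),
\]
from which $|D_k| \leq K_k \leq K$, and
\[
\mathbb{E}[D_k^2 \mid \mathcal{F}_{k-1}] = p_k(1-p_k)\bigl(h_k(1)-h_k(0)\bigr)^2 \leq p_k K_k^2.
\]
Summing gives the predictable quadratic variation bound $V_N := \sum_{k=1}^N \mathbb{E}[D_k^2\mid \mathcal{F}_{k-1}] \leq \sum_{k=1}^N p_k K_k^2$ almost surely.

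Finally, I would invoke Freedman's one-sided bound: for a martingale with $|D_k| \leq R$ and $V_N \leq v$ almost surely, one has $\mathbb{P}[M_N - M_0 \geq t] \leq \exp(-t^2/(2v + 2Rt/3))$. Applying this with $R=K$ and $v = \sum_{k=1}^N p_k K_k^2$, and then applying the same argument to the martingale $-M_k$ to handle the lower tail, a union bound yields the claimed two-sided inequality with the factor of $2$. The only real subtlety is the conditional-variance step (rather than just using $|D_k|\leq K$, which would give a weaker Azuma-type bound without the $p_k$ factor); this is what makes the estimate sharp when $\sum p_i$ is much smaller than $N$, and is crucial for the applications later in the paper.
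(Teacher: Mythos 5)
Your proof is correct and is exactly the argument the paper has in mind: the paper does not prove this statement itself but cites it as Corollary 6 of Warnke's paper, noting only that it ``follows from the martingale approach of Freedman [Fre75],'' which is precisely the Doob-martingale decomposition with the conditional-variance bound $\mathbb{E}[D_k^2\mid\mathcal{F}_{k-1}] = p_k(1-p_k)(h_k(1)-h_k(0))^2 \le p_kK_k^2$ that you give, followed by Freedman's tail bound and a union bound for the two tails.
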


Note that, in order to obtain a strong bound from \cref{freedman_inequality}, we need to know that $f(\vec g)$ is not very sensitive to changes in any individual $g_i$. In this respect \cref{freedman_inequality} is similar to the much more well-known \emph{bounded differences inequality} of McDiarmid~\cite{McD89} (which is a consequence of the Azuma--Hoeffding martingale concentration inequality). However, we need the stronger statement of \cref{freedman_inequality} for its essentially-optimal dependence on $p_1,\dots,p_N$.

We also record a corollary of \cref{freedman_inequality} in the special case of weighted sums of Bernoulli random variables (this is basically the standard Chernoff bound).

\begin{corollary}
\label{thm:chernoff_dependency_graph}
Let $\mathbf{g}_1, \dots, \mathbf{g}_N$ be independent Bernoulli random variables, consider weights $w_1,\dots,w_N\in[0, \Delta]$, and let $\bfX = w_1\mathbf g_1+\dots+w_N\mathbf g_N$. Then, for any $\delta>0$ we have
\[ \mathbb{P}\big[|\bfX - \mathbb E\mathbf X| \geq  \delta\mathbb E\mathbf X\big] \leq 2 \exp \Bigg( - \frac{\delta^2 \mathbb E\mathbf X}{(2+2\delta/3) \Delta }\Bigg). \]
\end{corollary}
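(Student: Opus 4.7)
The plan is to directly invoke \cref{freedman_inequality} applied to the linear function $f(\vec g)=\sum_{i=1}^N w_i g_i$, so that $f(\vec{\mathbf g})=\mathbf X$. First I would check the Lipschitz hypothesis: if $\vec g$ and $\pvec g$ differ only in the $i$-th coordinate, then $|f(\vec g)-f(\pvec g)|=w_i$, so we may take $K_i=w_i$ and the uniform bound $K=\Delta$ (using $w_i\in[0,\Delta]$).

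Next I would bound the variance-like parameter $\sum_{i=1}^N p_i K_i^2=\sum_{i=1}^N p_i w_i^2$. Since each $w_i\le\Delta$, we have
\[
\sum_{i=1}^N p_i w_i^2 \;\le\; \Delta\sum_{i=1}^N p_i w_i \;=\; \Delta\,\mathbb E\mathbf X,
\]
where the last equality is just linearity of expectation together with $\mathbb E\mathbf g_i=p_i$.

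Then I would apply \cref{freedman_inequality} with $t=\delta\,\mathbb E\mathbf X$. Plugging the above bounds into the exponent gives
\[
\frac{t^2}{2\sum_{i=1}^N p_iK_i^2+2Kt/3}\;\ge\;\frac{\delta^2(\mathbb E\mathbf X)^2}{2\Delta\,\mathbb E\mathbf X+2\Delta\delta\,\mathbb E\mathbf X/3}\;=\;\frac{\delta^2\,\mathbb E\mathbf X}{(2+2\delta/3)\Delta},
\]
so \cref{freedman_inequality} yields
\[
\Pr\bigl[|\mathbf X-\mathbb E\mathbf X|\ge\delta\,\mathbb E\mathbf X\bigr]\;\le\;2\exp\!\Bigl(-\tfrac{\delta^2\,\mathbb E\mathbf X}{(2+2\delta/3)\Delta}\Bigr),
\]
as required.

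There is no real obstacle here; this is a routine specialisation in which the only content is the bound $\sum p_i w_i^2\le\Delta\sum p_i w_i=\Delta\,\mathbb E\mathbf X$, which converts Freedman's ``second-moment-like'' denominator into the cleaner Chernoff-style denominator $(2+2\delta/3)\Delta$. (One tiny cosmetic point: \cref{freedman_inequality} is stated with strict inequality ``$>t$'' whereas the corollary uses ``$\ge\delta\mathbb E\mathbf X$''; this can be handled by applying the theorem with $t=\delta\mathbb E\mathbf X-\eps$ and letting $\eps\to 0$, or simply by noting that the bound is monotone in $t$.)
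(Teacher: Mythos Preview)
Your proof is correct and is exactly the intended derivation: the paper states this result as an immediate corollary of \cref{freedman_inequality} for weighted Bernoulli sums without giving details, and your specialisation (taking $K_i=w_i$, $K=\Delta$, and bounding $\sum p_i w_i^2\le\Delta\,\mathbb E\mathbf X$) is precisely what is implicit there.
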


\section{Approximating random Latin squares with the triangle removal process}\label{sec:approximation-1}

In this section, we prove an approximation lemma that relates random Latin squares to the \emph{triangle removal process} (which is a tractable probability distribution on \emph{partial} Latin squares). To state this lemma, we need some preparations. First, it will be convenient for us to take an alternative perspective on Latin squares.

\begin{fact}
\label{fact:hypergraph_view}
An $n\times n$ Latin square can be interpreted as a tripartite 3-uniform hypergraph (specifically, a subgraph of the complete tripartite 3-uniform hypergraph $K^{(3)}_{n,n,n}$), in which every pair of vertices in different parts features in exactly one hyperedge.

To see the correspondence, think of the three parts as the set of rows, the set of columns and the set of symbols; a hyperedge $(r,c,s)$ means that symbol $s$ appears in the $c$-th column of the $r$-th row.
\end{fact}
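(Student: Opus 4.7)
The plan is to exhibit an explicit bijection between $n\times n$ Latin squares with symbols $[n]$ and tripartite $3$-uniform hypergraphs on vertex set $R\sqcup C\sqcup S$ (three disjoint copies of $[n]$) in which every pair of vertices from distinct parts lies in exactly one hyperedge. Given a Latin square $L$, I associate the hypergraph $H_L$ with edge set $\{(r,c,L(r,c)):r,c\in[n]\}$. Conversely, given such a hypergraph $H$, I associate the array $L_H$ defined by $L_H(r,c)=s$, where $(r,c,s)$ is the unique hyperedge of $H$ containing $r$ and $c$ (well-defined precisely because every row--column pair lies in exactly one edge).

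First I would verify that $H_L$ has the required pair-covering property. A row--column pair $(r,c)$ lies in the unique hyperedge $(r,c,L(r,c))$ by construction. For a row--symbol pair $(r,s)$, since the restriction of $L$ to row $r$ is a permutation of $[n]$, there is a unique column $c$ with $L(r,c)=s$, making $(r,c,s)$ the unique hyperedge that covers $(r,s)$; the column--symbol case is symmetric, using that each column of $L$ is also a permutation.

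Conversely, I would check that $L_H$ is a Latin square. Fixing a row $r$, the $n$ row--column pairs $(r,c_1),\dots,(r,c_n)$ determine $n$ hyperedges of the form $(r,c_i,s_i)$, so every cell in row $r$ receives a well-defined symbol. Applying the pair-covering property to row--symbol pairs $(r,s)$ forces the symbols $s_i$ to be pairwise distinct, so each symbol appears exactly once in row $r$; the analogous argument using column--symbol pairs shows each symbol appears exactly once in each column. The two constructions are manifestly inverse to one another, so the bijection is established.

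There is no genuine obstacle here: the statement is really an unpacking of the row and column permutation axioms into three symmetric ``line'' conditions on the associated hypergraph, and the proof consists entirely of checking these correspondences.
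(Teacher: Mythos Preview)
Your proof is correct and spells out precisely the bijection indicated in the statement; the paper itself does not give a separate proof of this fact, treating it as self-evident once the correspondence is described. Your verification that both directions of the map are well-defined and mutually inverse is more detailed than what the paper provides, but the content is the same.
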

 From now on, we interchangeably take the hypergraph point of view and the  $n\times n$ array point of view, depending on whichever is most convenient at any given moment (generally speaking, the hypergraph point of view is more expressive, and highlights the symmetry between rows, columns and symbols, but it is easier to draw pictures with the $n\times n$ array point of view).

\begin{definition}
    Let $\cL_n$ be the set of all $n \times n$ Latin squares.
    
    A \emph{partial} Latin square is a tripartite $3$-uniform hypergraph in which every pair of vertices in different parts features in \emph{at most} one hyperedge (alternatively, one can also interpret this as an $n\times n$ array in which only some of the entries are filled with symbols). Let $\cP_{n,m}$ be the set of all $n \times n$ partial Latin squares with $m$ hyperedges, and let $\mc P_n=\bigcup_{m \leq n^2}\cP_{n,m}\supseteq \mc L_n$ be the set of all $n\times n$ partial Latin squares (with any number of hyperedges).
    
    An \emph{ordered} partial Latin square is a partial Latin square together with an ordering on its hyperedges. Let $\vec \cP_{n,m}$ be the set of all $n \times n$ ordered partial Latin squares with $m$ hyperedges, and let $\vec \cP_n=\bigcup_{m \leq n^2}\vec \cP_{n,m}$.
\end{definition}

Now, our approximation lemma will compare uniformly random Latin squares to a distribution on (ordered) partial Latin squares; for this to make sense we need a notion of inheritance from a property of Latin squares to a property of (ordered) partial Latin squares.

\begin{definition}\label{def:inherited}
    Let $\vec\cU \subseteq \vec{\cP}_{n}$ be a property of ordered partial Latin squares and let $\cT \subseteq \cL_n$ be a property of Latin squares. For $m \leq n^2$, we say $\vec\cU$ is \emph{$(\rho,m)$-inherited} from $\cT$ if for any $L \in \cT$, taking $\vec{\bfP}_m(L)$ to be a uniformly random subset of $m$ hyperedges of $L$, equipped with a uniformly random order, we have $\vec{\bfP}_m(L) \in \vec\cU$ with probability at least $\rho$. 
\end{definition}
\begin{remark}\label{rem:exotic}
    It will often be quite obvious how to specify the property $\vec {\mc U}$, for a desired event $\mc T$ (and often, the ordering will play no role). For example, if $\cT$ is the property of having fewer than $(1-\varepsilon)n^2/4$  intercalates, it would make sense to take $\vec \cU$ to be the property of having fewer than $(1-\varepsilon/2)(m/n^2)^4n^2/4$ intercalates. However, as we will see later in the paper, there is sometimes reason to make more exotic choices for $\vec \cU$.
\end{remark}

\begin{remark}
    Throughout the paper we will need to consider many different properties of (ordered) (partial) Latin squares, and we have attempted to adopt some notational conventions to help the reader keep track of the ``type'' of each property. Specifically, for properties of partial Latin squares, we will usually use the letter $\mc U$ (think ``unfinished'') and for properties of complete Latin squares we will usually use the letter $\mc T$ (think ``total''). An arrow on top of a property indicates that it is a property of ordered (rather than unordered) partial Latin squares. 
\end{remark}

Now, the main lemma of this section concerns the \emph{triangle removal process}, which we now define.
\newcommand{\abort}{\perp}
\begin{definition}
Let $\trp(n,m)$ be the distribution over $\vec \cP_{n,m} \cup \{\abort\}$ obtained as follows. Start with the complete tripartite graph $K_{n,n,n}$. At each step, choose a uniformly random triangle (among all the triangles in the current graph), and remove it. After $m$ steps of this process, the sequence of removed triangles can be interpreted as an ordered partial Latin square $\vec R\in \vec \cP_{n,m}$ (unless we ran out of triangles at some point in the process, in which case we take $\vec R=\abort$).
\end{definition}

We are now ready to state the main result of this section, comparing a uniformly random Latin square to the triangle removal process.

\begin{lemma}
    \label{lem:random_LS_to_triangle_removal}
    Fix constants $\alpha \in (0,1)$ and  $\rho \in (0,1]$, and let $m=\alpha n^2$. Consider properties $\vec{\cU} \subseteq \vec{\cP}_{n,m}$ and $\cT \subseteq \cL_n$ such that $\vec{\cU}$ is $(\rho,m)$-inherited from $\cT$. Let $\vec{\mathbf{R}} \sim \trp(n,m)$ be an ordered partial Latin square obtained by $m$ steps of the triangle removal process, and let $\mathbf{L} \sim \on{Unif}(\mc L_n)$ be a uniformly random $n \times n$ Latin square. Then
    \[ \mathbb{P}[\mathbf{L} \in \cT] \leq \exp\Bigg(\frac{n \log^2 {n}}{1-\alpha-o(1)}\Bigg) \mathbb{P}[\vec{\mathbf{R}} \in \vec{\cU}].\]
\end{lemma}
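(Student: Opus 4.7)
The plan is to compare the two distributions on ordered partial Latin squares --- the law of $\vec\bfP_m(\mbf L)$ and the law of $\vec\bfR$ --- directly, via their probability mass functions. As a first step, the inheritance hypothesis handles the passage from $\mbf L \in \cT$ to $\vec\bfP_m(\mbf L) \in \vec\cU$: averaging the pointwise implication ``$L \in \cT \Rightarrow \Pr[\vec\bfP_m(L) \in \vec\cU] \ge \rho$'' over $\mbf L \sim \on{Unif}(\cL_n)$ yields $\Pr[\vec\bfP_m(\mbf L) \in \vec\cU] \ge \rho\,\Pr[\mbf L \in \cT]$. Since $\rho$ is a positive constant, the factor $\rho^{-1}$ is absorbed into the $o(1)$ in the target exponent, so the goal reduces to
\[\Pr[\vec\bfP_m(\mbf L) \in \vec\cU] \le \exp\!\left(\frac{n\log^2 n}{1-\alpha-o(1)}\right) \Pr[\vec\bfR \in \vec\cU].\]

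Next, both pmfs admit closed forms: for $\vec P \in \vec\cP_{n,m}$ a short calculation gives
\[\Pr[\vec\bfP_m(\mbf L) = \vec P] = \frac{|\cL_n(P)|}{|\cL_n|} \cdot \frac{(n^2-m)!}{(n^2)!}, \qquad \Pr[\vec\bfR = \vec P] = \prod_{i=0}^{m-1} \frac{1}{T_i(\vec P)},\]
where $|\cL_n(P)|$ denotes the number of Latin squares extending $P$ and $T_i(\vec P)$ is the number of triangles remaining in the auxiliary tripartite graph $K_{n,n,n}$ after the first $i$ edges of $\vec P$ are deleted. It therefore suffices to establish the \emph{pointwise} ratio bound
\[|\cL_n(P)| \cdot \prod_{i=0}^{m-1} T_i(\vec P) \le \exp\!\left(\frac{n\log^2 n}{1-\alpha-o(1)}\right) \cdot |\cL_n| \cdot \frac{(n^2)!}{(n^2-m)!}\]
for every $\vec P \in \vec\cP_{n,m}$ (trivial when $|\cL_n(P)| = 0$); summing over $\vec P \in \vec\cU$ then delivers the lemma. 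Crucially, the right-hand side does not depend on $\vec P$, so what we need is a uniform upper envelope on the ``size times triangle product'' quantity on the left.

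To prove this pointwise inequality we combine the two enumeration techniques described in \cref{subsec:enumeration}. The factor $|\cL_n(P)|$ is bounded from above by the row-by-row construction: process the rows of $L \supseteq P$ in a chosen order, write the number of valid completions of each row as the permanent of a $0/1$ column/symbol availability matrix, and apply Bregman's theorem. Aggregating these bounds over the $n$ rows yields an estimate of the shape $|\cL_n(P)| \le \prod_{i=m}^{n^2-1}\wt T_i \cdot \exp(O(n\log^2 n/(1-\alpha)))$, where $\wt T_i$ is a ``generic'' triangle count (what $T_i$ would be if all row/column/symbol degrees stayed near their averages). On the other side, $|\cL_n|$ is bounded from \emph{below} via the entry-by-entry triangle removal analysis: using Keevash-style completion machinery (and Chernoff concentration for the degree sequences), the triangle removal process completes with overwhelming probability, giving $|\cL_n| \ge \prod_{i=0}^{n^2-1}\wt T_i / (n^2)! \cdot \exp(-o(n^2))$. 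Dividing these two estimates causes nearly all of the $\wt T_i$ to cancel, and Chernoff-type concentration controls the comparison between $T_i(\vec P)$ and $\wt T_i$ for $i<m$, producing the desired ratio bound.

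The main obstacle is the Bregman-style upper bound on $|\cL_n(P)|$ for an \emph{arbitrary} partial Latin square $P$, rather than a Latin rectangle. The textbook application of Bregman handles rectangles cleanly because every row to be added has the same structure; for a general $P$, by contrast, the distribution of filled cells can be highly non-uniform (for instance $P$ may cluster in a few rows, or exhibit unusual column/symbol patterns), and the row-by-row permanents can vary wildly. To cope with this one must process the rows in a carefully chosen order and aggregate the individual Bregman estimates into a single product expressible in terms of $\prod_i T_i(\vec P)$, with the $1/(1-\alpha)$ dependence in the exponent arising precisely from the $(1-\alpha)n^2$ cells that remain to be filled. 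This delicate bookkeeping, bridging row-by-row permanents with entry-by-entry triangle counts, is the technical heart of the proof.
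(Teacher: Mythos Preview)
Your overall shape---reduce to comparing the pmfs of $\vec\bfP_m(\mbf L)$ and $\vec\bfR$, and write both in closed form---is exactly right, and matches the paper. But the pointwise strategy you then commit to does not work, and the paper proceeds quite differently from that point on.

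The core problem is your claim that it suffices to bound
\[
|\cL_n(P)|\cdot\prod_{i=0}^{m-1}T_i(\vec P)\;\le\;\exp\!\left(\frac{n\log^2 n}{1-\alpha-o(1)}\right)\cdot|\cL_n|\cdot\frac{(n^2)!}{(n^2-m)!}
\]
for \emph{every} $\vec P\in\vec\cP_{n,m}$. There is no reason this should hold uniformly: for atypical $\vec P$ (e.g.\ with highly skewed row/column/symbol degrees) both factors on the left can deviate from their ``generic'' values by factors of $\exp(\Theta(n^2))$, and there is no a priori cancellation between them. Your appeal to ``Chernoff-type concentration'' to compare $T_i(\vec P)$ with $\wt T_i$ makes no sense here, since $\vec P$ is fixed and there is no randomness to concentrate over. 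The paper sidesteps this entirely: using \cref{lem:almost-always-triangle-typical,lem:almost-always-quasirandom}, it shows that for any $L\in\cT$ the random ordered subset $\vec\bfP_m(L)$ is triangle-typical and quasirandom with probability $1-o(1)$, so it suffices to prove the ratio bound only on the restricted set $\cS=\vec\cU\cap\vec\cP^{\triangle:\varepsilon}_{n,\alpha n^2}\cap\cP^\gamma_{n,\alpha n^2}$. On $\cS$ the triangle counts $T_i(\vec P)$ are all within a $1\pm\varepsilon$ factor of $(1-i/n^2)^3 n^3$ by definition, which immediately gives a tight estimate on $\Pr[\vec\bfR=\vec P]$ (\cref{lem:triangle-removal-process-prob-instance}).

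You also have the two enumeration tools swapped. For the upper bound on $|\cL_n(P)|$ the paper does \emph{not} use Bregman on arbitrary partial Latin squares (the ``main obstacle'' you correctly identify, but do not resolve); instead it uses the Linial--Luria entropy method with a random row/column ordering (\cref{thm:upper-bound-completions}), and this is precisely where the quasirandomness and triangle-typicality of $P$ are consumed to get the $\exp(n\log^2 n/(1-\alpha-o(1)))$ error. For the lower bound on $|\cL_n|$, the Keevash completion machinery you propose only gives $|\cL_n|\ge(n/e^2)^{n^2}\exp(-O(n^{2-\varepsilon}))$, which is far too weak: it would swamp the target exponent. The paper instead uses the Egorychev--Falikman permanent bound (\cref{lem:lower-bound-Latin-squares}), which gives $|\cL_n|\ge(n/e^2)^{n^2}\exp(-O(n\log n))$.
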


\cref{lem:random_LS_to_triangle_removal} is a quantitative improvement to \cite[Theorem~2.4]{KSSS22} (which is itself an adaptation of \cite[Theorem 2.4]{kwan2020almost}). Our proof of \cref{lem:random_LS_to_triangle_removal} bears some similarities to the proofs in \cite{kwan2020almost,KSSS22}, but incorporates various additional ideas (related to our use of enumeration results that are stronger but much less robust).

\begin{remark}
    It may seem that \cref{lem:random_LS_to_triangle_removal} is not suitable for studying ``local'' properties (e.g., properties concerning a particular vertex $v$). Such properties are important for the so-called \emph{absorption method}, which (is not relevant for the main results in this paper but) has played a central role in previous work on random Latin squares. Indeed, it is not hard to see that with probability at least $\exp(-O(n))$, the random hypergraph $\vec{\mbf R}$ has no edges at all containing $v$, and this probability seems too large to have any hope of applying \cref{lem:random_LS_to_triangle_removal} to any interesting local property about $v$. However, one can generally overcome this issue with a judicious choice of $\vec{\mc U}$, cf.\ \cref{rem:exotic}. For example (very informally speaking), if $\mc T$ is the property that $v$ does not participate in a desired local structure, we can take $\vec \cU$ to be the property that $v$ has high degree but still does not participate in the desired local structure.
\end{remark}

The rest of this section is devoted to the proof of \cref{lem:random_LS_to_triangle_removal}. We start with some further definitions.

\begin{definition}
    For an (ordered) partial $n \times n$ Latin square $P$, let $G(P)\subseteq K_{n,n,n}$ be the graph which contains an edge $uv$ (of $K_{n,n,n}$) if and only if there is no hyperedge of $P$ containing $u$ and $v$. That is to say, $G(P)$ is the graph of pairs that would need to be covered to complete $P$ to a full Latin square.

    For a graph $G\subseteq K_{n,n,n}$, let $\on{dens}(G) = e(G)/(3n^2)$.
\end{definition}

\begin{definition}
We say that a graph $G\subseteq K_{n,n,n}$, is \emph{$\varepsilon$-triangle-typical} if its number of triangles is $(1\pm \varepsilon) n^3 \on{dens}(G)^3$. Let $\cP^{\triangle:\varepsilon}_{n, m}\subseteq \cP_{n,m}$ be the set of partial Latin squares $P \in \cP_{n, m}$ such that $G(P)$ is $\varepsilon$-triangle-typical. Let $\vec\cP^{\triangle:\varepsilon}_{n,m}\subseteq \vec \cP_{n,m}$ be the set of ordered partial Latin squares $\vec P\in \vec \cP_{n,m}$ such that for each $i\leq m$, if we consider the partial Latin square $P_i$ consisting of the first $i$ hyperedges of $\vec P$, then $P_i\in \cP^{\triangle:\varepsilon}_{n, m}$.

\end{definition}

\begin{definition}
We say that a graph $G\subseteq K_{n,n,n}$, is \emph{$\gamma$-quasirandom} if for all pairs of vertices $u,v$ in different parts, the number of common neighbours of $u$ and $v$ is $(1 \pm \gamma) n \on{dens}(G)^2$. Let $\cP^\gamma_{n,m}$ be the set of partial Latin squares $P\in \cP_{n,m}$ such that $G(P)$ is $\gamma$-quasirandom.
\end{definition}    

\begin{remark}
    It is not hard to see that if a graph is $\gamma$-quasirandom, then it is necessarily $O(\gamma)$-triangle-typical (this fact is used in the previous approximation lemmas in \cite{kwan2020almost,KSSS22}). However, in our proof of \cref{lem:random_LS_to_triangle_removal} (with its strong quantitative aspects), we will need to consider quasirandomness and triangle-typicality with quite different parameters.
\end{remark}

In the next two lemmas, we show that for \emph{any} Latin square $L\in \cL_n$, if we take a random ordering of a random subset of $L$, then the resulting random object is likely to satisfy certain triangle-typicality and quasirandomness properties. This will allow us to assume that various graphs we encounter later in the proof are triangle-typical/quasirandom, which will be necessary for our enumeration techniques.

\begin{lemma}
\label{lem:almost-always-triangle-typical}
Fix a constant $\alpha \in (0,1)$. Consider any $L \in \cL_n$ and let $\vec \bfP_{\alpha n^2}(L)\in \vec \cP_{n,\alpha n^2}$ be a random ordering of a random set of $\alpha n^2$ hyperedges of $L$. Then, with $\varepsilon = n^{-1} \log{n}$, we have
\[ \mathbb{P}[\vec \bfP_{\alpha n^2}(L) \notin \vec\cP^{\triangle:\varepsilon}_{n,\alpha n^2}] \leq \exp(- \Omega(\log^2{n})).\]

\end{lemma}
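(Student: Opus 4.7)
The plan is to union-bound over $i\le \alpha n^2$, showing that for each fixed such $i$, the partial Latin square $\mbf{P}_i$ consisting of the first $i$ hyperedges of $\vec{\mbf{P}}_{\alpha n^2}(L)$ lies in $\cP^{\triangle:\varepsilon}_{n,i}$ with probability $1-\exp(-\Omega(\log^2 n))$. Since the ordering is uniform, $\mbf{P}_i$ is distributed as a uniformly random size-$i$ subset of the hyperedges of $L$, and what must be shown is concentration of the triangle count $T(G(\mbf{P}_i))$ within $\varepsilon n^3\beta_i^3$ of its target value $n^3\beta_i^3$, where $\beta_i:=1-i/n^2\ge 1-\alpha$. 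Crucially, $\beta_i$ is bounded below by the positive constant $1-\alpha$, which is what gives quantitative headroom.

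To apply \cref{freedman_inequality}, which requires independent Bernoullis, first couple to the Bernoulli model: let $\mbf{Q}$ include each hyperedge of $L$ independently with probability $q=i/n^2$, so that conditioning on $|\mbf{Q}|=i$ recovers the uniform size-$i$ distribution. Since $\Pr[|\mbf{Q}|=i]=\Omega(1/n)$ by Stirling, it suffices to prove concentration in the Bernoulli model and absorb a factor of $n$ at the end. To compute $\mb E[T(G(\mbf{Q}))]$, classify each triangle $(r,c,s)$ of $K_{n,n,n}$ as \emph{real} if $L(r,c)=s$ (of which there are $n^2$) or \emph{fake} otherwise. For a real triangle, the Latin property forces the hyperedge $(r,c,s)\in L$ itself to be the unique hyperedge of $L$ covering each of the three pairs $(r,c)$, $(r,s)$, $(c,s)$, so the triangle survives in $G(\mbf{Q})$ iff $(r,c,s)\notin\mbf{Q}$, with probability $\beta:=1-q$. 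For a fake triangle, a short verification shows the three pairs are covered by three \emph{distinct} hyperedges of $L$, so the triangle survives independently with probability $\beta^3$. Summing, $\mb E[T(G(\mbf{Q}))]=n^2\beta+(n^3-n^2)\beta^3=n^3\beta^3+O(n^2)$.

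For sensitivity: flipping $\mbf{g}_e=\mbm 1[e\in\mbf Q]$ toggles the three pairs associated with $e$ in $G(\mbf{Q})$, affecting only triangles of $K_{n,n,n}$ that share a pair with $e$, of which there are at most $3n-2$ (by inclusion-exclusion, using that each pair lies in $n$ triangles and $e$ is the unique triangle containing any two pairs of $e$). So \cref{freedman_inequality} applies with $K_e\le 3n$ and $\sum_e p_e K_e^2\le 9qn^4=O(n^4)$. Taking $t=\tfrac12\varepsilon n^3\beta^3=\Theta(n^2\beta^3\log n)$, the Freedman denominator $18qn^4+2nt$ is $O(n^4)$, while $t^2=\Theta(n^4\beta^6\log^2 n)$, yielding deviation probability $\exp(-\Omega(\beta^6\log^2 n))=\exp(-\Omega(\log^2 n))$ using $\beta\ge 1-\alpha=\Theta(1)$. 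Since $\mb E[T(G(\mbf Q))]$ differs from the target $n^3\beta^3$ by only $O(n^2)$, which is $o(\varepsilon n^3\beta^3)$, the triangle inequality converts this into $\varepsilon$-triangle-typicality. The coupling loss $O(n)$ and the union-bound factor $O(n^2)$ over $i$ are both absorbed into the $\exp(-\Omega(\log^2 n))$ bound. The main obstacle is essentially a bookkeeping one: the per-coordinate sensitivity $K_e=\Theta(n)$ is rather large, and it is really only the hypothesis $\alpha<1$ (forcing $\beta$ bounded below by a positive constant) that keeps the ratio $t^2/(\text{Freedman denominator})$ at $\Omega(\log^2 n)$ rather than being swamped.
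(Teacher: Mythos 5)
Your proof is correct and follows essentially the same approach as the paper: union bound over the prefix length, couple to the Bernoulli model, compute $\mathbb E[\mathbf X]=n^2(1-p)+(n^3-n^2)(1-p)^3$ by distinguishing triangles $(r,c,s)$ with $L(r,c)=s$ from the rest, bound the per-coordinate sensitivity by $3n-2$, and invoke \cref{freedman_inequality}, with $\beta\ge 1-\alpha=\Theta(1)$ providing the crucial headroom. The one minor divergence is at the coupling step, where you pay the factor $1/\Pr[|\mbf Q|=m]=O(n)$ directly, while the paper cites Pittel's inequality (cost $3\sqrt m$); both losses are harmlessly absorbed by $\exp(-\Omega(\log^2 n))$, so this is purely cosmetic.
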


\begin{proof}
    Fix some $m \leq \alpha n^2$. We will show that $G(\vec \bfP_m(L))$ is $\varepsilon$-triangle-typical with probability at least $1 - \exp(- \Omega(\log^2{n}))$. The statement of the lemma will then follow by a union bound over all $m \leq \alpha n^2$.

    Note that $\vec \bfP_m(L)$ consists of a uniformly random set of $m$ hyperedges of $L$. Let $\bfB_p(L)$ be a random subset 
    of the hyperedges of $L$ where each hyperedge is taken independently at random with probability $p=m/n^2$. By ``Pittel's inequality'' (see e.g. page 17 in~\cite{janson2011random}), for any property $\cQ$ of unordered partial Latin squares,
    \[ \mathbb{P}[\vec \bfP_m(L) \notin \cQ] \leq 3\sqrt{m} \mathbb{P}[\bfB_p(L) \notin \cQ].\]
    Thus, letting $\mbf X$ be the number of triangles in $G(\vec \bfP_m(L))$, it suffices to show that $\mathbb{P}[\mbf X \notin (1\pm \varepsilon) n^3 (1-p)^3 ] \leq \exp(-\Omega(\log^2{n}))$.

    For any $v_1 \in V_1, v_2 \in V_2, v_3 \in V_3$ such that $v_1v_2v_3 \in L$, the probability that $v_1v_2v_3$ is a triangle in $G(\bfB_p(L))$ is precisely $1-p = 1- m/n^2$. On the other hand, if $v_1v_2 v_3 \notin L$, the probability $v_1v_2v_3$ is a triangle in $G(\bfB_p(L))$ is $(1-p)^3$, since each pair of vertices among $v_1, v_2, v_3$ is in a distinct hyperedge of $L$, and all these hyperedges need to be absent from $\bfB_p(L)$ in order for $v_1v_2v_3$ to be in $G(\bfB_p(L))$. Thus,
    \[\mu := \mathbb{E}[\mathbf{X}] = n^2 (1-p) + (n^3 - n^2) (1-p)^3  = \Theta (n^3).\]
    There are $n^2$ hyperedges in $L$, and adding or removing a hyperedge from $\mbf B_p(L)$ can affect $\mathbf{X}$ by at most $1  + 3(n-1)$, since each pair of vertices in $e$ can form a triangle with $n-1$ other vertices in $G(\mbf B_p(L))$. Thus, by \cref{freedman_inequality}, we have
    \begin{align*} \mathbb{P}[\mathbf{X}\notin (1\pm \varepsilon) n^3 (1-p)^3 ] &\leq  \mathbb{P}[ |\mathbf{X} - \mu | \geq \varepsilon \mu /2 ] \\
    &\leq 2 \exp\Bigg(- \frac{\varepsilon^2 \mu^2 / 4}{ 2n^2 p \big( 1 + 3 (n-1) \big)^2 + 2\big( 1 + 3 (n-1) \big) \varepsilon \mu /6  } \Bigg)\\ &= \exp \big( - \Omega (\log^2{n}) \big),\end{align*}
    where in the first inequality we used that $\mu - n^3 (1-p)^3 = o\big(\varepsilon n^3 (1-p)^3 \big)$.
\end{proof}
\begin{lemma}
\label{lem:almost-always-quasirandom}
Fix a constant $\alpha \in (0,1)$. Consider any $L \in \cL_n$ and let $\bfP_{\alpha n^2}(L)\in \mc P_{n,\alpha n^2}$ be a random subset of $\alpha n^2$ hyperedges of $L$. Let $\gamma =\omega(n^{-1/2}\log^{1/2}{n})$. Then 
\[ \mathbb{P}[\bfP_{\alpha n^2}(L) \notin \cP^{\gamma}_{n,\alpha n^2}] \leq \exp ( - \Omega (\gamma^2 n) ).\]
\end{lemma}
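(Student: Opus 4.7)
The plan is to follow the same overall template as \cref{lem:almost-always-triangle-typical}: first use Pittel's inequality to pass from a uniformly random $m$-subset of hyperedges (with $m=\alpha n^2$, so $p=\alpha$) to the binomial model $\mathbf{B}_p(L)$ in which every hyperedge of $L$ is kept independently with probability $p$. The property $\cP^{\gamma}_{n,\alpha n^2}$ only concerns the unordered partial Latin square, so Pittel's inequality costs only a factor of $O(\sqrt{n^2})$, which is absorbed in the final error. Thus it suffices to show that, in the binomial model, $G(\mathbf{B}_p(L))$ fails to be $\gamma$-quasirandom with probability $\exp(-\Omega(\gamma^2 n))$.

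Fix a pair of vertices $u,v$ in distinct parts, say $u\in V_i$, $v\in V_j$, and let $V_k$ be the third part. Let $\mathbf{X}_{uv}$ be the number of common neighbours of $u$ and $v$ in $G(\mathbf{B}_p(L))$; these common neighbours all live in $V_k$. Since every pair of vertices in distinct parts is covered by exactly one hyperedge of $L$, the edge $uw$ (for $w\in V_k$) lies in $G(\mathbf{B}_p(L))$ iff the unique hyperedge of $L$ containing $u$ and $w$ is absent from $\mathbf{B}_p(L)$, and similarly for $vw$. Exactly one vertex $w^{*}\in V_k$ has the property that $\{u,v,w^{*}\}$ is a hyperedge of $L$: this $w^*$ is a common neighbour with probability $1-p$, while every other $w\in V_k$ is a common neighbour with probability $(1-p)^{2}$. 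Hence
\[\mu_{uv}:=\mathbb{E}[\mathbf{X}_{uv}]=(n-1)(1-p)^{2}+(1-p)=n(1-p)^{2}+O(1).\]

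Next, apply \cref{freedman_inequality} with the independent Bernoullis being the indicators that each hyperedge of $L$ lies in $\mathbf{B}_p(L)$. The crucial observation is that only hyperedges of $L$ containing $u$ or $v$ can affect $\mathbf{X}_{uv}$: each hyperedge of $L$ through $u$ contains exactly one vertex of $V_k$ (and similarly through $v$), so toggling its presence changes $\mathbf{X}_{uv}$ by $O(1)$. All other $n^{2}-O(n)$ coordinates have Lipschitz constant $0$. Thus $\sum_i p_i K_i^{2}=O(np)=O(n)$, and Freedman's inequality with $t=\gamma\mu_{uv}/4=\Theta(\gamma n)$ yields
\[\mathbb{P}\big[\big|\mathbf{X}_{uv}-\mu_{uv}\big|\ge \gamma\mu_{uv}/4\big]\le 2\exp\!\left(-\frac{\Omega(\gamma^{2}n^{2})}{O(n)+O(\gamma n)}\right)=\exp\big(-\Omega(\gamma^{2}n)\big).\]

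Separately, $e(G(\mathbf{B}_p(L)))=3n^{2}-3|\mathbf{B}_p(L)|$ since each hyperedge kept in $\mathbf{B}_p(L)$ deletes exactly three edges of $K_{n,n,n}$, so $\operatorname{dens}(G(\mathbf{B}_p(L)))=1-|\mathbf{B}_p(L)|/n^{2}$. A standard Chernoff bound (\cref{thm:chernoff_dependency_graph}) gives $|\mathbf{B}_p(L)|=pn^{2}(1\pm\gamma/10)$ with probability $1-\exp(-\Omega(\gamma^{2}n^{2}))$, hence $n\operatorname{dens}(G(\mathbf{B}_p(L)))^{2}=n(1-p)^{2}(1\pm O(\gamma))$ with the same probability. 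Combining this with the previous display and taking a union bound over the $3n^{2}$ pairs $(u,v)$ in distinct parts (which is affordable because $\gamma^{2}n=\omega(\log n)$ by hypothesis), we obtain $\mathbf{X}_{uv}=(1\pm\gamma)n\operatorname{dens}(G)^{2}$ simultaneously for all such pairs, with probability $1-\exp(-\Omega(\gamma^{2}n))$.

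The only real obstacle is arranging the Lipschitz analysis carefully enough that the denominator in Freedman's inequality is $O(n)$ rather than $O(n^{2})$; the weaker estimate (counting all $n^{2}$ coordinates with constant Lipschitz constant) would only yield $\exp(-\Omega(\gamma^{2}))$ concentration, which is useless after the union bound. The crux is the observation above that only the $2n-1$ hyperedges of $L$ through $u$ or $v$ contribute to the variance term, which is exactly the assumption needed to obtain the claimed $\exp(-\Omega(\gamma^{2}n))$ bound under the hypothesis $\gamma=\omega(n^{-1/2}\log^{1/2}n)$.
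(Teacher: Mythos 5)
Your proposal is correct and follows essentially the same approach as the paper: pass to the binomial model $\mathbf{B}_p(L)$ via Pittel's inequality, observe that for fixed $u,v$ only the $2n-1$ hyperedges of $L$ through $u$ or $v$ can affect the common-neighbour count, apply Freedman to get $\exp(-\Omega(\gamma^2 n))$, and union-bound over pairs (affordable since $\gamma^2 n = \omega(\log n)$).

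The one step you include that the paper skips — controlling $\operatorname{dens}(G(\mathbf{B}_p(L)))$ via a separate Chernoff bound — is unnecessary, though harmless. The target object $\mathbf{P}_{\alpha n^2}(L)$ has exactly $\alpha n^2$ hyperedges, so $\operatorname{dens}(G(\mathbf{P}_{\alpha n^2}(L))) = 1-\alpha$ deterministically; the event ``$\gamma$-quasirandom'' for it is just ``every common neighbourhood has size $(1\pm\gamma)n(1-\alpha)^2$.'' One can apply Pittel's inequality directly to that fixed-threshold event, and then the only thing to show in the binomial model is $\mathbb{P}[|\mathbf{S}_{u,v}|\notin(1\pm\gamma)n(1-p)^2]\le\exp(-\Omega(\gamma^2 n))$, which your Freedman step already does (after noting $\mu_{uv}-n(1-p)^2 = p(1-p)=o(\gamma\mu_{uv})$, so the shift of center is absorbed). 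Your route instead establishes quasirandomness with the \emph{random} density of $G(\mathbf{B}_p(L))$, at the cost of an extra $O(\gamma)$ slack; this still transfers to the $m$-subset after adjusting the constant in $\gamma$, but the fixed-threshold phrasing is cleaner and is what the paper uses.
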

\begin{proof}
    Let $m := \alpha n^2$. We will show that $G(\bfP_m(L))$ is $\gamma$-quasirandom with probability at least $1 - \exp ( - \Omega (\gamma^2 n) )$.
    
    As in the proof of \cref{lem:almost-always-triangle-typical}, let $\bfB_p(L)$ be a random subset of the hyperedges of $L$ where each hyperedge is taken independently at random with probability $p=m/n^2 = \alpha$. Let $\mbf S_{u,v}$ be the set of common neighbours of $u$ and $v$, with respect to the graph $G(\bfB_p(L))$.
    It suffices to show that for every $u,v$ that are in different parts of $G(\bfB_p(L))$, it holds that $\mathbb{P}[|\mbf S_{u,v}| \notin (1\pm \gamma) n (1-\alpha)^2 ] \leq \exp ( - \Omega (\gamma^2 n) )$. A union bound over all pairs of vertices $u,v$ and Pittel's inequality would then complete the proof.

    Fix some $u,v$ in different parts of $G(\bfB_p(L))$. Let $w$ be the unique vertex such that $uvw$ is a hyperedge of $L$. Then $\mathbb{P}[w \in \mbf S_{u,v}] = 1-p$. On the other hand, for all $w' \neq w$ that are in the same part as $w$, we have $\mathbb{P}[w' \in \mbf S_{u,v}] = (1-p)^2$ (since $w'$ participates in some hyperedge together with $u$ and in another hyperedge together with $v$, and both of these need to not be selected in $\bfB_p(L)$ to have $w' \in \mbf S_{u,v}$). Thus, for $\mathbf{X} = |\mbf S_{u,v}|$, 
    \[\mu := \mathbb{E}[\mathbf{X}] = (1-p) + (n-1) (1-p)^2  = \Theta (n).\]
    There are $2n-1$ hyperedges in $L$ that affect $\bfX$, and adding or removing such a hyperedge from $\mbf B_p(L)$ can affect $\mathbf{X}$ by at most $1$. Thus, by \cref{freedman_inequality}, we have
    \[ \mathbb{P}[\mathbf{X}\notin (1\pm \gamma) n (1-p)^2 ] \leq  \mathbb{P}[ |\mathbf{X} - \mu | \geq \gamma \mu /2 ] \leq \exp\Big(- \frac{\gamma^2 \mu^2 / 4}{ 2(2n-1) + \gamma \mu/3 } \Big) = \exp ( - \Omega (\gamma^2 n) ),\]
    where in the first inequality we used that $\mu - n (1-p)^2 = (1-p)p = o(\gamma \mu)$.
\end{proof}

We next prove an upper bound on the number of completions of a partial Latin square, under triangle-typicality and quasirandomness assumptions. 
\begin{theorem}
    \label{thm:upper-bound-completions}
    Fix a constant  $\alpha\in (0,1)$, let $\gamma =o(1)$ and let $\varepsilon = n^{-1}\log{n}$.
    For any partial Latin square $P \in \cP^\gamma_{n,\alpha n^2} \cap \cP^{\triangle:\varepsilon}_{n,\alpha n^2}$, the number $|\mc L_n(P)|$ of completions of $P$ to an $n\times n$ Latin square satisfies
    \[ |\mc L_n(P)| \leq \Bigg( \frac{(1-\alpha)^2 n}{e^2} \Bigg)^{(1-\alpha)n^2}\!\!\!\! \cdot \exp\left(\frac{n \log^2{n}}{1-\alpha-o(1)}\right).\]
\end{theorem}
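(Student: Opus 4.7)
The plan is an entropy argument in the style of Linial--Luria and Br\'egman, building up a uniformly random completion of $P$ row by row in reverse order. Write $\beta=1-\alpha$, let $\mathbf{L}\in\cL_n(P)$ be uniform, and let $\mathbf{r}_i$ denote its $i$-th row (viewed as a bijection from columns to symbols). By the chain rule,
\[
\log|\cL_n(P)|=H(\mathbf{L})=\sum_{r=1}^{n}H(\mathbf{r}_r\mid\mathbf{r}_{r+1},\dots,\mathbf{r}_n).
\]
Given any outcome of the exposed later rows, $\mathbf{r}_r$ is supported on perfect matchings of the bipartite graph $B_r$ between the columns empty in row $r$ of $P$ and the symbols missing from row $r$ of $P$, so each conditional entropy is at most $\mathbb{E}\log|\mathrm{M}(B_r)|$; Br\'egman's theorem then bounds the log-matching count by $\sum_c\log(\deg_{B_r}(c)!)/\deg_{B_r}(c)$.

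The technical heart is the estimate $\deg_{B_r}(c)=(1\pm o(1))\beta^{2}r$ for most $c$, with high probability over the exposed rows. A symbol $s$ neighbours $c$ in $B_r$ iff $s$ is missing from row $r$ of $P$ and has not yet been placed in column $c$. Using $\gamma$-quasirandomness of $G(P)$ (combined with \cref{freedman_inequality}), the number of symbols unplaced in column $c$ after revealing the last $n-r$ rows concentrates near $\beta r$, and each such symbol independently has probability $\approx\beta$ of being missing from row $r$ of $P$ (by another application of quasirandomness to the pair $(r,c)$). The $\varepsilon$-triangle-typicality assumption plays an auxiliary role, controlling how quasirandomness-based estimates persist as rows are revealed. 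Substituting $\deg_{B_r}(c)\approx\beta^{2}r$ into Br\'egman and Stirling gives
\[
H(\mathbf{r}_r\mid\mathbf{r}_{r+1},\dots,\mathbf{r}_n)\le \beta n\log(\beta^{2}r/e)+O\!\left(\frac{n\log(\beta^{2}r)}{\beta^{2}r}\right),
\]
and summing over $r$, using $\sum_{r=1}^{n}\log r=n\log(n/e)+O(\log n)$, yields
\[
H(\mathbf{L})\le \beta n^{2}\log(\beta^{2}n/e^{2})+O(n\log^{2}n/\beta),
\]
which on exponentiation is the desired bound, with leading factor $\bigl((1-\alpha)^{2}n/e^{2}\bigr)^{(1-\alpha)n^{2}}$ and error $\exp(n\log^{2}n/(1-\alpha-o(1)))$.

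The main obstacle is justifying the degree estimate uniformly in $r$. For very small $r$ (say $r\le\log^{2}n$), the bipartite graph $B_r$ is too sparse for quasirandomness-based concentration; we instead use the worst-case Br\'egman bound $\log((\beta r)!)\cdot(\beta n)/(\beta r)=O(n\log r)$ per row, which contributes $O(n\log^{3}n)$ in total, comfortably within the error budget. For larger $r$, rare bad outcomes of the exposed rows (where the degrees deviate substantially from $\beta^{2}r$) are controlled by the trivial upper bound $\deg_{B_r}(c)\le\beta r$ (which always holds, since column $c$ has at most $\beta r$ unplaced symbols after revealing $n-r$ rows); the extra entropy from these outcomes is absorbed into the $n\log^{2}n/(1-\alpha-o(1))$ error term. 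A secondary difficulty is that the joint distribution of the revealed rows is not obviously tractable; here we use Pittel's inequality together with a Freedman-style martingale argument (in the spirit of \cref{lem:almost-always-triangle-typical,lem:almost-always-quasirandom}) to show that the relevant random quantities concentrate.
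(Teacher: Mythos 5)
Your plan follows the classical row-by-row Br\'egman strategy (which the paper discusses in Section~2, item (1)), whereas the paper's actual proof of this theorem is an entry-by-entry entropy argument in the Radhakrishnan/Linial--Luria style: it exposes the symbols $\bfz_e$ one cell at a time, in a \emph{random} order $\prec_{\bflambda}$ generated by i.i.d.\ row- and column-labels, and bounds each conditional support by a quantity $\bfR_e(\bflambda)$ whose expectation over the random labels can be computed in closed form (an integral yielding a polylogarithm), after which Jensen and the triangle-typicality/quasirandomness hypotheses finish the estimate deterministically. No concentration over the random completion is ever needed.

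The gap in your proposal is exactly the ``technical heart'' you flag: establishing $\deg_{B_r}(c) = (1\pm o(1))\beta^2 r$ for most $c$ with high probability over the exposed rows $\mathbf{r}_{r+1},\dots,\mathbf{r}_n$. Those rows are the top $n-r$ rows of a \emph{uniformly random completion} of $P$, and this marginal distribution is precisely the intractable object the whole paper is built to avoid reasoning about directly. The tools you invoke do not apply: Pittel's inequality transfers between fixed-size and binomial sampling from a known ground set, and \cref{lem:almost-always-triangle-typical,lem:almost-always-quasirandom} concern uniformly random subsets of the cells of a single \emph{fixed} Latin square --- not the rows of a random completion of a partial Latin square, whose law is far from any explicit product/urn model. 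A Freedman martingale over ``reveal one row at a time'' would require one-step conditional expectations that are themselves counts of completions, which is circular here (this theorem is a key ingredient in the paper's machinery for estimating exactly such counts). Absent this concentration step, the sum $\sum_c \log(d_c!)/d_c$ in Br\'egman's bound is uncontrolled; note also that a Jensen-to-the-mean reduction (using concavity of $d\mapsto\log(d!)/d$) does not rescue you, since computing $\mathbb{E}[\deg_{B_r}(c)]$ still requires knowing $\Pr[x_s(c)\le r]$ under the random completion, which is again the untractable marginal.

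Two smaller points: the contribution you attribute to the range $r\le\log^2 n$ should be accounted more carefully --- as written you claim $O(n\log^3 n)$ ``comfortably within budget,'' but the budget is $n\log^2 n/(1-\alpha-o(1))$, so you must compare to the \emph{main term} $\beta n\log(\beta^2 r/e)$ for those same rows and show the \emph{difference} is $O_\beta(n\log^2 n)$ with a small enough constant. Also, quasirandomness of $G(P)$ as defined controls \emph{codegrees} only; you would need to first argue (e.g.\ via triangle-typicality plus double counting) that individual vertex degrees in $G(P)$ are $(1\pm o(1))\beta n$ before your ``$\approx\beta n$ empty cells in each row'' step is justified.
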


\begin{proof}
    For a row $x$ (respectively, column $y$), we write $r_x$ (respectively, $c_y$) for the number of empty cells in that row, in $P$. Then for an empty cell $e = (x,y)$, we write $Q_{x,y}$ for the set of symbols that do not appear in the row or column of $e$. Note that $Q_{x,y}$ is precisely the set of common neighbours of $x$ and $y$ in $G(P)$. Then, let $\bfL(P)$ be a uniformly random completion of $P$. We will estimate the entropy $H[\bfL(P)] = \log |\mc L_n(P)|$ of $\bfL(P)$.

    Let $E \subseteq [n]^2$ be the set of empty cells in $P$. For each $e = (x,y) \in E$, let $\bfz_e$ be the symbol in cell $e$ in $\bfL(P)$. So the sequence $(\bfz_e)_{e \in E}$ determines $\bfL(P)$. For any total ordering $\prec$ on $E$, we have
    \begin{align}
    \label{equation:entropy-cell-by-cell}
    H[\bfL(P)] = \sum_{e \in E} H\big[\bfz_e \,\big|\,(\bfz_{e'} : e' \prec e)\big].
    \end{align}
    Now, consider a pair of sequences $\mu = (\mu_x)_x, \nu = (\nu_y)_y\in [0,1]^n$ (of real numbers in the range $[0,1]$, which we assume to be all distinct from each other). Let $\lambda = (\lambda_{x,y})_{x,y}$ be the array defined by $\lambda_{x,y} = (\mu_x, \nu_y)$. We can use $\lambda$ to define a total order $\prec_\lambda$ on $E$, via reverse lexicographic order on the pairs $\lambda_{x,y}$. To be precise, write $(x',y')\prec_\lambda(x,y)$ when 
    $\mu_{x'}>\mu_{x}$ or when $x'=x$ and $\nu_{y'} > \nu_y$. 
    Let $\bfR_e(\lambda)$ be an upper bound on the conditional support size
    \[\Big|\supp\big(\bfz_e\,\big|\,(\bfz_{e'} : e' \prec_\lambda e)\big)\Big|\]
    defined as follows. $\bfR_{e}(\lambda)$ for $e=(x,y)$ is $1$ plus the number of symbols $z \in Q_e \setminus \bfz_e$ for which $\mu_{x_z} < \mu_x$ and $\nu_{y_z} < \nu_y$, where $x_z$ and $y_z$ are the row and column such that cells $(x_z,y)$ and $(x,y_z)$ contain symbol $z$ in $\bfL(P)$.

    Since $\bfR_e(\lambda)$ is an upper bound on $|\supp(\bfz_e\,|\,(\bfz_{e'}:e' \prec_\lambda e))|$, we have
    \begin{align}
    \label{inequality:entropy-upper-bound}
    H\big[\bfz_e\,\big|\,(\bfz_{e'}:e' \prec_\lambda e)\big] \leq \mathbb{E}[\log{\bfR_e(\lambda)}].
    \end{align}
    It follows from~(\ref{equation:entropy-cell-by-cell}) and~(\ref{inequality:entropy-upper-bound}) that
    \[ H[\bfL(P)] \leq \sum_{e \in E} \mathbb{E}[\log{\bfR_e(\lambda)}].\]
    This is true for any fixed $\lambda$, so it is also true if $\lambda$ is chosen randomly: let $\bflambda \in \Big( [0,1]^2\Big)^{n^2}$ be obtained via $2n$ independent random variables $\bfmu_x, \bfnu_y$ each uniformly distributed in $[0,1]$ (with probability $1$ they are all distinct). Then
    \[ H[\bfL(P)] \leq \sum_{e \in E} \mathbb{E}[\log{\bfR}_e(\bflambda)]. \]
    Next, for any completion $L$ of $P$, any $e = (x,y)$ and any $\lambda_e = (\mu_x, \nu_y) \in [0,1]^2$, let
    \[ R_e^{L,\lambda_e} = \mathbb{E}[\bfR_e(\bflambda)\, |\, \bfL(P) = L, \bflambda_e=\lambda_e].\]
    (Note that $\bflambda_e = \lambda_e$ occurs with probability zero, so formally we should condition on $\bflambda_e=\lambda_e \pm \mathrm{d} \lambda_e$ and take limits in what follows, but there are no continuity issues so we will ignore this detail). By the definition of $\bfR_e(\bflambda)$ and linearity of expectation, we have
    \[ R_e^{L, \lambda_e} = 1 + (|Q_e| - 1) \mu_x \nu_y.\]
    By Jensen's inequality,
    \[ \mathbb{E}[\log{\bfR_e(\bflambda)} \,|\, \bfL(P) = L, \bflambda_e=\lambda_e] \leq \log{R_e^{L,\lambda_e}},\]
    so
    \begin{align*}
    \mathbb{E}[\log{\bfR_e(\bflambda)} \,|\, \bfL(P) = L] &\leq \mathbb{E}\Big[ \log{R_e^{L, \bflambda_e}} \Big]\\
    &= \int_{[0,1]^2} \log\big(1 + (|Q_e| - 1) \mu_x \nu_y\big) \,\mathrm{d} \lambda_e.\end{align*}
    For $C > 0$ we compute
    \[ \int_0^1 \int_0^1 \log(1 + Cts) \,\mathrm{d}t\, \mathrm{d}s = \log(1+C) - 2 + \frac{\log(1+C) - \Li_2(-C)}{C}, \]
    where
    \[ \Li_2(C)= - \int_0^C \frac{\log(1-t)}{t}\,\mathrm{d}t \]
    is an evaluation of the polylogarithm function, so, as $C \rightarrow \infty$,
    \begin{align*} -\Li_2(-C) =  \int_0^{-C} \frac{\log(1-t)}{t} \,\mathrm{d}t &= \int_0^{C} \frac{\log(1+t)}{t}\,\mathrm{d}t\\
    &\leq \int_{0}^1 \frac{\log(1+t)}{t}\, \mathrm{d}t + \sum_{t=1}^{C-1} \frac{\log(2+t)}{t} \leq \log^2(C+1)  + O(1).\end{align*}
    Therefore, substituting $C = |Q_e|-1$, we have
    \[ \mathbb{E}[\log{\bfR_e(\bflambda)} \,|\, \bfL(P) = L]  \leq \log{|Q_e|} - 2 + \frac{\log{|Q_e|} + \log^2{|Q_e|} + O(1) }{|Q_e|} = \log{|Q_e|} - 2 +\frac{(1 + o(1)) \log^2{|Q_e|} }{|Q_e|}. \]
    Now recall that $P \in \cP^\gamma_{n,\alpha n^2}$, and that $|Q_e| = |Q_{x,y}|$ is precisely the number of common neighbours of $x$ and $y$ in $G(P)$. Since $G(P)$ is $\gamma$-quasirandom, we have $ |Q_e| = (1\pm \gamma) n (1-\alpha)^2=n (1-\alpha-o(1))^2$. Thus, using $|Q_e| \leq n$, and noting that the expression above does not depend on the choice of $L$, we get
    \[ \mathbb{E}[\log{\bfR_e(\bflambda)} ] \leq \log{|Q_e|} - 2 +\frac{ \log^2{n}}{ (1-\alpha-o(1))^2 n}, \]
    Next, since $P \in \cP^{\triangle:\varepsilon}_{n,\alpha n^2}$ with $\varepsilon:=n^{-1}\log{n}$, we have that the number of triangles in $G(P)$  is
    \[\sum_{e \in E} |Q_e|=(1 \pm \varepsilon)n^3(1-\alpha)^3.\]
    Since the natural logarithm is a concave function, Jensen's inequality yields
    \[ \sum_{e\in E} \log{|Q_e|} \leq |E| \log\Bigg( \frac{\sum_{e \in E} |Q_e|}{|E|} \Bigg) \leq |E| \log \Big( (1+\varepsilon) n(1-\alpha)^2 \Big).\]
    We now get
    \begin{align*}
     \log{|\mc L_n(P)|} = H[\bfL(P)]\leq \sum_{e \in E} \mathbb{E}[\log{\bfR_e(\bflambda)}] &\leq \sum_{e \in E} \log{|Q_e|} + |E| \Bigg(\frac{ \log^2{n}}{(1-\alpha-o(1))^2 n} - 2 \Bigg)\\
     &\leq n^2(1-\alpha) \Bigg( \log\big((1+\varepsilon)(1-\alpha)^2 n \big) - 2  \Bigg) + \frac{n \log^2{n}}{1-\alpha-o(1)}.
    \end{align*}
   Thus,
    \begin{align*}|\mc L_n(P)| &\leq \Bigg( \frac{(1+\varepsilon)(1-\alpha)^2 n}{e^2} \Bigg)^{(1-\alpha)n^2} \!\!\!\! \cdot \exp\left(\frac{n \log^2{n}}{1-\alpha-o(1)}\right)\\
     &\leq \Bigg( \frac{(1-\alpha)^2 n}{e^2} \Bigg)^{(1-\alpha)n^2}\!\!\!\! \cdot\exp\left(\varepsilon (1-\alpha) n^2+\frac{n \log^2{n}}{1-\alpha-o(1)}\right)\\
     &= \Bigg( \frac{(1-\alpha)^2 n}{e^2} \Bigg)^{(1-\alpha)n^2}\!\!\!\! \cdot\exp\left(\frac{n \log^2{n}}{1-\alpha-o(1)}\right),\end{align*}
    recalling that $\varepsilon = n^{-1}\log{n}$ and so $\exp(\varepsilon (1-\alpha) n^2) = \exp(o(n \log^2{n})).$
\end{proof}

The next ingredient we will need is a lower bound on the total number of $n\times n$ Latin squares. The current state of the art is the following bound (obtained via a celebrated result of Egorychev--Falikman~\cite{Ego81,Fal81} on permanents of doubly stochastic matrices).
\begin{lemma}[{\cite[Theorem~17.2]{vLW01}}]
    \label{lem:lower-bound-Latin-squares}
    The number of $n \times n$ Latin squares is at least
    \[|\cL_n| \geq \Big( \frac{n}{e^2}  \Big)^{n^2}\cdot \exp(-O(n\log n)).\]
\end{lemma}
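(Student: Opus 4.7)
The plan is to construct a Latin square row by row and bound the number of valid choices at each step using the permanent of a doubly stochastic matrix, via the Egorychev--Falikman theorem (the van der Waerden conjecture).

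More precisely, I would build an $n\times n$ Latin square one row at a time. After $k$ rows have been placed, the number of valid choices for row $k+1$ is exactly the permanent of the 0/1 matrix $A_k\in\{0,1\}^{n\times n}$ with $A_k(i,j)=1$ iff symbol $j$ has not yet appeared in column $i$ among the first $k$ rows. Since each symbol has been used exactly $k$ times (once in each of the first $k$ rows, each in a distinct column), the matrix $A_k$ has all row sums and column sums equal to $n-k$, so $(n-k)^{-1}A_k$ is doubly stochastic. By Egorychev--Falikman, $\operatorname{perm}((n-k)^{-1}A_k)\ge n!/n^n$, which gives
\[
\operatorname{perm}(A_k)\ge (n-k)^n\cdot \frac{n!}{n^n}.
\]
Multiplying these lower bounds over $k=0,1,\dots,n-1$ yields
\[
|\mathcal{L}_n|\ge \prod_{k=0}^{n-1}(n-k)^n\cdot \frac{n!}{n^n}=(n!)^n\cdot\Big(\frac{n!}{n^n}\Big)^n=\frac{(n!)^{2n}}{n^{n^2}}.
\]

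Finally, I would apply Stirling's approximation $n!=(n/e)^n\sqrt{2\pi n}\,(1+o(1))$ to estimate $(n!)^{2n}=(n/e)^{2n^2}(2\pi n)^{n}(1+o(1))^n$, and divide by $n^{n^2}$ to obtain
\[
|\mathcal{L}_n|\ge \Big(\frac{n}{e^2}\Big)^{n^2}(2\pi n)^{n}(1+o(1))^n\ge \Big(\frac{n}{e^2}\Big)^{n^2}\exp(-O(n\log n)),
\]
since $(2\pi n)^n(1+o(1))^n=\exp(O(n\log n))$ (indeed, this error factor is actually $\ge 1$, so the stated inequality holds trivially from this point).

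There is no real obstacle here: the argument is entirely classical, and the only nontrivial ingredient is the Egorychev--Falikman lower bound for permanents of doubly stochastic matrices, which one simply quotes. The lossy step is the uniform application of van der Waerden at every row, which is responsible for the $\exp(O(n\log n))$ slack; tightening this would require substantially more sophisticated tools (and is not needed here).
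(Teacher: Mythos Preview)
Your argument is correct and is precisely the classical proof the paper is citing: the paper does not give its own proof of this lemma but attributes it to \cite[Theorem~17.2]{vLW01}, noting that it is ``obtained via a celebrated result of Egorychev--Falikman on permanents of doubly stochastic matrices,'' which is exactly the row-by-row permanent argument you wrote out.
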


Finally, we need the following lemma, which shows that each triangle-typical ordered partial Latin square is roughly equally likely to be the outcome of the triangle removal process.
\begin{lemma}
    \label{lem:triangle-removal-process-prob-instance}
    Fix a constant $\alpha \in (0,1)$. Consider $\vec P \in \vec \cP^{\triangle:\varepsilon}_{n, \alpha n^2}$ for some $\varepsilon > 0$. Then for $\vec\bfR \sim \trp(n,m)$, we have
    \[ \mathbb{P}[\vec\bfR = \vec P] = (1 \pm 2\varepsilon)^{\alpha n^2} \exp\big( O(\log{n})\big) \Big(\frac{e}{n} \Big)^{3 \alpha n^2} (1-\alpha)^{3n^2 (1-\alpha)} . \]
\end{lemma}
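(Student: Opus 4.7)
The triangle removal process is Markovian: conditional on the first $i-1$ hyperedges $e_1, \dots, e_{i-1}$ having been chosen, the probability that step $i$ picks a specific triangle $e_i$ is exactly $1/t_i$, where $t_i$ is the number of triangles in the current graph $G_{i-1} := G(P_{i-1})$, and $P_{i-1}$ denotes the partial Latin square formed by $e_1, \dots, e_{i-1}$. Multiplying through, I would start from
\[
\Pr[\vec{\mathbf{R}} = \vec P] \;=\; \prod_{i=1}^{\alpha n^2} \frac{1}{t_i}.
\]

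The first key step is to identify $\operatorname{dens}(G_{i-1})$. Each hyperedge of $P_{i-1}$ uses exactly three of the $3n^2$ edges of $K_{n,n,n}$, and these sets of three edges are disjoint as $i-1$ ranges (by the definition of a partial Latin square), so $\operatorname{dens}(G_{i-1}) = 1 - (i-1)/n^2$. The hypothesis $\vec P \in \vec{\mathcal{P}}^{\triangle:\varepsilon}_{n,\alpha n^2}$ then gives
\[
t_i \;=\; (1 \pm \varepsilon)\, n^3 \bigl(1 - (i-1)/n^2\bigr)^3
\]
for every $i \le \alpha n^2$. Collecting the $(1\pm\varepsilon)$ factors contributes a multiplicative error of at most $(1\pm 2\varepsilon)^{\alpha n^2}$ (using $(1-\varepsilon)^{-1} \le 1 + 2\varepsilon$ for small $\varepsilon$), which matches the error term in the target formula.

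The main computational step is then to evaluate the deterministic product
\[
\prod_{i=1}^{\alpha n^2} \frac{1}{n^3\bigl(1 - (i-1)/n^2\bigr)^3} \;=\; \frac{1}{n^{3\alpha n^2}}\cdot \Biggl(\prod_{i=0}^{\alpha n^2 - 1}\bigl(1 - i/n^2\bigr)\Biggr)^{-3}.
\]
The inner product telescopes into a ratio of factorials:
\[
\prod_{i=0}^{\alpha n^2 - 1}\bigl(1 - i/n^2\bigr) \;=\; \frac{(n^2)!}{((1-\alpha)n^2)!\,(n^2)^{\alpha n^2}}.
\]
Applying Stirling's approximation $k! = \sqrt{2\pi k}(k/e)^k(1+O(1/k))$ to both factorials and simplifying, I would obtain
\[
\prod_{i=0}^{\alpha n^2 - 1}\bigl(1 - i/n^2\bigr) \;=\; \frac{1}{\sqrt{1-\alpha}}\, e^{-\alpha n^2}\, (1-\alpha)^{-(1-\alpha)n^2}\,\bigl(1 + O(1/n^2)\bigr).
\]
Cubing, reciprocating, and combining with the $n^{-3\alpha n^2}$ factor yields
\[
\prod_{i=1}^{\alpha n^2} \frac{1}{n^3\bigl(1 - (i-1)/n^2\bigr)^3} \;=\; (1-\alpha)^{3/2}\,\bigl(1 + O(1/n^2)\bigr)\cdot (e/n)^{3\alpha n^2}\cdot (1-\alpha)^{3(1-\alpha)n^2}.
\]
The prefactor $(1-\alpha)^{3/2}(1 + O(1/n^2))$ is bounded by $\exp(O(1))$, hence absorbed into the $\exp(O(\log n))$ error term. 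Combining this with the $(1\pm 2\varepsilon)^{\alpha n^2}$ factor from the triangle-typicality estimate gives the stated formula.

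I do not anticipate a genuine obstacle here: the proof is essentially a direct calculation, and the only subtlety is being careful about how the multiplicative $(1\pm\varepsilon)$ errors accumulate under a product of length $\alpha n^2$. In particular, this is precisely why the hypothesis requires $\vec{P}$ to be triangle-typical at \emph{every} intermediate step $i \le \alpha n^2$ (not merely at the end): the bound on $t_i$ must hold uniformly in $i$ for the telescoping to go through.
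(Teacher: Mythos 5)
Your proposal is correct and follows essentially the same approach as the paper: both start from the product $\prod_i 1/t_i$, invoke triangle-typicality at every intermediate step to replace $t_i$ by $(1\pm\varepsilon)n^3(1-(i-1)/n^2)^3$, pull out the $(1\pm 2\varepsilon)^{\alpha n^2}$ factor, and then evaluate the resulting deterministic product. The only (immaterial) difference is that you telescope the product into a factorial ratio and apply Stirling, whereas the paper bounds $\sum_i \log(1-i/n^2)$ via comparison with $\int_0^\alpha \log(1-t)\,dt$; in fact your route even gives a slightly tighter constant-order prefactor than the $\exp(O(\log n))$ the paper obtains.
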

\begin{proof}
Let $N=n^2$. Since $\vec P \in \vec\cP^{\triangle:\varepsilon}_{n,\alpha N}$,
\[ \mathbb{P}[\vec \bfR = \vec P] = \prod_{i=0}^{\alpha N - 1} \frac{1}{(1 \pm \varepsilon)(1-i/N)^3 n^3} = (1 \pm 2 \varepsilon)^{\alpha N}n^{-3 \alpha N} \exp \Bigg( -3 \sum_{i=0}^{\alpha N -1} \log\Big(1-\frac{i}{N}\Big) \Bigg).\]
Note that 
\[ \sum_{i=0}^{\alpha N -1} \frac{1}{N} \log\Big(1 - \frac{i+1}{N}\Big) \leq \int_{0}^\alpha \log(1-t) \mathrm{d}t \leq  \sum_{i=0}^{\alpha N -1} \frac{1}{N} \log\Big(1 - \frac{i}{N}\Big). \]
We have
\[ \sum_{i=0}^{\alpha N - 1} \Bigg( \log\Big(1 - \frac{i}{N}\Big) -\log\Big(1 - \frac{i+1}{N}\Big)   \Bigg) = \sum_{i=0}^{\alpha N - 1}  \log\Big( 1 + \frac{1}{N- (i+1)} \Big) \leq \sum_{i=0}^{\alpha N - 1}  \frac{1}{N- (i+1)} = O(\log{n}).\]
Thus, using the fact that $\int \log{x} = x(\log{x} - 1)$, we get
\begin{align*}
3 \sum_{i=0}^{\alpha N -1} \log\Big(1 - \frac{i}{N}\Big) &=   3N \int_{0}^\alpha \log(1-t) \,\mathrm{d}t \pm O(\log{n})\\
&=  3 N \int_{1-\alpha}^1 \log{s}\ \mathrm{d}s \pm O(\log{n})\\
&= 3 N \Big( 1(\log{1} - 1) - (1-\alpha) \big(\log(1-\alpha) - 1 \big) \Big) \pm O (\log{n})\\
&= 3N \big( - \alpha - (1-\alpha)\log(1-\alpha)\big) \pm O (\log{n}),\end{align*}
so
\[ \exp \Bigg(- 3 \sum_{i=0}^{\alpha N -1} \log\Big(1 - \frac{i}{N}\Big) \Bigg) = e^{3N \alpha} (1-\alpha)^{3N (1-\alpha)} \exp\big( O(\log{n})\big).\]
Therefore,
\[ \mathbb{P}[\vec\bfR = \vec P] = (1 \pm 2\varepsilon)^{\alpha N} \exp\big( O(\log{n})\big) \Big(\frac{e}{n} \Big)^{3 \alpha N} (1-\alpha)^{3N (1-\alpha)} .\qedhere\]
\end{proof}

We are now ready to prove \cref{lem:random_LS_to_triangle_removal}.
\begin{proof}[Proof of \cref{lem:random_LS_to_triangle_removal}]
Let $N:=n^2$ and $\varepsilon := n^{-1}\log{n}$ and $\gamma := n^{-1/4}$. By the fact that $\vec{\cU}$ is $(\rho,m)$-inherited from $\cT$ and by \cref{lem:almost-always-triangle-typical,lem:almost-always-quasirandom}, we have that for every Latin square $L \in \cT$, a random ordering $\vec \bfP_m(L)$ of a random set of $m$ hyperedges of $L$ satisfies
\[ \mathbb{P}[\vec \bfP_m(L) \in \vec{\cU} \cap \vec\cP^{\triangle:\varepsilon}_{n,\alpha N} \cap \cP^\gamma_{n,\alpha N}] \geq \rho - o(1),\]
where we say that an ordered partial Latin square is in $\cP^\gamma_{n,\alpha N}$ if its underlying unordered partial Latin square is in $\cP^\gamma_{n,\alpha N}$. Thus, letting $\cS := \vec{\cU} \cap \vec\cP^{\triangle:\varepsilon}_{n,\alpha N} \cap \cP^\gamma_{n,\alpha N}$,
\[ \mathbb{P}[\mathbf{L} \in \cT] = \sum_{L \in \cT} \frac{1}{|\cL_n|} \leq \sum_{L \in \cT} \frac{1}{|\cL_n|}\cdot\frac{\mathbb{P}[\vec\bfP_m(L) \in \cS]}{\rho - o(1)} = \frac{1}{\rho-o(1)}\cdot \frac{1}{|\cL_n|}\sum_{L \in \cT} \sum_{\vec P \in \cS} \mathbb{P}[\vec\bfP_m(L) = \vec P].\]
Changing the order of the two sums, we get
\[\mathbb{P}[\mathbf{L} \in \cT] \leq \frac{1}{\rho-o(1)}\sum_{\vec P \in \cS} \mathbb{P}[\vec{\bfP}_m(\bfL) = \vec P], \]
where $\vec{\bfP}_m(\bfL)$ is a uniformly random ordering of a uniformly random subset of $m$ hyperedges of $\bfL$. Letting $|\mc L_n(\vec P)|$ denote the number of completions of the underlying unordered partial Latin square of $\vec P$, we have
\[ \mathbb{P}[\vec{\bfP}_m(\bfL) = \vec P] = |\mc L_n(\vec P)|\cdot \frac{1}{|\cL_n|}\cdot \frac{(N - m)!}{N!},\]
since for each Latin square, there are $N (N-1) \dots (N - m + 1)$ ways to select an (ordered) sequence of $m$ hyperedges. Therefore, by \cref{thm:upper-bound-completions,lem:lower-bound-Latin-squares}, 
\begin{align*}
 \mathbb{P}[\mathbf{L} \in \cT]&\le \sum_{\vec P \in \cS} (\rho^{-1} + o(1))\cdot  \frac{|\mc L_n(\vec P)|}{ |\cL_n|}\cdot \frac{(N - m)!}{ N! }\\
&\leq \sum_{\vec P \in \cS} (\rho^{-1} + o(1))\cdot \Big( \frac{n(1-\alpha)^2}{e^2} \Big)^{N - |\vec P|} \exp\left(\frac{n \log^2 n}{1-\alpha-o(1)}\right) \cdot n^{O(n)} \Big( \frac{e^2}{n}\Big)^{n^2} \cdot \frac{(N - m)!}{ N! }\\
&\leq \sum_{\vec P \in \cS}  \exp\left(\frac{n \log^2 n}{1-\alpha-o(1)}\right) \cdot \Big( \frac{e^2}{n}\Big)^{\alpha N}\cdot (1-\alpha)^{2 N(1-\alpha)} \cdot \frac{(N - m)!}{ N! }.\end{align*}
By Stirling's approximation, we deduce
\begin{align*}\mathbb{P}[\mathbf{L} \in \cT] &\leq \sum_{\vec P \in \cS}  \exp\left(\frac{n \log^2 n}{1-\alpha-o(1)}\right) \cdot \Big( \frac{e^2}{n}\Big)^{\alpha N} \cdot (1-\alpha)^{2N(1-\alpha)} \cdot \frac{N^{N(1-\alpha)} (1-\alpha)^{N(1-\alpha)}}{e^{N(1-\alpha)}} \cdot \frac{e^N}{N^N}\\
&=\sum_{\vec P \in \cS}  \exp\left(\frac{n \log^2 n}{1-\alpha-o(1)}\right)  \Big( \frac{e}{n} \Big)^{3\alpha N} (1-\alpha)^{3N(1-\alpha)}.  \end{align*}
On the other hand, for each $\vec P \in \vec{\cU} \cap \vec\cP^{\triangle:\varepsilon}_{n,\alpha N} \cap \cP^{\gamma}_{n,\alpha N}$, by \cref{lem:triangle-removal-process-prob-instance} we have
\begin{align*} \mathbb{P}[\vec \bfR = \vec P] &\geq  (1 - 2\varepsilon)^{\alpha N} e^{- \Theta(\log{n})}\Big(\frac{e}{n} \Big)^{3 \alpha N} (1-\alpha)^{3N (1-\alpha)}  \\&\geq \exp\big(- (2+o(1)) \alpha n \log{n} \big)\Big(\frac{e}{n} \Big)^{3 \alpha N} (1-\alpha)^{3N (1-\alpha)} .\end{align*}
Therefore,
\[\mathbb{P}[\bfL \in \cT] \leq \sum_{\vec P \in \cS} \exp\left(\frac{n \log^2 n}{1-\alpha-o(1)}\right) \mathbb{P}[\vec \bfR = \vec P]\leq \exp\left(\frac{n \log^2 n}{1-\alpha-o(1)}\right) \mathbb{P}[\vec \bfR \in \vec{\cU}].\qedhere\]
\end{proof}
\section{Approximating the triangle removal process}\label{sec:approximation-2}
In this section, we collect some simple lemmas that allow us to analyse the triangle removal process via Erd\H os--R\'enyi (binomial) random hypergraphs. The latter are defined in terms of independent choices, and are therefore much easier to analyse.

We then use these lemmas to deduce a user-friendly corollary (\cref{cor:random-LS-to-binomial}) of \cref{lem:random_LS_to_triangle_removal}.

\begin{definition}
\label{def:cleaned_up_binomial_models}
Let $K_{n,n,n}^{(3)}$ be the complete 3-uniform 3-partite hypergraph on $n+n+n$ vertices, and let $\mb G^{(3)}(n,p)$ be the probability distribution on subgraphs of $K_{n,n,n}^{(3)}$ obtained by including each possible hyperedge with probability $p$ independently.

Note that $\bfG \sim \mb G^{(3)}(n,p)$ may not be a partial Latin square, as it may contain two hyperedges intersecting in more than one vertex. Call this a \emph{conflict}.
\end{definition}

Now, our first lemma is similar to \cite[Lemma~2.6]{ferber2020almost}. It approximates the triangle removal process with a binomial random hypergraph, for properties $\mc U$ that are \emph{monotone increasing} in the sense that $ P \in \cU$ and $ P \subseteq  P'$ then $P' \in \cU$ (i.e. ``adding additional hyperedges doesn't hurt''). For multiple-exposure arguments, it will be important that this lemma can handle the triangle removal process started from a proper subgraph of $K_{n,n,n}$.

\begin{definition}
    For a partial Latin square $P$, let $\trp(P, m)$ be the partial Latin square distribution obtained with $m$ steps of the triangle removal process starting from $G(P)$. Thus, an outcome of $\trp(P,m)$ always has $m$ hyperedges, and none of these hyperedges conflict with $P$ (unless we run out of triangles and get the outcome ``$\abort$'')
\end{definition}

\begin{lemma}
\label{lemma:triangle_removal_chunk_to_binomial}
Fix constants $\gamma,\alpha_1,\alpha_2$ such that $\gamma>0$, and $\alpha_1,\alpha_2\in [0,1]$ are sufficiently small with respect to $\gamma$. Define $m_1=\alpha_1 n^2$ and $m_2=\alpha_2 n^2$ and $p = (1 + \gamma)\alpha_2/n$. 
Let $P \in \cP_{n,m_1}$ be a partial Latin square, and consider a monotone increasing property $\cU$ of subhypergraphs of $K_{n,n,n}^{(3)}$.
Let $\bfR \sim \trp(P, m_2)$ and $\bfG \sim \mb G^{(3)}(n, p)$. Then 
\[ \mathbb{P}[\bfR \in \cU ] \leq \mathbb{P}[\bfG \in \cU] + \exp(-\Omega(n^2)).\]
\end{lemma}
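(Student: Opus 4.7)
The plan is to construct an explicit coupling of $\bfR$ and $\bfG$, via independent uniform $[0,1]$ labels $\{U_e\}_e$ on the hyperedges of $K^{(3)}_{n,n,n}$, under which $\bfR \subseteq \bfG$ holds with probability at least $1-\exp(-\Omega(n^2))$. Monotonicity of $\cU$ then yields the conclusion: $\bfR \in \cU$ and $\bfR \subseteq \bfG$ together imply $\bfG \in \cU$.

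The coupling is as follows. Set $\bfG := \{e : U_e \leq p\}$ (which has the correct $\mb G^{(3)}(n,p)$ distribution), and construct $\bfR$ by processing the hyperedges of $K^{(3)}_{n,n,n}$ not conflicting with $P$ in increasing order of label, greedily accepting each one that does not conflict with previously accepted ones, and stopping after $m_2$ acceptances. By a standard symmetry argument (at each step, conditional on the history, the labels of the currently-acceptable hyperedges are i.i.d.\ uniform on $[U^*_{i-1},1]$, so the next accepted hyperedge is uniformly distributed over the currently-available triangles), $\bfR$ has the required $\trp(P, m_2)$ distribution. Crucially, in this coupling $\bfR \subseteq \bfG$ holds if and only if the label $U^*_{m_2}$ of the $m_2$-th accepted hyperedge satisfies $U^*_{m_2} \leq p$.

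To estimate $U^*_{m_2}$, pass to the log-transform $W_i := -\log(1-U^*_i)$. Using the memoryless property of uniform order statistics, one finds that, conditional on the history, $W_i - W_{i-1} \sim \mathrm{Exp}(T_{i-1})$, where $T_{i-1}$ is the number of currently-available triangles after $i-1$ acceptances. Writing $W_i - W_{i-1} = X_i / T_{i-1}$, the random variables $X_1,\dots,X_{m_2}$ are i.i.d.\ $\mathrm{Exp}(1)$. Since each acceptance removes at most $3n$ triangles (as each pair of the accepted triangle is in at most $n$ triangles of $K_{n,n,n}$), and $G(P)$ starts with at least $n^3 - 3m_1 n$ triangles, we have deterministically $T_{i-1} \geq T_{\min} := n^3(1-3\alpha_1-3\alpha_2)$ for all $i \leq m_2$. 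Thus $W_{m_2} \leq S/T_{\min}$, where $S := \sum_{i=1}^{m_2} X_i \sim \mathrm{Gamma}(m_2, 1)$.

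A standard Chernoff bound gives $\Pr[S \geq (1+\delta) m_2] \leq \exp(-\Omega(\delta^2 m_2))$ for any constant $\delta \in (0,1)$. Choose $\delta$ small in terms of $\gamma$ so that $(1+\delta)/(1-3\alpha_1-3\alpha_2) \leq 1+\gamma$, which is possible because $\alpha_1,\alpha_2$ are assumed small enough with respect to $\gamma$. Then with probability at least $1-\exp(-\Omega(m_2)) = 1-\exp(-\Omega(n^2))$,
\[W_{m_2} \leq \frac{(1+\delta)\alpha_2}{n(1-3\alpha_1-3\alpha_2)} \leq \frac{(1+\gamma)\alpha_2}{n} = p,\]
so $U^*_{m_2} = 1 - e^{-W_{m_2}} \leq W_{m_2} \leq p$, as required. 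The main technical point to check is the careful verification of the conditional exponential structure of the increments $W_i - W_{i-1}$ (together with the attendant i.i.d.\ claim for the $X_i$); once this is in place, everything else reduces to an elementary Chernoff bound for the $\mathrm{Gamma}(m_2,1)$ distribution, with the crucial margin coming from the fact that $\mathbb{E}[W_{m_2}]$ is smaller than $p$ by a factor $1+\gamma - o(1)$ when $\alpha_1, \alpha_2$ are small in terms of $\gamma$.
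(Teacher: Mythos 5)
Your proof is correct. Both it and the paper's proof are based on the same underlying coupling — assign i.i.d.\ uniform labels to the potential hyperedges, set $\bfG=\{e: U_e\le p\}$, and realise $\bfR$ by running the triangle removal process in increasing label order — but you verify the containment $\bfR\subseteq\bfG$ in a genuinely different way. The paper uses a crude sufficient condition: it shows that the number of hyperedges of $\bfG$ that are isolated in $\bfG$ \emph{and} do not conflict with $P$ is at least $m_2$ with probability $1-\exp(-\Omega(n^2))$, via \cref{freedman_inequality}; any such hyperedge is guaranteed to be accepted by the greedy process, so the first $m_2$ acceptances occur among labels $\le p$. Your proof instead works with the exact criterion $U^*_{m_2}\le p$, passes to the exponential embedding $W_i=-\log(1-U^*_i)$, exhibits the increments as $X_i/T_{i-1}$ with $X_i$ i.i.d.\ $\mathrm{Exp}(1)$, bounds $T_{i-1}$ deterministically from below by $n^3(1-3\alpha_1-3\alpha_2)$, and finishes with a Gamma Chernoff bound. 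The trade-off is as you note: your argument is an exact description of the stopping label (and in particular gives a clean derivation of the constant $(1+\gamma)$ slack), but it requires the careful conditional-independence bookkeeping behind the claim that the $X_i$ are i.i.d.; the paper's argument is less precise but avoids that bookkeeping by appealing to a general-purpose concentration inequality for the count of ``safe'' hyperedges. Both routes yield the same $\exp(-\Omega(n^2))$ error term, which is all that is needed downstream.
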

\begin{proof}
Let $\bfG^*$ be obtained from $\bfG$ by deleting all hyperedges involved in conflicts. We say that the hyperedges of $\bfG^*$ are \emph{isolated} in $\bfG$. Note that we can couple $(\bfG^*, \bfG)$ and $\bfR$ such that $\bfR \subseteq \bfG$ as long as $\bfG^*$ has at least $m_2 = \alpha_2 n^2$ hyperedges (to see this, randomly order the hyperedges in $\bfG$ and run the triangle removal process on that ordering).

Let $\bfX$ be the number of hyperedges in $\bfG^*$ (i.e., the number of isolated hyperedges in $\bfG$). Note that each hyperedge in $P$ conflicts with no more than $3n$ of the $n^3$ potential hyperedges in $\bfG$. Thus, there are at least $(1-3\alpha_1)n^3$ potential hyperedges that do not conflict with $P$. Each of these is present and isolated in $\bfG$ with probability at least $p (1-p)^{3n}  \geq (1 + \gamma)\alpha_2 \exp(-6(1+\gamma)\alpha_2) /n$. Thus, we have 
\[ \mathbb{E}[ \bfX] \geq (1-3\alpha_1)n^3 \cdot (1+\gamma)\alpha_2 \exp(-6(1+\gamma)\alpha_2) /n  \geq (1+\gamma/2)\alpha_2 n^2 . \]
Adding a hyperedge to $\bfG$ can increase $\bfX$ by at most $1$, and removing a hyperedge from $\bfG$ can increase $\bfX$ by at most $3$, by making three hyperedges isolated. Thus, by \cref{freedman_inequality},
\[ \mathbb{P}[\bfX < \alpha_2 n^2] \leq \mathbb{P}[|\bfX - \mathbb{E}[\bfX]| \geq \gamma \alpha_2 n^2/2 ] \leq \exp(-\Omega(n^2)).\]
Therefore,
\[ \mathbb{P}[\bfR \in \cU] \leq \mathbb{P}[\bfG \in \cU] + \mathbb{P}[\bfX < \alpha_2 n^2] = \mathbb{P}[\bfG \in \cU] + \exp(-\Omega(n^2)).\qedhere \]
\end{proof}

The next lemma appears as \cite[Lemma~5.2]{KSSS22}, and handles properties $\mc U$ that are monotone decreasing in the sense that if $P \in \cU$ and $P' \subseteq P$ then $P' \in \cU$ (i.e., ``removing hyperedges doesn't hurt'').
\begin{lemma}[{\cite[Lemma~5.2]{KSSS22}}]
\label{lemma:triangle_removal_to_cleaned_binomial}
Fix a constant $\alpha \in (0,1)$. Let $\cU \subseteq \cP_{n}$ be a monotone decreasing property of partial Latin squares, let\footnote{We previously defined $\trp(n, m)$ as a distribution on ordered partial Latin squares, but here we are somewhat abusively viewing it as a distribution on unordered partial Latin squares (we simply ignore the ordering).} $\bfR \sim \trp(n, \alpha n^2)$, let $\bfG \sim \mb G^{(3)}(n,\alpha/n)$, and let $\mbf G^*$ be obtained from $\mbf G$ by deleting all hyperedges involved in conflicts. Then
\[ \mathbb{P}[ \bfR \in \cU \text{ or } \bfR = \abort] = O (\mathbb{P}[\bfG^* \in \cU]).\]
\end{lemma}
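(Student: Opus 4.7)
The plan is to construct a coupling of $\bfG$ (and hence $\bfG^*$) with the triangle removal process output $\bfR$, under which $\bfG^* \subseteq \bfR$ with overwhelming probability. Concretely, I would assign i.i.d.\ $\mathrm{Uniform}[0,1]$ labels $\tau_e$ to each of the $n^3$ potential hyperedges $e$ of $K_{n,n,n}^{(3)}$. Setting $\bfG := \{e : \tau_e < \alpha/n\}$ realizes the desired distribution $\mb G^{(3)}(n,\alpha/n)$, with $\bfG^*$ the subset of isolated hyperedges. Simultaneously, realize $\bfR$ from the same labels via the equivalent ``random greedy'' algorithm: process all $n^3$ hyperedges in increasing order of $\tau_e$, adding each one that does not conflict with previously added hyperedges, and stopping after $\alpha n^2$ successful additions (or declaring $\bfR=\abort$ if all labels are exhausted first). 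The equivalence between this random greedy algorithm and the triangle removal process follows from the standard fact that the minimum of i.i.d.\ continuous labels on a finite set is uniformly distributed on that set.

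The key observation driving the coupling is as follows: for any $e \in \bfG^*$, every hyperedge $e'$ processed before $e$ has $\tau_{e'} < \tau_e < \alpha/n$ and hence lies in $\bfG$; since $e$ is isolated in $\bfG$, it does not conflict with any such $e'$. Therefore, provided the greedy process has not stopped by the time it reaches $e$, the hyperedge $e$ is added to $\bfR$. This yields the deterministic inclusion $\bfG^* \subseteq \bfR$ on the ``success event'' that the process reaches all hyperedges with $\tau < \alpha/n$ before terminating. On this success event, the monotone decreasing hypothesis on $\cU$ immediately gives $\bfR \in \cU \Longrightarrow \bfG^* \in \cU$.

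The main obstacle is bounding the failure probability, which decomposes into two sub-events: that the process aborts, and that the process accumulates $\alpha n^2$ additions before $\tau$ reaches $\alpha/n$. For the first, a standard pseudorandomness analysis of the triangle removal process (tracking the number of remaining triangles via Freedman's inequality, \cref{freedman_inequality}, along the process) gives that for $\alpha$ bounded away from $1$, the process avoids aborting with probability $1-\exp(-\Omega(n^2))$. For the second, the number of additions from hyperedges in $\bfG$ is $|\bfG|$ minus the number of rejections, and the expected rejection count is $\Omega_\alpha(n^2)$ because the conflict graph of $\bfG$ has $\Theta(\alpha^2 n^2)$ conflicting pairs in expectation (so that the greedy independent set it produces has size bounded away from $|\bfG|$ by $\Omega_\alpha(n^2)$). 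Thus the expected number of additions from $\bfG$ is strictly less than $\alpha n^2$, and Freedman's inequality again gives a failure probability of $\exp(-\Omega(n^2))$.

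Putting the pieces together, the coupling yields $\mathbb{P}[\bfR \in \cU \text{ or } \bfR = \abort] \leq \mathbb{P}[\bfG^* \in \cU] + \exp(-\Omega(n^2))$. The $O(\cdot)$ factor in the conclusion absorbs this additive error term: in any meaningful application, $\mathbb{P}[\bfG^* \in \cU]$ dominates the exponentially small error (if not, both sides of the stated inequality are trivially $\exp(-\Omega(n^2))$), so a constant factor suffices to produce the claimed multiplicative bound.
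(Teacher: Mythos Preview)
Your coupling via i.i.d.\ uniform labels is exactly the one the paper uses, and the key observation that every $e\in\bfG^*$ survives the greedy process (as long as it has not yet stopped) is correct. The divergence is in how you conclude.

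The real gap is the additive-to-multiplicative step. Your argument yields
\[
\Pr[\bfR\in\cU\text{ or }\bfR=\abort]\le \Pr[\bfG^*\in\cU]+\exp(-\Omega(n^2)),
\]
and you then assert that the $O(\cdot)$ absorbs the additive term. It does not: the lemma must hold for \emph{every} monotone decreasing $\cU$, and $\Pr[\bfG^*\in\cU]$ can be arbitrarily small compared to $\exp(-cn^2)$, so no universal constant bounds the ratio. Your parenthetical (``both sides are trivially $\exp(-\Omega(n^2))$'') confuses an upper bound on the left side with a lower bound on the right. A secondary issue is that establishing your ``success event'' with probability $1-\exp(-\Omega(n^2))$ is genuinely nontrivial: bounding $\Pr[\bfR=\abort]$ needs a pseudorandomness analysis of the process, and showing the greedy accepts fewer than $\alpha n^2$ edges from $\bfG$ with the claimed probability needs a careful concentration argument you have only sketched.

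The paper sidesteps all of this with one observation: in the label coupling, the law of $\bfR$ depends only on the \emph{ranks} of the $\tau_e$, while $e(\bfG)=|\{e:\tau_e<\alpha/n\}|$ depends only on the \emph{values}; hence $\bfR$ is independent of $e(\bfG)$. Since $e(\bfG)$ is binomial with mean $\alpha n^2$, the event $\{e(\bfG)\le\alpha n^2\}$ has probability $\Omega(1)$, and conditioning on it does not change the distribution of $\bfR$. On this event the process cannot accumulate $\alpha n^2$ acceptances before exhausting $\bfG$ (there are at most $e(\bfG)\le\alpha n^2$ of them), so $\bfG^*\subseteq\bfR$ or $\bfR=\abort$ deterministically. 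This gives
\[
\Pr[\bfR\in\cU\text{ or }\bfR=\abort]
\le \Pr[\bfG^*\in\cU\mid e(\bfG)\le\alpha n^2]
\le \frac{\Pr[\bfG^*\in\cU]}{\Pr[e(\bfG)\le\alpha n^2]}
= O(\Pr[\bfG^*\in\cU]),
\]
with no concentration input at all.
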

For completeness, we reproduce the simple proof of \cref{lemma:triangle_removal_to_cleaned_binomial} (from \cite{KSSS22}).
\begin{proof}
We can couple $\bfR$ and $\bfG^*$ such that, if $e(\bfG) \leq \alpha n^2$, then either $\bfG^* \subseteq \bfR$ or $\bfR = \abort$ (to see this, randomly order the hyperedges of $\bfG$ and run the triangle removal process on that ordering).

Since $\cU$ is monotone decreasing, we have
\[ \Pr[\bfR \in \cU \text{ or } \bfR = \abort] \leq \Pr[\bfG^* \in \cU \,|\, e(\bfG) \leq \alpha n^2].\]
Since $e(\bfG)$ is binomial with mean $\alpha n^2$, we have that $\Pr[e(\bfG) \leq \alpha n^2] = \Omega(1)$. Thus, 
\[ \Pr[\bfR \in \cU \text{ or } \bfR = \abort] \leq \Pr[\bfG^* \in \cU] / 
\Pr[e(\bfG) \leq \alpha n^2] = O(\Pr[\bfG^* \in \cU]). \qedhere\]
\end{proof}

To illustrate the tools from this section, we conclude this section by stating a corollary of \cref{lem:random_LS_to_triangle_removal}, which is an easy-to-apply tool for studying random Latin squares. We do not actually use this corollary in the present paper (due to some technical issues related to multiple-exposure arguments, we need the full power of \cref{lem:random_LS_to_triangle_removal}), but we hope that it may be convenient in future work on random Latin squares.

Note that we have defined $(\rho,m)$-inheritedness (in \cref{def:inherited}) only for properties of ordered partial Latin squares, but in what follows we extend this definition to unordered partial Latin squares in the natural way.

\begin{corollary}
\label{cor:random-LS-to-binomial}
Fix constants $\alpha \in (0,1)$ and $\rho \in (0,1)$, and let $m = \alpha n^2$. Let $\cT$ be a property of $n\times n$ Latin squares and let $\cU$ be a property of subhypergraphs of $K^{(3)}_{n,n,n}$, such that $\cU$ is \emph{$(\rho,m)$-inherited} from $\cT$. Let $\bfL$ be a uniformly random $n\times n$ Latin square.
\begin{enumerate}
    \item If $\cU$ is a \emph{monotone decreasing} property, we have \[ \mathbb{P}[\mathbf{L} \in \cT] \leq \exp\big(O(n \log^2 {n})\big)  \mathbb{P}[\mathbf{G^*} \in \cU],\]
    where $\bfG \sim \mb G^{(3)}(n,\alpha/n)$, and $\bfG^*$ is obtained from $\bfG$ by deleting all hyperedges involved in conflicts.
    \item If $\cU$ is a \emph{monotone increasing} property, then for any constant $\gamma > 0$, if $\alpha$ is sufficiently small with respect to $\gamma$, we have \[ \mathbb{P}[\mathbf{L} \in \cT] \leq \exp\big(O(n \log^2 {n})\big)  \mathbb{P}[\mathbf{G} \in \cU] + \exp(-\Omega(n^2)),\]
    where $\bfG \sim \mb G^{(3)}(n,(1+\gamma)\alpha/n)$.
\end{enumerate}
\end{corollary}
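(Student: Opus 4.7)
The plan is to chain \cref{lem:random_LS_to_triangle_removal} with either \cref{lemma:triangle_removal_to_cleaned_binomial} (for part~(1)) or \cref{lemma:triangle_removal_chunk_to_binomial} (for part~(2)); the corollary is essentially a repackaging of these three results. A preliminary compatibility issue to address is that \cref{lem:random_LS_to_triangle_removal} is phrased for properties of \emph{ordered} partial Latin squares, whereas here $\cU$ is a monotone property of subhypergraphs that does not reference orderings. This is a non-issue: defining $\vec\cU$ to be the lift of $\cU$ (i.e., an ordered partial Latin square is in $\vec\cU$ iff its underlying unordered hypergraph is in $\cU$), the $(\rho,m)$-inheritedness of $\cU$ from $\cT$ passes verbatim to that of $\vec\cU$, since a uniformly random ordering of a uniformly random size-$m$ subset of hyperedges of $L$ has the same underlying unordered distribution as a uniformly random size-$m$ subset.

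For part~(1), I would apply \cref{lem:random_LS_to_triangle_removal} to get
\[\Pr[\mbf L \in \cT] \le \exp\!\paren{\frac{n \log^2 n}{1-\alpha-o(1)}} \Pr[\vec{\mbf R} \in \vec\cU],\]
where $\vec{\mbf R} \sim \trp(n, m)$. Viewing $\vec{\mbf R}$ as an unordered partial Latin square $\mbf R$, we have $\Pr[\vec{\mbf R} \in \vec\cU] \le \Pr[\mbf R \in \cU \text{ or } \mbf R = \abort]$, and then \cref{lemma:triangle_removal_to_cleaned_binomial} (which applies since $\cU$ is monotone decreasing) bounds this by $O(\Pr[\mbf G^* \in \cU])$. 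Absorbing the $O(1)$ factor into the $\exp(O(n\log^2 n))$ term gives part~(1).

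For part~(2), the same application of \cref{lem:random_LS_to_triangle_removal} gives the same opening bound. I would then invoke \cref{lemma:triangle_removal_chunk_to_binomial} with starting partial Latin square $P = \emptyset$ (so $\alpha_1 = 0$), $\alpha_2 = \alpha$, and the given $\gamma$; the hypothesis that $\alpha$ is sufficiently small relative to $\gamma$ is exactly what is needed, and yields $\Pr[\mbf R \in \cU] \le \Pr[\mbf G \in \cU] + \exp(-\Omega(n^2))$ with $\mbf G \sim \mb G^{(3)}(n, (1+\gamma)\alpha/n)$. Multiplying by the $\exp(O(n\log^2 n))$ factor and using $n^2 \gg n \log^2 n$ to collapse $\exp(O(n\log^2 n))\exp(-\Omega(n^2))$ back to $\exp(-\Omega(n^2))$ finishes part~(2).

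There is no serious obstacle here: all the real work has already been done in \cref{lem:random_LS_to_triangle_removal,lemma:triangle_removal_to_cleaned_binomial,lemma:triangle_removal_chunk_to_binomial}, and the only minor care needed is the ordered/unordered lift described in the first paragraph and the additive-error bookkeeping in part~(2).
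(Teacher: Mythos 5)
Your proposal is correct, and it is essentially the same argument the paper gives: the corollary is proved by chaining \cref{lem:random_LS_to_triangle_removal} with \cref{lemma:triangle_removal_to_cleaned_binomial} for part (1) and with \cref{lemma:triangle_removal_chunk_to_binomial} (taking $P=\emptyset$, $\alpha_1=0$, $\alpha_2=\alpha$) for part (2). Your handling of the ordered/unordered lift and of the additive error in part (2) is right. One small point: the paper's one-sentence proof actually transposes the two lemma references (citing \cref{lemma:triangle_removal_chunk_to_binomial} for the monotone-decreasing part (1) and \cref{lemma:triangle_removal_to_cleaned_binomial} for the monotone-increasing part (2)), which is evidently a typo since \cref{lemma:triangle_removal_to_cleaned_binomial} is the one about monotone-decreasing properties and $\bfG^*$ while \cref{lemma:triangle_removal_chunk_to_binomial} is the one about monotone-increasing properties with the $\exp(-\Omega(n^2))$ error term; your assignment of lemmas to parts is the correct one.
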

Part (1) is obtained by combining \cref{lem:random_LS_to_triangle_removal} with \cref{lemma:triangle_removal_chunk_to_binomial} (with $P=\emptyset$ and $\alpha_1=0$ and $\alpha_2=\alpha$), and Part (2) is obtained by combining \cref{lem:random_LS_to_triangle_removal} with \cref{lemma:triangle_removal_to_cleaned_binomial}.

\section{The master theorem for parity distributions}\label{sec:master}

All parts of \cref{thm:shiny} will be deduced from a master theorem which describes the distribution of $(N_{\mr{row}}(\mbf L),N_{\mr{col}}(\mbf L),N_{\mr{sym}}(\mbf L))$ in somewhat technical terms. To state this theorem, we need some terminology to describe certain types of distributions that arise naturally from random intercalate switches.
\begin{definition}\label{def:near-binomial}
Consider a random vector $(c_{1}+\mathbf{B}_{1},c_{2}+\mathbf{B}_{2},c_{3}+\mathbf{B}_{3})$,
where $\mathbf{B}_{1},\mathbf{B}_{2},\mathbf{B}_{3}$ are independent
binomial random variables satisfying $\mathbf{B}_{i}\sim\operatorname{Bin}(n_{i},1/2)$. We say that this random vector is \emph{$(n,d)$-near-binomial} if $n-d\le n_{i}\le n$ and $c_{i}\le d$ for each $i$.
\end{definition}

The reader should think of $d$ as a ``defect parameter'': if $d$ is small, then $(n,d)$-near-binomial random vectors are approximately
distributed like three independent copies of $\operatorname{Bin}(n,1/2)$.
We remark that if we want this approximation to have $o(1)$ total variation error, then we need $d=o(\sqrt{n})$. Indeed, for each $i$
we need $\mb E[\mathbf{X}_{i}]-n/2$ to be negligible compared to the fluctuations
of $\operatorname{Bin}(n,1/2)$ (which have order of magnitude $\sqrt{n}$).
\begin{definition}\label{def:2-near-binomial}
Say that a random vector $(\mathbf{X}_{1},\mathbf{X}_{2},\mathbf{X}_{3})\in\mathbb{N}^{3}$
is \emph{$(n,d)$-2-near-binomial} if it can be obtained from an $(n,d)$-near-binomial
random vector by conditioning on a particular parity (even or odd) for each of $\mathbf{X}_{1},\mathbf{X}_{2},\mathbf{X}_{3}$.
\end{definition}

\begin{definition}\label{def:2-near-binomial-mixture}
Say that a random vector $\vec{\mathbf{X}}$ is a \emph{2-near-binomial
mixture} with parameters $(n,d,\varepsilon)$ if one can define
an auxiliary random object $\mathbf{Q}$ (on the same probability
space as $\vec{\mathbf{X}}$), and an event $\mathcal{A}$ depending
only on $\mathbf{Q}$, such that:
\begin{itemize}
\item $\Pr[\mathcal{A}]\ge1-\varepsilon$, and
\item For any of the possible outcomes $Q$ of $\mathbf{Q}$ satisfying
$\mathcal{A}$, the conditional distribution of $\vec{\mathbf{X}}$
given $\mathbf{Q}=Q$ is $(n,d)$-2-near-binomial.
\end{itemize}
\end{definition}
If $d$ is small, then the above definition says that $\vec{\mathbf{X}}$
is closely approximated by a sequence of three independent $\operatorname{Bin}(n,1/2)$
random variables, except for possible biases mod 2. Note that the distribution $\mu^*$ featuring in \cref{thm:shiny} is a 2-near-binomial mixture with parameters $(n,0,0)$.

Now, our main technical theorem is as follows.
\begin{theorem}\label{thm:main-technical}
Let $\mathbf{L}$ be a uniformly random $n\times n$ Latin square. Then $(N_{\mathrm{row}}(\mathbf{L}),N_{\mathrm{col}}(\mathbf{L}),N_{\mathrm{sym}}(\mathbf{L}))$ is a 2-near-binomial mixture with parameters
\[
\big(n,\;O(\log^{11} n),\;\exp(-\omega(n\log^2 n))\big).
\]
\end{theorem}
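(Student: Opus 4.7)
The plan is to reduce \cref{thm:main-technical} to an $\mathbb{F}_2$-linear-algebra statement about a canonical collection of pairwise compatible intercalate switchings. First, sample a sparse random template $\mathbf{T}$ of row/column pairs independently of $\mathbf{L}$, and let $\mathcal{I} = \mathcal{I}_{\mathbf{T}}(\mathbf{L})$ be the resulting set of $\mathbf{T}$-stable intercalates of $\mathbf{L}$. The stability/canonicity property says that switching any subset $X \subseteq \mathcal{I}$ of these intercalates produces a new Latin square $L^{X}$ with $\mathcal{I}_{\mathbf{T}}(L^{X}) = \mathcal{I}$ as an unordered set (with the chosen intercalates simply replaced by their switched form). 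Thus ``switch an arbitrary subset'' is a free $\mathbb{F}_2^{\mathcal{I}}$-action on $\mathcal{L}_{n}$, each orbit has size exactly $2^{|\mathcal{I}|}$, and conditional on the orbit of $\mathbf{L}$ the conditional distribution of $\mathbf{L}$ is uniform on that orbit by uniformity of $\mathbf{L}$ on $\mathcal{L}_{n}$.

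Let $\mathbf{Q}$ encode $\mathbf{T}$ together with the orbit of $\mathbf{L}$. Fix a representative $L_{0}$ of the orbit and write the parity triple as $\vec{\xi}(L) = (\vec{\xi}_{\mathrm{row}}(L), \vec{\xi}_{\mathrm{col}}(L), \vec{\xi}_{\mathrm{sym}}(L)) \in (\mathbb{F}_{2}^{n})^{3}$. Conditional on $\mathbf{Q}$, we have $\vec{\xi}(\mathbf{L}) = \vec{\xi}(L_{0}) + \phi(\mathbf{X})$, where $\mathbf{X} \in \mathbb{F}_{2}^{\mathcal{I}}$ is uniform and $\phi:\mathbb{F}_{2}^{\mathcal{I}} \to (\mathbb{F}_{2}^{n})^{3}$ is the linear map sending the indicator $e_{I}$ of an intercalate $I$ on rows $\{r_{1},r_{2}\}$, columns $\{c_{1},c_{2}\}$, symbols $\{s_{1},s_{2}\}$ to the weight-$6$ vector $(e_{r_{1}}+e_{r_{2}},\, e_{c_{1}}+e_{c_{2}},\, e_{s_{1}}+e_{s_{2}})$. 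So $\vec{\xi}(\mathbf{L})$ is uniform on the affine subspace $\vec{\xi}(L_{0}) + V$ of $(\mathbb{F}_{2}^{n})^{3}$, where $V := \on{Im}(\phi)$.

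Now apply \cref{lemma:existence_split_stable_intercalate}. With $d := O(\log^{11} n)$, the lemma provides stable intercalates inside any large enough triples of row/column/symbol sets, and an iterative argument (building $V$ direction by direction and using the splitting property to cancel column and symbol effects whenever a new row direction is needed, and symmetrically for $V_{c}, V_{s}$) shows that with probability $1 - \exp(-\omega(n \log^{2} n))$ over $(\mathbf{L},\mathbf{T})$ the projections $V_{r} := \pi_{\mathrm{row}} V$, $V_{c} := \pi_{\mathrm{col}} V$, $V_{s} := \pi_{\mathrm{sym}} V$ each have codimension at most $d$ in $\{u \in \mathbb{F}_{2}^{n} : \mathbf{1}^{\top} u \equiv 0 \bmod 2\}$, and moreover $V$ contains
\[
(V_{r} \oplus V_{c} \oplus V_{s}) \cap \big\{(\vec{u},\vec{v},\vec{w}) : \mathbf{1}^{\top}\vec{u} + \mathbf{1}^{\top}\vec{v} + \mathbf{1}^{\top}\vec{w} \equiv f(n) \bmod 2\big\},
\]
the last being the only global constraint on the image (since every $\phi(e_{I})$ has even weight in each block). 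Conditional on $\mathbf{Q}$ in this good event, the row-part of $\vec{\xi}(\mathbf{L})$ is therefore uniform on a coset of $V_{r}$, so its Hamming weight $N_{\mathrm{row}}(\mathbf{L})$ is of the form $c_{1} + \mathbf{B}_{1}$ with $c_{1} \le d$ a fixed contribution from the at-most-$d$ defect coordinates and $\mathbf{B}_{1} \sim \mathrm{Bin}(n_{1},1/2)$ on the remaining $n_{1} \ge n-d$ coordinates, and analogously for $N_{\mathrm{col}}$ and $N_{\mathrm{sym}}$; the three are jointly independent up to the single mod-$2$ constraint from $V$. This is precisely the $(n, O(\log^{11} n))$-$2$-near-binomial description demanded by \cref{def:2-near-binomial,def:2-near-binomial-mixture}.

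The main obstacle is the linear-algebra extraction above: \cref{lemma:existence_split_stable_intercalate} gives stable intercalates inside prescribed subsets, but to simultaneously bound the codimensions of $V_{r}$, $V_{c}$, $V_{s}$ \emph{and} to realize the direct-sum structure inside $V$, one must iterate the lemma to produce, for each desired new row direction, a pair of $\mathbf{T}$-stable intercalates sharing the same row-effect but with independent column/symbol effects, so that their XOR sits in $V$ with trivial column and symbol parts. The word ``split'' in the lemma is precisely what delivers this redundancy; without it one would only obtain an aggregate codimension bound on $V$, which is too weak to give coordinate-wise near-binomial distributions and would allow forbidden correlations between $N_{\mathrm{row}}$, $N_{\mathrm{col}}$, $N_{\mathrm{sym}}$ beyond the single mod-$2$ identity.
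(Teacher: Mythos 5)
Your high-level strategy matches the paper's: use stable intercalate switchings to set up a free $\mathbb{F}_2$-action on $\mathcal{L}_n$ whose orbits carry the uniform distribution, so that the parity triple becomes a uniform point on an affine subspace over $\mathbb{F}_2$, and then invoke \cref{lemma:existence_split_stable_intercalate} to control that subspace. But the linear-algebra step as you sketch it contains a genuine gap, and your choice of $\mathbf{Q}$ is too coarse to deliver the 2-near-binomial conclusion.

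On the linear algebra: you propose to find, for each new row direction, two $\mathbf{T}$-stable intercalates ``sharing the same row-effect but with independent column/symbol effects, so that their XOR sits in $V$ with trivial column and symbol parts''. As written this is inconsistent: if two intercalates share the same row-effect, their XOR has trivial \emph{row} part, not trivial column/symbol parts. Swapping roles does not repair it, because for the XOR of two intercalates to have trivial column and symbol parts, the two intercalates would need the same pair of columns \emph{and} the same pair of symbols — and in a (partial) Latin square there is at most one intercalate on any given column-pair/symbol-pair, so this never happens for distinct intercalates. So the proposed iterative mechanism cannot work. The paper instead proves what you need via a dual (left-kernel) argument (\cref{lem:RCS}): it identifies $R,C,S$ of size $\ge n-\ell$ so that the incidence matrix of the intercalates in $\mathcal{I}(R,C,S)$ has $\mathbb{F}_2$-left-kernel spanned exactly by the three block all-ones vectors, which it establishes by producing, for every non-trivial test triple $(R',C',S')$, a single split stable intercalate with odd intersection. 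That argument exploits all six roles in the split $6$-tuple at once and does not reduce to pairwise XOR cancellation.

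On the conditioning: you take $\mathbf{Q}$ to be the template plus the orbit, so that conditional on $\mathbf{Q}$ the parity triple is uniform on a coset of the full image $V$, spanned by \emph{all} $\mathbf{T}$-stable intercalates. But $V$ includes contributions from intercalates touching the (up to $\ell$) rows, columns, and symbols outside $R,C,S$, and your asserted identity $V = (V_r\oplus V_c\oplus V_s)\cap\{\cdot\}$ is neither proved nor implied by \cref{lem:RCS}. The paper refines $\mathbf{Q}$: it additionally reveals the outcomes of the switches for $\mathbf{T}$-stable intercalates \emph{not} in $\mathcal{I}(R,C,S)$, after which the remaining randomness lives precisely in the well-structured subspace given by \cref{lem:RCS} (the three-constraint subspace supported on $R,C,S$), from which the $(n,O(\log^{11}n))$-$2$-near-binomial description follows immediately. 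Without that extra conditioning, your codimension-$\le d$ bound on $V_r$ is both unproved and strictly weaker than what you need: a codimension-$d$ subspace of the even-weight hyperplane of $\mathbb{F}_2^n$ need not be of the form $\{\vec x:\supp(\vec x)\subseteq R,\ \vec 1\cdot\vec x=0\}$ with $|R|\ge n-d$, so ``$c_1+\mathrm{Bin}(n_1,1/2)$ on the remaining $n_1\ge n-d$ coordinates'' does not follow. Finally, the parity constraint in your display should read $\equiv 0$ rather than $\equiv f(n)$: $V$ is a linear subspace and every generator has zero total sum, with $f(n)$ entering only through the affine shift $\vec\xi(L_0)$.
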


\subsection{Corollaries of the master theorem}
In this subsection we briefly explain how to deduce all parts of \cref{thm:shiny} from our master theorem (\cref{thm:main-technical}). This is all quite routine. Note that \cref{thm:baby} is an immediate consequence of \cref{thm:shiny}(4).

We start with basic consequences of the de Moivre--Laplace local central limit theorem and a large deviation principle for binomial distributions.
\begin{lemma}\label{lem:deM-Laplace}
Let $\vec{\mathbf{X}}=(\mathbf{X}_{1},\mathbf{X}_{2},\mathbf{X}_{3})$
be a $(n,o(\sqrt n))$-2-near-binomial random vector.
\begin{enumerate}
    \item[(i)] Recall that $(\mathbf{X}_{1}\text{ mod 2},\,\mathbf{X}_{2}\text{ mod 2},\,\mathbf{X}_{3}\text{ mod 2})$
always takes a common value $\vec{v}\in\{0,1\}^{3}$. For any $\vec{x}\in\mathbb{Z}^{3}$
satisfying $\vec{x}=\vec{v}$ mod 2, we have
\[
\Pr[\vec{\mathbf{X}}=\vec{x}]=8\cdot\frac{1}{(2\pi(n/4))^{3/2}}\exp\left(-\frac{(x_{1}-n/2)^{2}+(x_{2}-n/2)^{2}+(x_{3}-n/2)^{2}}{2(n/4)}\right)+o(n^{-3/2}).
\]
\item [(ii)] Let $H_{2}:\alpha\mapsto-\alpha\log_2\alpha-(1-\alpha)\log_2(1-\alpha)$
be the base-2 binary entropy function, and let $I(x_{1},x_{2},x_{3})=3-H_{2}(x_{1})-H_{2}(x_{2})-H_{2}(x_{3})$.
We have
\[
-\inf_{\vec{x}\in E^{\circ}}I(\vec{x})\le\liminf n^{-1}\log_2\Pr[n^{-1}\vec{\mathbf{X}}\in E]\le\limsup n^{-1}\log_2\Pr[n^{-1}\vec{\mathbf{X}}\in E]\le-\inf_{\vec{x}\in\overline{E}}I(\vec{x})
\]
for all Borel $E\subseteq\mb R^{3}$.
\end{enumerate}
\end{lemma}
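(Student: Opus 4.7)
For part (i), my plan is to exploit the factorization provided by the definition of an $(n,o(\sqrt n))$-2-near-binomial vector. By \cref{def:near-binomial,def:2-near-binomial}, $\vec{\mathbf X}$ has the same distribution as $(c_1+\mathbf B_1,c_2+\mathbf B_2,c_3+\mathbf B_3)$ with independent $\mathbf B_i\sim\operatorname{Bin}(n_i,1/2)$, $c_i,\,n-n_i\le d=o(\sqrt n)$, conditioned on each $\mathbf X_i$ having the prescribed parity $v_i$. Using the elementary identity $\Pr[\mathbf B_i\equiv 0\pmod 2]=1/2$ (valid for any $n_i\ge 1$), the parity conditioning doubles each pointwise probability on admissible values, so $\Pr[\vec{\mathbf X}=\vec x]=8\prod_{i=1}^{3}\Pr[\mathbf B_i=x_i-c_i]$ whenever $\vec x\equiv\vec v\pmod 2$. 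I would then invoke the classical one-dimensional local CLT for $\operatorname{Bin}(n_i,1/2)$ to obtain $\Pr[\mathbf B_i=x_i-c_i]=G_{n_i}(x_i-c_i)+o(n^{-1/2})$ uniformly in $x_i$, where $G_{n'}(k):=(\pi n'/2)^{-1/2}\exp\bigl(-(k-n'/2)^2/(n'/2)\bigr)$.

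The key remaining step is to show $G_{n_i}(x_i-c_i)=G_n(x_i)+o(n^{-1/2})$ uniformly in $x_i$, which I would handle by a bulk/tail case split. In the bulk $|x_i-n/2|\le n^{2/3}$, setting $y_i:=x_i-n/2$ and $\delta_i:=c_i+(n_i-n)/2=O(d)$, a Taylor expansion of the log-ratio of the two Gaussians yields an exponent difference of order $O(|y_i|d/n)+O(y_i^{2}d/n^{2})+O(d^{2}/n)$, which is $o(1)$ for $|y_i|\le n^{2/3}$ and $d=o(\sqrt n)$; combined with a prefactor ratio of $1+O(d/n)$, this gives multiplicative error $1+o(1)$ and hence additive error $o(n^{-1/2})$. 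In the tail $|x_i-n/2|>n^{2/3}$, both $G_{n_i}(x_i-c_i)$ and $G_n(x_i)$ are bounded by $\exp(-\Omega(n^{1/3}))\cdot O(n^{-1/2})$, and a Chernoff bound gives the same for $\Pr[\mathbf B_i=x_i-c_i]$, so the bound holds trivially. Hence $\Pr[\mathbf X_i=x_i]=2G_n(x_i)+o(n^{-1/2})$ uniformly in $x_i$. Taking the product over $i=1,2,3$ and noting that each cross-term in the expansion has the form $o(n^{-1/2})\cdot O(n^{-1})$ (the other two factors being $O(n^{-1/2})$ each), we arrive at $\Pr[\vec{\mathbf X}=\vec x]=8\prod_i G_n(x_i)+o(n^{-3/2})$, which is exactly the claimed three-dimensional Gaussian density.

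For part (ii), I would deduce the LDP from Stirling's approximation. For the upper bound, write $\Pr[n^{-1}\vec{\mathbf X}\in E]\le n^{3}\max\{\Pr[\vec{\mathbf X}=\vec x]:\vec x/n\in\overline E\cap[0,1]^{3}\}$, since there are only $O(n^{3})$ admissible lattice points. Combining $\Pr[\mathbf X_i=x_i]\le 2\Pr[\mathbf B_i=x_i-c_i]$ with Stirling's $\binom{n_i}{k}=2^{n_i H_{2}(k/n_i)-O(\log n)}$, together with the continuity of $H_{2}$ and $n_i/n\to 1$, $c_i/n\to 0$, each pointwise probability is $2^{-n(1-H_{2}(x_i/n))+o(n)}$; multiplying the three bounds and taking $n^{-1}\log_{2}$ gives the upper bound. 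For the lower bound, if $E^{\circ}\cap(0,1)^{3}=\emptyset$ then $I\equiv+\infty$ on $E^{\circ}$ and the bound is vacuous; otherwise pick $\vec x_0\in E^{\circ}\cap(0,1)^{3}$, find a lattice point $\vec x$ with $\vec x/n$ in a small ball around $\vec x_{0}$ contained in $E$ and with the correct parity vector $\vec v$ (possible for $n$ large), and use the same Stirling estimates to obtain $\Pr[\vec{\mathbf X}=\vec x]\ge 2^{-nI(\vec x_0)-o(n)}$.

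The main obstacle I expect is securing the genuine uniform $o(n^{-3/2})$ additive error in part (i). The delicate point is the linear-in-$y_i$ cross term $|y_i|d/n$ in the exponent comparison, which can already be nonnegligible at $|y_i|=\Theta(\sqrt{n\log n})$; this is precisely why the bulk/tail threshold is pushed out to $n^{2/3}$, where the Gaussian has become super-polynomially small and harmlessly absorbs the remaining error. Part (ii) is then comparatively routine, once the standard entropy asymptotics for the binomial are in hand.
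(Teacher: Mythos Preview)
Your overall strategy for part~(i) matches the paper's: factor through independence, apply the one-dimensional de Moivre--Laplace local CLT coordinate-wise, then argue that the $o(\sqrt n)$ perturbations in $n_i$ and $c_i$ can be absorbed into the $o(n^{-1/2})$ error. The paper is much briefer on this last step, simply asserting that ``it is easy to see that the error terms in the $n_i$ and $x_i'$ can be absorbed into the main error term''.

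However, your explicit bulk/tail execution contains an arithmetic slip that breaks the argument as written. With the threshold $|y_i|\le n^{2/3}$ and $d=o(\sqrt n)$, the linear cross-term $|y_i|\,d/n$ is only $o(n^{1/6})$, not $o(1)$; for instance, if $d=n^{0.4}$ (which is $o(\sqrt n)$) and $|y_i|=n^{2/3}$, this term diverges. So the claimed multiplicative error $1+o(1)$ in the bulk is false, contradicting your own remark that the cross-term is ``already nonnegligible at $|y_i|=\Theta(\sqrt{n\log n})$''. Pushing the threshold \emph{outward} helps the tail but hurts the bulk. The repair is straightforward: either pick a threshold $T$ with $\sqrt n\ll T\ll n/d$ (such $T$ exists precisely because $d=o(\sqrt n)$), or avoid the case split entirely via the mean value theorem. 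Writing $\phi_n(y)=(\pi n/2)^{-1/2}e^{-2y^2/n}$, one has $\sup_y|\phi_n'(y)|=O(n^{-1})$ and $\sup_z|\partial_m\phi_m(z)|=O(m^{-3/2})$, whence
\[
|\phi_{n_i}(y_i-\delta_i)-\phi_n(y_i)|\le |\delta_i|\cdot O(n^{-1})+|n-n_i|\cdot O(n^{-3/2})=O(d/n)=o(n^{-1/2})
\]
uniformly in $y_i$, with no case analysis needed.

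For part~(ii), the paper just cites Sanov's theorem and notes that the mod-$2$ conditioning (probability exactly $1/8$) is negligible on the exponential scale. Your direct Stirling approach is a more elementary route to the same conclusion; just take a little care with uniformity of the entropy approximation when $x_i/n$ is near $0$ or $1$ (where $H_2$ has unbounded derivative).
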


\begin{proof}
Recall that $\vec{\mathbf{X}}$ is obtained from a $(n,d)$-near-binomial
random vector $\vec{\mathbf{Y}}$ by conditioning on the event
that $\vec{\mathbf{Y}}=\vec{v}$ mod 2 (which occurs with probability exactly\footnote{Here we are using that $\on{Bin}(n,1/2)$ is equally likely to be even and odd, which is easy to prove e.g.\ by induction.} $1/8$). 
Now, the de Moivre--Laplace
local central limit theorem (see e.g. \cite[Chapter~VII, Section~3, Theorem~1]{Fel68}) tells us that
\[
\Pr[\vec{\mathbf{Y}}=\vec{x}]=
\prod_{i=1}^3\left(\frac{1}{\sqrt{2\pi(n_i/4)}}\exp\left( -\frac{(x_{i}'-n_i/2)^{2}}{2(n_i/4)}\right)+o(n^{-1/2})\right)
\]
for some $n_1,n_2,n_3=n-o(\sqrt n)$, and some $x_1',x_2',x_3'$ satisfying  $x'_i = x_i - o(\sqrt{n})$. 
It is easy to see that the error terms in the $n_i$ and $x_i'$ can be absorbed into the main error term (i.e., it makes no difference to replace $n_1,n_2,n_3$ with $n$, and replace each $x_i'$ with $x_i$). The desired result follows, noting that conditioning on an event that occurs with probability $1/8$ introduces
a factor of $8$.

We can similarly deduce (ii) from a large deviation principle for binomial distributions (e.g. Sanov's theorem, see \cite[Theorem~2.1.10]{DZ10}\footnote{A simple approximation argument (approximating general open and closed sets with product sets) can be used to combine large deviation principles for three independent random variables, see e.g.\ \cite[Lemma~2.8]{LS87}.}). Actually, this is even simpler than (i), because in this case the conditioning mod 2 has a negligible impact. 
\end{proof}
Now we are ready to deduce \cref{thm:shiny}.
\begin{proof}[Proof of \cref{thm:shiny}]
We start with \cref{thm:shiny}(3) (the local central limit theorem). Consider
any $\vec{x}\in\mathbb{Z}^{3}$ satisfying $x_{1}+x_{2}+x_{3}=f(n)$
(mod 2). By \cref{thm:main-technical}, together with \cref{lem:deM-Laplace}(i) and the definition of a 2-near-binomial mixture, $\Pr[\vec{\mathbf{X}}=\vec{x}]$ is of the form 
\begin{align*}
&\Pr[\vec{\mathbf{X}}=\vec{x}\text{ mod 2}]\cdot \left( 8\cdot\frac{1}{(2\pi(n/4))^{3/2}}\exp\left(\frac{(x_{1}-n/2)^{2}+(x_{2}-n/2)^{2}+(x_{3}-n/2)^{2}}{2(n/4)}\right)+o(n^{-3/2})\right)\\
&\qquad\qquad+\exp(-\omega(n\log^2 n)).
\end{align*}
By the Cavenagh--Wanless theorem~\cite{CW16} (mentioned in the introduction), we
have $\Pr[\vec{\mathbf{X}}=\vec{x}\text{ mod 2}]=1/4+o(1)$, and \cref{thm:shiny}(3) follows.

We can then deduce \cref{thm:shiny}(4) (total variation convergence)
by summing over all $\vec{x}$ (using e.g.\ Chebyshev's inequality to control the tails). Indeed, let $\vec{\mathbf{Z}}\sim\mu^{*}$. For any $N$, the variance of $\on{Bin}(N,1/2)$ is $N/4=O(N)$, so (recalling the definition of a 2-near-binomial mixture and noting that the means of all relevant binomials are within $O((\log{n})^{11}) = O(\sqrt{n})$ of each other), the variance of each component of $\vec{\mbf X}$ and $\vec{\mbf Z}$ is $O(n)$. Then, for any $K\ge1$, taking $\vec{\mathbf{Z}}\sim\mu^{*}$ and writing $(n/2)\vec 1=(n/2,n/2,n/2)$,
we have
\begin{align*}
{\operatorname d}_{\mathrm{TV}}(\vec{\mathbf{X}},\mu^{*}) & =\frac{1}{2}\sum_{\vec{x}\in\mathbb{Z}^{3}}\big|\Pr[\vec{\mathbf{X}}=\vec{x}]-\Pr[\vec{\mathbf{Z}}=\vec{x}]\big|\\
&\le O((K\sqrt{n})^{3})\cdot o(n^{-3/2})+\Pr\big[\|\vec{\mathbf{X}}-(n/2)\vec 1\|_{\infty}\ge K\sqrt{n}\big]+\Pr\big[\|\vec{\mathbf{Z}}-(n/2)\vec 1\|_{\infty}\ge K\sqrt{n}\big]\\ 
 & \le o(1)\cdot K^{3}+O(1/K^{2}).
\end{align*}
Taking $K\to\infty$ sufficiently slowly, \cref{thm:shiny}(4) follows. 

It is then a near-trivial matter to deduce \cref{thm:shiny}(1--2) (the central limit theorem
and law of large numbers), since total variation convergence is much stronger than convergence in distribution (see e.g.\ \cite[Proposition~1.2]{Ros11}). 

Finally, \cref{thm:shiny}(5) (the large deviation principle) follows from \cref{thm:main-technical} and \cref{lem:deM-Laplace}(ii). Indeed, \cref{thm:main-technical} says that $\vec {\mbf X}$ is a mixture of $(n,o(\sqrt n))$-2-near-binomial distributions except with an exceptional probability $\exp(-\omega(n\log^2 n))$ (which is negligible). 
\end{proof}

\begin{remark}
    In the above proof, it was convenient to use the Cavenagh--Wanless theorem as a black box. However, we remark that given the machinery we developed in \cref{sec:approximation-1,sec:approximation-2}, it would be a very simple matter to re-prove the Cavenagh--Wanless theorem. Indeed, using this machinery it is easy to prove that $\mbf L$ is very likely to have a $2\times 3$ Latin subrectangle (in the first half of the rows, columns and symbols of $\mbf L$, say), and a $3\times 2$ Latin subrectangle (in the second half of the rows, columns and symbols of $\mbf L$, say), and switching on these two Latin subrectangles mixes between all four different possibilities for $(N_{\mr{row}}(\mbf L),N_{\mr{col}}(\mbf L),N_{\mr{sym}}(\mbf L))$ mod 2.
\end{remark}
\section{Stable intercalates}\label{sec:stable}

An \emph{intercalate} in a Latin square is a $2 \times 2$ Latin subsquare. To \emph{switch} an intercalate with symbols $a$ and $b$ means to replace both occurrences of $a$ with $b$ and vice versa. The result of this operation is always another Latin square.

As discussed in \cref{sec:outline}, we will prove \cref{thm:main-technical} via random intercalate switches. It is very important that if we start with a uniformly random Latin square, and perform our random switches, the resulting Latin square is still uniformly random. To ensure this, we need to restrict our attention to intercalates with a certain \emph{stability} property. We define this property in such a way that switching stable intercalates can never create a new intercalate or destroy an existing intercalate (and, in particular, if we only ever switch stable intercalates, then the set of available stable intercalate switches never changes).

Specifically, we define an intercalate to be stable if it does not intersect other intercalates, and if switching it and/or some other intercalates cannot make it intersect another intercalate, as follows.

\begin{definition}\label{def:stable_intercalates}
Let $P$ be a partial Latin square. 
For an intercalate $A$, we denote by $\bar{A}$ the outcome of switching $A$.
\begin{itemize}
    \item We say that an intercalate in $P$ is \emph{isolated} 
    if it does not share an entry with any other intercalate.
    \item Consider a set of isolated intercalates $\{A_1, \dots, A_t\}$ in $P$. We say that this is a \emph{critical} set of intercalates if it is possible to switch a subset of $A_1, \dots, A_t$ (resulting in intercalates $B_1, \dots, B_t$ where $B_i = A_i$ or $B_i = \bar{A_i}$ for each $i\in [t]$) to create a new intercalate that shares an entry with each of $B_1, \dots, B_t$.
    \item We say that an isolated intercalate in $P$ is \emph{stable} if it is not contained in a critical set of intercalates.
\end{itemize}
\end{definition}

Observe that a critical set $\{A_1, \dots, A_t\}$ may be a subset of another critical set $\{A_1, \dots, A_{t+1}\}$ (e.g., if switching a subset of $A_1, \dots, A_t$ creates a new intercalate that intersects $A_{t+1}$). Any critical set $\mc D$ has size at most $4$, since its members $A_1, \dots, A_t$ are pairwise entry-disjoint, and for each $A_i$, either $A_i$ or $\bar{A}_i$ shares an entry with the new intercalate that can be produced by switching a subset of $\mc D$.

Now, in the following lemma we record the fact that switching stable intercalates leaves the set of stable intercalates switches unchanged.

\begin{definition}\label{def:intercalate-abstraction}
    For an intercalate $A$ with rows $r_1,r_2$, columns $c_1,c_2$, and symbols $s_1,s_2$, define the tuple $\sigma(A) = (\{r_1,r_2\}, \{c_1,c_2\}, \{s_1,s_2\})$ (i.e., this records the rows, columns and symbols of the intercalate, without recording which of the two possible ``states'' it is in). For a (partial) Latin square $P$, let $\cS(P)$ be the set of all tuples $\sigma(A)$ for all stable intercalates $A$ in $P$ (i.e., this records the intercalate switches we can perform, without recording which state each of those intercalates is in). 
\end{definition}

\begin{lemma}
\label{lem:canonicity_of_stable_intercalates}
Let $P_1$ be a Latin square, and suppose that we can switch a single stable intercalate in $P_1$ to obtain a second Latin square $P_2$. Then $\cS(P_1) = \cS(P_2)$.
\end{lemma}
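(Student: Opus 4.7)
The plan is to track how the intercalate structure transforms when we switch the stable intercalate $A$. Write $\bar A$ for its image in $P_2$; by definition $\sigma(A) = \sigma(\bar A)$. The argument has three steps, after which the lemma follows by symmetry — applying the same reasoning starting from $P_2$ and switching $\bar A$, which will be shown to be stable in $P_2$ as a special case of Step~3.

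First, I would show that the intercalates of $P_2$ correspond bijectively to those of $P_1$, with $\bar A$ playing the role of $A$ and all other intercalates preserved. Any intercalate $A' \ne A$ of $P_1$ is cell-disjoint from $A$ (otherwise $A'$ and $A$ would share the unique entry at the common cell, violating isolation), so $A'$ persists unchanged in $P_2$. Conversely, if $B \ne \bar A$ were an intercalate of $P_2$ using a cell of $\bar A$, then $B$ would share an entry with $\bar A$ in $P_2$, while $B$ would fail to be an intercalate of $P_1$ (its symbol at that cell would be $\bar A$'s, not $A$'s); then $\{A\}$ alone would constitute a critical set of $P_1$ containing $A$, contradicting stability. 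This is the key moment where the full strength of stability (not just isolation) is used. Second, isolation transfers along the correspondence, because cell-disjointness equals entry-disjointness in a Latin square and the cells of every intercalate are unchanged between $P_1$ and $P_2$.

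The main step is to show that critical-set membership corresponds under $\sigma$. Let $A'$ be a stable intercalate of $P_1$ and $A''$ its $P_2$-correspondent. Suppose for contradiction that $A''$ lies in a critical set $\mathcal D_2$ of $P_2$, witnessed by a switch subset $T_2$ and a new intercalate $C$. I would construct a critical set in $P_1$ containing $A'$ (or $A$, which is also stable in $P_1$) as follows: take the $\sigma$-correspondent $\mathcal D_1$ of $\mathcal D_2$ in $P_1$, possibly enlarged by $A$ if $\bar A \notin \mathcal D_2$ but $C$ happens to use cells of $A$; and take the switch subset $T_1$ obtained from $T_2$ by toggling the $A$-slot so that $P_1^{T_1} = P_2^{T_2}$. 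A case analysis on whether $\bar A \in \mathcal D_2$, whether $\bar A \in T_2$, and whether $C$ uses cells of $A$ confirms that $C$ is still new in $P_1$ and continues to share an entry with each post-switch state in the enlarged critical set. The isolation of $\bar A$ in $P_2$ (from Step~2) constrains where $C$'s cells can sit, and the fact that $C$ differs from each $B'_i$ rules out degenerate coincidences where $C$ would either coincide with an element of the candidate critical set or revert to being an old intercalate of $P_1$.

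The main obstacle is the bookkeeping in the third step. Newness of $C$ in $P_1$ is subtly sensitive to whether $C$'s cells overlap those of $A$: at such cells, the symbols of $P_1$ and $P_2$ disagree, so $C$ might be new with respect to one Latin square but not the other. One must verify across all cases that $C$ neither coincides with a pre-existing intercalate of $P_1$ nor loses an entry-share with any post-switch state. This is where the rigidity provided by isolation (two isolated intercalates have disjoint cells, and a ``new'' intercalate must differ from all $B_i$) becomes indispensable.
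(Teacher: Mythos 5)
Your proposal is correct and takes essentially the same approach as the paper: both establish that intercalates other than $A$/$\bar{A}$ correspond and remain isolated between $P_1$ and $P_2$, then transfer critical-set membership by toggling whether $A$ is switched, with reversibility ($\bar{A}$ stable in $P_2$) supplying the symmetry needed to close. The paper organizes this a little differently — proving reversibility as a standalone first step rather than as the ``$A'=A$'' special case, and tracking only isolated intercalates instead of a full bijection — but the underlying argument (including the case analysis you defer in Step~3) matches the paper's proof.
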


\begin{proof}
Let $A$ be the stable intercalate in $P_1$ which is switched to obtain $P_2$. Then $\sigma(A) \in \cS(P_1)$. We start with a simple observation which will be used multiple times.
\begin{claim}\label{claim:stay-isolated}
    If an intercalate $B \neq \bar{A}$ is isolated in $P_2$, it is also present and isolated in $P_1$.
\end{claim}
\begin{claimproof}
    Since $B$ shares no entry with $\bar{A}$ in $P_2$, we see that $B$ is also present in $P_1$. Furthermore, if there were any intercalate $C$ in $P_1$ that shares an entry with $B$, that intercalate $C$ would not be present in $P_2$, so it would have to share an entry with $A$ in $P_1$. But that would imply $A$ is not isolated in $P_1$, which would contradict our assumption that $A$ is stable in $P_1$.
\end{claimproof}

\medskip\noindent\textit{Step 1: reversibility of the $A$ switch.} We will first show $\sigma(A) \in \cS(P_2)$, i.e., we could equally well have switched $\bar A$ in $P_2$ to obtain $P_1$. This amounts to showing that $\bar{A}$ is isolated in $P_2$, and not contained in a critical set of intercalates.

First, note that if $\bar{A}$ were not isolated in $P_2$, then there would be some intercalate $B$ in $P_2$ that shares an entry with $\bar{A}$. Since that entry would not be in $P_1$, we would have that $B$ is not in $P_1$, implying that $\{A\}$ is a critical set in $P_1$, contradicting the stableness of $A$ in $P_1$. Thus, $\bar{A}$ is isolated in $P_2$.

Now, suppose for contradiction that $\bar{A}$ is contained in a critical set $\{\bar{A}, A_2, \dots, A_t\}$ of isolated intercalates in $P_2$. If $t=1$, this would contradict the isolatedness of either $A$ in $P_1$ or $\bar{A}$ in $P_2$; thus $t \geq 2$. Since the intercalates $A_2, \dots, A_t$ are isolated in $P_2$, they are also present and isolated in $P_1$, by \cref{claim:stay-isolated}. But then $\{A, A_2, \dots, A_t\}$ is a critical set in $P_1$: if switching $S \subseteq \{\bar{A}, A_2, \dots, A_t\}$ creates a new intercalate in $P_2$, then switching $\bar{S} = S \cup \{A\}$ (if $\bar{A} \notin S$) or $\bar{S} = S \setminus \{\bar{A}\}$ (if $\bar{A}\in S$) creates a new intercalate in $P_1$. 
This contradicts the stableness of $A$ in $P_1$. We conclude that $\bar{A}$ is stable in $P_2$, so $\sigma(A) \in \cS(P_2)$.

\medskip\noindent\textit{Step 2: the intercalates other than $A$.}
Since $\sigma(A) \in \cS(P_1)\cap \cS(P_2)$, by symmetry it is enough to show that for every intercalate $B$ that is stable in $P_1$, we have $\sigma(B) \in \cS(P_2)$. Let $B \neq A$ be a stable intercalate in $P_1$. Then $B$ must be present and isolated in $P_2$, by \cref{claim:stay-isolated} and the symmetry between $P_1$ and $P_2$.

Suppose for the purpose of contradiction that $B$ is not stable in $P_2$. Then $B$ is contained in a critical set $\mc Z=\{B, A_2, \dots, A_t\}$ of isolated intercalates in $P_2$. Since the intercalates in $\{A_2, \dots, A_t\}\setminus \{\bar{A}\}$ are isolated in $P_2$ and distinct from $\bar{A}$, they are also present and isolated in $P_1$. 
Now, either $\mc Z \setminus \{\bar{A}\} \cup \{A\}$ or $\mc Z \setminus \{\bar{A}\}$ is a critical set in $P_1$. In particular, if the new intercalate that can be created in $P_2$ by switching a subset of $\mc Z$ shares an entry with either $A$ or $\bar{A}$, then $\mc Z \setminus \{\bar{A}\} \cup \{A\}$ is a critical set in $P_1$. Otherwise, $\mc Z \setminus \{\bar{A}\}$ is a critical set in $P_1$. In either case, $B$ is contained in a critical set in $P_1$, contradicting that $B$ is stable in $P_1$. Thus, $B$ must be stable in $P_2$, so $\sigma(B) \in \cS(P_2)$, concluding the proof.
\end{proof}

\section{Deducing the master theorem from stable intercalate information}\label{sec:master-from-stable}
As discussed in \cref{sec:outline}, the main challenge in this paper is to show that random Latin squares typically have very rich constellations of stable intercalates. These intercalates can then be randomly switched to prove our master theorem on parity distributions (\cref{thm:main-technical}). In this section, we state our key lemma about stable intercalates in random Latin squares (\cref{lemma:existence_split_stable_intercalate}), and show how to use this key lemma (together with some linear-algebraic arguments) to prove \cref{thm:main-technical}. After this section, the rest of the paper will be devoted to the proof of \cref{lemma:existence_split_stable_intercalate}.

\subsection{The key lemma on stable intercalates}
Roughly speaking, we need the property that for all ``reasonably large'' sets of rows, columns and symbols, we can find stable intercalates compatible with those choices. This will imply that the stable intercalates are so well-spread throughout the rows, columns and symbols that randomly switching them will thoroughly mix up the row, column and symbol parities.

\begin{definition}
\label{def:intercalate-expander}Let $P$ be an $n \times n$ (partial) Latin square, and consider some $1\le \ell\le n$. We say that $P$ is an \emph{$(\ell,\beta)$-intercalate-expander} if the following property holds.
For any sets of rows $\sR, \sR^*$, any sets of columns $\sC, \sC^*$, and any sets of symbols $\sS, \sS^*$, such that five out of these six sets have size $\beta n$ and the last one has size $\ell$, there is a stable intercalate in $P$ with one row in $\sR$, the other row in $\sR^*$, one column in $\sC$, the other column in $\sC^*$, one symbol in $\sS$, and the other symbol in $\sS^*$.
\end{definition}
Note that the sets of rows, columns, and symbols above are not necessarily disjoint. Thus, the stable intercalate promised by \cref{def:intercalate-expander} may for instance have both its rows in $\sR \cap \sR^*$.

Random Latin squares themselves are unlikely to be good intercalate-expanders, because they have too many intercalates (that are too likely to intersect each other and form critical sets of intercalates). We are able to overcome this with a \emph{sparsification} technique: instead of considering all the stable intercalates in a Latin square $L$, we consider only the stable intercalates in some sparsified \emph{partial} Latin square (and we make sure to use the same ``sparsification template'' $T$ for all Latin squares). To formalise this, we need another definition.

\begin{definition}\label{def:template}
An $n\times n$ \emph{template} is a subset $T\subseteq [n]^2$, which we interpret as a set of row/column pairs. For a Latin square $L\in \mc L_n$, we write $T \cap L\in \mc P_n$ for the partial Latin square containing just the entries of $L$ in positions specified by $T$.
\end{definition}

Now, our key lemma is as follows.
\begin{lemma}
\label{lemma:existence_split_stable_intercalate}Let $\beta >0$ be a constant. There exists a template $T \subseteq [n]^2$ such that the following holds. Let $\bfL \sim \mr{Unif}(\mc L_n)$ be a uniformly random Latin square (interpreted as a subgraph of $K^{(3)}_{n,n,n}$) and let $\ell = \log^{11}n$. Then
\[ \mathbb{P}[ T\cap \bfL \text{ is an }(\ell,\beta)\text{-intercalate-expander}\,] \geq 1- \exp(-\omega(n \log^2{n})).\]
\end{lemma}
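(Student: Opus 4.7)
The plan is to take $T$ to be an instance of a random template in which each pair $(i,j)\in [n]^2$ is included independently with some small probability $p=p(n)$, tuned so that intersections between template intercalates are rare while every split configuration still has many candidates; the existence of a suitable deterministic $T$ then follows by showing that the random $T$ succeeds with positive (in fact overwhelming) probability over the joint choice of $T$ and $\bfL$. For any fixed sextuple $(\sR, \sR^*, \sC, \sC^*, \sS, \sS^*)$ with the prescribed sizes, there are at most $\exp(O(n))$ possible such sextuples (since $\ell=\log^{11}n=o(n)$), so a union bound is harmless as long as the per-sextuple failure probability is $\exp(-\omega(n\log^2 n))$.

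For a fixed sextuple, I would control two quantities of $T\cap\bfL$: the count $\bfM$ of pairwise entry-disjoint intercalates whose rows split between $\sR$ and $\sR^*$, columns between $\sC$ and $\sC^*$, and symbols between $\sS$ and $\sS^*$; and the count $\bfN$ of entries of $T\cap\bfL$ lying in some ``bad'' arrangement, where ``bad'' means participating in any arrangement of intercalates that would force the surrounding intercalate to fail the stability conditions of \cref{def:stable_intercalates} (an intersecting pair of intercalates, or a critical set of size up to four). Since each bad entry can destroy only a bounded number of members of any disjoint family, showing $\bfM \ge C\bfN$ with overwhelming probability, for a suitable absolute constant $C$, immediately produces a stable intercalate with the required splits.

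Both tail bounds $\Pr[\bfM\le \mu]\le \exp(-\omega(n\log^2 n))$ and $\Pr[\bfN\ge \mu/(2C)]\le \exp(-\omega(n\log^2 n))$, for a suitable common polynomial quantity $\mu$, will be proved by choosing appropriate $(\rho,m)$-inherited properties $\vec{\mc U}$ and invoking \cref{lem:random_LS_to_triangle_removal} (with $m=\alpha n^2$ for a small constant $\alpha$). One then transfers to the binomial hypergraph $\mb G^{(3)}(n,\alpha'/n)$ via \cref{lemma:triangle_removal_chunk_to_binomial} or \cref{lemma:triangle_removal_to_cleaned_binomial}, depending on whether the event is monotone increasing or decreasing in hyperedges. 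In the binomial model the template indicators and the edge indicators are jointly independent, so Freedman's inequality \cref{freedman_inequality} applies directly. The lower bound on $\bfM$ will use a Bollob\'as-style maximum-disjoint-family argument: first estimate the total number of split intercalates in $T\cap\bfL$, then the maximum number of template intercalates incident to any single entry, and take the ratio; two careful applications of \cref{freedman_inequality} then give concentration of both quantities.

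The main obstacle I expect is the upper tail on $\bfN$. The recursive definition of a critical set in \cref{def:stable_intercalates} permits arrangements of up to four pre-existing intercalates plus an additional intercalate appearing only after switching a subset, giving an unwieldy zoo of bad configurations; worse, because the approximation in \cref{lem:random_LS_to_triangle_removal} only has $\exp(O(n\log^2 n))$ slack, one cannot tolerate any clustering phenomena of the kind behind classical ``infamous upper tails'' (which is why we work with the number of bad \emph{entries} rather than bad configurations, as the outline emphasises). My plan is to first perform an auxiliary switching argument, unrelated to the main one, that collapses every bad configuration to one of a short list of canonical basic types; then, for each basic type, bound the number of entries lying in some configuration of that type via a careful concentration argument in the triangle removal process, leveraging that the joint probability of several intercalates simultaneously fitting inside the sparse template is sharply suppressed by powers of~$p$.
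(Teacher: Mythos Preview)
Your high-level plan matches the paper's almost exactly: random template, reduce to a lower bound on disjoint split intercalates versus an upper bound on entries in bad configurations, transfer to the triangle-removal/binomial model via \cref{lem:random_LS_to_triangle_removal}, and use an auxiliary switching argument to collapse the zoo of critical sets to a finite list of basic types. So the conceptual skeleton is correct.

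The one place where your description would fail as written is the concentration for $\bfM$. You propose to bound the total number of split intercalates and the maximum ``degree'' and take the ratio, applying Freedman to each. But the total count has expectation only of order $\varepsilon^4 n\ell$ (because $|\sR^*|=\ell$ is polylogarithmic), while a single edge can participate in $\Theta(n)$ potential split intercalates; plugging into \cref{freedman_inequality} with $\sum p_iK_i^2\sim n^3\cdot(\varepsilon/n)\cdot n^2$ gives a useless bound, nowhere near $\exp(-\omega(n\log^2 n))$. The paper sidesteps this by a \emph{two-phase reveal}: first expose all edges in rows outside $\sR^*$ and count ``half-intercalates'' (pairs of compatible edges in a single non-$\sR^*$ row), which have expectation $\Theta(\varepsilon^2 n^3)$ and concentrate easily; then expose the edges inside $\sR^*$, where there are only $O(n^2\ell)$ Bernoulli variables left and the maximum-disjoint-family statistic has $O(1)$ sensitivity, so Freedman now gives $\exp(-\Omega(\varepsilon^8 n\ell))$. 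The ``two careful applications'' are these two phases, not total-count and degree.

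A parallel subtlety arises in the upper bound on $\bfN$: the paper does not bound bad-configuration entries directly, but introduces an intermediate notion of \emph{threatened pairs} (bad configurations missing their two $\sR^*$-entries), bounds those first, and only then tracks how many survive the second-phase reveal. Your switching-to-basic-types idea is exactly what the paper does, but it is applied to the threatened-pair count rather than to bad configurations themselves, and the switching cost has to be paid back against a sharper $\exp(-\Omega(\varepsilon^3 n^2))$ bound on the basic count (this is why the paper also needs \cref{lemma:not_too_many_isolated_intercalates}).
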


\subsection{Deducing the master theorem on parity distributions}
To deduce \cref{thm:main-technical} from \cref{lemma:existence_split_stable_intercalate}, we will need some more notation and preliminaries.
\begin{definition}
Consider an $n\times n$ Latin square $L$. For a set of rows $R$
and a set of intercalates $\mathcal{I}$ in $L$, let $M_{R,\mathcal{I}}\in\{0,1\}^{|R|\times|\cI|}$
be the ``incidence matrix'' defined as follows:
\[
M_{R,\mathcal{I}}(r,I)=\begin{cases}
1 & \text{if row }r\text{ participates in intercalate }I\\
0 & \text{otherwise}.
\end{cases}
\]
We can similarly define $M_{C,\mc I}$ or $M_{S,\mc I}$ for a set of columns $C$ or a set
of symbols $S$.
\end{definition}

\begin{lemma}\label{lem:RCS}
Consider an $n\times n$ partial Latin square $P$ that is an $(\ell,1/10)$-intercalate-expander, where $n$ is sufficiently large and $1 \leq \ell = o(\sqrt n)$. Then we can find sets $R,C,S$ of rows, columns and symbols, satisfying $|R|,|C|,|S|\ge n-\ell$,
with the following property.

If we let $\mathcal{I}(R,C,S)$ be the set of
all stable intercalates in $P$ which are contained in the rows in $R$ and the columns in
$C$, only using the symbols in $S$, then the matrix
\[
\begin{pmatrix}M_{R,\mathcal{I}(R,C,S)}\\M_{C,\mathcal{I}(R,C,S)}\\M_{S,\mathcal{I}(R,C,S)}\end{pmatrix}\in\{0,1\}^{(|R|+|C|+|S|)\times|\mathcal{I}(R,C,S)|}
\]
has rank exactly $|R|+|C|+|S|-3$ over $\mb F_2$. The left kernel vectors of this matrix are those vectors of the form $(\vec{x}_R\; \vec{x}_C\; \vec{x}_S)\in \mb F_2^{|R|+|C|+|S|}$ where $\vec{x}_Z\in \mb F_2^{|Z|}$ is the all-zero vector $(0,\dots,0)$ or the all-one vector $(1,\dots,1)$ for each $Z \in \{R,C,S\}$.
\end{lemma}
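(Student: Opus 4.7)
The rank upper bound $\mathrm{rank}\le |R|+|C|+|S|-3$ is immediate: every stable-intercalate column has exactly two $1$s in each of the row-, column-, and symbol-blocks, so the three vectors $(\vec 1_R,0,0)$, $(0,\vec 1_C,0)$, $(0,0,\vec 1_S)$ are $\mathbb F_2$-linearly independent left-kernel elements. Once we establish the matching lower bound, the full left-kernel description is automatic: the kernel is the $3$-dimensional span of these three, whose $2^3=8$ elements are exactly the vectors described in the statement.

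For the lower bound, the plan is to construct $R,C,S$ by a shrinking procedure, starting from $R=C=S=[n]$ and carefully removing problematic elements. The driving observation uses the intercalate-expander property (\cref{def:intercalate-expander}) with $\beta=1/10$. Suppose $\vec x=(\vec x_R,\vec x_C,\vec x_S)$ is a non-trivial left-kernel vector modulo the trivial $3$-dimensional subspace. By replacing $\vec x_Z$ with $\vec 1_Z+\vec x_Z$ for $Z\in\{R,C,S\}$ as needed, I may assume each support $A=\mathrm{supp}(\vec x_R)$, $B=\mathrm{supp}(\vec x_C)$, $D=\mathrm{supp}(\vec x_S)$ has size at most half of its ambient set. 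In particular $|R\setminus A|,|C\setminus B|,|S\setminus D|\ge(n-\ell)/2\ge\beta n$ for $n$ large, since $\ell=o(\sqrt n)$.

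The crucial expander invocation is: if any one of $|A|,|B|,|D|$ is at least $\ell$, we reach a contradiction. Say $|A|\ge\ell$; then taking $\sR\subseteq A$ of size $\ell$, $\sR^*\subseteq R\setminus A$ of size $\beta n$, and $\sC=\sC^*\subseteq C\setminus B$, $\sS=\sS^*\subseteq S\setminus D$ each of size $\beta n$, the intercalate-expander furnishes a stable intercalate of $\cI(R,C,S)$ with exactly one row in $A$ and one in $R\setminus A$, both columns in $C\setminus B$, and both symbols in $S\setminus D$. Its parity under $\vec x$ is $1+0+0=1$ (odd), contradicting $\vec x\in \ker$. Hence every non-trivial left-kernel vector, in its normalized form, must satisfy $|A|,|B|,|D|<\ell$.

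It remains to eliminate small-support non-trivial kernel vectors. The plan is to iteratively pick such a vector and remove its supports $A,B,D$ (each of size less than $\ell$) from the current $R,C,S$, repeating until no non-trivial kernel vector survives. The main obstacle is controlling the total shrinkage to stay within the $\ell$-per-block budget: naive iteration could in principle remove more than $\ell$ elements from each side. I would handle this via a one-shot construction---selecting $R_0,C_0,S_0$ as a carefully chosen small defect set, essentially the union of supports of a generating set of the non-trivial kernel quotient, and arguing via the same expander rigidity above (which remains applicable in the restricted matrix, since $\beta=1/10$ provides substantial slack after an $\ell$-sized shrinkage) that the restriction has trivial kernel. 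The dimension-counting needed to guarantee $|R_0|,|C_0|,|S_0|\le\ell$ is the subtle step, and is where I expect the bulk of the technical work to lie.
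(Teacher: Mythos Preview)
Your handling of the rank upper bound and of the ``large support'' case is correct and matches the paper's Case~2. The genuine gap is exactly where you flag it: the small-support case and your proposed shrinking procedure. That plan is not merely incomplete but structurally problematic. Removing elements from $R,C,S$ also removes intercalate-columns from the matrix (those intercalates that touch a deleted row/column/symbol), which can \emph{create} new left-kernel vectors that were not present before; there is no evident monotonicity, no termination argument, and no reason the total number of deletions stays within~$\ell$ per block. The ``one-shot'' variant you sketch has the same issue: the non-trivial kernel quotient is defined with respect to a matrix that changes once you delete, so taking a generating set of it does not control the kernel after deletion.

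The paper sidesteps this circularity by fixing $R,C,S$ \emph{before} any kernel analysis, via a definition tailored to the small case. Let $R$ be the set of rows $r$ that lie in at least $6\ell$ stable intercalates of $P$ which are, apart from $r$, pairwise disjoint in rows, columns and symbols; define $C,S$ analogously. Two quick facts follow. First, $|R|>n-\ell$: if a set $R_0$ of $\ell$ rows were entirely excluded, one applies the $(\ell,1/10)$-expander greedily to find $n/10$ stable intercalates each meeting $R_0$ but otherwise disjoint, forcing some $r\in R_0$ into at least $(n/10)/\ell\ge 6\ell$ of them, a contradiction. Second, fewer than $3\ell$ vertices lie outside $R\sqcup C\sqcup S$, and each such vertex kills at most one of the $6\ell$ externally-disjoint intercalates through any fixed $r\in R$; hence every vertex of $R\sqcup C\sqcup S$ lies in more than $3\ell$ externally-disjoint intercalates of $\mathcal I(R,C,S)$.

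This second fact is precisely what dispatches your small-support case. If, after normalisation, $|A|,|B|,|D|<\ell$ and not all three are empty, pick any $v\in A\cup B\cup D$. Since $v$ lies in more than $3\ell>|A|+|B|+|D|$ externally-disjoint intercalates of $\mathcal I(R,C,S)$, at least one of them meets $A\cup B\cup D$ only at $v$, giving odd inner product with $\vec x$. No iteration and no dimension-counting are required.
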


\begin{proof}
First, we translate the desired statement into more combinatorial language. Consider the auxiliary 6-uniform hypergraph $\mc Q(R,C,S)$ on the vertex set\footnote{Here we use ``$\sqcup$'' to indicate a \emph{disjoint} union.} $R\sqcup C\sqcup S$, where for every intercalate in $\mc I(R,C,S)$ we put a hyperedge consisting of the two rows, two columns and two symbols of that intercalate. We want to choose $R,C,S$ to have the property that for any choices of vertex subsets $R'\subseteq R$ and $C'\subseteq C$ and $S'\subseteq S$ which are not all ``trivial'' (i.e., at least one of these three subsets is nonempty and proper), there is some hyperedge of $\mc Q(R,C,S)$ which intersects $R'\sqcup C'\sqcup S'$ in an odd number of vertices.

Now, we specify $R,C,S$ as follows. Let $R$ be the set of all rows $r$ for which there exist at least $6\ell$ different stable intercalates in $P$ which all involve $r$ and which otherwise involve pairwise disjoint rows, columns and symbols. Define $C,S$ similarly.

\begin{claim*}
Each of $R,C,S$ has size greater than $n-\ell$.
\end{claim*}
\begin{claimproof}
    Suppose for contradiction (and without loss of generality) that there is a set $R_0$ of $\ell$ rows which are not in $R$. Then we can use the intercalate-expansion property to greedily find $n/10$ stable intercalates in $P$ which all involve disjoint rows, columns and symbols (except for their rows in $R_0$). So, there must be some $r\in R_0$ involved in $(n/10)/|R_0|\ge 6\ell$ stable intercalates which involve disjoint rows, columns and symbols (except that they all involve $r$). This contradicts the choice of $R_0$.
\end{claimproof}
Recall that the defining properties of $R,C,S$ were that they are included in many (at least $6\ell$) ``externally disjoint'' stable intercalates in $P$. The above claim shows that almost all of those intercalates lie in $\mc I(R,C,S)$,
and therefore correspond to hyperedges of $\mc Q(R,C,S)$. Specifically, each ``bad'' row/column/symbol (which is not in $R,C,S$) kills at most one of our externally disjoint stable intercalates, so each vertex of $\mc Q(R,C,S)$ is contained in more than $6\ell-3\ell\ge 3\ell$ externally disjoint 6-uniform hyperedges.

Now, consider vertex subsets $R'\subseteq R$ and $C'\subseteq C$ and $S'\subseteq S$ which are not all trivial (at least one of these subsets is nonempty and proper). We need to prove there is some hyperedge intersecting $R'\sqcup C'\sqcup S'$ in an odd number of vertices.

\medskip
\noindent\textbf{Case 1: }First, suppose that for each $Z \in \{R', C', S'\}$, either $Z$ or its complement has size less than $\ell$. This gives us three ``small sets'' of size less than $\ell$. At least one of our small sets must be non-empty; let $v$ be a vertex in that set. Recall that $v$ is contained in at least $3\ell$ externally disjoint hyperedges of $\mc Q(R,C,S)$, so at least one of these hyperedges (externally) avoids our three small sets. Any such hyperedge has odd intersection with $R\sqcup C\sqcup S$.

\medskip
\noindent\textbf{Case 2: }If Case 1 does not occur, we may assume without loss of generality that $R'$ and $R\setminus R'$ both have size at least $\ell$. Then we apply the intercalate-expansion property directly. We take the smaller of $R'$ and $R\setminus R'$ as $\mathscr R^*$ and the larger as $\mathscr R$; we take the larger of $C'$ and $C\setminus C'$ as $\mathscr C = \sC^*$; and we take the larger of $S'$ and $S\setminus S'$ as $\mathscr S = \sS^*$. This again gives us a stable intercalate in $P$ (corresponding to an edge of $\mc Q(R,C,S)$) with the desired odd intersection.
\end{proof}

Now we can prove \cref{thm:main-technical}.
\begin{proof}[Proof of \cref{thm:main-technical}]
Let $T$ be as given by \cref{lemma:existence_split_stable_intercalate}, and let $\ell = \log^{11}{n}$. For a Latin square $L\in \mc L_n$, we say an intercalate in $L$ is \emph{$T$-stable} if it is present and stable in $T\cap L$. Recall the notation $\mc S(P)$ from \cref{def:intercalate-abstraction} (recording the set of stable intercalates in a partial Latin square $P$, but without recording which of the two possible ``states'' each intercalate is in).

We consider an auxiliary multigraph $\mc G$, whose vertices are the Latin squares $L\in \mc L_n$, and where there is an edge between two Latin squares $L_1,L_2$ if it is possible to switch some of the $T$-stable intercalates in $L_1$ to transform it into $L_2$. 
So, for every $L\in \mc L_n$, the degree of $L$ in $\mc G$ is precisely $2^{|\mc S(T\cap L)|}$ (including one loop edge from $L$ to itself). In fact, by \cref{lem:canonicity_of_stable_intercalates}, the connected component of $L$ is a clique (with loops) of order $2^{|\mc S(T\cap L)|}$, where each vertex $L'\in \mc L_n$ in this clique has $\mc S(T\cap L')=\mc S(T\cap L)$. In particular, each component is regular, so the uniform distribution $\on{Unif}(\mc L_n)$ is a stationary distribution for the random walk on $\mc G$.

Therefore, if we first sample $\mathbf{L}\sim \on{Unif}(\mc L_n)$, and then we ``re-randomise'' $\mathbf{L}$
by randomly switching each $T$-stable intercalate with probability $1/2$ independently (this is the same as stepping to a random neighbour of $\mathbf L$ in $\mc G$),
then the resulting random Latin square $\mathbf{L}'$ still has
the same uniform distribution as $\mathbf{L}$.

Now, for the rest of the proof, our goal is to show that $\vec{\mathbf{X}}'=(N_{\mathrm{row}}(\mathbf{L}'),N_{\mathrm{col}}(\mathbf{L}'),N_{\mathrm{sym}}(\mathbf{L}'))$ is a 2-near-binomial mixture with parameters $(n, \ell, \exp(-\omega(n\log^2{n}))$. Recalling \cref{def:2-near-binomial-mixture}, this means we need to show how to reveal certain information about $\bfL$ and $\bfL'$, in such a way that, conditional on a typical outcome of this revealed information, $\vec{\mathbf{X}}'$ is a $(n, \ell)$-2-near-binomial random vector (where here ``typical'' refers to an event which occurs with probability $1-\exp(-\omega(n\log^2 n))$).

First, reveal an outcome of $\bfL$ such that $T\cap \bfL$ is an $(\ell,1/10)$-intercalate-expander. By \cref{lemma:existence_split_stable_intercalate}, this occurs with probability $1-\exp(-\omega(n\log^2 n))$. From now on, we will view $\bfL$ as a non-random object (i.e., all probabilities will implicitly be with respect to the conditional probability space given our outcome of $\bfL$). Let $R,C,S$ be the sets of rows, columns and symbols from \cref{lem:RCS}, and let $\mc I=\mc I(R,C,S)$ be as in the statement of \cref{lem:RCS}. Recall that $\bfL'$ is obtained from $\bfL$ via random $T$-stable intercalate switches; reveal any outcome of these random switches for $T$-stable intercalates which are \emph{not} in $\mathcal{I}$. We will show that, in the resulting conditional probability
space (which can be described in terms of $|\mc I|$ random coin flips), $\vec{\mathbf{X}}'$ has a 2-near-binomial distribution with
parameters $(n,\ell)$.

To see this, let $\vec{\mathbf{x}}_{1}\in\{0,1\}^{n},\vec{\mathbf{x}}_{2}\in\{0,1\}^{n},\vec{\mathbf{x}}_{3}\in\{0,1\}^{n}$
be the sequences of parities of rows, columns and symbols in $\mathbf{L}'$.
In our conditional probability space, the only parts of $\vec{\mathbf{x}}_{1},\vec{\mathbf{x}}_{2},\vec{\mathbf{x}}_{3}$
that remain random are the subsequences $\vec{\mathbf{x}}_{R}\in\{0,1\}^{R},\vec{\mathbf{x}}_{C}\in\{0,1\}^{C},\vec{\mathbf{x}}_{S}\in\{0,1\}^{S}$
corresponding to our identified sets of rows, columns and symbols.
We can describe their joint distribution as
\[
(\vec{\mathbf{x}}_{R}\;\vec{\mathbf{x}}_{C}\;\vec{\mathbf{x}}_{S})=(\vec y_R\; \vec y_C\; \vec y_S)+\begin{pmatrix}M_{R,\mathcal{I}}\\ M_{C,\mathcal{I}}\\M_{S,\mathcal{I}}\end{pmatrix}\vec{\mathbf{r}},
\]
where $\vec y_R,\vec y_C,\vec y_S$ describe the parities of our identified rows, columns and symbols in $\mbf L$ (which we are viewing as non-random), $\vec{\mathbf{r}}\in\{0,1\}^{|\mathcal{I}|}$ is a uniformly random zero-one sequence of length $|\mathcal{I}|$, and arithmetic is mod 2.

Now, recall the structure of the left kernel vectors from \cref{lem:RCS}. We see that $(\vec{\mathbf{x}}_{R}\;\vec{\mathbf{x}}_{C}\;\vec{\mathbf{x}}_{S})$ is a uniformly random element of $\{0,1\}^R\times \{0,1\}^C\times \{0,1\}^S$, except that the values of $\vec 1\cdot \vec{\mbf x}_R$, $\vec 1\cdot \vec{\mbf x}_C$ and $\vec 1\cdot \vec{\mbf x}_S$ are constrained (i.e., the parities of the rows, columns and symbols in $R,C,S$ are uniformly random, except that the number of odd rows, number of odd columns, and number of odd symbols are constrained mod 2). This implies the desired result.
\end{proof}

\section{Setup for the proof of the intercalate-expander lemma}
\label{sec:setup_int_expander_theorem}
In this section, we set the stage for the proof of \cref{lemma:existence_split_stable_intercalate}. We first rephrase it by slightly changing the nature and role of the template. After that, we state two main lemmas, and show how to deduce the (rephrased) intercalate-expander lemma from them (we will then spend the following sections of the paper proving these two lemmas).

Recall the sparsifying role of the template $T$ in \cref{lemma:existence_split_stable_intercalate}.
To prove \cref{lemma:existence_split_stable_intercalate}, we will take a slightly different view on $T$. Namely, for most of the proof it will be convenient to instead work with a template \emph{hypergraph} $H$ (which specifies a set of row/column/symbol triples, instead of a set of row/column pairs). Our template hypergraph will be obtained by sampling $\mb G^{(3)}(n,\varepsilon)$ with an appropriate~$\varepsilon$, as specified in the following definition.

\begin{definition}
\label{def:ell_eps}
Throughout this and the following sections, we fix a constant $\beta > 0$ (cf.\ \cref{lemma:existence_split_stable_intercalate}). We also let $\varepsilon = \eta \log^{-1}{n}$ for some $\eta = o(1)$ that slowly goes to $0$ (concretely, take $\eta = 1/\log\log{n}$) and let $\ell = \log^{11}{n}$.
\end{definition}
We now state a version of the intercalate-expander lemma with a random template hypergraph.

\begin{lemma}
\label{lemma:intercalate_expander_random_hypergraph_template}
Fix a constant $\beta >0$. Consider a random hypergraph $\bfH \sim \mb G^{(3)}(n,\varepsilon)$ and an independent random Latin square $\bfL \sim \on{Unif}(\cL_n)$ (both interpreted as subgraphs of $K^{(3)}_{n,n,n}$). Then
\[ \mathbb{P}[ \bfH\cap \bfL \text{ is an }(\ell,\beta)\text{-intercalate-expander}\,] \geq 1- \exp(-\omega(n \log^2{n})).\]
\end{lemma}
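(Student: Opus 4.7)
The plan is a union bound over admissible sextuples of sets. Let $\mathcal{F}$ denote the collection of all tuples $(\sR, \sR^*, \sC, \sC^*, \sS, \sS^*)$ with five components of size $\beta n$ and one of size $\ell$; then $|\mathcal{F}| \leq \binom{n}{\beta n}^5 \binom{n}{\ell} \leq \exp(O(n) + O(\log^{12} n)) = \exp(O(n))$, which is dwarfed by the desired failure probability $\exp(-\omega(n \log^2 n))$. So I would focus on a fixed sextuple and show that $\bfH \cap \bfL$ contains a \emph{compatible} stable intercalate (that is, one whose two rows, two columns, and two symbols split between $\sR/\sR^*$, $\sC/\sC^*$, and $\sS/\sS^*$ as in \cref{def:intercalate-expander}) with failure probability $\exp(-\omega(n \log^2 n))$, and then union bound.

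For a fixed sextuple, I would split the task into two supporting lemmas whose precise formulations are to come. The first, call it (A), asserts that $\bfH \cap \bfL$ contains $\Omega(\varepsilon^4 \ell n)$ pairwise entry-disjoint compatible intercalates; heuristically $\bfL$ has $\Theta(\beta^5 \ell n)$ compatible intercalates, and each survives intersection with $\bfH$ independently with probability $\varepsilon^4$. The second, call it (B), asserts that at most $o(\varepsilon^4 \ell n)$ entries of $\bfH \cap \bfL$ lie in any ``bad'' arrangement of intercalates, meaning a configuration that would make a compatible intercalate through such an entry fail to be stable in the sense of \cref{def:stable_intercalates} (i.e.\ not isolated, or contained in a critical set). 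Since each of the disjoint compatible intercalates from (A) has only four entries, (B) can only destroy stability of at most $o(\varepsilon^4 \ell n)$ of them, so combining (A) and (B) leaves $\Omega(\varepsilon^4 \ell n) \geq 1$ compatible \emph{stable} intercalates in $\bfH \cap \bfL$, proving \cref{lemma:intercalate_expander_random_hypergraph_template}.

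To establish (A) and (B) with the required overwhelming probability, my plan is to invoke \cref{lem:random_LS_to_triangle_removal} to transfer each event from $\bfL$ to an ordered partial Latin square $\vec{\mbf R} \sim \trp(n, \alpha n^2)$ for some small constant $\alpha$. This costs a multiplicative factor $\exp(O(n \log^2 n))$ in the failure probability, which is tolerable: it suffices to prove the corresponding inherited properties on $\vec{\mbf R}$ (intersected with the independent template $\bfH$) hold with failure probability $\exp(-\omega(n \log^2 n))$. These properties can then be reduced to the fully independent binomial model $\mb G^{(3)}(n, p)$ via \cref{lemma:triangle_removal_chunk_to_binomial} (for the monotone increasing direction in (A)) and \cref{lemma:triangle_removal_to_cleaned_binomial} (for the monotone decreasing direction in (B)), after which standard concentration tools like \cref{freedman_inequality} become applicable. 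For (A) a maximum disjoint family argument in the spirit of Bollob\'as converts expected intercalate counts into disjoint families, while (B) would be approached by enumerating local bad arrangements entry by entry.

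The main obstacle is (B). The class of configurations obstructing stability is large and heterogeneous (pairs of intersecting intercalates, triples and quadruples forming critical sets whose switches create a new intercalate, etc.), and naively counting the \emph{number of such configurations} would fail due to the ``infamous upper tail'' clustering phenomenon discussed in \cref{subsec:intercalate-switchings}: with non-negligible probability a few extra intercalates can appear at a single location and inflate the count beyond anything controllable at the $\exp(-\omega(n \log^2 n))$ level. Two ingredients should be central to overcoming this: first, bounding the number of \emph{entries} participating in bad configurations rather than the number of configurations themselves (a single bad cluster contributes only $O(1)$ entries, which breaks the clustering); second, a preparatory switching argument that reduces the zoo of bad configurations to a small list of canonical ``basic types'', so that only constantly many case analyses are needed in the final entry-by-entry concentration estimate on the binomial model.
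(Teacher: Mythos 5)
Your proposal matches the paper's proof almost exactly: Lemma~\ref{lemma:intercalate_expander_random_hypergraph_template} is reduced to a pair of lemmas (the paper's \cref{lemma:min_disjoint_split_ints} and \cref{lemma:max_entries_bad_configs}, respectively your (A) and (B)), each proved by transferring to the triangle removal process via \cref{lem:random_LS_to_triangle_removal} and then to a binomial model via \cref{lemma:triangle_removal_chunk_to_binomial}/\cref{lemma:triangle_removal_to_cleaned_binomial}, with (A) handled by a Bollob\'as-style maximum disjoint family argument and (B) handled by counting \emph{entries} (not configurations, to dodge the infamous upper tail) after a switching reduction to finitely many basic types. The one precision worth adding for (B): the paper restricts the entry count to entries lying in rows of $\sR^*$ covered by $\sR^*$-split bad configurations (yielding the bound $C\varepsilon^5 n\ell$), rather than all entries touching bad arrangements -- an unrestricted count would be of order $n^2$ rather than $n\ell$ and would not be $o(\varepsilon^4 n\ell)$.
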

\begin{remark}
    \cref{lemma:intercalate_expander_random_hypergraph_template} trivially implies that there is a hypergraph $H\subseteq K_{n,n,n}^{(3)}$ such that \[\Pr[H \cap \bfL\text{ is an }(\ell,\beta)\text{-intercalate-expander}]\ge 1 - \exp(-\omega(n\log^2{n})).\] However, this would not have sufficed to prove \cref{thm:main-technical} (we would have no guarantee that if we switch an intercalate which lies in $H$, the resulting switched intercalate still lies in $H$). The slightly different notion of a template in \cref{def:template} was chosen to avoid this issue.
\end{remark}

We first show the (simple) deduction of \cref{lemma:existence_split_stable_intercalate} from \cref{lemma:intercalate_expander_random_hypergraph_template}. For this, we record the following simple fact, which we repeatedly use throughout the rest of the paper.
\begin{fact}\label{fact:joint_prob_vs_most_outcomes}
Suppose $\bfH_1 \in \cH_1$ and $\bfH_2 \in \cH_2$ are independent random objects, let $\cP^{(2)} \subseteq \cH_1 \times \cH_2$, and for any $H_1\in \cH_1$ and $p\in [0,1]$ let $\cP(p)$ be the set of all $H_1\in \mc H_1$ such that $\Pr[(H_1,\mbf H_2)\in \mc P^{(2)}]> p$. Then
\begin{enumerate}
    \item \label{fact:joint_to_separate} if $\mathbb{P}[(\bfH_1, \bfH_2) \in \cP^{(2)}] \leq p$, then $\Pr[\mbf H_1\in \mc P(\sqrt p)]\le \sqrt p$.
    \item \label{fact:separate_to_joint} If $\Pr[\mbf H_1\in \mc P(p_2)]\le p_1$, then $\mathbb{P}[(\bfH_1, \bfH_2) \in \cP^{(2)}] \leq p_1+p_2$.
\end{enumerate}
\end{fact}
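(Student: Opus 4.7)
The plan is to observe that both parts are essentially Markov's inequality applied to the conditional-probability function $f(H_1) := \Pr[(H_1, \bfH_2) \in \cP^{(2)}]$. By independence of $\bfH_1$ and $\bfH_2$ and the law of total probability, we have
\[
\Pr[(\bfH_1, \bfH_2) \in \cP^{(2)}] \;=\; \mb E\big[f(\bfH_1)\big],
\]
and $\cP(p) = \{H_1 : f(H_1) > p\}$ by definition. Everything then reduces to translating between a bound on $\mb E[f(\bfH_1)]$ and a bound on $\Pr[f(\bfH_1) > p]$.

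For part (1), the hypothesis says $\mb E[f(\bfH_1)] \le p$. I would apply Markov's inequality at threshold $\sqrt p$ to get
\[
\Pr[\bfH_1 \in \cP(\sqrt p)] \;=\; \Pr\big[f(\bfH_1) > \sqrt p\big] \;\le\; \frac{\mb E[f(\bfH_1)]}{\sqrt p} \;\le\; \frac{p}{\sqrt p} \;=\; \sqrt p,
\]
which is exactly the claim. (Strictly, if $p=0$ one takes a limit / treats the threshold as $0$, but this case is trivial since $f(\bfH_1) = 0$ almost surely.)

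For part (2), I would split the expectation $\mb E[f(\bfH_1)]$ according to whether $\bfH_1 \in \cP(p_2)$. On the complement we have $f(\bfH_1) \le p_2$ by definition of $\cP(p_2)$, while on $\cP(p_2)$ we use the trivial bound $f(\bfH_1) \le 1$:
\[
\Pr[(\bfH_1,\bfH_2) \in \cP^{(2)}] \;=\; \mb E[f(\bfH_1)] \;\le\; 1 \cdot \Pr[\bfH_1 \in \cP(p_2)] + p_2 \cdot \Pr[\bfH_1 \notin \cP(p_2)] \;\le\; p_1 + p_2.
\]

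There is no substantive obstacle here; both estimates are immediate from the two-line identity above. The only thing to be careful about is making the independence of $\bfH_1$ and $\bfH_2$ explicit when invoking $\Pr[(\bfH_1,\bfH_2) \in \cP^{(2)}] = \mb E[f(\bfH_1)]$, since this step is exactly where the independence hypothesis is used.
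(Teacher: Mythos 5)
Your proof is correct, and since the paper states this as a \emph{Fact} with no proof (treating it as self-evident), your argument simply supplies the obvious justification that the paper omits. Defining $f(H_1) = \Pr[(H_1, \bfH_2) \in \cP^{(2)}]$, using independence to write $\Pr[(\bfH_1,\bfH_2)\in\cP^{(2)}] = \mb E[f(\bfH_1)]$, and then applying Markov's inequality at threshold $\sqrt p$ for part (1) and the two-case split for part (2) is exactly the intended reasoning, and you handle the degenerate $p=0$ case of Markov correctly.
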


\begin{proof}[Proof of \cref{lemma:existence_split_stable_intercalate}]
Say that $L\in \cL_n$ is \emph{good} if \[\Pr[\bfH \cap L\text{ is an }(\ell,\beta)\text{-intercalate-expander}]\ge 1 - \exp(-\omega(n\log^2{n})).\]
By \cref{lemma:intercalate_expander_random_hypergraph_template} and \cref{fact:joint_prob_vs_most_outcomes}(\ref{fact:joint_to_separate}), $\mbf L$ is good with probability $1-\exp(-\omega(n\log^2{n} )$.

Now, let $\bfT \subseteq [n]^2$ be a random subset of row/column pairs, where each is included with probability $\varepsilon$ independently. Note that for each fixed outcome $L$ of $\bfL$, the two random partial Latin squares $\bfH \cap L$ and $\bfT \cap L$ have exactly the same distribution. Therefore, for each good $L$,
\[\Pr[\bfT \cap L\text{ is an }(\ell,\beta)\text{-intercalate-expander}]\ge 1 - \exp(-\omega(n\log^2{n})),\]
and \cref{fact:joint_prob_vs_most_outcomes}(\ref{fact:separate_to_joint}) yields
\[\Pr[\bfT \cap \bfL\text{ is an }(\ell,\beta)\text{-intercalate-expander}]\ge 1 - \exp(-\omega(n\log^2{n})).\]
Applying \cref{fact:joint_prob_vs_most_outcomes}(\ref{fact:joint_to_separate}) again, we see that almost all outcomes $T$ of $\mbf T$ are suitable for the conclusion of \cref{lemma:existence_split_stable_intercalate} (namely, $\mbf T$ satisfies  the required property with probability $1-\exp(-\omega(n\log^2{n}))$).
\end{proof}

We now move on to reducing \cref{lemma:intercalate_expander_random_hypergraph_template} to two main lemmas. Recall that the property of being an intercalate-expander says that (with respect to any appropriate 6-tuple $(\sR, \sR^*, \sC, \sC^*, \sS, \sS^*)$) there is at least one stable intercalate consistent with our 6-tuple. Our first lemma (\cref{lemma:min_disjoint_split_ints}) will say that there are likely to be \emph{many} intercalates consistent with every appropriate 6-tuple (and says nothing about isolatedness or stability). Our second lemma (\cref{lemma:max_entries_bad_configs}) will say that there is likely to be a much smaller number of entries which participate in ``bad sets'' of intercalates which violate isolatedness or stability (i.e., they participate in a critical set of intercalates or a pair of intersecting intercalates).

\begin{remark}\label{rem:upper-tail}It is crucially important here that we consider the \emph{number of entries} in bad sets of intercalates (as opposed to the number of bad sets of intercalates themselves). This is due to the ``infamous upper tail'' problem which arises when trying to estimate the upper tail of subgraph counts in random graphs and hypergraphs. Due to clustering phenomena, these upper tails are `fatter' than their lower counterparts (we need to pay very close attention to tail bounds due to the exponential error terms in \cref{lem:random_LS_to_triangle_removal}). 
\end{remark}

Before stating our two lemmas, we need some preparations.

\begin{definition}
    For sets $\sR$ and $\sR^*$ of rows, sets $\sC$ and $\sC^*$ of columns and sets $\sS$ and $\sS^*$ of symbols, we say that an intercalate is $(\sR,\sR^*,\sC,\sC^*,\sS,\sS^*)$-\emph{split} (or just \emph{split} if the parameters are clear from context) if it has one row in $\sR$, the other row in $\sR^*$, one column in $\sC$, the other column in $\sC^*$, one symbol in $\sS$, and the other symbol in $\sS^*$.
    \end{definition}
    By symmetry between the rows, columns, and symbols of a Latin square, in the context of \cref{def:intercalate-expander} it suffices to consider the case where $|\sR^*| = \ell$ and $|\sR|, |\sC|, |\sC^*|, |\sS|, |\sS^*| \geq \beta n$.
\begin{definition}
We say a $6$-tuple $(\sR,\sR^*,\sC,\sC^*,\sS,\sS^*)$ is \emph{$(\ell,\beta)$-permissible} if we have $|\sR^*| = \ell$ and $\ |\sR|, |\sC|, |\sC^*|, |\sS|, |\sS^*| \geq \beta n$. 
\end{definition} Our goal is then to show that, with very high probability, all $(\ell,\beta)$-permissible $6$-tuples have a split stable intercalate.

\begin{definition}
Let $P$ be a partial Latin square, let $\{A_1, \dots, A_t\}$ be a critical set of isolated intercalates in $P$, and let $A'$ be an intercalate that can be created by switching a subset of $\{A_1, \dots, A_t\}$. Then we say the entries in $A_1 \cup \dots \cup A_t \cup (A' \cap P)$ comprise a \emph{critical configuration} with respect to $P$.

We say that a set of entries in a partial Latin square $P$ is a \emph{bad configuration} with respect to $P$ either if it is a critical configuration with respect to $P$, or if it comprises two intercalates that intersect in one entry. (We will from now on omit ``with respect to $P$'' if $P$ is clear from the context.)

We say that a bad configuration is $(\sR, \sR^*, \sC, \sC^*, \sS, \sS^*)$-\emph{split} if at least one intercalate in it (that is, one of the at most four intercalates in the critical set of intercalates or one of the two intersecting intercalates, depending on the type of bad configuration) is $(\sR, \sR^*, \sC, \sC^*, \sS, \sS^*)$-split.
\end{definition}

\begin{remark}
    We will sometimes need to talk about sets of entries which make up some arrangement of intercalates, and we will sometimes need to talk about sets of intercalates themselves. To try to assist the reader to keep track of this distinction, we will reserve the word ``configuration'' for a set of entries in a partial Latin square (i.e. a set of hyperedges, in the hypergraph perspective in \cref{fact:hypergraph_view}).
\end{remark}
 
Note that pairs of intercalates in a partial Latin square can intersect in at most one entry, so if an intercalate in a partial Latin square does not intersect any other intercalate in one entry, then it is isolated. 
Thus, an intercalate in a (partial) Latin square is stable if and only if it is not a subset of any bad configuration.

Instead of reasoning directly about the total number of split intercalates, it will be easier to show lower bounds on the size of the maximum family of disjoint split intercalates (as this random variable behaves better with respect to \cref{freedman_inequality}). The next lemma states that if we consider a random Latin square intersected with a random sparse template hypergraph, then there is very likely to be a large family of disjoint split intercalates.

\begin{definition}
    For $c>0$ and a tripartite hypergraph $H \subseteq K^{(3)}_{n,n,n}$, let $\cT^{\mr{int}}(H,c)\subseteq \cL_n$ be the set of Latin squares $L$ such that for some $(\ell,\beta)$-permissible $6$-tuple $(\sR, \sR^*, \sC, \sC^*, \sS, \sS^*)$, the maximum family of disjoint $(\sR, \sR^*, \sC, \sC^*, \sS, \sS^*)$-split intercalates in $H \cap L$ has size smaller than $c \varepsilon^4 n\ell$.
\end{definition}

\begin{lemma}
\label{lemma:min_disjoint_split_ints}
Fix a constant $c>0$ which is sufficiently small with respect to $\beta$. Then, consider a random hypergraph $\bfH \sim \mb G^{(3)}(n,\varepsilon)$ and an independent random Latin square $\bfL \sim \on{Unif}(\cL_n)$. We have
\[ \mathbb{P}[ \bfL \in \cT^{\mr{int}}(\bfH,c)] \leq \exp(-\omega(n\log^2{n})).\]
\end{lemma}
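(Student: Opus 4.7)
The plan is a two-stage reveal: first $\bfL$, then $\bfH$. In the first stage I show that with probability $1-\exp(-\omega(n\log^2 n))$, $\bfL$ itself contains, for every $(\ell,\beta)$-permissible 6-tuple, a family of $\Omega(n\ell)$ pairwise edge-disjoint split intercalates; in the second stage, I show enough of each such family survives sparsification by $\bfH$.

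For the first stage, fix a permissible 6-tuple. The property ``max disjoint split family of size less than $Cn\ell$'' (for a suitable constant $C=C(\beta)$) is monotone decreasing in the partial Latin square and trivially inherited by subsets ($\rho=1$), so I apply \cref{lem:random_LS_to_triangle_removal} and then \cref{lemma:triangle_removal_to_cleaned_binomial} to reduce to the cleaned binomial $\bfG^*\sim\mb G^{(3)}(n,\alpha/n)$ for a small constant $\alpha=\alpha(\beta)$. Direct first-moment computations give $\mathbb E[X]=\Theta(\alpha^4 n\ell)$ for the count $X$ of split intercalates and $\mathbb E[Y]=\Theta(\alpha^7 n\ell)$ for the count $Y$ of ordered intersecting pairs, so $\mathbb E[Y]/\mathbb E[X]=\Theta(\alpha^3)$ is small, and the Caro--Wei/Bollob\'as bound gives a max disjoint split family of size at least $X^2/(X+2Y)=\Omega(n\ell)$ in expectation. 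Super-exponential concentration comes from a two-step application of Freedman's inequality (\cref{freedman_inequality}): the first step concentrates $X$ from below (via a Janson-style revelation of hyperedges, exploiting the very large mean $\mathbb E[X]=\Theta(n\log^{11} n)$); the second step uses Bollob\'as's disjoint-family technique to bypass direct control of the ``infamous'' upper tail of $Y$, instead applying Freedman to the (much more benign) count of hyperedges whose intercalate-degree in $\bfG^*$ is abnormally large, and arguing that deleting these few heavy hyperedges still leaves enough disjoint split intercalates. A union bound over the $\exp(O(n))$ permissible 6-tuples preserves the super-exponential bound.

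For the second stage, condition on any $\bfL$ satisfying the first stage, and for each 6-tuple let $\mathcal F_{\bfL}$ denote the associated family of $\Omega(n\ell)$ pairwise edge-disjoint split intercalates. Each intercalate $I\in\mathcal F_{\bfL}$ survives in $\bfH\cap\bfL$ iff its four hyperedges all lie in $\bfH$, an event of probability $\varepsilon^4$; disjointness of $\mathcal F_{\bfL}$ makes these survival events mutually independent across $I$. The number of survivors is therefore $\on{Bin}(|\mathcal F_{\bfL}|,\varepsilon^4)$, with mean $\Omega(\varepsilon^4 n\ell)=\Omega(\eta^4 n\log^7 n)=\omega(n\log^2 n)$ (using $\ell=\log^{11}n$, $\varepsilon=\eta/\log n$, $\eta=1/\log\log n$). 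The Chernoff bound (\cref{thm:chernoff_dependency_graph}) then delivers at least $c\varepsilon^4 n\ell$ survivors except with probability $\exp(-\omega(n\log^2 n))$, and a union bound over the $\exp(O(n))$ permissible 6-tuples completes the argument.

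The main obstacle is the first stage, specifically the super-exponential concentration of the max disjoint split family in $\bfG^*$. A na\"ive Freedman bound on hyperedges is useless, because individual hyperedges can participate in $\Theta(n^3)$ potential split intercalates, making the worst-case Lipschitz constant $K$ in \cref{freedman_inequality} intolerable. Bollob\'as's trick is to work instead with a quantity (essentially, the split-intercalate count after deleting a controllably small set of ``heavy'' hyperedges) whose sensitivity is much smaller, and to absorb the loss from deletion into only a constant factor in the final bound.
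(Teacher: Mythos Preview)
Your two-stage plan (first $\bfL$, then $\bfH$) is structurally different from the paper's, and your second stage is sound: given a family $\cF_{\bfL}$ of $\Omega(n\ell)$ pairwise edge-disjoint split intercalates in $\bfL$, the survival events under $\bfH$ are independent, and Chernoff delivers $\ge c\varepsilon^4 n\ell$ survivors with probability $1-\exp(-\omega(n\log^2 n))$.

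The gap is in your first stage. You propose to concentrate the count $X$ of split intercalates in $\bfG^*$ via Freedman, but this cannot succeed: a single hyperedge of a partial Latin square can lie in up to $n-1$ intercalates, so $K_i=\Theta(n)$, and with $n^3$ Bernoullis of parameter $\alpha/n$ one gets $\sum p_iK_i^2=\Theta(n^4)$. Since $\mathbb E[X]=\Theta(n\ell)$, Freedman yields only $\exp(-\Theta(\ell^2/n^2))$, which is trivial. Switching to the max disjoint family $\bfZ$ directly (Lipschitz $O(1)$) is no better: now $\sum p_iK_i^2=\Theta(n^2)$, and a deviation of $t=\Theta(n\ell)$ gives only $\exp(-\Theta(\ell^2))=\exp(-\Theta(\log^{22}n))$, still far short of $\exp(-\omega(n\log^2 n))$. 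The Bollob\'as heavy-edge deletion you sketch does not alter this arithmetic: with $\Theta(n^2)$ effective trials and a target of order $n\ell$, no rearrangement of the Freedman application can produce an exponent of order $n\ell$ unless the number of relevant random variables is first reduced.

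The paper's key idea is exactly such a reduction. It reveals the random choices in two phases split by rows: first all hyperedges with row \emph{outside} $\sR^*$, then those with row \emph{inside} $\sR^*$. Phase one counts ``half-intercalates'' (pairs of entries on a common non-$\sR^*$ row poised to complete to a split intercalate); this has huge mean $\Theta(n^3)$ and Lipschitz $O(n)$, so Freedman gives $\exp(-\Omega(n^2))$. In phase two only $O(n^2\ell)$ random variables remain, so Freedman on the max disjoint family (Lipschitz $O(1)$, mean $\Theta(n\ell)$) now gives $\exp(-\Omega(n\ell))=\exp(-\omega(n\log^2 n))$. The paper also couples $\bfH$ and $\bfG$ together via paired Bernoullis $\bfg^{\bfh}_e,\bfg_e$ rather than separating them as you do, but that difference is stylistic; the essential missing ingredient in your proposal is the $\sR^*$/non-$\sR^*$ row split. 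If you insert that split into your first stage, your overall scheme would go through.
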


Next, the following lemma gives an upper bound on the number of entries in split bad configurations.
\begin{definition}
    We say that an entry in a row in $\sR^*$ is \emph{covered} by some split bad configuration if it belongs to a split intercalate in it. We say an intercalate or a bad configuration is $\sR^*$-split if it is $(\sR, \sR^*, \sC, \sC^*, \sS, \sS^*)$-split for some $(\ell,\beta)$-permissible $6$-tuple $(\sR, \sR^*, \sC, \sC^*, \sS, \sS^*)$ containing $\sR^*$. For $C>0$ and a hypergraph $H \subseteq K^{(3)}_{n,n,n}$, let $\cT^{\mr{bad}}(H, C)\subseteq \cL_n$ be the set of Latin squares $L$ such that for some set of rows $\sR^*$ of size $\ell$, the number of entries in rows in $\sR^*$ covered by $\sR^*$-split bad configurations in $H \cap L$ is more than $C \varepsilon^5 n\ell$.
\end{definition}
\begin{lemma}
\label{lemma:max_entries_bad_configs}
There is an absolute constant $C>0$ such that the following holds. Consider a random hypergraph $\bfH \sim \mb G^{(3)}(n,\varepsilon)$ and an independent random Latin square $\bfL \sim \on{Unif}(\cL_n)$. We have
\[ \mathbb{P}[ \bfL \in \cT^{\mr{bad}}(\bfH, C) ] \leq \exp(-\omega(n \log^2{n})).\]
\end{lemma}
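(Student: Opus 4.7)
The plan is to reduce the problem to a sparse random hypergraph via Lemma~\ref{lem:random_LS_to_triangle_removal} and Lemma~\ref{lemma:triangle_removal_chunk_to_binomial}, and then count entries in bad configurations via concentration. Since $\bfH \cap \bfL$ is distributionally close (up to Pittel-type comparisons) to a random $\varepsilon n^2$-subset of $\bfL$, the event $\bfL \in \cT^{\mr{bad}}(\bfH, C)$ is a property of such a random subset. We will apply Lemma~\ref{lem:random_LS_to_triangle_removal} with $m = \alpha n^2$ for a small constant $\alpha > \varepsilon$, setting $\vec\cU$ to be the property that the first $\varepsilon n^2$ hyperedges of the ordered partial Latin square contain too many entries in $\sR^*$-split bad configurations for some $\sR^*$. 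To sidestep the non-monotonicity of the isolatedness condition built into critical configurations, we pass to a monotone-increasing relaxation: enlarge the class of ``bad entries'' to include all ``pseudo-bad'' entries that would belong to a bad configuration if the isolatedness clauses were dropped. We then apply Lemma~\ref{lemma:triangle_removal_chunk_to_binomial}, whose proof readily extends to $\alpha_2 = \varepsilon = o(1)$ at the cost of an $\exp(-\Omega(\varepsilon^2 n^2))$ error (well below our target), reducing to analysis in $\bfG \sim \mb G^{(3)}(n,(1+\gamma)\varepsilon/n)$. The $\exp(O(n \log^2 n))$ slack from Lemma~\ref{lem:random_LS_to_triangle_removal} and the $\binom{n}{\ell} \le \exp(O(\log^{12} n))$ union-bound factor over choices of $\sR^*$ are both easily absorbed by the target $\exp(-\omega(n \log^2 n))$.

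Next comes a classification of bad configurations. A bad configuration is either a pair of intercalates sharing one entry, or a critical configuration built from $t \in \{1,2,3,4\}$ isolated intercalates together with a virtual intercalate. The raw combinatorial variety of such configurations is enormous, so we will perform a switching argument internal to the configuration itself: by trading entries between its constituent intercalates, we reduce any bad configuration to one of four ``basic types'' while preserving which entries in rows of $\sR^*$ are covered. This lets us restrict attention to these four canonical shapes when counting $\sR^*$-covered entries.

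For each basic type we compute the expected number of $\sR^*$-covered entries in $\bfG$. Each basic type is a fixed pattern of $k$ hyperedges; an $\sR^*$-split instance pins one row to $\sR^*$, yielding $\Theta(n^{\text{dof}} \ell)$ potential placements in $K^{(3)}_{n,n,n}$, so the expected count is $\Theta(\varepsilon^k n^{\text{dof}} \ell)$, which we verify is $o(\varepsilon^5 n\ell)$ in every case (essentially because each additional hyperedge in a basic type costs a factor $\varepsilon$ beyond the $\varepsilon^4$ needed for a single intercalate). For concentration we apply Freedman's inequality (Theorem~\ref{freedman_inequality}) to the indicator sum $\sum_e \mathbbm{1}[e \text{ is covered}]$. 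Crucially we count entries rather than configurations: each hyperedge of $\bfG$ participates in only $O(\varepsilon^3) = o(1)$ intercalates on average, so the per-hyperedge Lipschitz constant is small, and Freedman's inequality delivers a tail of $\exp(-\omega(n \log^2 n))$. This is precisely the point of Remark~\ref{rem:upper-tail}: counting bad configurations directly would run straight into the infamous upper tail, whereas counting covered entries keeps the tail manageable.

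The main obstacle is the classification in the second paragraph: verifying that every critical configuration, over all $t \in \{1,2,3,4\}$ and all intersection patterns between the $t$ participating intercalates and their virtual intercalate, can be switched into one of the four basic types while preserving the $\sR^*$-covered entries is a delicate case analysis. A secondary challenge is the calibration of Freedman's inequality so that the pointwise Lipschitz bounds actually hold with sufficiently high probability; this will require a preliminary typicality event ruling out hyperedges that lie in anomalously many intercalates, and the parameters must be tuned so that this typicality step does not itself cost more than $\exp(-\omega(n\log^2 n))$.
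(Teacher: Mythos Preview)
Your proposal has the right high-level shape (reduce via \cref{lem:random_LS_to_triangle_removal}, classify via switching, then concentrate), but two of its load-bearing steps do not go through as written, and the paper's proof is structured quite differently precisely to avoid these obstacles.

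\textbf{The switching reduction.} You say you will reduce every bad configuration to one of four basic types by ``trading entries between its constituent intercalates'', i.e.\ by a switching \emph{internal to the configuration}. This is not what the paper does, and it is not clear it can be made to work. The paper's reduction (\cref{fact:switch} and the proof of \cref{lemma:upper_bound_threatened_pairs}) switches isolated intercalates \emph{in the whole partial Latin square} $R$, producing a genuinely different partial Latin square $R'$. One then must compare probabilities over $R$ and $R'$; this costs a factor counting how many $R$ can switch to a given $R'$, and controlling that factor requires a separate lemma (\cref{lemma:not_too_many_isolated_intercalates}) bounding the number of disjoint intercalates by $O(\varepsilon^4 n^2)$. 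It is also why \cref{lemma:upper_bound_reduced_basic_threats} must be proved with the stronger tail $\exp(-\Omega(\varepsilon^3 n^2))$ rather than just $\exp(-\omega(n\log^2 n))$: the switching entropy eats a factor of $\exp(O(\varepsilon^4 n^2 \log n))$. Your ``internal'' switching does not change the ambient partial Latin square, so you cannot absorb the isolatedness constraints this way; and if you simply drop isolatedness as you propose, the collection of ``pseudo-bad'' shapes is much larger than four (critical sets of size $1,2,3,4$ with many intersection patterns), and several of those shapes have too many degrees of freedom for the expectation bound you assert.

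\textbf{The Freedman step.} You claim the per-hyperedge Lipschitz constant for the number of covered entries is small because a hyperedge lies in $O(\varepsilon^3)$ intercalates \emph{on average}. But \cref{freedman_inequality} needs a pointwise bound $K_i$, and in $\bfG\sim\mb G^{(3)}(n,\Theta(\varepsilon)/n)$ a single hyperedge can lie in many intercalates. You acknowledge needing a ``preliminary typicality event'', but ruling out hyperedges in (say) more than a constant number of intercalates cannot be done with failure probability $\exp(-\omega(n\log^2 n))$ --- this is exactly the infamous-upper-tail obstruction you cite. The paper avoids this by a \emph{two-phase} argument: it first bounds the number of \emph{threatened pairs} in a partial Latin square (where the pointwise bound $w_e\le n$ holds because in a partial Latin square each entry lies in at most $n$ intercalates; this bound is lost in $\bfG$), and only afterwards reveals the two special entries in $\sR^*$ in two further stages (\cref{lemma:upper_bound_bad_configs_given_not_too_many_reduced_threats_bin}). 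The non-monotonicity of isolatedness under subsampling is handled not by relaxing to pseudo-bad configurations but by baking the random subset into the property $\cT^{\mr{threat}}(H,C',\alpha,\phi)$ itself, so that $(\rho,m)$-inheritedness holds essentially by definition.

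In short: the two-phase ``threatened pairs then completion'' decomposition, and the fact that the switching acts on the ambient partial Latin square (with an accompanying entropy estimate), are not optional refinements --- they are what make the concentration and the case reduction go through at all.
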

We can now deduce~\cref{lemma:intercalate_expander_random_hypergraph_template} from \cref{lemma:min_disjoint_split_ints,lemma:max_entries_bad_configs}.
\begin{proof}[Proof of \cref{lemma:intercalate_expander_random_hypergraph_template}]
By the union bound, \cref{lemma:min_disjoint_split_ints}, and~\cref{lemma:max_entries_bad_configs}, there are $C, c>0$ such that we have
\[ \mathbb{P}[\bfL \in \cT^{\mr{int}}(\bfH, c)\cup \cT^{\mr{bad}}(\bfH, C)] \leq \exp(-\omega(n\log^2{n})).\]
Note that if $\bfL \notin \cT^{\mr{int}}(\bfH, c)\cup \cT^{\mr{bad}}(\bfH, C)$, then for every $(\ell,\beta)$-permissible $6$-tuple $(\sR, \sR^*, \sC, \sC^*, \sS, \sS^*)$, there are many $(\sR, \sR^*, \sC, \sC^*, \sS, \sS^*)$-split intercalates in $\bfH\cap \bfL$, and only a small fraction of them can have entries covered by $(\sR, \sR^*, \sC, \sC^*, \sS, \sS^*)$-split bad configurations. Therefore, at least one of them must be stable. By symmetry between rows, columns, and symbols, and a union bound, we see that we have
\[ \mathbb{P}[ \mbf H\cap \bfL \text{ is an }(\ell,\beta)\text{-intercalate-expander}] \geq 1- \exp(-\omega(n \log^2{n})).\]
That is to say, almost all outcomes $H \cap L$ of $\mbf H \cap \bfL$ satisfy the conclusion of \cref{def:intercalate-expander}.
\end{proof}

It remains to prove \cref{lemma:min_disjoint_split_ints,lemma:max_entries_bad_configs}, which we will do in \cref{sec:existence_disjoint_ints,sec:upper_bounding_entries_in_bad_configurations}, respectively. Both proofs will make crucial use of \cref{lem:random_LS_to_triangle_removal} (to deduce results about random Latin squares from very-high-probability results about the triangle removal process).
Given \cref{lem:random_LS_to_triangle_removal}, the proof of \cref{lemma:min_disjoint_split_ints} is a rather quick (though somewhat delicate) consequence of \cref{freedman_inequality}. The proof of \cref{lemma:max_entries_bad_configs} is much longer, and requires several additional ideas.

\section{Existence of many disjoint intercalates}
\label{sec:existence_disjoint_ints}
As already mentioned, to prove \cref{lemma:min_disjoint_split_ints}, we employ \cref{lem:random_LS_to_triangle_removal} and work in the setting of the triangle removal process. To do that, we define a property of ordered partial Latin squares that satisfies suitable inheritance properties with respect to $\cT^{\mr{int}}(H,c)$ (recall \cref{def:inherited}).

\begin{definition}
Recall the definitions of $\ell$ and $\varepsilon$ from \cref{def:ell_eps}.
    For $c>0$ and $H \subseteq K^{(3)}_{n,n,n}$, let $\vec{\cU}^{\mr{int}}(H,c) \subseteq \vec \cP_{n}$ be the set of ordered partial Latin squares $\vec{P}$ such that for some $(\ell,\beta)$-permissible $6$-tuple $(\sR, \sR^*, \sC, \sC^*, \sS, \sS^*)$, the maximum family of disjoint $(\sR, \sR^*, \sC, \sC^*, \sS, \sS^*)$-split intercalates in $H \cap \vec{P}$ has size less than $c \varepsilon^4 n\ell$.
\end{definition}
Now, the following lemma is a version of \cref{lemma:min_disjoint_split_ints} for the triangle removal process.
\begin{lemma}
\label{lemma:min_disjoint_split_ints_binomial}
Fix constants $\alpha,c>0$, such that $\alpha$ is sufficiently small and $c$ is sufficiently small in terms of $\alpha$ and $\beta$. Consider independent random hypergraphs $\vec{\bfR} \sim \trp(n,\alpha n^2)$ and $\bfH \sim \mb G^{(3)}(n,\varepsilon)$. Then
\[ \mathbb{P}[\vec \bfR \in \vec {\cU}^{\mr{int}}(\bfH, c)] \leq \exp(-\omega(n\log^2{n})).\]
\end{lemma}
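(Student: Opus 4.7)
The plan is to decouple the randomness of $\vec{\bfR}$ and $\bfH$ and apply \cref{freedman_inequality} in two distinct steps: first, to lower-bound (for every permissible 6-tuple $T$) the maximum family of disjoint $T$-split intercalates \emph{inside} $\vec{\bfR}$ alone via a Bollob\'as-style bolt-down argument; and then to ``thin'' this family by $\bfH$, exploiting the fact that edge-disjoint intercalates survive into $\bfH$ independently. A union bound over the at most $\binom{n}{\ell}\binom{n}{\beta n}^5 = \exp(O(n))$ permissible 6-tuples will close the argument, since $\exp(O(n)) \cdot \exp(-\omega(n\log^2 n)) = \exp(-\omega(n\log^2 n))$.

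\emph{Step 1 (Bollob\'as on $\vec{\bfR}$).} Fix $T = (\sR,\sR^*,\sC,\sC^*,\sS,\sS^*)$ and run the triangle removal process step by step, maintaining a greedy disjoint family $\bfD^T$: whenever a newly added hyperedge $e_i$ completes a $T$-split intercalate $I \subseteq \vec{\bfR}_i$ which is vertex-disjoint from every intercalate already in $\bfD^T_{i-1}$, add $I$ to $\bfD^T$. For the expectation, there are $\Theta(\beta^5 n^5 \ell)$ potential $T$-split intercalates, each appearing in $\vec{\bfR}$ with probability $\Theta((\alpha/n)^4)$ by comparison with the binomial model (cf.\ \cref{lemma:triangle_removal_to_cleaned_binomial,lemma:triangle_removal_chunk_to_binomial}), for a total of $\Theta(\alpha^4\beta^5 n\ell)$ intercalates in expectation. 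At any moment the running family $\bfD^T_{i-1}$ blocks $4|\bfD^T_{i-1}| = O(\alpha^4 n\ell)$ edges, and each edge participates in $\Theta(\alpha^3\beta^2 \ell/n)$ $T$-split intercalates, so the blocked share of completions is a vanishing fraction and $\mathbb{E}[|\bfD^T|] = \Theta(\alpha^4\beta^5 n\ell)$. Concentration is the first application of \cref{freedman_inequality}, applied to the independent uniforms coupled to the choice of triangle at each step; I stop the process the instant any required quasi-randomness/triangle-typicality property of $G(\vec{\bfR}_{\cdot})$ fails (an event of probability $\exp(-\omega(n\log^2 n))$ by (variants of) \cref{lem:almost-always-quasirandom,lem:almost-always-triangle-typical}), so that on the non-stopped event the Lipschitz constants are $O(\mathrm{polylog}\, n)$, and Freedman yields $|\bfD^T| \ge c_1 \alpha^4 n\ell$ with probability $1-\exp(-\omega(n\log^2 n))$ for some $c_1 = c_1(\alpha,\beta) > 0$.

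\emph{Step 2 (thinning by $\bfH$).} The intercalates in $\bfD^T$ are pairwise edge-disjoint by construction, so the events $\{I \subseteq \bfH\}$ for distinct $I \in \bfD^T$ are \emph{mutually independent}, each of probability $\varepsilon^4$. Conditional on any outcome of $\vec{\bfR}$ for which $|\bfD^T| \ge c_1\alpha^4 n\ell$, the number of intercalates surviving into $\bfH \cap \vec{\bfR}$ (a lower bound on the max disjoint family there) is therefore a sum of $\ge c_1\alpha^4 n\ell$ independent $\mathrm{Bernoulli}(\varepsilon^4)$ variables with mean $\ge c_1 \alpha^4 \varepsilon^4 n\ell$. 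The second application of \cref{freedman_inequality} (equivalently a Chernoff bound, cf.\ \cref{thm:chernoff_dependency_graph}) then gives that at least $c \varepsilon^4 n\ell$ survive with probability $1 - \exp(-\Omega(\varepsilon^4 n\ell)) = 1 - \exp(-\Omega(\eta^4 n \log^7 n))$, which is comfortably $\exp(-\omega(n\log^2 n))$, for any $c < c_1\alpha^4/2$ (so $c$ is indeed small in terms of $\alpha$ and $\beta$, as assumed).

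The main obstacle is the sensitivity analysis in Step 1: a single change in the triangle removal process (flipping the choice of which triangle is removed at a single step) can in principle cascade through many subsequent greedy bolt-downs, giving naively unusable Lipschitz constants. The fix is exactly the truncation trick described above --- stopping the greedy process upon any quasi-randomness failure --- which, combined with the fact that any fixed hyperedge lies in only $O(1)$ intercalates of $\bfD^T$ (by disjointness) and in $O(n)$ potential $T$-split intercalates overall, keeps the per-step sensitivity polylogarithmic on the surviving event. Step 2 is by contrast almost routine, precisely because edge-disjointness of the family built in Step 1 promotes the thinning by $\bfH$ to a genuine sum of independent Bernoullis.
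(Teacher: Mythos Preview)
Your Step~2 is sound, but Step~1 has a genuine gap: the concentration you claim for $|\bfD^T|$ in $\vec{\bfR}$ alone is not achievable by the argument you sketch.

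The quantity $|\bfD^T|$ has mean only $\Theta(n\ell)=\Theta(n\log^{11}n)$, while you are exposing $\Theta(n^2)$ random choices (the steps of the triangle removal process). Your claim that the truncation trick yields Lipschitz constants $O(\mathrm{polylog}\,n)$ for the greedy family, as a function of the ``independent uniforms'', is not justified: quasirandomness controls global statistics but says nothing about which \emph{specific} intercalates the greedy rule picks, so changing one step can still rewrite the entire downstream family. But even granting $K_i=O(1)$ (e.g.\ by passing to the binomial model and working with the maximum disjoint family directly, which genuinely has $O(1)$ edge-sensitivity), the Freedman variance proxy $\sum p_iK_i^2$ is still $\Theta(n^2)$, yielding an exponent of only $(n\ell)^2/n^2=\ell^2=(\log n)^{O(1)}$. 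This does not come close to beating the $\exp(O(n))$ union bound over permissible 6-tuples.

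The paper resolves exactly this tension by a two-phase revelation keyed to the small row-set $\sR^*$, \emph{not} by separating $\vec{\bfR}$ from $\bfH$. After passing to the binomial model via \cref{lemma:triangle_removal_to_cleaned_binomial} and coupling $\bfH\cap\bfG$ and $\bfG$ together as functions of a common family of independent Bernoullis, it first reveals all entries in rows \emph{outside} $\sR^*$ and counts ``half-intercalates'' (conflict-free pairs on a common non-$\sR^*$ row that could be completed to a split intercalate); this count has mean $\Theta(\varepsilon^2 n^3)$, and Freedman concentrates it with exponent $\Theta(\varepsilon^4 n^2)$. Conditioning on a good first-phase outcome, the second phase reveals only the $O(n^2\ell)$ Bernoullis corresponding to entries in rows of $\sR^*$; now the variance proxy collapses to $O(n\ell)$, matching the mean, and Freedman on the maximum disjoint family gives exponent $\Theta(\varepsilon^8 n\ell)=\omega(n\log^2 n)$. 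Isolating the $O(n^2\ell)$ variables that actually govern the completion of half-intercalates is what makes the numbers work; no amount of quasirandomness stopping can substitute for this.
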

We first deduce \cref{lemma:min_disjoint_split_ints} from \cref{lemma:min_disjoint_split_ints_binomial} using \cref{lem:random_LS_to_triangle_removal}.
\begin{proof}[Proof of \cref{lemma:min_disjoint_split_ints}]
Let $\alpha,c$ be small enough for \cref{lemma:min_disjoint_split_ints_binomial} and let $\rho = 1$. 
By \cref{lemma:min_disjoint_split_ints_binomial} and \cref{fact:joint_prob_vs_most_outcomes}(\ref{fact:joint_to_separate}), with probability $1-\exp(-\omega(n\log^2{n})$ our random hypergraph $\bfH \sim \mb G^{(3)}(n,\varepsilon)$ is such that $\mathbb{P}[\vec \bfR \in \vec{\cU}^{\mr{int}} (\bfH, c)\,|\,\mbf H] \leq \exp(-\omega(n \log^2{n}))$. Let $H$ be an outcome of $\bfH$ for which this holds. Note that $\vec{\cU}^{\mr{int}}(H,c)$ is $(\rho, \alpha n^2)$-inherited from $\cT^{\mr{int}}(H,c)$: indeed, if $L \in \cT^{\mr{int}}(H,c)$, then there is some $(\ell,\beta)$-permissible $6$-tuple $(\sR, \sR^*, \sC, \sC^*, \sS, \sS^*)$ such that the maximum family of disjoint $(\sR, \sR^*, \sC, \sC^*, \sS, \sS^*)$-split intercalates in $H\cap L$ has size smaller than $c\varepsilon^4 n \ell$, so for any $P \subseteq L$, the maximum family of disjoint $(\sR, \sR^*, \sC, \sC^*, \sS, \sS^*)$-split intercalates in $H\cap P$ also has size smaller than $c\varepsilon^4 n \ell$. Thus, by \cref{lem:random_LS_to_triangle_removal},
\[ \mathbb{P}[\bfL \in \cT^{\mr{int}}(H,c)] \leq \exp(2n\log^2{n}) \mathbb{P}[\vec \bfR \in \vec{\cU}^{\mr{int}} (H,c)] \leq \exp(-\omega(n\log^2{n})).\]
Recalling our choice of $H$, \cref{fact:joint_prob_vs_most_outcomes}(\ref{fact:separate_to_joint}) then implies that
\[\mathbb{P}[\bfL \in \cT^{\mr{int}}(\bfH,c)] \leq \exp(- \omega(n\log^2{n})),\]
as desired.
\end{proof}

\begin{proof}[Proof of \cref{lemma:min_disjoint_split_ints_binomial}]
We will prove the desired statement with $c = 10^{-5}\beta^5 \alpha^4$.

Note that the property $\vec{\cU}^{\mr{int}}(H,c)$ is monotone decreasing. Let $\bfG \sim \mb G^{(3)}(n, \alpha/n)$, and let $\mbf G^*$ be obtained from $\mbf G$ by deleting all hyperedges involved in conflicts (recall that a conflict is a pair of hyperedges that intersect in more than one vertex). By \cref{lemma:triangle_removal_to_cleaned_binomial}, for each $H \subseteq K^{(3)}_{n,n,n}$ we have
\[ \mathbb{P}[\vec \bfR \in \vec{\cU}^{\mr{int}}(H,c)] = O( \mathbb{P}[\bfG^* \in \vec{\cU}^{\mr{int}}(H,c)]),\]
so for our random hypergraph $\mbf H$ we have
\[ \mathbb{P}[\vec \bfR \in \vec{\cU}^{\mr{int}}(\bfH,c)] = O(\mathbb{P}[\bfG^* \in \vec{\cU}^{\mr{int}}(\bfH,c)]).\]
Thus, from now on we work in $\bfG^*$. Fix an $(\ell,\beta)$-permissible $6$-tuple $(\sR, \sR^*, \sC, \sC^*, \sS, \sS^*)$. We will show that the probability that the maximum family of disjoint $(\sR, \sR^*, \sC, \sC^*, \sS, \sS^*)$-split intercalates in $\bfH \cap \bfG^*$ has size smaller than $c\varepsilon^4 n \ell$ is at most $\exp(-\omega(n\log^2{n}))$.

Our main tool will be \cref{freedman_inequality}, but to effectively apply this concentration inequality we need to be quite careful with the way we interpret $\bfH \cap \bfG^*$ as a function of independent choices. Specifically, for each potential hyperedge $e$, let $\bfg^{\bfh}_e$ and $\bfg_e$ be two independent Bernoulli random variables with $\mathbb{P}[\bfg^{\bfh}_e=1] = \alpha \varepsilon/{n} $ and $\mathbb{P}[\bfg_e=1] = \alpha (1-\varepsilon)/(n - \alpha \varepsilon)$. We generate $\bfG$ and $\bfH \cap \bfG$ by sampling $\bfg^{\bfh}_e$ and $\bfg_e$ for each potential edge $e$, setting $e \in \bfH \cap\bfG$ if $\bfg^{\bfh}_e = 1$, and setting $e \in \bfG$ if $\bfg^{\bfh}_e = 1$ or $\bfg_e = 1$ (or both). To see that this correctly describes the joint distribution of $\bfG$ and $\bfH \cap\bfG$, note that
\begin{align*}
 \mathbb{P}[e \in \bfG] = 1 - (1 - \mathbb{P}[\bfg^{\bfh}_e = 1])(1 - \mathbb{P}[\bfg_e=1]) &= 1 - \Big(1- \frac{\alpha \varepsilon}{n}\Big)\Big(1 - \frac{\alpha(1-\varepsilon)}{n - \alpha \varepsilon}\Big) \\
 &= \frac{\alpha \varepsilon}{n} + \frac{\alpha(1-\varepsilon)}{n - \alpha\varepsilon} - \frac{\alpha^2 \varepsilon(1-\varepsilon)}{n(n-\alpha \varepsilon)} = \frac{\alpha}{n}.
\end{align*}

Now, the plan is to reveal $\bfG$ and $\bfH$ (via the random variables $\bfg^{\bfh}_e$ and $\bfg_e$) in two phases. First, we reveal all entries in rows outside of $\sR^*$, and then we reveal entries in rows in $\sR^*$.

Recall that a conflict is a pair of hyperedges that intersect in more than one vertex. We say a pair of entries $\{(r, c_1, s_1), (r, c_2, s_2)\}$ in $\bfH \cap \bfG$ on the same row $r \notin \sR^*$ has a \emph{second-order conflict} if $s_1$ appears in column $c_2$ or if $s_2$ appears in column $c_1$ on a row outside of $\sR^*$, with respect to $\bfG$. For $r \in \sR\setminus \sR^*$, $c_1 \in \sC$, $c_2\in \sC^*$, $s_1\in \sS$ and $s_2 \in \sS^*$, we say a pair $\{(r, c_1, s_1), (r, c_2, s_2)\}$ is a \emph{half-intercalate} if both its hyperedges are present in $\bfH \cap \bfG$ and have no conflicts with entries of $\bfG$ outside of $\sR^*$, and the pair has no second-order conflicts (which would make it impossible to complete it to an intercalate in $\mbf G^*$).

The idea is that in the first phase, by revealing all the entries in the rows outside $\sR^*$, we reveal all the half-intercalates. In the second phase, each half-intercalate $\{(r, c_1, s_1), (r, c_2, s_2)\}$ can then be completed to a split intercalate in $\bfH \cap \mbf G^*$ via any pair of entries $(r', c_1, s_1),(r', c_2, s_2)$ (with $r' \in \sR^*$) in $\mbf G$, as long as the four entries involved in the intercalate do not conflict with other entries in $\bfG$ on rows in $\sR^*$.

Let $\bfX$ be the number of half-intercalates. We will apply \cref{freedman_inequality} to show concentration of $\bfX$. First note that
\[\mathbb{E}[\bfX] \geq \binom{\beta n}{2}^2 (\beta n-\ell) (\varepsilon \alpha / n)^2 (1-\alpha/n)^{8n-8} \geq 10^{-1} \varepsilon^2 \alpha^2 (\beta n-\ell)\beta^4 n^2 e^{-16 \alpha} \geq 10^{-2} \varepsilon^2 \alpha^2 \beta^5 n^3. \]
To see this, recall that $|\sR\setminus \sR^*|\ge \beta n-\ell$ and each of $\sR, \sC, \sC^*,\sS,\sS^*$ have size at least $\beta n$; also, each pair of entries is present in $\bfH \cap \bfG$ with probability $(\varepsilon\alpha/n)^2$, it has no conflicts with other entries in $\bfG$ with probability at least $(1-\alpha/n)^{6n-6}$, and it has no second-order conflicts with probability at least $(1-\alpha/n)^{2n-2}$. 

Recall that we work with the independent random variables $\bfg^\bfh_e$ and $\bfg_e$ for each of the $O(n^3)$ many potential hyperedges $e$ outside the rows in $\sR^*$, and each of these random variables is $1$ with probability $O (1 / n)$.  The appearance of a hyperedge in $\bfH \cap \bfG$ can increase $\bfX$ by at most $n-1$, since each entry can be in at most $n-1$ half-intercalates. The appearance of a hyperedge in $\bfG$ can decrease $\bfX$ by at most $4n$, since it can destroy at most $3$ hyperedges in $\bfG^*$ due to conflicts (and each of them could be in at most $n-1$ pairs counted by $\bfX$), and can introduce at most $n$ second-order conflicts. Thus, the effect of changing any individual random variable (of the form $\bfg^\bfh_e$ or $\bfg_e$) is $O(n)$. By \cref{freedman_inequality}, with probability $1 - \exp(-\Omega(\varepsilon^4  n^2))$, we have $\bfX \geq 10^{-3} \varepsilon^2 \alpha^2 \beta^5 n^3$. For the rest of the proof, we condition on an outcome of the first phase such that this is the case (i.e., in the rest of the proof, all probabilistic considerations are implicitly with respect to this conditional probability space).

Now, in the second phase we reveal $\bfG$ and $\bfH \cap \bfG$ in rows in $\sR^*$, and estimate the size $\bfZ$ of the maximum family of disjoint $(\sR, \sR^*, \sC, \sC^*, \sS, \sS^*)$-split intercalates in $\bfH \cap \bfG^*$. Note that each half-intercalate counted by $\bfX$ forms an intercalate with two other entries on row $r\in\sR^*$ if and only if these two other entries are present in $\bfH \cap \bfG$ and no entry in $\bfG$ in a row in $\sR^*$ conflicts with any of the four entries of the intercalate. We can compute a lower bound on $\mathbb{E}[\bfZ]$ by subtracting the expected number of pairs of intersecting split intercalates from the expected number of split intercalates. Indeed,
\[ \mathbb{E}[\bfZ] \geq \bfX \ell (\varepsilon\alpha/n)^2(1-\alpha/n)^{8n-8} - \bfX n \ell^2 (\varepsilon\alpha/n)^4 - \bfX n \ell (\varepsilon\alpha/n)^3 \geq \bfX \frac{\ell\alpha^2 \varepsilon^2 }{2n^2} \geq 10^{-4}  \alpha^4 \beta^5\varepsilon^4 n \ell,\]
where the term $(1-\alpha/n)^{8n-8}$ accounts for the probability that some entry in a fixed potential split intercalate conflicts with some entry in a row in $\sR^*$. The second term bounds the expected number of pairs of split intercalates that intersect in an entry outside of $\sR^*$, since for a fixed half-intercalate, there are at most $n$ choices for the other column of the other intercalate and at most $\ell$ choices for the other row of each intercalate, and for each such combination of choices, four entries need to be present in rows in $\sR^*$. The third term bounds the expected number of pairs of split intercalates that intersect in an entry in $\sR^*$, since for a fixed half-intercalate,  there are at most $\ell$ choices for the common to both intercalates row in $\sR^*$ and at most $n$ choices for the other row of the other intercalate, which is outside of $\sR^*$ (and this determines the entries of both intercalates completely). We again apply \cref{freedman_inequality} with the remaining unrevealed random variables $\bfg^\bfh_e$ and $\bfg_e$.
Since we are interested in a family of disjoint intercalates, the appearance of a hyperedge can increase $\bfZ$ by at most $1$ and can decrease it by at most $3$ (as it can have a conflict with at most $3$ edges in $\bfG^*$). Crucially, there are now only $O(n^2\ell)$ many unrevealed random variables. Thus, by \cref{freedman_inequality}, we have that $\bfZ \geq 10^{-5} \alpha^4 \beta^5 \varepsilon^4 n\ell$ with probability at least $1 - \exp(-\Omega_{\alpha,\beta}(\varepsilon^8 n\ell))$.

We conclude the proof with a union bound over all (at most $2^{6n}$) choices for an $(\ell,\beta)$-permissible $6$-tuple $(\sR, \sR^*, \sC, \sC^*, \sS, \sS^*)$.
\end{proof}

\section{Upper-bounding the number of entries in bad configurations}
\label{sec:upper_bounding_entries_in_bad_configurations}

We now turn to the proof of \cref{lemma:max_entries_bad_configs}. For any set of $\ell$ rows $\sR^*$, our goal is to upper bound the probability that there are many entries in rows in $\sR^*$ covered by $\sR^*$-split bad configurations.

\subsection{Setup} As in the proof of \cref{lemma:min_disjoint_split_ints_binomial}, the special role of the rows in $\sR^*$ requires us to consider two ``phases''. In the first phase we reveal a subset of the entries of our random Latin square, and study configurations of entries which are ``almost'' bad configurations (they are just missing some entries from $\sR^*$). In the second phase we reveal the rest of our random Latin square, and we study which of our ``almost bad configurations'' give rise to actual bad configurations. However, this breakdown into phases is much more complicated than for \cref{lemma:min_disjoint_split_ints_binomial} (for several different reasons), and requires some setup.
\begin{definition}
    Given a $\sR^*$-split bad configuration $F$, we say that the \emph{special entries} of $F$ are its two entries which belong to rows in $\sR^*$. (Technically, it is possible that there is more than one split intercalate in a bad configuration, or that the bad configuration contains an intercalate with both its rows in $\sR^*$. In these cases we imagine multiple ``copies'' of the bad configuration, each time making a different choice for the two special entries). Also, we say that the intercalate in $F$ which contains the special entries is the \emph{special intercalate}. 
\end{definition}
\begin{definition}
Given a partial Latin square $P$ and a set of rows $\sR^*$, consider a pair $\{e_1,e_2\}$ of row/column/symbol triples, belonging to some row in $\sR^*$, which are not already present in $P$ but which can be added to $P$ without causing conflicts (i.e.,  $P\cup\{e_1,e_2\}$ is a partial Latin square). Say that $\{e_1,e_2\}$ is a \emph{threatened pair} for $P$ if there is some split bad configuration $F$ in $P\cup\{e_1,e_2\}$ which has $e_1$ and $e_2$ as special entries. In this case, we say $F\cap P$ is a \emph{threat configuration} in $P$. 
\end{definition}

Informally speaking, threat configurations are the sets of entries which are ``in danger of being completed to a split bad configuration'', only requiring the addition of their two special entries. Each of the possible ways to complete a threat configuration to a split bad configuration is described by a threatened pair. A single threatened pair may complete multiple different threat configurations to split bad configurations, and a single threat configuration may be completeable by multiple different threatened pairs (to different split bad configurations).

Now, given the above terminology, we can be a bit more concrete about the high-level plan to prove \cref{lemma:max_entries_bad_configs}. In a first phase, we reveal part of our random Latin square and upper-bound the number of threatened pairs in it (we do \emph{not} upper bound the number of threat configurations themselves, cf.\ \cref{rem:upper-tail}). Then, in a second phase, we reveal the rest of the Latin square and upper bound the number of threatened pairs from the first phase that actually appear as entries in the second phase. By an averaging argument (over all the ways to split into two phases) we are able to deduce the desired bound on the number of entries in split bad configurations.

For the first phase, we consider the following property.
\begin{definition}
Recall the definitions of $\ell$ and $\varepsilon$ from \cref{def:ell_eps}.
    For $H \subseteq K^{(3)}_{n,n,n}$ and $C, \alpha, \phi>0$, let $\cT^{\mr{threat}} (H, C, \alpha, \phi)\subseteq \cL_n$ be the set of Latin squares $L$ such that the following holds. If we take a random subset $\bfP_{\alpha n^2}(L)\in \mc P_{n,\alpha n^2}$ of $\alpha n^2$ entries in $L$, then with probability at least $\phi$, for some set of rows $\sR^*$ of size $|\sR^*| = \ell$, the number of threatened pairs for $H \cap \bfP_{\alpha n^2}(L)$ is more than $C\varepsilon^3 n^3 \ell$.
\end{definition}
Note that here we are defining a property of Latin squares in terms of how their random subsets of entries behave. This may seem rather unwieldy, but it is very convenient for the $\rho$-inheritedness assumption in \cref{lem:random_LS_to_triangle_removal} (when it comes time to apply \cref{lem:random_LS_to_triangle_removal}, the $\rho$-inheritedness assumption will hold basically by definition). The issue with more straightforward properties is that bad configurations and threatened pairs do not behave well under subsampling entries, and therefore $\rho$-inheritedness would be quite difficult to prove. (The issue is that a bad configuration is defined in terms of \emph{isolated} intercalates, and deleting entries can cause more intercalates to be isolated\footnote{The reader may then wonder why we insist on isolatedness in the definition of a bad configuration. The reason is that it is very convenient to restrict our attention to isolated intercalates in a switching argument which will appear later in this section.}).

Now, the following two lemmas correspond to the two ``phases'' informally discussed above.

\begin{lemma}
\label{lemma:upper_bound_threatened_pairs} 
There is an absolute constant $C'>0$ such that the following holds. Fix constants $\alpha, \phi >0$, and consider a random hypergraph $\bfH \sim \mb G^{(3)}(n,\varepsilon)$ and an independent random Latin square $\bfL \sim \on{Unif}(\mc L_n)$. We have
\[ \mathbb{P}[ \bfL \in \cT^{\mr{threat}}(\bfH,C', \alpha, \phi)] \leq \exp(-\omega(n \log^2{n})).\]
\end{lemma}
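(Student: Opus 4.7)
The plan is to follow the general strategy of \cref{lemma:min_disjoint_split_ints}. First, define an ordered partial Latin square property: let $\vec{\cU}^{\mr{threat}}(H,C)\subseteq \vec{\cP}_n$ consist of those $\vec P$ for which, for some $\sR^*\subseteq [n]$ with $|\sR^*|=\ell$, the number of threatened pairs for $H\cap P$ exceeds $C\varepsilon^3 n^3\ell$. By the very definition of $\cT^{\mr{threat}}$, this property is $(\phi,\alpha n^2)$-inherited from $\cT^{\mr{threat}}(H,C,\alpha,\phi)$, so combining \cref{lem:random_LS_to_triangle_removal} with \cref{fact:joint_prob_vs_most_outcomes} (as in the proof of \cref{lemma:min_disjoint_split_ints}) reduces the task to showing
\[
\Pr[\vec{\mbf R}\in \vec{\cU}^{\mr{threat}}(\mbf H, C')]\le \exp(-\omega(n\log^2 n))
\]
for independent $\vec{\mbf R}\sim \trp(n,\alpha n^2)$ and $\mbf H\sim\mb G^{(3)}(n,\varepsilon)$.

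The number of threatened pairs is not a monotone function of $P$, because the definition of ``bad configuration'' requires the constituent intercalates to be isolated. To work around this, I upper bound the threatened-pair count by a ``pseudo-threatened-pair'' count obtained by dropping the isolation requirement; this upper bound is monotone increasing in $P$, so by \cref{lemma:triangle_removal_chunk_to_binomial} it suffices to bound the analogous tail in the binomial model $\mbf H\cap \mbf G\sim \mb G^{(3)}(n,p)$ with $p=\Theta(\varepsilon/n)$ (taking $\mbf G\sim \mb G^{(3)}(n,(1+\gamma)\alpha/n)$).

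Now fix a set of rows $\sR^*$ of size $\ell$; the $\binom{n}{\ell}=\exp(O(\log^{12}n))=\exp(o(n\log^2 n))$ choices will be absorbed by a final union bound. By the switching argument sketched in the outline of \cref{sec:upper_bounding_entries_in_bad_configurations}, every pseudo bad configuration with specified special entries can be transformed into one of four basic types, so it suffices to count pairs threatened by a basic-type configuration. For each basic type a direct parametric count yields an expectation bound of the form $O(n^j\ell p^k)$; summing over the four types gives a total expected pseudo-threatened-pair count of $O(\varepsilon^4 n^3\ell)$, with the dominant contribution coming from two intersecting intercalates ($O(\ell n^8\cdot p^5)=O(\varepsilon^5 n^3 \ell)$). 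This is smaller than the target threshold $C'\varepsilon^3 n^3\ell$ by a factor of $\Theta(\log^2 n/\eta^2)$, so the remaining task is purely one of concentration.

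To upgrade the expectation bound to an $\exp(-\omega(n\log^2 n))$ tail bound, I follow a multi-phase exposure strategy analogous to the one in the proof of \cref{lemma:min_disjoint_split_ints_binomial}: first reveal the hyperedges in rows outside $\sR^*$, then reveal those in $\sR^*$-rows, using \cref{freedman_inequality} in each phase. Each individual hyperedge participates in only polynomially many candidate configurations of each basic type conditional on previously exposed information, which keeps the effective Lipschitz constants under control. The main obstacle is that the naive Lipschitz constants (for global counts such as ``number of half-intercalates'') are too large to apply Freedman directly and obtain $\exp(-\omega(n\log^2 n))$ tails; this forces a careful decomposition of the count into many sub-quantities (indexed by basic type and by how each sub-configuration intersects $\sR^*$), so that each sub-count individually concentrates with sufficient strength. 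Once all sub-counts are concentrated, summing and union-bounding over $\sR^*$ yields the claim.
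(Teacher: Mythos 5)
Your reduction to the triangle removal process via $(\phi,\alpha n^2)$-inheritedness is correct, and you are right to flag the non-monotonicity caused by the isolation requirement. But the step ``every pseudo bad configuration with specified special entries can be transformed into one of four basic types, so it suffices to count pairs threatened by a basic-type configuration'' is where the proposal breaks down, and it is exactly the obstacle the paper's proof is engineered around. The ``transformation'' in \cref{fact:switch} is a random switching of isolated intercalates that produces a \emph{different} partial Latin square; a basic threatened pair for the switched square is not a basic threatened pair for the original one. In a fixed instance, basic threatened pairs are neither a superset nor a subset of the (pseudo-)threatened pairs: a critical configuration with $t\ge 2$ disjoint intercalates yields a threatened pair that is not basic and will not become basic until one actually performs the switches. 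So bounding basic threatened pairs in $\bfH\cap\vec{\bfR}$ does not bound the quantity defining $\vec{\cU}^{\mr{threat}}$.

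The paper performs the switching directly at the \emph{Latin-square} level, and this is what makes it work: it flips each isolated intercalate of $\bfH\cap\bfP$ with probability $1/2$ to obtain $\bfL'$, uses \cref{fact:switch} to show that many threatened pairs become basic, and then transfers the basic-type bound (\cref{lemma:upper_bound_reduced_basic_threats}, proved with tail $\exp(-\Omega(\varepsilon^3 n^2))$) back to $\bfL$ via an orbit-counting argument based on the uniformity of $\bfL$: $\Pr[\bfL=L_1]=\Pr[\bfL=L_2]$, and the number of $L_1$ reachable from a fixed $L_2$ by disjoint intercalate switches is $\exp(O(\varepsilon^4 n^2\log n))$, the latter requiring a separate ingredient (\cref{lemma:not_too_many_isolated_intercalates}) controlling the number of disjoint intercalates. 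None of this has an analogue inside $\trp(n,\alpha n^2)$ or $\mb G^{(3)}(n,p)$, where the requisite invariance under switching is absent. To salvage your outline you would need either to move the switching up to the Latin-square level (landing on the paper's argument) or to directly enumerate and concentrate all shapes of non-isolated critical configurations with up to four disjoint intercalates --- a much larger zoo than four types, and one for which your concentration discussion (already informal for the four basic types, which in the paper require a carefully ordered five-phase exposure) gives no handle. A minor inconsistency: you claim a total expectation $O(\varepsilon^4 n^3\ell)$ while asserting the dominant term is $O(\varepsilon^5 n^3\ell)$; in any case the paper deliberately works with the looser threshold $\varepsilon^3 n^3\ell$ precisely because the true mean is too small to concentrate around with tails of strength $\exp(-\Omega(\varepsilon^3 n^2))$.
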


\begin{lemma}
\label{lemma:upper_bound_bad_configs_given_not_too_many_reduced_threats}
Fix constants $\alpha,C',C,\phi>0$, such that $\alpha$ is sufficiently small, $C$ is sufficiently large in terms of $\alpha$ and $C'$, and $\phi$ is sufficiently small in terms of $\alpha$. Consider a random hypergraph $\bfH \sim \mb G^{(3)}(n,\varepsilon)$ and an independent random Latin square $\bfL \sim \on{Unif}(\mc L_n)$. We have
\[ \mathbb{P}[ \bfL \in \cT^{\mr{bad}}(\bfH, C) \setminus \cT^{\mr{threat}}(\bfH, C', \alpha, \phi)] \leq \exp(-\omega(n \log^2{n})).\]
\end{lemma}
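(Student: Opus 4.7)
My plan is to apply the transfer lemma \cref{lem:random_LS_to_triangle_removal} after setting up an appropriately inherited property of ordered partial Latin squares. I define $\vec{\cU} \subseteq \vec{\cP}_{n,\alpha n^2}$ to encode two simultaneous conditions on $\vec{P}$: (i) $H \cap \vec{P}$ contains many \emph{threat configurations}, in the sense that for many bad configurations $F$ in $H\cap L$ (where $L$ is the underlying ambient Latin square), the non-special part $F\setminus\{e_1,e_2\}$ lies entirely in $H \cap \vec P$ while $\{e_1,e_2\}\cap\vec P=\emptyset$; and (ii) $H \cap \vec{P}$ has at most $C'\varepsilon^3 n^3 \ell$ threatened pairs for every choice of $\sR^*$.

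To verify $(\rho, \alpha n^2)$-inheritance from $\cT^{\mr{bad}}(H,C) \setminus \cT^{\mr{threat}}(H,C',\alpha,\phi)$, fix $L$ in this difference set. Since $L \in \cT^{\mr{bad}}(H,C)$, $H \cap L$ contains at least $\Omega(\varepsilon^5 n \ell)$ bad configurations (each has at most constantly many entries). Each such bad configuration $F$ survives a random $\bfP_{\alpha n^2}(L)$ as a threat configuration with probability at least $\alpha^{|F|-2}(1-\alpha)^2 \ge \alpha^{14}(1-\alpha)^2$, so a second-moment argument (or a suitably adapted version of \cref{freedman_inequality} with per-entry bounded differences) gives (i) with some constant probability $\rho_1>0$. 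Condition (ii) holds with probability at least $1-\phi$ by the definition of $L \notin \cT^{\mr{threat}}(H,C',\alpha,\phi)$. Since $\phi$ is sufficiently small in terms of $\alpha$ we may assume $\phi < \rho_1/2$, so both conditions co-occur with probability at least $\rho := \rho_1/2$. Applying \cref{lem:random_LS_to_triangle_removal} then yields
\[
\mathbb{P}[\bfL \in \cT^{\mr{bad}}(\bfH,C) \setminus \cT^{\mr{threat}}(\bfH,C',\alpha,\phi)] \le \exp\bigl(O(n \log^2 n)\bigr)\cdot \mathbb{P}[\vec\bfR \in \vec\cU]/\rho,
\]
for $\vec\bfR \sim \trp(n,\alpha n^2)$, so the problem reduces to showing $\mathbb{P}[\vec\bfR \in \vec\cU] \le \exp(-\omega(n \log^2 n))$.

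The key combinatorial insight for this triangle-removal bound is an \emph{$\ell$-amplification principle}: the non-special part of a bad configuration is independent of the specific row $r^* \in \sR^*$ at which the special intercalate is placed, so each threat configuration in $H \cap \vec\bfR$ can be completed to a valid bad configuration by any of $\Theta(\ell)$ different pairs $\{e_1,e_2\}$, producing $\Theta(\ell)$ distinct threatened pairs. Hence conditions (i) and (ii) are quantitatively in tension, and (ii) must fail once (i) produces enough threat configurations, provided $C$ is taken large enough in terms of $C'$ and $\alpha$. To convert this tension into a genuine exponential bound, I would pass to the cleaned binomial model $\bfG^*$ via \cref{lemma:triangle_removal_to_cleaned_binomial}, and then apply \cref{freedman_inequality} in a two-phase exposure (first revealing entries outside rows of $\sR^*$, which determine the surviving threat configurations, then revealing entries in rows of $\sR^*$, which determine the realized threatened pairs), closely paralleling the approach of \cref{lemma:min_disjoint_split_ints_binomial}.

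The main obstacle will be the \emph{infamous upper tail} issue noted in \cref{rem:upper-tail}: a single entry can participate in many overlapping bad configurations, producing heavy upper tails that defeat concentration of raw configuration counts. I expect to need to track ``per-entry'' quantities (analogous to the covered-entries count used in the very definition of $\cT^{\mr{bad}}$) so that the bounded-differences constants in \cref{freedman_inequality} stay under control. I also anticipate invoking the switching-based reduction of arbitrary bad configurations to the four basic types foreshadowed in the outline of \cref{sec:upper_bounding_entries_in_bad_configurations}, applying the $\ell$-amplification case-by-case to each tractable type.
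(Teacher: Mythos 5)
Your high-level plan of invoking \cref{lem:random_LS_to_triangle_removal} with a property built from ``many threat configurations'' plus ``few threatened pairs'' does not match the logic of the lemma, and the central ``$\ell$-amplification'' tension you rely on is quantitatively impossible. The two quantities live on completely different scales: $\cT^{\mr{bad}}$ concerns $\Theta(\varepsilon^5 n\ell)$ covered entries (hence $O(\varepsilon^5 n\ell)$ threat configurations after subsampling), while $\cT^{\mr{threat}}$ allows up to $C'\varepsilon^3 n^3\ell$ threatened pairs. Even granting a full factor-$\ell$ amplification, $\varepsilon^5 n\ell\cdot \ell=\varepsilon^5 n\ell^2 \ll \varepsilon^3 n^3\ell$, so there is no contradiction between ``many'' threat configurations and ``few'' threatened pairs. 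The actual argument must run in the opposite direction: starting from few threatened pairs and showing that it is unlikely for many of them to become \emph{realized} bad configurations when the remaining entries are revealed. This is inherently a conditional probabilistic bound, not a deterministic tension; the paper proves it as \cref{lemma:upper_bound_bad_configs_given_not_too_many_reduced_threats_bin} via a three-block exposure of an ordered partial Latin square of size $2\alpha n^2$ (threatened pairs counted from the first half $[0,1/2]$, which has exactly $\alpha n^2$ entries to match the definition of $\cT^{\mr{threat}}$; special entries revealed in $[1/2,3/4]$ and $[3/4,1]$).

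There are two further structural problems. First, your $\vec\cU$ explicitly refers to ``bad configurations $F$ in $H\cap L$ (where $L$ is the underlying ambient Latin square),'' but when you later apply \cref{lem:random_LS_to_triangle_removal} the right-hand side is $\vec\bfR\sim\trp(n,\alpha n^2)$, which does not come embedded in a Latin square; $\vec\cU$ must be a property of the ordered partial Latin square $\vec P$ alone. Second, your condition (i) insists that the special pair $\{e_1,e_2\}$ be \emph{absent} from $\vec P$, so the sample never witnesses a realized bad configuration at all. But $\cT^{\mr{bad}}$ is about entries actually covered by bad configurations; the inherited property has to encode this (which is exactly why the paper enlarges the sample to $2\alpha n^2$ entries and introduces the ``consistent with $\vec P$'' condition, requiring one special entry in $\vec P[1/2,3/4]$ and the other in $\vec P[3/4,1]$). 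Finally, the switching reduction to the four basic types that you anticipate invoking is used for \cref{lemma:upper_bound_threatened_pairs}, not for this lemma; it plays no role in the paper's proof here.
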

\cref{lemma:upper_bound_threatened_pairs,lemma:upper_bound_bad_configs_given_not_too_many_reduced_threats}  are both proved using \cref{lem:random_LS_to_triangle_removal}, comparing random Latin squares to the triangle removal process. In addition, the proof of \cref{lemma:upper_bound_threatened_pairs} also involves a switching argument directly on random Latin squares (to tame the complexity of all the different possibilities for the structure of a threat configuration).

We conclude this subsection with the brief deduction of \cref{lemma:max_entries_bad_configs} from \cref{lemma:upper_bound_threatened_pairs,lemma:upper_bound_bad_configs_given_not_too_many_reduced_threats}.
\begin{proof}[Proof of \cref{lemma:max_entries_bad_configs}]
 Let $C'$ be as in \cref{lemma:upper_bound_threatened_pairs}, and then let $\alpha,C,\phi$ be as in \cref{lemma:upper_bound_bad_configs_given_not_too_many_reduced_threats} (for the same value of $C'$). Then we have
\begin{align*}
\mathbb{P}[\bfL \in \cT^{\mr{bad}}(\bfH, C)] &\leq \mathbb{P}[ \bfL \in \cT^{\mr{bad}}(\bfH, C) \setminus \cT^{\mr{threat}}(\bfH, C', \alpha, \phi)] + \mathbb{P}[ \bfL \in \cT^{\mr{threat}}(\bfH,C', \alpha, \phi)] \\
&\leq \exp(-\omega(n \log^2{n}))+\exp(-\omega(n \log^2{n}))
\end{align*}
by \cref{lemma:upper_bound_threatened_pairs,lemma:upper_bound_bad_configs_given_not_too_many_reduced_threats}.
\end{proof}

In the next three subsections, we will prove \cref{lemma:upper_bound_threatened_pairs,lemma:upper_bound_bad_configs_given_not_too_many_reduced_threats} (in reverse order).

\subsection{Few bad configurations or many threatened pairs}
\label{subsec:bad_configs}
In this subsection we prove \cref{lemma:upper_bound_bad_configs_given_not_too_many_reduced_threats}. As previously mentioned, we will work with the triangle removal process, via \cref{lem:random_LS_to_triangle_removal}. In contrast to our previous application of \cref{lem:random_LS_to_triangle_removal} in \cref{sec:existence_disjoint_ints}, this time the order of the edges will be important.
\begin{definition}\label{def:block}
    Let $\vec P\in \vec \cP_{n}$ be an ordered partial Latin square, and write $e_1,\dots,e_m$ for its entries (in order). For $\iota,\kappa\in[0,1]$, we write $\vec P[\iota,\kappa]\in \cP_{n,(\kappa-\iota)m}$ for the partial Latin square consisting of the edges $e_i$ with $\iota m< i\le \kappa m$.
\end{definition}

Recall the definitions of $\ell$ and $\varepsilon$ from \cref{def:ell_eps}.
\begin{definition}
    Let $\alpha \in (0,1)$, $C>0$, and $H \subseteq K^{(3)}_{n,n,n}$.
Let $\vec{\cU}^{\mr{threat}}(H, C) \subseteq \vec\cP_{n}$ be the set of partial ordered Latin squares $\vec P$ such that for some set of rows $\sR^*$ of size $\ell= |\sR^*|$, the number of threatened pairs for $H \cap \vec{P}[0,1/2]$ is more than $C \varepsilon^3 n^3 \ell$.
\end{definition}

\begin{definition}
We call a bad configuration \emph{consistent with} $\vec P$ if its corresponding threat configuration is in $\vec P[0,1/2]$, one of its special entries is in $\vec P[1/2,3/4]$, and its other special entry is in $\vec P[3/4,1]$. Let $\vec{\cU}^{\mr{bad}}(H, C)\subseteq \vec{\cP}_{n}$ be the set of partial ordered Latin squares $\vec P$ such that for some set of rows $\sR^*$ of size $\ell=|\sR^*|$, there are more than $C\varepsilon^5 n\ell$ entries in $\vec P[3/4,1]$ that are in rows in $\sR^*$ and are covered by $\sR^*$-split bad configurations in $H \cap \vec{P}$ that are consistent with $\vec P$.
\end{definition}

The following lemma is a version of \cref{lemma:upper_bound_bad_configs_given_not_too_many_reduced_threats} for the  triangle removal process.

\begin{lemma}
\label{lemma:upper_bound_bad_configs_given_not_too_many_reduced_threats_bin}
Fix constants $\alpha,C'$, such that $\alpha$ is sufficiently small. Consider independent random hypergraphs $\vec{\bfR} \sim \trp(n, \alpha n^2)$ and $\bfH \sim \mb G^{(3)}(n, \varepsilon)$. Then
\[ \mathbb{P}[ \vec{\bfR} \in \vec{\cU}^{\mr{bad}}(\bfH, C') \setminus  \vec{\cU}^{\mr{threat}}(\bfH, C')] \leq \exp(-\omega(n \log^2{n})).\]
\end{lemma}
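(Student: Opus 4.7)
The plan is to follow the same two-phase template as the proof of \cref{lemma:min_disjoint_split_ints_binomial}: fix a set $\sR^*$ of $\ell$ rows, reveal $\vec\bfR[0,1/2]$ and $\bfH$, then use \cref{lemma:triangle_removal_chunk_to_binomial} to dominate the remainder of the triangle removal process by an independent binomial model, and close via \cref{freedman_inequality}. A union bound over the at most $\binom{n}{\ell}\le\exp(O(\ell\log n))=\exp(O(\log^{12}n))$ choices of $\sR^*$ is easily absorbed by the target tail $\exp(-\omega(n\log^2 n))$. Split the second half $\vec\bfR[1/2,1]$ into the quarters $\vec\bfR[1/2,3/4]$ and $\vec\bfR[3/4,1]$; each is distributed as $\trp(\cdot,\alpha n^2/4)$ starting from the partial Latin square revealed up to that point. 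By applying \cref{lemma:triangle_removal_chunk_to_binomial} sequentially (with $\alpha$ small), each quarter can be dominated, for monotone increasing events, by an independent binomial $\bfG_i\sim\mb G^{(3)}(n,(1+\gamma)\alpha/(4n))$, with additive error $\exp(-\Omega(n^2))$.

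Every $\sR^*$-split bad configuration in $\bfH\cap\vec\bfR$ consistent with $\vec\bfR$ is witnessed by a threatened pair $\{e_1,e_2\}\in T(\bfH,\sR^*)$ with $e_1,e_2\in\bfH$ and exactly one special entry in each of $\vec\bfR[1/2,3/4]$, $\vec\bfR[3/4,1]$ (here we use the easy observation that the isolation of the intercalates in $\bfH\cap\vec\bfR$ implies their isolation in the smaller partial Latin square $(\bfH\cap\vec\bfR[0,1/2])\cup\{e_1,e_2\}$, so the pair is genuinely threatened for $\bfH\cap\vec\bfR[0,1/2]$). So the count at stake is bounded by
\[
  Y := 2\sum_{\{e_1,e_2\}\in T(\bfH,\sR^*)} \mathbb 1[e_1,e_2\in\bfH]\cdot \mathbb 1[e_1\in\bfG_2]\cdot\mathbb 1[e_2\in\bfG_3].
\]
The key observation is that the event $\{e_1,e_2\}\in T(\bfH,\sR^*)$ depends only on $\bfH$ restricted to entries in $\vec\bfR[0,1/2]$ (where every threat configuration lives), and is therefore independent of the event $e_1,e_2\in\bfH$, which depends on $\bfH$ at the disjoint set $\{e_1,e_2\}$. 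Combined with the mutual independence of $(\bfH,\bfG_2,\bfG_3)$, this yields
\[
  \mathbb E[Y] \le 2\varepsilon^2 p^2\cdot\mathbb E[|T(\bfH,\sR^*)|] \le O(\alpha^2\varepsilon^5 n\ell),
\]
using the hypothesis $|T(\bfH,\sR^*)|\le C'\varepsilon^3 n^3\ell$ provided by $\vec\bfR\notin\vec\cU^{\mr{threat}}(\bfH,C')$, together with $p=O(\alpha/n)$. Choosing $\alpha$ small enough that $O(\alpha^2)\le 1/2$, this matches the target bound $C'\varepsilon^5 n\ell/2$ in expectation.

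To upgrade to a tail bound, apply \cref{freedman_inequality} in two successive phases. First, in the independent Bernoulli model over the indicators $\mathbb 1[e\in\bfH]$ with $e\notin\vec\bfR[0,1/2]$, concentrate $|T'|:=|\{\{e_1,e_2\}\in T(\bfH,\sR^*):e_1,e_2\in\bfH\}|$ around its conditional mean $\varepsilon^2|T(\bfH,\sR^*)|=O(\varepsilon^5 n^3\ell)$. Then, conditioning on a typical outcome with $|T'|=O(\varepsilon^5 n^3\ell)$, apply Freedman's inequality again over the independent Bernoullis defining $(\bfG_2,\bfG_3)$ to concentrate $Y$ around its conditional mean $O(p^2|T'|)=O(\varepsilon^5 n\ell)$. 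The main obstacle is controlling per-variable sensitivity: a single edge in $\bfH$ (respectively in $\bfG_i$) could a priori lie in $\Omega(n^2)$ pairs counted by $|T'|$ (respectively $Y$), which would render a naive application of \cref{freedman_inequality} useless. We expect to overcome this by first proving, via a separate Freedman-based argument analogous to the one used in \cref{lemma:min_disjoint_split_ints_binomial}, that on a typical outcome of $\bfH$ the threatened-pair ``degree'' of any entry is $(\log n)^{O(1)}$, and then restricting all subsequent analysis to this typical regime. Once the maximum-degree bound holds, the sensitivity parameters in \cref{freedman_inequality} become polylogarithmic, and both phases yield the desired $\exp(-\omega(n\log^2 n))$ tail, comfortably absorbing the $\exp(O(\log^{12}n))$ union bound loss over $\sR^*$.
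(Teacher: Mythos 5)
Your high-level decomposition matches the paper's: reveal $\bfH\cap\vec\bfR[0,1/2]$, condition on $\vec\bfR\notin\vec\cU^{\mr{threat}}(\bfH,C')$, union-bound over $\sR^*$, split $[1/2,1]$ into quarters, and dominate each quarter by a binomial via \cref{lemma:triangle_removal_chunk_to_binomial}. The expectation computation is also fine. The gap is in the concentration step, which you flag yourself but then resolve with an argument that would not close.

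The first problem is the quantitative shape of your second-phase Chernoff. You count \emph{pairs} $Y$ rather than \emph{entries}. Conditioning on a typical $|T'|$, the mean of $Y$ is $O(\varepsilon^5 n\ell)$, but a single hyperedge of $\bfG_2$ or $\bfG_3$ can participate in up to $n$ pairs of $T'$, so the sensitivity parameter in \cref{freedman_inequality}/\cref{thm:chernoff_dependency_graph} is $\Delta=\Theta(n)$. The resulting tail is $\exp(-\Omega(\mathbb E[Y]/\Delta))=\exp(-\Omega(\varepsilon^5\ell))=\exp(-\Omega(\log^6 n/(\log\log n)^5))$, which is nowhere near $\exp(-\omega(n\log^2 n))$. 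Your escape hatch is to assert a $(\log n)^{O(1)}$ bound on the threatened-pair degree of every entry, with error probability $\exp(-\omega(n\log^2 n))$, and you only say ``we expect to overcome this by first proving\ldots'' This is exactly where the real work lies and it is left unproved; it is also not clear such a max-degree statement is provable at this tail with the available tools, and it is in any case not what the paper does.

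Two changes close the gap, both of which the paper uses. First, there is a clean \emph{deterministic} bound $w_e\le n$ on the number of threatened pairs containing any fixed entry $e$: each such pair forces $e$ into a distinct special intercalate, and an entry lies in at most $n$ intercalates of any partial Latin square. So no probabilistic max-degree lemma is needed, and the $\Omega(n^2)$ worry never materializes. Second, the final quantity in $\vec\cU^{\mr{bad}}$ is a count of \emph{entries} in $\vec\bfR[3/4,1]$, not of pairs. After revealing $\bfH\cap\vec\bfR[1/2,3/4]$ and concentrating the number of surviving pairs to $O(\varepsilon^4 n^2\ell)$ with $\Delta=n$ (tail $\exp(-\Omega(\varepsilon^4 n\ell))$), one counts \emph{threatened entries} (the second entries of surviving pairs) hit by $\bfH\cap\vec\bfR[3/4,1]$, each as a $0/1$ indicator, so $\Delta=1$; with mean $O(\varepsilon^5 n\ell)$ the tail is $\exp(-\Omega(\varepsilon^5 n\ell))$, which is $\exp(-\omega(n\log^2 n))$. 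Switching to entry-counting in the last step is the essential idea you are missing; it is also why the paper reveals $\vec\bfR[1/2,3/4]$ and $\vec\bfR[3/4,1]$ \emph{sequentially} rather than jointly.
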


Before proving \cref{lemma:upper_bound_bad_configs_given_not_too_many_reduced_threats_bin}, we show how it implies \cref{lemma:upper_bound_bad_configs_given_not_too_many_reduced_threats}, using \cref{lem:random_LS_to_triangle_removal} and an averaging argument.

\begin{proof}[Proof of \cref{lemma:upper_bound_bad_configs_given_not_too_many_reduced_threats}]
Let $m=2\alpha n^2$ and $\rho=\alpha^{16}/2^4$. We assume that $\phi\le\rho$ and $\rho C\ge C'$.

By \cref{lemma:upper_bound_bad_configs_given_not_too_many_reduced_threats_bin} and \cref{fact:joint_prob_vs_most_outcomes}(\ref{fact:joint_to_separate}), with probability $1-\exp(-\omega(n\log^2{n}))$ over the randomness of $\bfH$ we have
\[ \mathbb{P}[ \vec{\bfR} \in \vec{\cU}^{\mr{bad}}(\bfH, C') \setminus  \vec{\cU}^{\mr{threat}}(\bfH, C')\,|\,\bfH] \leq \exp(-\omega(n \log^2{n})).\]
The desired result would therefore follow from \cref{lem:random_LS_to_triangle_removal} and \cref{fact:joint_prob_vs_most_outcomes}(\ref{fact:separate_to_joint}), if we knew that $\vec{\cU}^{\mr{bad}}(H, C') \setminus \vec\cU^{\mr{threat}}(H, C')$ is $(\rho,m)$-inherited from $\cT^{\mr{bad}}(H, C) \setminus \cT^{\mr{threat}}(H, C', \alpha, \phi)$ for all outcomes $H$ of $\bfH$. For the rest of the proof, our goal is to prove that this is the case.

Suppose $L \in \cT^{\mr{bad}}(H, C) \setminus \cT^{\mr{threat}}(H, C', \alpha, \phi)$, and let $\vec \bfP_m(L)\in \vec \cP_{n,m}$ be a random ordering of $m$ random entries of $L$. We need to prove that $\vec \bfP_m(L)\in  \vec{\cU}^{\mr{bad}}(H, C') \setminus  \vec{\cU}^{\mr{threat}}(H, C')$ with probability at least $\rho$.

By the definition of $\cT^{\mr{threat}}(H, C', \alpha, \phi)$ and the fact that $\vec \bfP_{m}(L)[0,1/2]$ is a uniformly random set of $\alpha n^2$ entries of $L$, first note that $\mathbb{P}[ \vec \bfP_m(L) \in  \vec\cU^{\mr{threat}}(H,C') ] \leq \phi$. So, it suffices to show that $\vec \bfP_m(L) \in \vec\cU^{\mr{bad}}(H, C') $ with probability at least $\rho + \phi$.

Since $L \in \cT^{\mr{bad}}(H, C)$, there is some $\sR^*$ such that there are $X \geq C \varepsilon^5 n \ell$ many entries in rows in $\sR^*$ covered by $\sR^*$-split bad configurations in $H \cap L$. Let $\bfY\le X$ be the number of such entries in $\vec\bfP_m(L)[3/4,1]$ which are covered by $\sR^*$-split bad configurations in $H \cap \vec \bfP_m(L)$ consistent with $\vec \bfP_m(L)$. For each covered entry $(r,c,s)$ in $H\cap L$ counted by $X$, the probability that its associated split bad configuration with $s \leq 16$ entries is present in $\vec \bfP_m(L)$ is $\binom{n^2 - s}{m-s}/\binom{n^2}{m}=(1+o(1))(2\alpha)^s$. Given that it is present in $\vec \bfP_m(L)$, the probability that it is consistent with $\vec \bfP_m(L)$ and that $(r,c,s) \in \vec \bfP_m(L)[3/4,1]$ is \[\frac{m/4}{m} \cdot \frac{m/4}{m-1}\cdot  \binom{m/2}{s-2} / \binom{m-2}{s-2}=(1+o(1))(1/4)^2(1/2)^{s-2}.\] Note that each isolated intercalate in $H\cap L$ whose entries are in $\vec \bfP_m(L)$ is also isolated in $H\cap \vec \bfP_m(L)$, so each bad configuration in $H \cap L$ that is present in $H\cap \vec \bfP_m(L)$ is also a bad configuration in $H\cap \vec \bfP_m(L)$. Therefore
\[ \mathbb{E}[\bfY] \geq (1+o(1))X (2\alpha)^s(1/4)^2(1/2)^{s-2}
\ge X \alpha^{16}/4.\]
Recall our assumptions on $\phi,C$ from the start of the proof. Let $\delta=\alpha^{16}/8\geq \rho+\phi$ and note that $\delta X\ge C'\varepsilon^5 n \ell$. By Markov's inequality, $\mathbb{P}[\vec \bfP_m(L) \in \vec\cU^{\mr{bad}}(H, C')]$ is at least
\[\Pr[\mathbf Y\ge \delta X]=1- \Pr[X-\mathbf Y> (1-\delta)X]\ge 1- \frac{\mb E[X-\mathbf Y]}{(1-\delta) X}\ge 1-\frac{1-2\delta}{1-\delta}\ge \rho+\phi,\]
as desired.
\end{proof}

Now we prove \cref{lemma:upper_bound_bad_configs_given_not_too_many_reduced_threats_bin}.
\begin{proof}[Proof of \cref{lemma:upper_bound_bad_configs_given_not_too_many_reduced_threats_bin}]

First, we reveal $\bfH \cap \vec{\bfR}[0,1/2]$. We assume that for each choice of $\sR^*$ of size $\ell$, the number of threatened pairs for $\bfH \cap \vec{\bfR}[0,1/2]$ is at most $C'\varepsilon^3 n^3 \ell$ (otherwise $\vec{\cU}^{\mr{threat}}(\bfH, C')$ holds, and there is nothing to prove). From now on, when we talk about threatened pairs in this proof, we mean threatened pairs for $\bfH \cap \vec{\bfR}[0,1/2]$.

Given revealed information, our goal is to show that with probability $1-\exp(-\omega(n\log^2 n))$, for every $\sR^*$ with $|\sR^*|= \ell$, there are at most $C' \varepsilon^5 n\ell$ entries of $\vec{\bfR}[3/4,1]$ that are in rows in $\sR^*$ and are covered by $\sR^*$-split bad configurations in $\mbf H\cap \vec{\bfR}$ that are consistent with $\vec{\bfR}$. Actually, it suffices to show this for a fixed choice of $\sR^*$ (a union bound over choices of $\sR^*$ has a negligible impact). So, fix some $\sR^*$ with $|\sR^*|= \ell$.

The next step is to consider $\bfH \cap \vec{\bfR}[1/2, 3/4]$. We say a threatened pair \emph{survives} if one of its entries is in $\bfH \cap \vec{\bfR}[1/2, 3/4]$.
\begin{claim*}
    Given revealed information, the number of surviving threatened pairs is at most $C' \varepsilon^4 n^2 \ell$ with probability $1-\exp(-\omega(n\log^2 n))$.
\end{claim*}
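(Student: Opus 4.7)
\begin{claimproof}
The plan is to apply \cref{freedman_inequality} after using \cref{lemma:triangle_removal_chunk_to_binomial} to replace the continuation $\vec{\bfR}[1/2,3/4]$ of the triangle-removal process by an independent binomial hypergraph. The crucial structural ingredient is the bound $K_e = O(n)$ for every potential entry $e=(r,c,s)$ with $r\in \sR^*$, where $K_e$ denotes the number of threatened pairs containing $e$.

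To establish this bound, observe that any threatened pair $\{e,e'\}$ determines a split special intercalate in $\bfH\cap\vec{\bfR}[0,1/2]\cup\{e,e'\}$, whose other two entries must already lie in $\bfH\cap\vec{\bfR}[0,1/2]$. Since any two entries of an intercalate share either a row, a column, or a symbol, I split into cases: (i) $e'$ shares the row $r$ with $e$, or (ii) $e'$ lies in some different row $r^*\in\sR^*$ and shares a column or a symbol with $e$. In case (i), once the second row $r'\in[n]\setminus\{r\}$ of the intercalate is fixed, the partial-Latin-square structure of $\bfH\cap\vec{\bfR}[0,1/2]$ uniquely determines the remaining two entries (and hence $e'$) --- the column of $e'$ is the column of row $r'$ containing symbol $s$, and the symbol of $e'$ is the symbol in cell $(r',c)$; this gives at most $n-1$ pairs. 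Cases (ii) are analogous, with the second row now forced to lie in $\sR^*$, contributing at most $2(\ell-1)$ pairs. Altogether $K_e\le n+2\ell=O(n)$. Moreover, summing over $e$, $\sum_e K_e=2|\mathcal T|\le 2C'\varepsilon^3 n^3\ell$ by the assumption on threatened pairs (for the fixed $\sR^*$).

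Next, apply \cref{lemma:triangle_removal_chunk_to_binomial} with $P=\vec{\bfR}[0,1/2]$ and $m_2=\alpha n^2/4$: the continuation $\vec{\bfR}[1/2,3/4]$ is stochastically dominated, for monotone-increasing properties, by $\bfG\sim\mb G^{(3)}(n,p)$ with $p=(1+o(1))\alpha/(4n)$, up to additive error $\exp(-\Omega(n^2))$. The event ``more than $C'\varepsilon^4 n^2\ell$ surviving threatened pairs'' is monotone-increasing in $\vec{\bfR}[1/2,3/4]$ for each fixed outcome of $\bfH$, so we may work in a coupled model in which $\bfH$ and $\bfG$ are independent. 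Then the indicators $W_e:=\one[e\in \bfH\cap \bfG]$ are independent Bernoulli random variables with parameter $\varepsilon p=O(\varepsilon/n)$, and a trivial double-counting bound gives
\[
Z \;\le\; \sum_e K_e W_e,
\]
where $Z$ denotes the number of surviving threatened pairs. Consequently $\mathbb E[Z]\le 2\varepsilon p|\mathcal T|=O(\alpha\varepsilon^4 n^2\ell)\le C'\varepsilon^4 n^2\ell/2$, taking $\alpha$ sufficiently small as assumed in the ambient lemma.

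Finally, apply \cref{freedman_inequality} to $f=\sum_e K_e W_e$ with $K_i\le K=O(n)$, $\sum_i p_iK_i^2\le K\cdot\varepsilon p\cdot\sum_e K_e=O(\varepsilon^4 n^3\ell)$, and $t$ of order $\varepsilon^4 n^2\ell$: the tail exponent becomes
\[
\frac{t^2}{2\sum_i p_iK_i^2+2Kt/3}=\Omega\!\left(\frac{\varepsilon^8 n^4\ell^2}{\varepsilon^4 n^3\ell}\right)=\Omega(\varepsilon^4 n\ell),
\]
which, using $\ell=\log^{11}n$ and $\varepsilon=\Theta(1/(\log n\log\log n))$, is $\omega(n\log^2 n)$. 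The additive $\exp(-\Omega(n^2))$ error from \cref{lemma:triangle_removal_chunk_to_binomial} is negligible, so the claim follows. The main obstacle is precisely the bound $K_e=O(n)$: the naive per-coordinate sensitivity of $Z$ could plausibly be $\Omega(n^2)$, which would render the Freedman estimate only $\exp(-\mathrm{polylog}(n))$, far too weak; exploiting the partial-Latin-square uniqueness to pin down the companion entry for each choice of second row is what enables the required super-polynomial concentration.
\end{claimproof}
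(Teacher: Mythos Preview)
Your proof is correct and follows essentially the same approach as the paper: replace $\vec{\bfR}[1/2,3/4]$ by a binomial hypergraph via \cref{lemma:triangle_removal_chunk_to_binomial}, bound the weight $K_e$ by $O(n)$, and apply a Freedman/Chernoff-type inequality to the weighted sum. The paper uses \cref{thm:chernoff_dependency_graph} rather than \cref{freedman_inequality} directly, but these are equivalent here since the statistic is a weighted sum of independent Bernoullis.

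One small remark: your case (ii) is vacuous. By definition, a threatened pair consists of two triples ``belonging to some row in $\sR^*$'', i.e.\ both special entries lie in the \emph{same} row of the special intercalate (when both rows of the intercalate are in $\sR^*$, the paper considers separate copies, one for each choice of row). So only case (i) arises, and the bound $K_e\le n-1$ follows immediately from your argument there; the paper phrases this as ``a single entry can be in at most $n$ intercalates'', which is the same counting. This does not affect your bound or the rest of the argument.
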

\begin{claimproof}
    Let $\mc U$ be the set of $G\subseteq K^{(3)}_{n,n,n}$ which intersect more than $C' \varepsilon^4 n^2 \ell$ threatened pairs. Then, for each possible outcome $H$ of $\bfH$, the set $\{G\subseteq K^{(3)}_{n,n,n}:H\cap G\in \mc U\}$ is a monotone increasing property of subgraphs of $K^{(3)}_{n,n,n}$, so by \cref{lemma:triangle_removal_chunk_to_binomial}, the probability that there are more than $C' \varepsilon^4 n^2 \ell$ surviving threatened pairs is
    \[\Pr[ \bfH\cap \vec{\bfR}[1/2,3/4]\in \mc U]\le \Pr[ \bfH \cap \bfG'\in \mc U]+\exp(-\Omega(n^2))=\Pr[\bfG\in \mc U]+\exp(-\Omega(n^2)),\]
    where $\bfG' \sim \mb G^{(3)}(n,(\alpha/2)/n)$ and $\bfG \sim \mb G^{(3)}(n,(\varepsilon\alpha/2)/n)$.

From here, we just need to apply a concentration inequality in $\bfG$, to upper bound $\Pr[\bfG\in \mc U]$. For each row/column/symbol triple $e$, let $w_e$ be the number of threatened pairs that $e$ is in, and let $\mbf X$ be the sum of $w_e$ over all hyperedges $e$ of $\mbf G$. Note that (for small $\alpha>0$) we have
    \[\mb E[\mbf X]\le 2\cdot C'\varepsilon^3 n^3 \ell\cdot \frac{\varepsilon\alpha/2}n\le \frac{C'\varepsilon^4 n^2 \ell}2.\]
    Then, for any row/column/symbol triple $e$, consider each of the threatened pairs that $e$ is in. Each of these threatened pairs, if added to $\vec{\bfR}[1/2,1]$, would complete a split bad configuration in which $e$ would be in a different intercalate. Since a single entry can be in at most $n$ many intercalates (in any Latin square), it follows that $e$ is in at most $n$ threatened pairs; that is, $w_e\le n$.
    
    We can therefore apply \cref{thm:chernoff_dependency_graph} with $\Delta = n$ and $\delta=1$, to obtain
    \[\Pr[\bfG\in \mc U]\le \Pr\left[\mbf X\ge \mb E [\mbf X]+\frac{C'\varepsilon^4n^2\ell}2\right]\le\exp(-\Omega(\varepsilon^4 n\ell)),\]
    and the desired result follows (recalling the relationship between $\ell$ and $\varepsilon$ in \cref{def:ell_eps}).
\end{claimproof}
Now, reveal $\bfH \cap \vec{\bfR}[1/2, 3/4]$; by the above claim we may assume that our revealed outcome is such that there are at most $C' \varepsilon^4 n^2 \ell$ surviving threatened pairs. Then, the final step is to consider $\bfH \cap \vec{\bfR}[3/4,1]$.

Each surviving threatened pair's presence in $\bfH \cap \vec{\bfR}$ is determined by whether its second entry is in $\bfH \cap \vec{\bfR}[3/4,1]$. Say a row/column/symbol triple $e$ is a \emph{threatened entry} if it is the second entry of some surviving threatened pair.
\begin{claim*}
    Given revealed information, there are at most $C' \varepsilon^5 n\ell$ threatened entries present in $\bfH \cap \vec{\bfR}[3/4,1]$, with probability $1-\exp(-\omega(n\log^2 n))$.
\end{claim*}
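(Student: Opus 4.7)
The plan is to mirror the structure of the preceding claim, swapping the role of threatened pairs for threatened entries. After all the information revealed so far, the set $T$ of threatened entries is fully determined, and the previous claim lets me assume $|T|\le 2C'\varepsilon^4 n^2\ell$, since each surviving threatened pair contributes at most two triples to $T$.

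Next, I apply \cref{lemma:triangle_removal_chunk_to_binomial} with starting partial Latin square $\vec{\bfR}[0,3/4]$, with $m_1=3\alpha n^2/4$ and $m_2=\alpha n^2/4$, and with the monotone increasing property ``the hypergraph contains at least $C'\varepsilon^5 n\ell$ triples from $T$''. This passes from $\vec{\bfR}[3/4,1]$ to $\bfG'\sim \mb G^{(3)}(n,(1+\gamma)\alpha/(4n))$ with an additive error $\exp(-\Omega(n^2))$ (for some small constant $\gamma>0$ and $\alpha$ small enough). The key point is that the randomness of $\bfH$ on triples outside $\vec\bfR[0,3/4]$ has never been examined and is independent of $\vec\bfR$, hence also of $\bfG'$; so each triple $e$ lies in $\bfH\cap\bfG'$ independently with probability at most $\varepsilon\alpha/(2n)$ (taking $\gamma\le 1$).

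Writing $\bfX$ for the number of $e\in T$ lying in $\bfH\cap\bfG'$, I then have $\mb E[\bfX]\le |T|\cdot\varepsilon\alpha/(2n)\le C'\alpha\varepsilon^5 n\ell$. For $\alpha$ chosen small enough (say $\alpha\le 1/2$) this is at most $C'\varepsilon^5 n\ell/2$, so \cref{thm:chernoff_dependency_graph} with $\Delta=\delta=1$ yields
\[ \mb P[\bfX>C'\varepsilon^5 n\ell]\le 2\exp\bigl(-\Omega(\mb E\bfX)\bigr)=\exp\bigl(-\Omega(\varepsilon^5 n\ell)\bigr). \]
Since $\varepsilon^5 n\ell=\eta^5 n\log^6 n=\omega(n\log^2 n)$ with $\eta=1/\log\log n$, the failure probability is of the claimed form $\exp(-\omega(n\log^2 n))$.

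The main obstacle I anticipate is purely bookkeeping: verifying that, conditional on $\bfH\cap\vec\bfR[0,3/4]$, the remaining randomness of $\bfH$ really does decouple from $\vec\bfR[3/4,1]$ so that one can legitimately multiply the two densities and treat the events $\{e\in\bfH\cap\bfG'\}$ for distinct $e$ as independent Bernoullis of parameter $\le \varepsilon\alpha/(2n)$. Once that is in place, the fact that $T$ is a \emph{fixed} (not random) set of size $O(\varepsilon^4 n^2\ell)$ reduces the claim to a one-line Chernoff bound, and there is no ``infamous upper tail'' issue of the sort discussed in \cref{rem:upper-tail} because we are no longer counting bad configurations themselves.
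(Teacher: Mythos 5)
Your proposal takes essentially the same route as the paper's: after the previous claim, the set $T$ of threatened entries is determined and bounded in size; you pass from $\vec{\bfR}[3/4,1]$ to a binomial model via \cref{lemma:triangle_removal_chunk_to_binomial}, note that the as-yet-unexamined coordinates of $\bfH$ are independent fresh Bernoulli$(\varepsilon)$ bits (so $\bfH\cap\bfG'$ is itself a sparse binomial hypergraph), and finish with \cref{thm:chernoff_dependency_graph}. This is what the paper does, writing $\bfG\sim\mb G^{(3)}(n,(\alpha\varepsilon/2)/n)$ directly.

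Two small technical slips, both easy to fix. First, a ``threatened entry'' is defined to be the \emph{second} entry of a surviving threatened pair (the one not yet present), so $|T|\le C'\varepsilon^4 n^2\ell$, not twice that; your factor of $2$ is harmless since it is absorbed into the choice of $\alpha$. Second, and more substantively, the final step reads ``\cref{thm:chernoff_dependency_graph} with $\Delta=\delta=1$ yields $\mb P[\bfX>C'\varepsilon^5 n\ell]\le 2\exp(-\Omega(\mb E\bfX))=\exp(-\Omega(\varepsilon^5 n\ell))$.'' The displayed equality is unjustified: you only have the upper bound $\mb E\bfX\le C'\varepsilon^5 n\ell/2$, with no matching lower bound, and $\mb E\bfX$ could be much smaller than $\varepsilon^5 n\ell$ (even zero if $T$ happens to be empty), in which case $\exp(-\Omega(\mb E\bfX))$ is useless. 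The fix is to not fix $\delta=1$ but instead choose $\delta$ so that $(1+\delta)\mb E\bfX = C'\varepsilon^5 n\ell$; then $\delta\ge 1$ and the \cref{thm:chernoff_dependency_graph} exponent is $\Omega(\delta\mb E\bfX)=\Omega(\varepsilon^5 n\ell)$ regardless of how small $\mb E\bfX$ is. Equivalently, pad $T$ to a set of size exactly $2C'\varepsilon^4 n^2\ell$ (this can only increase $\bfX$) and then $\delta=1$ does suffice. This is what the paper does implicitly when it cites \cref{thm:chernoff_dependency_graph} with only ``$\Delta=1$'' and no fixed $\delta$.
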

\begin{claimproof}
We proceed very similarly to the last claim (in fact, the situation is even simpler). Let $\bfG \sim \mb G^{(3)}(n,(\alpha\varepsilon/2)/n)$, and let $\bfY$ be the number of threatened entries present in $\bfG$. Using \cref{lemma:triangle_removal_chunk_to_binomial} as before, the probability that there are more than $C' \varepsilon^5 n\ell$ threatened entries present in $\bfH \cap \vec{\bfR}[3/4,1]$ is at most $\Pr[\mbf Y>C' \varepsilon^5 n\ell]+\exp(-\Omega(n^2))$.

We have\[\mb E[\mbf Y]\le C' \varepsilon^4 n^2 \ell\cdot \frac{\alpha\varepsilon/2}n\le \frac{C'\varepsilon^5 n\ell}2,\]
so by \cref{thm:chernoff_dependency_graph} (with $\Delta=1$), we have
$\Pr[\mbf Y>C' \varepsilon^5 n\ell]\le \exp(-\Omega(\varepsilon^5 n\ell))$; the desired result follows.
\end{claimproof}
Now, if $e$ is an entry of $\vec{\bfR}[3/4,1]$ that is in a row in $\sR^*$ and is covered by a split bad configuration in $\mbf H\cap \vec{\bfR}$ consistent with $\vec{\bfR}$, then $e$ was a threatened entry which happened to be present in $\bfH \cap \vec \bfR[3/4,1]$ (the converse may not hold, as bad configurations in subsets of $\bfH \cap \vec{\bfR}$ are not always bad configurations in $\bfH \cap\vec{\bfR}$ itself). So, the desired result follows from the above claim.
\end{proof}

\subsection{Upper-bounding the number of threatened pairs: setup and switching}
\label{sec:counting_threats}
\definecolor{nicegreen}{RGB}{0,180,0}

It remains to prove \cref{lemma:upper_bound_threatened_pairs}, which amounts to a careful study of threat configurations in a random subset of a random Latin square. 

Recall that threat configurations are sets of entries that are ``two entries away'' from a bad configuration, and there are a very large number of possibilities for the structure of a bad configuration (e.g., critical configurations can have different numbers of intercalates, they can require different subsets of intercalates to be switched to create a new intercalate, and they can have different rows in $\sR^*$). To tame this complexity, one crucial observation is that for every bad configuration in a partial Latin square, there is some way to switch isolated intercalates to obtain a pair of intersecting intercalates. So, with a switching argument (unrelated to the main switching argument in \cref{sec:master-from-stable}), we can reduce the number of cases significantly: it suffices to consider bad configurations that correspond to pairs of intersecting intercalates, and bad configurations that are ``one switch away'' from a pair of intersecting intercalates.

\begin{definition}
Given a partial Latin square $P\in \cP_{n}$ and a set of rows $\sR^*$, we define a \emph{basic split bad configuration} to be a set $F$ of seven entries in $P$, such that $F$ contains an intercalate $A$ with a row in $\sR^*$, and either $F$ or $(F\setminus A)\cup \bar A$ is a union of two intersecting intercalates.

Then, say that a pair $\{e_1,e_2\}$, belonging to some row in $\sR^*$, is a \emph{basic threatened pair} for $P$ if there is some basic split bad configuration $F$ in $P\cup\{e_1,e_2\}$ with $e_1,e_2 \in F$. In this case, we say $F\setminus \{e_1,e_2\}$ is a \emph{basic threat configuration} in $P$.
\end{definition}
There are four cases for the structure of a basic threatened pair/basic threat configuration; see \cref{fig:basic}. Note that we make no isolatedness assumptions in the above definition, so technically it is possible that a basic split bad configuration is not actually a split bad configuration (or that a basic threatened pair is not actually a threatened pair, or that a basic threat configuration is not actually a threat configuration). However, we still borrow the same terminology (e.g., we may talk about the special entries of a basic split bad configuration).
\begin{figure}
\begin{center}
    \begin{tabular}{ c| c c c }
     & $c_1$ & $c_2$ & $c_3$  \\
    \hline
    $r_1^*$ & \coo$\mbf s_1$& \coo$\mbf s_2$ & \\
    $r_2$ & $s_2$ & $s_1$ & $s_3$\\ 
    $r_3$ &  & $s_3$ & $s_2$ 
    \end{tabular} 
    \qquad
 \begin{tabular}{ c| c c c }
     & $c_1$ & $c_2$ & $c_3$  \\
    \hline
    $r_1^*$ & \coo$\mbf s_2$& \coo$\mbf s_1$ & \\
    $r_2$ & $s_1$ & $s_2$ & $s_3$\\ 
    $r_3$ &  & $s_3$ & $s_2$ 
    \end{tabular}
\qquad
\begin{tabular}{ c| c c c }
     & $c_1$ & $c_2$ & $c_3$  \\
    \hline
    $r_1$ & $s_1$ & $s_2$ & \\
    $r^*_2$ & \coo$\mbf s_2$& \coo$\mbf s_1$& $s_3$\\ 
    $r_3$ &  & $s_3$ & $s_2$ 
    \end{tabular}
    \qquad
    \begin{tabular}{ c| c c c }
     & $c_1$ & $c_2$ & $c_3$  \\
    \hline
    $r_1$ & $s_2$ & $s_1$ & \\
    $r^*_2$ & \coo$\mbf s_1$& \coo$\mbf s_2$& $s_3$\\ 
    $r_3$ &  & $s_3$ & $s_2$ 
    \end{tabular}
\end{center}
    \caption{The four possibilities for the structure of a basic threat configuration, each illustrated with a basic threatened pair in {\coo \bf orange}. In each case, rows in $\sR^*$ are marked with a star. (Rows not marked with a star may or may not be in $\sR^*$.)
    }
    \label{fig:basic}
\end{figure}

\begin{fact}\label{fact:switch}
Consider any partial Latin square $R\in \mc P_n$ and set of rows $\sR^*$, and let $\mbf R'$ be obtained from $R$ by switching each isolated intercalate with probability 1/2, independently. Let $\{e_1,e_2\}$ be any threatened pair for $R$. Then, $\{e_1,e_2\}$ is a basic threatened pair for $\mbf R'$, with probability at least $(1/2)^5$.
\end{fact}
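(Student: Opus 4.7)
The plan is to construct, given the threatened pair $\{e_1,e_2\}$ and a witnessing split bad configuration $F$ in $R\cup\{e_1,e_2\}$, an explicit target basic split bad configuration $F'$ of seven entries, together with a set $\mc J$ of at most five isolated intercalates of $R$ and a prescribed switching pattern on $\mc J$ (switch/don't switch), so that whenever the random outcome $\mbf R'$ realises this pattern, $F'\subseteq \mbf R'\cup\{e_1,e_2\}$ and $F'$ witnesses that $\{e_1,e_2\}$ is a basic threatened pair for $\mbf R'$. Since the isolated intercalates of $R$ are switched independently, each with probability $1/2$, any fixed pattern on $\mc J$ is realised with probability exactly $(1/2)^{|\mc J|}\ge(1/2)^5$, yielding the claim.

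To define $F'$, write $A$ for the special intercalate of $F$, so that $A$ contains $e_1,e_2$ and two further entries $e_3,e_4\in R$. If $F$ is a pair of intersecting intercalates $A\cup B$, I simply take $F':=F$; since two distinct intercalates in a partial Latin square can share at most one entry, $|F|=7$ automatically, and $F'$ realises the ``directly two intersecting intercalates'' form of a basic split bad configuration. If instead $F$ is a critical configuration with isolated critical set $\{A=A_1,\dots,A_t\}$ and new intercalate $A'$ obtained by switching a subset $S\subseteq\{A_1,\dots,A_t\}$, I take $F':=A\cup A'$ when $A\notin S$ and $F':=A\cup(A'\setminus\bar A)$ when $A\in S$. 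In the first sub-case, $A$ and $A'$ intersect in exactly one entry in the $S$-switched partial Latin square, so $F'$ is a union of two intersecting intercalates; in the second, $(F'\setminus A)\cup\bar A = A'\cup\bar A$ is a union of two intercalates meeting in one entry. Entry-disjointness of the critical set combined with the ``at most one shared entry'' fact gives $|F'|=7$ in every case, and then checking the basic split bad configuration axioms is routine.

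The set $\mc J$ is built by associating to each of the five entries $x\in F'\setminus\{e_1,e_2\}$ at most one isolated intercalate of $R$ controlling whether $x$ lies in $\mbf R'$. Specifically, if $x\in R$ then $J_x$ is the (at most one) isolated intercalate of $R$ containing $x$, and we demand $J_x\notin\mbf T$ (where $\mbf T$ denotes the random set of switched intercalates); if $x\notin R$, then $x$ must be produced by the (at most one) isolated intercalate of $R$ whose positions include $x$'s position, and we demand $J_x\in\mbf T$. Uniqueness of $J_x$ follows from the fact that two distinct isolated intercalates of $R$ cannot share a position, else they would share the entry there. The main technical step, and the one I would handle with most care, is to verify that these up to five specifications are mutually consistent, i.e.\ no single $J\in\mc J$ is assigned conflicting roles. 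This reduces to position-disjointness of the critical set: the ``must-switch'' members of $\mc J$ are exactly the intercalates in $S\setminus\{A\}$, whereas each ``must-not-switch'' $J\in\mc J$ has its positions lying either inside $A$'s positions or outside the positions of every intercalate in $S$, so these two subfamilies cannot overlap. Finally, because $\mbf R'\cup\{e_1,e_2\}$ is automatically a partial Latin square (isolated intercalate switches preserve this property), verifying that the realised $F'$ is indeed a basic split bad configuration in it is a short book-keeping check in each sub-case.
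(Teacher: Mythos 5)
Your proof is correct and takes essentially the same approach as the paper: identify a specific set of at most five isolated intercalates of $R$ and a switching pattern on them that, when realised, produces a basic split bad configuration containing $\{e_1,e_2\}$, and then use the independence and probability $1/2$ of each switch. The main difference is in bookkeeping: the paper bounds the number of constrained intercalates by observing that the threat configuration $F_0$ can intersect at most five isolated intercalates of $R$, whereas you bound it by noting that the target $F'\setminus\{e_1,e_2\}$ has five entries and each sits at a position controlled by at most one isolated intercalate (by entry-disjointness of isolated intercalates). Your version is somewhat more economical — it constrains only what directly controls $F'$ — and your explicit construction of $F'$ (distinguishing the intersecting-pair case from the critical-configuration case, and within the latter whether $A\in S$) and of $\mc J$ is more detailed than the paper's terse two-sentence argument. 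One small imprecision: in your consistency check, the sentence about a must-not-switch $J$ having ``its positions lying either inside $A$'s positions or outside the positions of every intercalate in $S$'' is stated about all four positions of $J$, whereas what you really use (and what is true) is that the \emph{position of the defining entry $x$} lies in $A$'s positions (when $x\in A$) or outside all positions of intercalates in $S$ (when $x\in A'\cap P$); either way $x$'s position cannot be a position of any $A_j\in S\setminus\{A\}$, forcing $J_x\neq A_j$. With that reading the argument is sound.
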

\begin{proof}
    There is some threat configuration $F_0$ in $R$ corresponding to the threatened pair $\{e_1,e_2\}$. By the definition of a split bad configuration, in $R\cup\{e_1,e_2\}$, there is a way to switch isolated intercalates to introduce a pair of intersecting intercalates containing $\{e_1,e_2\}$. If we do the same switches in $R$ (except possibly the switch of the special intercalate, which may not actually exist in $R$), we make $\{e_1,e_2\}$ a basic threatened pair. See \cref{fig:switch-example} for an example.
    
    Now, $F_0$ intersects at most five isolated intercalates in $R$ (it is clear that $F_0$ contains at most three isolated intercalates, but the other entries in $F_0$ could be contained in totally different isolated intercalates). The desired result follows.
\end{proof}

\begin{figure}
\newcommand{\HLinset}{2pt}    
\newcommand{\HLline}{0.8pt}     
\newcommand{\HLfill}{blue!10} 
    \centering
    \begin{center}
\begin{tabular}{c c c}
\begin{NiceTabular}{ c| c c c c c c }
 \CodeBefore[create-cell-nodes]
\begin{tikzpicture}
  \node[fill=\HLfill, rounded corners, inner sep=\HLinset, fit=(3-5) (4-6)] {};
  \node[fill=\HLfill, rounded corners, inner sep=\HLinset, fit=(5-5) (7-7)] {};
  \node[fill=\HLfill, rounded corners, inner sep=\HLinset, fit=(5-2) (6-4)] {};
\end{tikzpicture}
\Body
     & $c_1$ & $c_2$ & $c_3$ & $c_4$ & $c_5$ & $c_6$      \\
    \hline
    $r^{*}_1$ &  &  \coo $\mbf{s_2}$        &      \coo $\mbf{s_1}$             & & &            \\
    $r_2$ &            &       &            & $s_3$ & $s_4$ &            \\
    $r_3$ &            & $s_1$ & $s_2$      & $s_4$ & $s_3$ &            \\ 
    $r_4$ & $s_3$ &       & $s_5$ & $s_6$ &            & $s_2$ \\ 
    $r_5$ & $s_5$ &       & $s_3$ &            &            &            \\ 
    $r_6$ &            &       &            & $s_2$ &            & $s_6$ \\
\end{NiceTabular}  

&

$\rightarrow$

&

\begin{NiceTabular}{ c| c c c c c c }
 \CodeBefore[create-cell-nodes]
\begin{tikzpicture}
  \node[fill=green!20, rounded corners, inner sep=\HLinset, fit=(4-3) (5-5)] {};
\end{tikzpicture}
\Body
     & $c_1$ & $c_2$ & $c_3$ & $c_4$ & $c_5$ & $c_6$      \\
    \hline
    $r_1^*$ &            &      \coo $\mbf{s_2}$ &  \coo $\mbf{s_1}$          & & &            \\
    $r_2$ &       &            &            & $s_4$      & $s_3$ &       \\
    $r_3$ &       &   $s_1$ & $s_2$ & $s_3$ & $s_4$ &       \\ 
    $r_4$ & $s_5$ &            & $s_3$ & $s_2$ &       & $s_6$ \\ 
    $r_5$ & $s_3$ &            & $s_5$      &            &       &       \\ 
    $r_6$ &       &            &            & $s_6$      &       & $s_2$ \\
\end{NiceTabular}    
\end{tabular}
\end{center}
    \caption{On the left, an example of a threat configuration in a partial Latin square (with an example threatened pair in {\coo \bf orange}). If we switch the three highlighted intercalates, then we obtain the partial Latin square on the right, which contains a basic threat configuration (highlighted). The threatened pair on the left has now become a basic threatened pair on the right.}
    \label{fig:switch-example}
\end{figure}

We next record a lemma saying that there are typically not many entries covered by basic threatened pairs. Recall the definitions of $\ell$ and $\varepsilon$ from \cref{def:ell_eps}.

\begin{definition}
     For $C>0$ and a hypergraph $H \subseteq K^{(3)}_{n,n,n}$, let $\cT^{\mr{basic}}(H, C) \subseteq \cL_n$ be the set of Latin squares $L$ such that for some set of rows $\sR^*$ of size $|\sR^*| =\ell$, the number of basic threatened pairs in $H \cap L$ is more than $C \varepsilon^3 n^3 \ell$.
\end{definition}

\begin{lemma}
\label{lemma:upper_bound_reduced_basic_threats}
There is an absolute constant $C>0$ such that the following holds. 
Consider a random hypergraph $\bfH \sim \mb G^{(3)}(n,\varepsilon)$ and an independent random Latin square $\bfL \sim \on{Unif}(\mc L_n)$. We have
\[ \mathbb{P}[ \bfL \in \cT^{\mr{basic}}(\bfH, C)] \leq \exp(-\Omega(\varepsilon^3 n^2)).\]
\end{lemma}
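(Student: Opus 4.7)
My plan is to run the enumeration-to-binomial pipeline of \cref{sec:approximation-1,sec:approximation-2} and then concentrate in the binomial hypergraph model. First, I would apply \cref{lem:random_LS_to_triangle_removal} with $m=\alpha n^2$ for a small constant $\alpha$, taking the auxiliary property $\vec{\cU}^{\mr{basic}}(H,C')$ to be: there exists some $\sR^*$ of size $\ell$ witnessing at least $C'\varepsilon^3 n^3\ell$ basic threatened pairs in $H\cap\vec P$. Because the definition of a basic threatened pair does not involve isolatedness, this property is monotone increasing in $\vec P$, and a reverse-Markov (Paley--Zygmund) argument gives the $(\rho, m)$-inheritedness of $\vec{\cU}^{\mr{basic}}(H,C')$ from $\cT^{\mr{basic}}(H,C)$ with $C'=\alpha^5 C/4$ and $\rho=\alpha^5/4$: each basic threatened pair in $H\cap L$ survives subsampling (meaning all five entries of some associated basic threat configuration are retained in $\vec{\mathbf P}_m(L)$) with probability $(1+o(1))\alpha^5$, and since no new basic threatened pairs can appear, reverse Markov gives a constant lower bound on the probability that at least half of the expected survivors are realised.

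Since the property $\vec{\cU}^{\mr{basic}}(H,C')$ is monotone increasing, \cref{lemma:triangle_removal_chunk_to_binomial} further reduces the problem to the independent-edge model $\tilde{\mathbf G}:=\mathbf H\cap\mathbf G\sim\mb G^{(3)}(n,q)$ with $q=O(\varepsilon/n)$. A union bound over the $\binom{n}{\ell}\leq\exp(O(\log^{12}n))$ choices of $\sR^*$ is easily absorbed into the target failure probability $\exp(-\Omega(\varepsilon^3 n^2))=\exp(-\Omega(n^2/\log^3 n))$, so I fix a single $\sR^*$. A direct expectation computation shows that each of the four basic threat configuration types is determined by a tuple in $\sR^*\times[n]^8$ requiring $5$ specific entries in $\tilde{\mathbf G}$, giving expected count $O(\ell n^8 q^5)=O(\varepsilon^5 n^3\ell)$, a factor $\varepsilon^2$ below the target $C'\varepsilon^3 n^3\ell$; this leaves ample room for concentration.

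For the concentration argument I would split by structural type. For types~$2$ and $4$ (see \cref{fig:basic}) the threat configuration is exactly an intercalate of $\tilde{\mathbf G}$ plus one extra entry in a shared row, so its count is bounded above by (number of intercalates of $\tilde{\mathbf G}$)\,$\cdot$\,(maximum row-degree)\,$\cdot\,O(\ell)$. For types~$1$ and $3$ the threat configuration is an ``almost-intercalate'' (three of four entries of an intercalate, with the fourth replaced by a differently-symboled entry) together with two codegree-dependent extensions, admitting an analogous decomposition. Each such building-block count can in principle be handled via a multi-stage application of \cref{freedman_inequality} or \cref{thm:chernoff_dependency_graph}: first expose the sparse scaffolding (the intercalate or almost-intercalate skeleton), then expose the remaining row entries, applying concentration at each stage with an adequately controlled max-effect.

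The main obstacle will be to reach the strong failure exponent $\Omega(\varepsilon^3 n^2)$. A single-stage Freedman bound is insufficient because one edge can participate in $\Omega(\ell n^5)$ configurations while the expected total is only $O(\varepsilon^5 n^3\ell)$, and a crude row-degree Chernoff yields only rate $\exp(-\Omega(\varepsilon n))$, which is weaker than what we need (since $\varepsilon n\ll\varepsilon^3 n^2$). Navigating this via a delicate multi-phase exposure scheme, analogous to the one employed in the proof of \cref{lemma:min_disjoint_split_ints_binomial}, will be the most intricate part of the argument.
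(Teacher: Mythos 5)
Your first two steps are sound and match the paper's strategy: reducing to the triangle removal process via \cref{lem:random_LS_to_triangle_removal}, the observation that basic threatened pairs (unlike threatened pairs) form a monotone increasing property because they do not involve isolatedness, the reverse-Markov inheritance argument, and the order-of-magnitude expectation calculation $O(\ell n^8 q^5)=O(\varepsilon^5 n^3\ell)$. Where the proposal breaks down is exactly where you flag it as ``the most intricate part'': the concentration step. There are two concrete problems with the route you sketch.

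First, you reduce all at once (via a single application of \cref{lemma:triangle_removal_chunk_to_binomial}) to $\tilde{\mbf G}\sim\mb G^{(3)}(n,O(\varepsilon/n))$ and then try to concentrate there. This forfeits the ordered partial-Latin-square structure of $\vec{\bfR}$, which is precisely what the paper's proof uses in the middle phases of its exposure. The paper defines a $\pi$-consistency requirement that assigns the five edges of $Q_t$ to five consecutive blocks $\vec\bfR[(i-1)/5,i/5]$, revealed in sequence. The first block gives a Chernoff bound on the choices for $\phi(e_1)$; the second and third blocks yield \emph{deterministic} bounds (``at most $n$ entries in column $\phi(c_3)$'', ``at most one occurrence of $\phi(s_3)$ in row $\phi(r_3)$'') because each block of $\vec\bfR$ is a partial Latin square; and only in the fourth and fifth blocks does one pass to a binomial model via \cref{lemma:triangle_removal_chunk_to_binomial}, at which point the sets $S$, $\bfS_4$ are already small enough that \cref{thm:chernoff_dependency_graph} with $\Delta=n^2$ (resp.\ $\Delta=n$) gives the required $\exp(-\Omega(\varepsilon^2 n^2))$ and $\exp(-\Omega(\varepsilon^3 n^2))$. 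In $\tilde{\mbf G}$ there is no ``at most one entry per row–symbol pair'', so the deterministic steps would have to be replaced by tail bounds which, after a union bound over all $n^2$ row–symbol pairs, cannot reach failure probability $\exp(-\Omega(\varepsilon^3 n^2))$.

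Second, the specific decomposition you propose for the concentration, bounding the type-$2$/$4$ count by $(\text{number of intercalates in }\tilde{\mbf G})\cdot(\text{max row-degree})\cdot O(\ell)$, reintroduces the very quantity the paper deliberately avoids counting. The number of intercalates in $\tilde{\mbf G}$ is a four-edge subgraph count and its upper tail is governed by the ``infamous upper tail'' clustering phenomenon discussed in \cref{rem:upper-tail}; showing $\Pr[\text{intercalate count}>\varepsilon^2 n^2]\le\exp(-\Omega(\varepsilon^3 n^2))$ would be a serious obstacle, and meanwhile the max row-degree factor gives only $\exp(-\Omega(\varepsilon n))\gg\exp(-\Omega(\varepsilon^3 n^2))$, as you yourself observe. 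The paper sidesteps both obstacles not by decomposing into a product of random counts, but by controlling, at each of five exposure stages, the number of \emph{partial embeddings} and the per-edge branching factor; no global subgraph count ever appears. So the missing idea is the $\pi$-consistency mechanism that keeps the analysis inside $\vec{\bfR}$'s block structure long enough to drive the branching factors down to $n^2$ and then $n$ before invoking Chernoff.
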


We defer the proof of \cref{lemma:upper_bound_reduced_basic_threats} to \cref{subsec:basic}. First, we show how to use a switching argument (with \cref{fact:switch}) to deduce \cref{lemma:upper_bound_threatened_pairs}. Note that, as we will see, this deduction crucially requires that the error probability in \cref{lemma:upper_bound_reduced_basic_threats} is $\exp(-\Omega(\varepsilon^3n^2))$ rather than our usual $\exp(-\omega(n\log^2{n}))$. We also need an upper bound on the size of the largest family of pairwise disjoint intercalates, as follows.

\begin{definition}
    Let $\cT^{\mr{int}}_{\mr{upper}}(H)\subseteq \mc L_n$ be the set of Latin squares $L$ for which there is a family of more than $80 \varepsilon^4 n^2$ disjoint intercalates in $H\cap L$.
\end{definition}
\begin{lemma}
\label{lemma:not_too_many_isolated_intercalates}
Consider a random hypergraph $\bfH \sim \mb G^{(3)}(n,\varepsilon)$ and an independent random Latin square $\bfL \sim \on{Unif}(\mc L_n)$. We have
\[\Pr[\mbf L\in \cT^{\mr{int}}_{\mr{upper}}(\mbf H)]\le \exp(-\omega(n\log^2 n)).\]
\end{lemma}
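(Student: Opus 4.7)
The plan is to combine the approximation machinery of \cref{sec:approximation-1,sec:approximation-2} with a first moment bound on families of disjoint intercalates in a sparsified binomial random hypergraph. The main obstacle is quantitative: the target bound $\exp(-\omega(n\log^2 n))$ must beat the $\exp(O(n\log^2 n))$ loss incurred by \cref{lem:random_LS_to_triangle_removal}, which will rely crucially on the fact that the threshold $80\varepsilon^4 n^2$ disjoint intercalates far exceeds the expected \emph{total} number of intercalates in the binomial model after sparsification.

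Concretely, I would condition on an outcome $H$ of $\bfH$ and introduce the monotone increasing auxiliary property $\cU_H$ on subhypergraphs $G\subseteq K^{(3)}_{n,n,n}$, defined so that $G\in\cU_H$ iff $H\cap G$ contains more than $40\alpha^4\varepsilon^4 n^2$ pairwise disjoint intercalates (for a small constant $\alpha>0$ to be chosen). A second moment calculation shows that $\cU_H$ is $(1/2,\alpha n^2)$-inherited from $\cT^{\mr{int}}_{\mr{upper}}(H)$: for $L\in \cT^{\mr{int}}_{\mr{upper}}(H)$, each intercalate of a fixed family of more than $80\varepsilon^4 n^2$ disjoint intercalates in $H\cap L$ survives in $\bfP_{\alpha n^2}(L)$ with probability $(1+o(1))\alpha^4$, and their entry-disjointness forces pairwise covariances of size $O(\alpha^8/n^2)$, so Chebyshev's inequality ensures the number of survivors is at least half its mean (that is, $\geq 40\alpha^4\varepsilon^4 n^2$) with probability $\geq 1/2$. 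Applying \cref{cor:random-LS-to-binomial}(2) (with $\alpha$ sufficiently small relative to a fixed $\gamma>0$) and averaging over $\bfH$, using the independence of $\bfH$ and $\bfG$, yields
\[\Pr[\bfL\in\cT^{\mr{int}}_{\mr{upper}}(\bfH)]\le\exp(O(n\log^2 n))\,\Pr\bigl[\tilde\bfG \text{ contains more than } 40\alpha^4\varepsilon^4 n^2 \text{ disjoint intercalates}\bigr]+\exp(-\Omega(n^2)),\]
where $\tilde\bfG\sim\mb G^{(3)}(n,(1+\gamma)\alpha\varepsilon/n)$.

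For the remaining probability I would apply a simple first moment bound. Set $t:=40\alpha^4\varepsilon^4 n^2$ and let $\bfY$ count the $t$-element families of pairwise disjoint intercalates contained in $\tilde\bfG$. Since there are at most $2\binom{n}{2}^3\le n^6$ potential intercalates in $K^{(3)}_{n,n,n}$, each consisting of $4$ hyperedges that are distinct across any disjoint family, we have
\[\mathbb{E}[\bfY]\le\binom{n^6}{t}\bigl((1+\gamma)\alpha\varepsilon/n\bigr)^{4t}\le\left(\frac{e(1+\gamma)^4}{40}\right)^t\le 2^{-t}\]
for sufficiently small $\gamma$. Recalling $\varepsilon=\eta/\log n$ with $\eta=1/\log\log n$, we have $t=\Theta(\alpha^4 n^2/((\log\log n)^4\log^4 n))=\omega(n\log^2 n)$, so $\mathbb{E}[\bfY]\le\exp(-\omega(n\log^2 n))$. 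Markov's inequality then yields $\Pr[\bfY\ge 1]\le\exp(-\omega(n\log^2 n))$, which combined with the preceding display finishes the proof.
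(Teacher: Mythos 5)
Your proposal is correct in its essentials and reaches the same conclusion, but it diverges from the paper's argument at both main steps. For the inheritance step, the paper uses a one-sided Markov bound (on the number of \emph{non}-surviving intercalates) to get $\rho=\alpha^4/4$, while you use a second-moment/Chebyshev argument to get $\rho=1/2$; both are constants, so this makes no quantitative difference, but the Markov route is slightly cleaner as it avoids estimating covariances. For the tail bound in the binomial model, the paper proves a separate triangle-removal-process version (\cref{lemma:not_too_many_isolated_intercalates_bin}) and applies Freedman's inequality (\cref{freedman_inequality}) to the size of the maximum disjoint family (which is 1-Lipschitz in the edges), obtaining $\exp(-\Omega(\varepsilon^7 n^2))$; you instead bypass \cref{lemma:not_too_many_isolated_intercalates_bin} by invoking \cref{cor:random-LS-to-binomial}(2) directly and applying a first-moment bound to the count of $t$-element disjoint families, obtaining $\exp(-\Omega(t))=\exp(-\Omega(\varepsilon^4 n^2))$. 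Your first-moment approach is arguably more elementary (it needs no martingale concentration and no Lipschitz bookkeeping) and in fact gives a somewhat sharper exponent, although either bound is comfortably $\exp(-\omega(n\log^2 n))$; the key structural observation making the first moment work --- that the threshold $\Theta(\varepsilon^4 n^2)$ sits a constant factor above the expected \emph{total} intercalate count in the sparsified model, so the per-family contribution is $(e(1+\gamma)^4/40)^t<2^{-t}$ --- is sound.

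One small numerical nitpick: with threshold $40\alpha^4\varepsilon^4 n^2$ in $\cU_H$, the mean number of surviving intercalates is $(1-O(n^{-2}))\cdot 80\alpha^4\varepsilon^4 n^2$, so ``at least half the mean'' lands marginally \emph{below} $40\alpha^4\varepsilon^4 n^2$. This is trivially repaired by taking a slightly smaller threshold (e.g.\ $39\alpha^4\varepsilon^4 n^2$, which only improves the final first-moment estimate), or by settling for $\rho=1/4$ via Chebyshev with a wider window; the paper sidesteps the issue by choosing the more generous threshold $20\alpha^4\varepsilon^4 n^2$.
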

We prove \cref{lemma:not_too_many_isolated_intercalates} with an averaging argument and \cref{lem:random_LS_to_triangle_removal} (this is a subset of the ideas we have already seen in \cref{sec:existence_disjoint_ints,subsec:bad_configs}). Indeed, the following lemma is a version of \cref{lemma:not_too_many_isolated_intercalates} for the triangle removal process.
\begin{definition}
    Let $\vec \cU^{\mr{int}}_{\mr{upper}}(H, \alpha)\subseteq \vec{\mc P_n}$ be the set of partial ordered Latin squares $\vec P$ for which there is a family of more than $20 \alpha^4\varepsilon^4 n^2$ disjoint intercalates in $H\cap \vec P$.
\end{definition}
\begin{lemma}
\label{lemma:not_too_many_isolated_intercalates_bin}
Let $\alpha \in (0,1)$ be a sufficiently small constant. Consider independent random hypergraphs $\vec\bfR \sim \trp(n, \alpha n^2)$ and $\bfH \sim \mb G^{(3)}(n,\varepsilon)$. Then we have
\[\Pr[\vec {\mbf R}\in \vec \cU^{\mr{int}}_{\mr{upper}}(\mbf H,\alpha)]\le \exp(-\omega(n\log^2 n)).\]
\end{lemma}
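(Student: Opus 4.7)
\medskip
\noindent\textbf{Proof proposal for \cref{lemma:not_too_many_isolated_intercalates_bin}.}
The plan is to reduce to an Erdős--Rényi model via \cref{lemma:triangle_removal_chunk_to_binomial}, and then apply \cref{freedman_inequality} to the size of the maximum edge-disjoint family of intercalates.

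First I observe that, for any fixed hypergraph $H$, the property $\vec{\cU}^{\mr{int}}_{\mr{upper}}(H,\alpha)$ is a monotone increasing property of the underlying hypergraph (adding hyperedges can only enlarge the maximum edge-disjoint family of intercalates in $H \cap \cdot$). So, applying \cref{lemma:triangle_removal_chunk_to_binomial} with $P=\emptyset$, $\alpha_1=0$, $\alpha_2=\alpha$, and any small $\gamma>0$, for each outcome $H$ of $\bfH$ we have
\[
\Pr[\vec{\bfR}\in \vec{\cU}^{\mr{int}}_{\mr{upper}}(H,\alpha)] \le \Pr[\bfG \in \vec{\cU}^{\mr{int}}_{\mr{upper}}(H,\alpha)] + \exp(-\Omega(n^2)),
\]
where $\bfG \sim \mb G^{(3)}(n,(1+\gamma)\alpha/n)$. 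Averaging over $\bfH$ (and using that $\bfG$ and $\bfH$ are independent), this reduces the lemma to bounding the probability that $\bfQ := \bfG \cap \bfH \sim \mb G^{(3)}(n,p)$, with $p := \varepsilon(1+\gamma)\alpha/n$, contains more than $20\alpha^4\varepsilon^4 n^2$ pairwise edge-disjoint intercalates.

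Let $\bfY$ be the size of a largest such family. I view $\bfQ$ as determined by $N=n^3$ independent Bernoulli random variables with parameter $p$, one for each potential hyperedge. The key observation is that adding or removing any single hyperedge can change $\bfY$ by at most $1$: in any edge-disjoint family of intercalates, a given hyperedge is contained in at most one intercalate, so adding an edge can contribute at most one new intercalate to the family, and removing one can destroy at most one intercalate in the family. On the other hand, since the total number of abstract intercalates in $K^{(3)}_{n,n,n}$ is $2\binom{n}{2}^3$, each present in $\bfQ$ with probability $p^4$, we have
\[
\mathbb{E}\bfY \le 2\binom{n}{2}^3 p^4 \le 2\alpha^4\varepsilon^4 n^2
\]
for $\gamma$ small. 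Hence the event $\bfY > 20\alpha^4\varepsilon^4 n^2$ is contained in the event $\bfY - \mathbb{E}\bfY > t$ with $t := 18\alpha^4\varepsilon^4 n^2$.

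Applying \cref{freedman_inequality} with $K_i = K = 1$ and $\sum_i p_i K_i^2 = n^3 p = (1+\gamma)\alpha\varepsilon n^2$, I obtain
\[
\Pr[\bfY - \mathbb{E}\bfY > t] \le 2\exp\!\left(-\frac{(18\alpha^4\varepsilon^4 n^2)^2}{2(1+\gamma)\alpha\varepsilon n^2 + 12\alpha^4\varepsilon^4 n^2}\right) = \exp\!\big(-\Omega(\alpha^7\varepsilon^7 n^2)\big).
\]
Since $\varepsilon = \eta/\log n$ with $\eta = 1/\log\log n$, we have $\varepsilon^7 n^2 = \omega(n\log^2 n)$, and this error term dominates the $\exp(-\Omega(n^2))$ from \cref{lemma:triangle_removal_chunk_to_binomial}. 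This gives the desired $\exp(-\omega(n\log^2 n))$ bound.

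The only delicate point — which in fact is what dictates the definition of $\vec{\cU}^{\mr{int}}_{\mr{upper}}$ in terms of disjoint rather than total intercalate counts — is that the Freedman variance proxy is $\sum_i p_i K_i^2 = \Theta(\varepsilon n^2)$, not $\Theta(\varepsilon^4 n^2)$. Trying to prove concentration of the total intercalate count would run headlong into the ``infamous upper tail'' phenomenon (cf.\ \cref{rem:upper-tail}), but for the max edge-disjoint family the Lipschitz constant is $1$, and the resulting $\exp(-\Omega(\varepsilon^7 n^2))$ bound is comfortably $\exp(-\omega(n\log^2 n))$ for our polylogarithmically small $\varepsilon$.
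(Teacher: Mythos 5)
Your proof is correct and follows essentially the same approach as the paper: reduce to the binomial model $\mb G^{(3)}(n,\Theta(\varepsilon\alpha/n))$ via \cref{lemma:triangle_removal_chunk_to_binomial}, observe that the maximum size of an edge-disjoint family of intercalates is Lipschitz-$1$ in the hyperedges, bound its expectation by $O(\alpha^4\varepsilon^4 n^2)$, and apply \cref{freedman_inequality} to get an error of $\exp(-\Omega(\alpha^7\varepsilon^7 n^2))=\exp(-\omega(n\log^2 n))$. The only differences are cosmetic (your choice of $\gamma$ small versus the paper's $\gamma=1$, and the slightly tighter expectation constant).
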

\begin{proof}[Proof of \cref{lemma:not_too_many_isolated_intercalates} given \cref{lemma:not_too_many_isolated_intercalates_bin}] 
Let $\alpha>0$ be small enough for \cref{lemma:not_too_many_isolated_intercalates_bin}, let $m = \alpha n^2$ and let $\rho = \alpha^4/4$.

By \cref{lemma:not_too_many_isolated_intercalates_bin} and \cref{fact:joint_prob_vs_most_outcomes}(\ref{fact:joint_to_separate}), with probability $1-\exp(-\omega(n\log^2{n}))$ our random hypergraph $\bfH \sim \mb G^{(3)}(n,\varepsilon)$ satisfies $\mathbb{P}[\vec{\bfR} \in \vec\cU^{\mr{int}}_{\mr{upper}}(\mbf H, \alpha)\,|\,\mbf H] \leq \exp(-\omega(n \log^2{n}))$. Let $H$ be such an outcome of $\bfH$.

To show $\vec\cU^{\mr{int}}_{\mr{upper}}(H,\alpha)$ is $(\rho,m)$-inherited from $\cT^{\mr{int}}_{\mr{upper}}(H)$, let $L \in \cT^{\mr{int}}_{\mr{upper}}(H)$, so there is a family of $X \geq 80 \varepsilon^4 n^2$ disjoint intercalates in $H\cap L$. Let $\bfY$ be the number of these intercalates which lie in a random ordered $m$-subset $\vec \bfP_m(L)$ of $L$. We have
\[\mathbb{E}[\bfY] =X\cdot (1+o(1))\alpha^4\geq \alpha^4 X /2.\]
By Markov's inequality,
\[\Pr[\vec \bfP_m(L)\in \vec\cU^{\mr{int}}_{\mr{upper}}(H, \alpha)]\ge \Pr\left[\bfY\ge \frac{\alpha^4}{4}X\right]=1-\Pr\left[X-\bfY> \left(1-\frac{\alpha^4}{4}\right)X\right]\ge 1-\frac{1-\alpha^4/2}{1-\alpha^4/4}\ge \rho.\]
That is to say, $\vec\cU^{\mr{int}}_{\mr{upper}}(H,\alpha)$ is $(\rho,m)$-inherited from $\cT^{\mr{int}}_{\mr{upper}}(H)$. Thus, by \cref{lem:random_LS_to_triangle_removal},
\[ \mathbb{P}[\bfL \in \cT^{\mr{int}}_{\mr{upper}}(H)] \leq \exp(2n\log^2{n})\mathbb{P}[\vec{\bfR} \in \vec\cU^{\mr{int}}_{\mr{upper}}(H,\alpha)]=\exp(-\omega(n\log^2n)).\]
Recalling our choice of $H$, \cref{fact:joint_prob_vs_most_outcomes}(\ref{fact:separate_to_joint}) concludes the proof.
\end{proof}
\begin{proof}[Proof of \cref{lemma:not_too_many_isolated_intercalates_bin}]
For each $H \subseteq K^{(3)}_{n,n,n}$, note that $\vec \cU^{\mr{int}}_{\mr{upper}}(H,\alpha)$ is a monotone increasing property. So, by \cref{lemma:triangle_removal_chunk_to_binomial} (applied to each possible outcome $H$ of $\mbf H$), we have
  \[\Pr[\bfH\cap \vec{\bfR}\in \vec \cU^{\mr{int}}_{\mr{upper}}(H,\alpha)]\le\Pr[\bfH\cap\bfG'\in \vec \cU^{\mr{int}}_{\mr{upper}}(H,\alpha)]+\exp(-\Omega(n^2))=\Pr[\bfG\in \vec \cU^{\mr{int}}_{\mr{upper}}(H,\alpha)]+\exp(-\Omega(n^2)),\]
  where $\bfG' \sim \mb G^{(3)}(n,2\alpha/n)$ and $\bfG \sim \mb G^{(3)}(n,2\varepsilon\alpha/n)$.
Let $\mbf X$ be the size of the maximum family of disjoint intercalates in $\mbf G$. We have

\[\mathbb{E}[\bfX] \leq n^6 (2\alpha \varepsilon/n)^4 = 16 \alpha^4 \varepsilon^4 n^2,\]
and changing an edge of $\mbf G$ changes $\bfX$ by at most $1$ (since an entry can be part of at most one intercalate in a maximum disjoint family). By \cref{freedman_inequality},
$\Pr[\bfG\in \vec{\cU}^{\mr{int}}_{\mr{upper}}(H,\alpha)]$ is at most
\[\Pr[\mbf X>20 \alpha^4 \varepsilon^4 n^2]\le \exp\Bigg(-\Omega\Big(\frac{\varepsilon^8 n^4}{n^3 \cdot \varepsilon/n + \varepsilon^4 n^2}\Big)\Bigg)= \exp(-\Omega(\varepsilon^7 n^2))=\exp(-\omega(n\log^2 n)),\]
and the desired result follows.
\end{proof}

We now deduce \cref{lemma:upper_bound_threatened_pairs} from \cref{lemma:not_too_many_isolated_intercalates,fact:switch,lemma:upper_bound_reduced_basic_threats}.

\begin{proof}[Proof of \cref{lemma:upper_bound_threatened_pairs}] 
Let $C$ be as in \cref{lemma:upper_bound_reduced_basic_threats}, and let $C'=100C$. 

By \cref{fact:joint_prob_vs_most_outcomes}(\ref{fact:joint_to_separate}) and \cref{lemma:not_too_many_isolated_intercalates,lemma:upper_bound_reduced_basic_threats}, with probability $1 - \exp(-\omega(n\log^2{n}))$ over the randomness of $\bfH$, we have
\[\mathbb{P}[\bfL \in \cT^{\mr{basic}}(\bfH, C)\,|\,\bfH]\leq \exp(- \Omega(\varepsilon^3 n^2)),\quad \mathbb{P}[\bfL \in \cT^{\mr{int}}_{\mr{upper}}(\bfH)\,|\,\bfH]\le \exp(-\omega(n\log^2 n)).
\]
Fix such an outcome $H \in K^{(3)}_{n,n,n}$ of $\bfH$, and let $\mc S=\cT^{\mr{threat}}(H, C', \alpha, \phi)\setminus \mc T^{\mr{int}}_{\mr{upper}}(H)$. By \cref{fact:joint_prob_vs_most_outcomes}(\ref{fact:separate_to_joint}), it suffices to show that 
\[\Pr[\mbf L\in \mc S]\le \exp(-\omega(n\log^2 n)).\]
Let $\bfP\in\mc P_{n,\alpha n^2}$ be a uniformly random subset of $\alpha n^2$ edges of $\bfL$ and let $\bfP'$ be obtained from $\bfP$ by switching each isolated intercalate in $H\cap \bfP$ with probability $1/2$ independently. Let $\bfL'=\bfP'\cup (\bfL\setminus \bfP)$.

Let $\bfX$ be the maximum over all choices of $\sR^*$ of the number of threatened pairs in $H\cap \bfP $, and let $\bfY\le \bfX$ be the maximum over all choices of $\sR^*$ of the number of threatened pairs in $H \cap \bfP$ which become basic threatened pairs in $H \cap\bfP'$. By \cref{fact:switch} with $\bfR=H \cap \bfP$ (and the fact that an expected maximum is at least the maximum expectation) we have $\mb E[\mbf Y\,|\,\mbf X]\ge(1/32)\mbf X$, so by Markov's inequality
\[\Pr\left[\mbf Y\ge  \frac{1}{100}\mbf X\,\middle|\,\mbf X\right] = 1-\Pr\left[\mbf X-\mbf Y> \frac{99}{100}\mbf X\,\middle|\,\mbf X\right]\ge 1-\frac{31/32}{99/100} \ge \frac{1}{100}.\]
Also, by the definition of $\cT^{\mr{threat}}(H, C', \alpha, \phi)\supseteq \mc S$, we have 
\[\Pr[\bfX\ge C'\varepsilon^3 n^3\ell \,|\,\mbf L\in \mc S]\ge \phi.\]
Recalling that $C'=100C$, and letting $\mc T=\cT^{\mr{basic}}(H, C)$, we deduce that $\Pr[\mbf L'\in \cT\,|\,\mbf L\in \mc S]\ge \phi/100$ or in other words
\[\Pr[\mbf L\in \mc S]\le \frac{100}{\phi}\mathbb{P}[\bfL \in \cS\text{ and } \bfL' \in \cT].\]
So, it suffices to prove that $\Pr[\mbf L\in \cS\text{ and }\mbf L'\in \mc T]\le \exp(-\omega(n\log^2n))$.

We introduce the notation $L_1 \rightarrow L_2$ to mean that we can obtain $L_2$ from $L_1$ by switching some disjoint intercalates in $H\cap L_1$. Recall that $\bfL'$ is obtained from $\bfL$ by switching some intercalates which are isolated in $H\cap \mbf P$, and these intercalates are certainly disjoint, so for any of the possible outcomes $(L_1,L_2)$ of $(\bfL,\bfL')$ we have $L_1 \rightarrow L_2$. We therefore have
\begin{align*}
\mathbb{P}[\bfL \in \cS\text{ and } \bfL' \in \cT] &\leq \sum_{L_2 \in \cT} \sum_{L_1 \in \cS, L_1 \rightarrow L_2} \mathbb{P}[\bfL = L_1] \mathbb{P}[\bfL' = L_2 \,|\, \bfL = L_1]\\
&\leq \sum_{L_2 \in \cT} \mathbb{P}[\bfL = L_2] \sum_{L_1 \in \cS, L_1 \rightarrow L_2} 1 \\
&\leq \sum_{L_2 \in \cT} \mathbb{P}[\bfL = L_2] \binom{n^4+80\varepsilon^4 n^2}{80\varepsilon^4 n^2}\\ 
&\leq \mathbb{P}[\bfL \in \mc T]\exp(O(80\varepsilon^4 n^2\log n))\\
&\leq \exp(- \Omega(\varepsilon^3 n^2))\cdot \exp(O(80\varepsilon^4 n^2\log n))\\
&\le \exp(-\omega(n\log^2 n)),
\end{align*}
as desired. Here, in the second inequality we used the fact that $\mathbb{P}[\bfL = L_1] = \mathbb{P}[\bfL = L_2]$ (since $\mbf L$ has the same probability of being equal to any Latin square). In the third inequality we used the fact that every Latin square has at most $n^4$ intercalates, and the difference between $L_1$ and $L_2$ is described by at most $80\varepsilon^4 n^2$ disjoint intercalate switches (note that $L_1\in \mc S$ implies that $L_1\notin \mc T^{\mr{int}}_{\mr{upper}}(H)$, i.e., $H\cap L_1$ has at most $80\varepsilon^4 n^2$ disjoint intercalates). Given a set of size $N$, the number of subsets of size at most $k$ is bounded by $\binom{N+k}k$. The fifth inequality is by our choice of $H$, and the final one uses that $\varepsilon=o(1/\log n)$ (recall \cref{def:ell_eps}). It is vital here that the error probability in \cref{lemma:upper_bound_reduced_basic_threats} is $\exp(-\Omega(\varepsilon^3n^2))$.
\end{proof}

\subsection{Upper-bounding the number of basic threatened pairs}\label{subsec:basic}
It remains to prove \cref{lemma:upper_bound_reduced_basic_threats}. This does not really involve any new ideas. Specifically, the approach is to carefully consider how basic threat configurations can emerge through the triangle removal process, using \cref{lem:random_LS_to_triangle_removal} to relate this to a random Latin square. There are four different cases for the structure of a basic threat configuration (cf.\ \cref{fig:basic}), each of which needs to be treated in a slightly different way.

We start by giving each of the four cases in \cref{fig:basic} a name.
\begin{definition}
Consider the four cases for the structure of a basic threat configuration, as follows\footnote{These are exactly as in \cref{fig:basic}, but in the last two cases we have swapped the first and second rows.}.
\begin{center}
    \begin{tabular}{ c| c c c }
     & $c_1$ & $c_2$ & $c_3$  \\
    \hline
    $r_1^*$&&&\\ 
    $r_2$ & $s_2$ & $s_1$ & $s_3$\\ 
    $r_3$ &  & $s_3$ & $s_2$ 
    \end{tabular} 
    \qquad
 \begin{tabular}{ c| c c c }
     & $c_1$ & $c_2$ & $c_3$  \\
    \hline
    $r_1^*$&&&\\ 
    $r_2$ & $s_1$ & $s_2$ & $s_3$\\ 
    $r_3$ &  & $s_3$ & $s_2$ 
    \end{tabular}
\qquad
\begin{tabular}{ c| c c c }
     & $c_1$ & $c_2$ & $c_3$  \\
    \hline
    $r^*_1$ & & & $s_3$\\ 
    $r_2$ & $s_1$ & $s_2$ & \\
    $r_3$ &  & $s_3$ & $s_2$ 
    \end{tabular}
    \qquad
    \begin{tabular}{ c| c c c }
     & $c_1$ & $c_2$ & $c_3$  \\
    \hline
    $r^*_1$ & & & $s_3$\\ 
    $r_2$ & $s_2$ & $s_1$ & \\
    $r_3$ &  & $s_3$ & $s_2$ 
    \end{tabular}
\end{center}
These four cases describe four partial Latin squares $Q_1,Q_2,Q_3,Q_4\in \mc P_3$ (from left to right). Each $Q_t$ has three rows, three columns and three symbols (though in $Q_1$ and $Q_2$, one of the rows does not have any entries in it).

For each type $t$, and any partial Latin square $P\in \mc P_{n}$, an \emph{embedding} from $Q_t$ into $P$ is an injective map from the 9 vertices of $Q_t$ into the vertices of $P$ (where row-vertices are mapped to row-vertices, column-vertices are mapped to column-vertices, and symbol-vertices are mapped to symbol-vertices), such that the image of every hyperedge of $Q_t$ is a hyperedge of $P_t$.

Then, for a partial Latin square $P\in \mc P_{n}$, note that a basic threatened pair (together with an associated basic threat configuration) is specified by an embedding of some $Q_t$ into $P$, where we demand that $r_1^*$ is mapped into $\sR^*$. In this case we say that the basic threatened pair has \emph{type} $t$.
\end{definition}
Note that a basic threatened pair can have multiple types (if it has multiple associated basic threat configurations).
\begin{definition}
For some $t\in \{1,2,3,4\}$, let $\pi=(e_1,\dots,e_5)$ be an ordering of the entries of $Q_t$, and let $\phi(Q_t)$ be an embedding of $Q_t$ into an ordered partial Latin square $\vec P\in \vec {\mc P}_n$. We say that $\phi(Q_t)$ is \emph{$\pi$-consistent} with $\vec P$ if $\phi(e_i) \in \vec{P}[(i-1)/5, i/5]$ for all $i\in \{1,\dots,5\}$. We say that a basic threatened pair of type $t$ is $\pi$-consistent with $\vec P$ if it is associated with some $\pi$-consistent embedding $\phi(Q_t)$.
\end{definition}

Now, we state a version of \cref{lemma:upper_bound_reduced_basic_threats} for the triangle removal process. Recall the definitions of $\ell$ and $\varepsilon$ from \cref{def:ell_eps}.
\begin{definition}
    For a hypergraph $H$, a type $t\in \{1,2,3,4\}$ and an ordering $\pi$ of the entries of $Q_t$, let $\vec\cU^{\mr{basic}}(H,t,\pi) \subseteq \vec \cP_{n}$ be the set of partial ordered Latin squares $\vec{P}$ such that for some set of rows $\sR^*$ of size $|\sR^*| = \ell$, the number of type-$t$ basic threatened pairs in $H \cap \vec{P}$ which are $\pi$-consistent with $\vec{P}$ is more than $\varepsilon^3 n^3 \ell$.
\end{definition}
\begin{lemma}
\label{lemma:upper_bound_reduced_basic_threats_bin}
Fix a sufficiently small constant $\alpha >0$, and any $t\in \{1,2,3,4\}$. There is an ordering $\pi$ of $Q_t$ such that for independent random hypergraphs $\vec{\bfR} \sim \trp(n, \alpha n^2)$ and $\bfH \sim \mb G^{(3)}(n, \varepsilon)$, we have
\[\mathbb{P}[ \vec{\bfR} \in \vec\cU^{\mr{basic}}(\bfH,t,\pi)] \leq \exp(-\Omega(\varepsilon^3 n^2)).\]
\end{lemma}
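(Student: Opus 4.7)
The plan is to expose $\bfH$ first (conditioning on its regularity), and then to expose $\vec{\mathbf R}$ in five phases corresponding to the five entries of $Q_t$ in a carefully chosen ordering $\pi$; at each phase I will track the number $\cN_i$ of partial $\pi$-consistent embeddings of $(e_1, \ldots, e_i)$ into $\bfH \cap \vec{\mathbf R}$, using \cref{lemma:triangle_removal_chunk_to_binomial} to dominate each phase by a binomial random hypergraph and \cref{thm:chernoff_dependency_graph} to get step-by-step concentration. To begin, expose $\bfH$ and condition on the event that $\bfH$ is ``near-regular'': every vertex-degree is $(1\pm o(1))\varepsilon n^2$, every codegree is $(1\pm o(1))\varepsilon n$, and similar for the other basic statistics that will arise. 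By Chernoff and a union bound, this fails with probability at most $\exp(-\Omega(\varepsilon n^2)) \ll \exp(-\Omega(\varepsilon^3 n^2))$, so I may treat $\bfH$ as fixed thereafter.

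Next, for each type $t$ pick an ordering $\pi = (e_1, \ldots, e_5)$ of the entries of $Q_t$ that minimises the number of new vertices introduced at each step; e.g.\ for $Q_1$ take $\pi = (d, e, b, a, c)$ (in the notation of the excerpt), which introduces $(3, 2, 2, 1, 0)$ new vertices respectively, with the last entry completely determined by the earlier ones. Expose $\vec{\mathbf R}$ in five phases (phase $i$ revealing $\vec{\mathbf R}[(i-1)/5, i/5]$); \cref{lemma:triangle_removal_chunk_to_binomial} dominates each phase (for the relevant monotone-increasing property) by an independent binomial $\bfG_i \sim \mb G^{(3)}(n, (1+\gamma)\alpha/(5n))$ with error $\exp(-\Omega(n^2))$ per phase. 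Writing $\cN_i = \sum_{e \in \bfH} w_e \one[e \in \bfG_i]$, where $w_e$ counts how many partial $(i-1)$-embeddings $e$ can extend, \cref{thm:chernoff_dependency_graph} should give $\cN_i \leq 2 \mathbb{E}[\cN_i \mid \cN_{i-1}]$ with error $\exp(-\Omega(\mathbb{E}[\cN_i]/K_i))$, where $K_i = \max_e w_e$. Iterating five times yields $\cN_5 = O(\mathbb{E}[\cN_5]) = O(\ell \varepsilon^5 \alpha^5 n^3) \leq \varepsilon^3 n^3 \ell$ (since $\varepsilon^2 \alpha^5 = o(1)$). The number of type-$t$ $\pi$-consistent basic threatened pairs (for a fixed $\sR^*$) is at most $\cN_5$, and a union bound over the $\binom{n}{\ell} \leq \exp(O(\log^{12} n)) \ll \exp(\Omega(\varepsilon^3 n^2))$ choices of $\sR^*$ is absorbed into the error. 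The cases $t \in \{3, 4\}$ are analogous, with $r_1^*$ as a constrained embedded vertex of $Q_t$ rather than a free parameter.

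The hard part will be verifying $\mathbb{E}[\cN_i \mid \cN_{i-1}]/K_i = \Omega(\varepsilon^3 n^2)$ at each intermediate step. A naive bound $K_i \leq O(\ell \cdot \varepsilon n^2)$, using only the $\bfH$-degrees, is much too weak. The fix is to show that the entries of $\bfH \cap \vec{\mathbf R}[0, (i-1)/5]$ are themselves well-spread across every row/column/symbol ``bucket'' --- a secondary Chernoff-plus-union-bound on $\bfG_1, \ldots, \bfG_{i-1}$ that yields per-bucket tail bounds at error $\exp(-\Omega(\varepsilon^3 n^2))$ (this is possible because the per-bucket deviation I need to tolerate is comfortably above the natural Chernoff scale, even after the union bound over $n$ buckets). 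The orderings $\pi$ are then chosen precisely so that at each step the shared vertices between $e_i$ and its predecessors correspond to these well-controlled buckets, keeping $K_i$ small enough to close the induction.
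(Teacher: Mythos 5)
Your high-level strategy (multi-phase exposure of $\vec{\bfR}$, domination by binomial hypergraphs via \cref{lemma:triangle_removal_chunk_to_binomial}, step-by-step concentration via \cref{thm:chernoff_dependency_graph}) is the same as the paper's, and the observation that the choice of ordering $\pi$ is critical is correct. But the mechanism you propose for controlling the weights $K_i$ has a genuine gap, and one of the preliminary conditioning steps is quantitatively impossible.

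The decisive issue is your proposed ``fix'' of conditioning on per-bucket well-spreadness of $\bfH\cap\vec{\bfR}[0,(i-1)/5]$ at error $\exp(-\Omega(\varepsilon^3 n^2))$. You assert this is achievable ``because the per-bucket deviation I need to tolerate is comfortably above the natural Chernoff scale.'' This is false: a bucket indexed by a single row (or column, or symbol) of a partial Latin square contains \emph{at most $n$ entries deterministically}, while $\varepsilon^3 n^2 \gg n$ (recall $\varepsilon = 1/(\log n \log\log n)$, so $\varepsilon^3 n = n/(\log^3 n (\log\log n)^3) \to\infty$). Since the deviation cannot exceed $n$, no per-bucket tail bound --- even the trivial deviation to the full bucket, with probability roughly $(\varepsilon\alpha)^n = \exp(-\Theta(n\log(1/\varepsilon)))$ --- can achieve error smaller than $\exp(-O(n\log\log n))$, which is vastly larger than the target $\exp(-\Omega(\varepsilon^3 n^2))$. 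A similar problem infects your proposed conditioning on the codegrees of $\bfH$: each codegree is $\operatorname{Bin}(n,\varepsilon)$ with mean $\varepsilon n$, so the natural Chernoff error for codegree regularity is $\exp(-\Omega(\varepsilon n))$, again much larger than $\exp(-\Omega(\varepsilon^3 n^2))$, contradicting your claim of an $\exp(-\Omega(\varepsilon n^2))$ failure probability.

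The idea that unblocks the argument, which is what the paper actually uses, is that you do \emph{not} need any probabilistic control of the weights at all: they are bounded \emph{deterministically} by the partial-Latin-square structure. For the intermediate steps (phases 2 and 3 in the paper's ordering), one simply observes that a row or column of a partial Latin square has at most $n$ entries, and that a (row, symbol) or (column, symbol) pair determines at most one entry; these give worst-case multiplicative factors of $n$ and $1$ with \emph{no} error probability at all. For the final two Chernoff phases, the ordering $\pi = (e_1,\dots,e_5)$ is chosen so that the weight of a potential $\phi(e_4)$ in $S$ is at most $n^2$ and, crucially, the weight of a potential $\phi(e_5)$ in $\mbf S_4$ is at most $n$, both again by deterministic counting in a partial Latin square (a tuple in $\mbf S_4$ is determined by the choice of $\phi(e_4)$ from the at most $n$ entries in a fixed row). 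This makes the error at the final phase exactly $\exp(-\Omega(\varepsilon^3 n^2))$, while earlier phases give smaller (better) errors of order $\exp(-\Omega(\varepsilon n^2))$ and $\exp(-\Omega(\varepsilon^2 n^2))$. Without this deterministic mechanism, your induction does not close. Incidentally, your ordering $(d,e,b,a,c)$ for $Q_1$ (new-vertex profile $(3,2,2,1,0)$) can in fact also be made to work via these deterministic weight bounds, but not via the conditioning route you describe; the paper's ordering (profile $(3,2,1,1,1)$) arguably makes the deterministic bookkeeping slightly cleaner.
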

Before proving \cref{lemma:upper_bound_reduced_basic_threats_bin}, we show how to deduce \cref{lemma:upper_bound_reduced_basic_threats}.
\begin{proof}[Proof of \cref{lemma:upper_bound_reduced_basic_threats}]
Let $\alpha \in (0,1)$ be small enough for \cref{lemma:upper_bound_reduced_basic_threats_bin}, let $m = \alpha n^2$ and let $\rho =  \alpha^5/5^7$, and assume $C\ge 4(5^7/\alpha^5)$. For each $t$, let $\pi_t$ be the ordering in \cref{lemma:upper_bound_reduced_basic_threats_bin}.

By \cref{lemma:upper_bound_reduced_basic_threats_bin} and \cref{fact:joint_prob_vs_most_outcomes}(\ref{fact:joint_to_separate}), with probability $1 - \exp(-\Omega(\varepsilon^3 n^2))$ over the randomness of $\bfH$, we have that for each $t \in \{1,2,3,4\}$,
\begin{align}
\label{eq:basic_unlikely}
\mathbb{P}[\vec \bfR \in \vec\cU^{\mr{basic}}(\bfH, t, \pi_t) \,|\,\bfH] \leq \exp(-\Omega(\varepsilon^3n^2)).
\end{align}
Fix such an outcome $H \subseteq K^{(3)}_{n,n,n}$ of $\bfH$.

Let $\vec\cU^{\mr{basic}}(H)=\bigcup_{t=1}^4\vec\cU^{\mr{basic}}(H,t,\pi_t)$. Given an ordered partial Latin square $\vec P$ and a set of rows $\sR^*$, we say a type-$t$ basic threatened pair is \emph{consistent} if it is $\pi_t$-consistent.

To show $\vec\cU^{\mr{basic}}(H)$ is $(\rho,m)$-inherited from $\cT^{\mr{basic}}(H,C)$, let $L \in \cT^{\mr{basic}}(H,C)$, so there is $\sR^*$ of size $|\sR^*| = \ell$ for which there are $X >C\varepsilon^3 n^3 \ell$ many basic threatened pairs in $H\cap L$. Let $\bfY$ be the number of basic threatened pairs in $H\cap L$ which are present and consistent in an ordered random $m$-subset $\vec \bfP_m(L)$ of $L$. We have
\[ \mathbb{E}[\bfY] = X \cdot (1+o(1))\alpha^5(1/5)^5\ge \frac{ \alpha^5}{5^6} X.\]
Now, if $\bfY\ge (\alpha^5/5^7)X>4\varepsilon^3 n^3\ell$ then $\vec\cU^{\mr{basic}}(H,t,\pi_t)$ holds for some $t$. Using Markov's inequality, it follows that
\[\mathbb{P}[\vec \bfP_m(L)\in \vec\cU^{\mr{basic}}(H)]\ge \mathbb{P}\left[\bfY \geq \frac{ \alpha^5}{5^7} X\right]=1-\mathbb{P}\left[X-\bfY > \left(1-\frac{ \alpha^5}{5^7}\right) X\right]\ge 1-\frac{1-(\alpha^5/5^6)}{1-(\alpha^5/5^7)}\ge \rho,\]
which means $\vec\cU^{\mr{basic}}(H)$ is $(\rho,m)$-inherited from $\cT^{\mr{basic}}(H,C)$.

Thus, we have
\[ \mathbb{P}[\bfL \in \cT^{\mr{basic}}(H,C)] \leq \exp(2n\log^2{n})\mathbb{P}[\vec{\bfR} \in \vec\cU^{\mr{basic}}(H)] \leq \exp(-\Omega(\varepsilon^3n^2)),\]
where the first inequality follows by \cref{lem:random_LS_to_triangle_removal}, and the second inequality follows by~\cref{eq:basic_unlikely}.

Recalling our choice of $H$, \cref{fact:joint_prob_vs_most_outcomes}(\ref{fact:separate_to_joint}) implies that 
\[ \mathbb{P}[\bfL \in \cT^{\mr{basic}}(\bfH,C)] \leq \exp(-\Omega(\varepsilon^3n^2)),\]
as desired.
\end{proof}

Now we prove \cref{lemma:upper_bound_reduced_basic_threats_bin}, separately considering types 1 and 2, and types 3 and 4.

\begin{proof}[Proof of \cref{lemma:upper_bound_reduced_basic_threats_bin} for types $t=1$ and $t=2$]
Recall that $Q_1$ and $Q_2$ have the following forms.
\begin{center}
\begin{tabular}{ c| c c c }
 & $c_1$ & $c_2$ & $c_3$ \\
\hline
$r_1^*$ & & & \\
$r_2$ &  $s_2$ & $s_1$ & $s_3$\\ 
$r_3$ &        & $s_3$ & $s_2$
\end{tabular}
\qquad \qquad 
\begin{tabular}{ c| c c c }
 & $c_1$ & $c_2$ & $c_3$ \\
\hline
$r_1^*$ &  & &\\ 
$r_2$ &  $s_1$ & $s_2$ & $s_3$\\ 
$r_3$ &        & $s_3$ & $s_2$
\end{tabular}
\end{center}
If $t=1$, we take our ordering $\pi$ to be
\[e_1=(r_3, c_3, s_2),\;e_2=(r_2,c_3,s_3),\;e_3=(r_3,c_2,s_3),\;e_4=(r_2, c_2, s_1),\;e_5=(r_2, c_1, s_2).\]
If $t=2$, we take our ordering $\pi$ to be
\[e_1=(r_3, c_3, s_2),\;e_2=(r_2,c_3,s_3),\;e_3=(r_3,c_2,s_3),\;e_4=(r_2, c_2, s_2),\;e_5=(r_2, c_1, s_1).\]

Fix $t\in \{1,2\}$ and let $\sR^*$ be a set of rows with $|\sR^*| = \ell$. Among all the $\pi$-consistent embeddings $\phi(Q_t)$ of $Q_t$ into $\mbf H\cap \vec {\mbf R}$ which satisfy $\phi(r_1^*)\in \sR^*$, let $\mbf N$ be the number of possibilities for $(\phi(r_1^*),\phi(c_1),\phi(c_2),\phi(s_1),\phi(s_2))$. Then, $\mbf N$ is an upper bound on the number of type-$t$ basic threatened pairs in $\bfH \cap \vec{\bfR}$ which are $\pi$-consistent with $\vec{\bfR}$.

We first reveal $\vec{\bfR}[0,1/5]$, which has $\alpha n^2/5$ entries. Each of them is present in $\bfH$ with probability $\varepsilon$ independently. Thus, by a Chernoff bound, with probability $1 - \exp (-\Omega(\varepsilon n^2))$, there are at most $ \varepsilon n^2$ many viable choices for $\phi(e_1)$ (and therefore $(\phi(r_3), \phi(c_3), \phi(s_2))$). Assume this is the case.

Next, we reveal $\mbf H\cap \vec{\bfR}[1/5,3/5]$. For each choice of $(\phi(r_3),\phi(c_3), \phi(s_2))$, there are at most $n$ choices for $\phi(e_2)$ in column $\phi(c_3)$ of $\vec{\bfR}[1/5,2/5]$. This determines $\phi(r_2)$ and $\phi(s_3)$, and then there is at most one occurrence of $\phi(s_3)$ in row $\phi(r_3)$ of $\vec{\bfR}[2/5,3/5]$, determining $\phi(c_2)$. This makes a total of at most $ \varepsilon n^3$ choices for $(\phi(r_2),\phi(r_3),\phi(c_2),\phi(c_3),\phi(s_2),\phi(s_3))$.

Let $S$ be the set of all possible choices for $(\phi(r_2),\phi(c_1),\phi(c_2),\phi(s_1),\phi(s_2))$ at this stage, so we have $|S|\le \varepsilon n^5$ (there are at most $n^2$ choices for $(\phi(c_1),\phi(s_1))$). Let $\mbf S_4\subseteq S$ be the set of all $(\phi(r_2),\phi(c_1),\phi(c_2),\phi(s_1),\phi(s_2))\in S$ for which $\phi(e_4)$ is an edge of $\mbf H \cap \vec{\bfR}[3/5,4/5]$, and let $\mbf S_5\subseteq \mbf S_4$ be the set of all such tuples for which $\phi(e_5)$ is an edge of $\mbf H \cap \vec{\bfR}[4/5,1]$. Note that $\mbf N\le \ell |\mbf S_5|$ (there are at most $\ell$ choices for $\phi(r_1^*)\in \sR^*$).

We next consider $\mbf H \cap \vec{\bfR}[3/5,4/5]$. We claim that $|\mbf S_4|\le \varepsilon^2 n^4$ with probability $1-\exp(-\Omega(\varepsilon^2 n^2))$. To see this, first note that, by \cref{lemma:triangle_removal_chunk_to_binomial}, it suffices to prove an analogous bound with $\mbf H\cap \mbf G\sim \mb G^{(3)}(n,(2\varepsilon\alpha/5)/n)$ in place of $\mbf H \cap \vec{\bfR}[3/5,4/5]$, where $\bfG \sim \mb G^{(3)}(n,2(\alpha/5)/n)$. But this is a direct consequence of \cref{thm:chernoff_dependency_graph}, since each potential entry $\phi(e_4)$ is present in $\mbf H\cap \bfG$ with probability at most $2\varepsilon \alpha /(5n)$, and appears in at most $n^2$ tuples in $S$.

Reveal an outcome of $\mbf H \cap \vec{\bfR}[3/5,4/5]$ satisfying $|\mbf S_4|\le \varepsilon^2 n^4$. We next consider $\mbf H \cap \vec{\bfR}[4/5,1]$: we claim that $|\mbf S_5|\le \varepsilon^3 n^3$ with probability $1-\exp(-\Omega(\varepsilon^3 n^2))$. The proof is basically as before: first note that it suffices to prove an analogous bound with $\mbf H\cap \mbf G\sim \mb G^{(3)}(n,(2\varepsilon\alpha/5)/n)$ in place of $\mbf H \cap \vec{\bfR}[4/5,1]$, and then the desired result follows from \cref{thm:chernoff_dependency_graph} (noting that each potential entry $\phi(e_5)$ appears in at most $n$ tuples in $\bfS_4$, since such a tuple is determined by a choice of $\phi(e_4)$ in the same row).

We have proved that the number of type-$t$ basic threatened pairs in $\bfH \cap \vec{\bfR}$ which are $\pi$-consistent with $\vec{\bfR}$ is at most $\mbf N\le \ell |\mbf S_5|\le \varepsilon^3 n^3 \ell$,  with probability at least $1-\exp(-\Omega(\varepsilon^3 n^2))$. A union bound over all (at most $2^n$) choices of $\sR^*$ finishes the proof.
\end{proof}

\begin{proof}[Proof of \cref{lemma:upper_bound_reduced_basic_threats_bin} for types $t=3$ and $t=4$]
Recall that $Q_3$ and $Q_4$ have the following forms.
\begin{center}
\begin{tabular}{ c| c c c }
 & $c_1$ & $c_2$ & $c_3$ \\
\hline
$r_1^*$ &        &       & $s_3$\\
$r_2$ &  $s_1$ & $s_2$ &      \\ 
$r_3$ &        & $s_3$ & $s_2$
\end{tabular}
\qquad\qquad
\begin{tabular}{ c| c c c }
 & $c_1$ & $c_2$ & $c_3$ \\
\hline
$r_1^*$ &        &       & $s_3$\\
$r_2$ &  $s_2$ & $s_1$ &      \\ 
$r_3$ &        & $s_3$ & $s_2$
\end{tabular}
\end{center}
If $t=3$, we take our ordering $\pi$ to be
\[e_1=(r_3, c_3, s_2),\;e_2=(r_1^*,c_3,s_3),\;e_3=(r_3,c_2,s_3),\;e_4=(r_2, c_2, s_2),\;e_5=(r_2, c_1, s_1).\]
If $t=4$, we take our ordering $\pi$ to be
\[e_1=(r_3, c_3, s_2),\;e_2=(r_1^*,c_3,s_3),\;e_3=(r_3,c_2,s_3),\;e_4=(r_2, c_2, s_1),\;e_5=(r_2, c_1, s_2).\]

Fix $t\in \{3,4\}$ and let $\sR^*$ be a set of rows with $|\sR^*| = \ell$. Among all the $\pi$-consistent embeddings $\phi(Q_t)$ of $Q_t$ into $\mbf H\cap \vec {\mbf R}$ which satisfy $\phi(r_{1}^*)\in \sR^*$, let $\mbf N$ be the number of possibilities for $(\phi(r_1^*),\phi(c_1),\phi(c_2),\phi(s_1),\phi(s_2))$. Then, $\mbf N$ is an upper bound on the number of type-$t$ basic threatened pairs in $\bfH \cap \vec{\bfR}$ which are $\pi$-consistent with $\vec{\bfR}$.

The first part is exactly the same as the previous proof: we first reveal $\vec{\bfR}[0,1/5]$, which has $\alpha n^2/5$ entries. By a Chernoff bound over the randomness of $\mbf H$, with probability $1 - \exp (-\Omega(\varepsilon n^2))$, there are at most $ \varepsilon n^2$ many viable choices for $\phi(e_1)$ (and therefore $(\phi(r_3), \phi(c_3), \phi(s_2))$). Assume this is the case.

Next, we reveal $\mbf H\cap \vec{\bfR}[1/5,3/5]$. The numbers are slightly different than in the last proof: for each choice of $(\phi(r_3),\phi(c_3), \phi(s_2))$, there are at most $\ell$ choices for $\phi(e_2)$ in column $\phi(c_3)$ of $\vec{\bfR}[1/5,2/5]$ (in a row in $\sR^*$). This determines $\phi(r^*_1)$ and $\phi(s_3)$, and then there is at most one occurrence of $\phi(s_3)$ in row $\phi(r_3)$ of $\vec{\bfR}[2/5,3/5]$. This makes a total of at most $ \varepsilon n^2\ell$ choices for $(\phi(r_1^*),\phi(r_3),\phi(c_2),\phi(c_3),\phi(s_2),\phi(s_3))$.

Let $S$ be the set of all possible choices for $(\phi(r^*_1),\phi(r_2),\phi(c_1),\phi(c_2),\phi(s_1),\phi(s_2))$ at this stage, so we have $|S|\le \varepsilon n^5\ell$ (there are at most $n^3$ choices for $(\phi(r_2),\phi(c_1),\phi(s_1))$). Let $\mbf S_4\subseteq S$ be the set of all $(\phi(r^*_1),\phi(r_2),\phi(c_1),\phi(c_2),\phi(s_1),\phi(s_2))\in S$ for which $\phi(e_4)$ is an edge of $\mbf H \cap \vec{\bfR}[3/5,4/5]$, and let $\mbf S_5\subseteq \mbf S_4$ be the set of all such tuples for which $\phi(e_5)$ is an edge of $\mbf H \cap \vec{\bfR}[4/5,1]$. Note that $\mbf N\le |\mbf S_5|$.

The end of the proof is similar to before.
We next consider $\mbf H \cap \vec{\bfR}[3/5,4/5]$: we claim that $|\mbf S_4|\le \varepsilon^2 n^4\ell$ with probability $1-\exp(-\Omega(\varepsilon^2 n^2))$. To see this, we first note that it suffices to prove an analogous bound with $\mbf H\cap \mbf G\sim \mb G^{(3)}(n,(2\varepsilon\alpha/5)/n)$ in place of $\mbf H \cap \vec{\bfR}[3/5,4/5]$, and then the desired result follows from \cref{thm:chernoff_dependency_graph}, noting that each potential entry $\phi(e_4)$ appears in at most $n^2\ell$ tuples in $S$.

Finally we consider $\mbf H \cap \vec{\bfR}[4/5,1]$: we claim that $|\mbf S_5|\le \varepsilon^3 n^3\ell$ with probability $1-\exp(-\Omega(\varepsilon^3 n^2))$. Again, we first note that it suffices to prove an analogous bound with $\mbf H\cap \mbf G\sim \mb G^{(3)}(n,(2\varepsilon\alpha/5)/n)$ in place of $\mbf H \cap \vec{\bfR}[4/5,1]$, and then the desired result follows from \cref{thm:chernoff_dependency_graph} (noting that each potential entry $\phi(e_5)$ appears in at most $n\ell$ tuples in $\bfS_4$, since such a tuple is determined by a choice of $\phi(e_4)$ in the same row, and a choice of $\phi(r_1^*)$).

We then finish with a union bound over choices of $\sR^*$, as before.
\end{proof}

\section{Concluding remarks}
\label{sec:concluding}
In this paper we have proved (a generalisation of) Cameron's conjecture, describing the joint distribution of the number of odd row/column/symbol permutations in a random $n\times n$ Latin square. There are various directions for further research, as follows.
\begin{itemize}
    \item Can quantitative aspects be improved? For example, in \cref{thm:shiny}(4), we see that the total variation error of our approximation is $o(1)$. Actually, our proof gives an error of $n^{-1/2+o(1)}$, and it seems this can be improved to $n^{-1+o(1)}$ with some additional calculations using the fact that $\mb E [N_{\mathrm{row}}(\mathbf{L})]=\mb E[N_{\mathrm{col}}(\mathbf{L})]=\mb E[N_{\mathrm{sym}}(\mathbf{L})]=n/2$. However, it seems plausible that the true error of the approximation might be super-exponentially small.
    \item What about constrained distributions on Latin squares (e.g.\ symmetric Latin squares)? We expect that similar results should hold, but the available enumeration estimates are much weaker. It might be possible to prove analogues of \cref{thm:shiny}(1) and \cref{thm:shiny}(5) (using the ideas described in \cref{subsec:intercalate-switchings}, which have relatively weak quantitative requirements), but new ideas would be required for the other parts of \cref{thm:shiny}.
    \item What about analogous questions for higher-dimensional analogues of Latin squares (sometimes called \emph{high dimensional permutations}, see e.g.\ \cite{LL16})? Here it seems completely new ideas would be required, because (we predict that) small switchings become rarer as the dimension increases.
\end{itemize}
\bibliographystyle{amsplain_initials_nobysame_nomr}
\bibliography{bibliography}

\providecommand{\bysame}{\leavevmode\hbox to3em{\hrulefill}\thinspace}
\providecommand{\MR}{\relax\ifhmode\unskip\space\fi MR }
\providecommand{\MRhref}[2]{%
  \href{http://www.ams.org/mathscinet-getitem?mr=#1}{#2}
}
\providecommand{\href}[2]{#2}
\begin{thebibliography}{10}

\bibitem{ADRA23}
Y.~Alimohammadi, P.~Diaconis, M.~Roghani, and A.~Saberi, \emph{Sequential importance sampling for estimating expectations over the space of perfect matchings}, Ann. Appl. Probab. \textbf{33} (2023), no.~2, 799--833.

\bibitem{AW25}
J.~Allsop and I.~M. Wanless, \emph{Subsquares in random {L}atin rectangles}, arXiv:2409.08446.

\bibitem{Alp17}
L.~Alpoge, \emph{Square-root cancellation for the signs of {L}atin squares}, Combinatorica \textbf{37} (2017), no.~2, 137--142.

\bibitem{Boll88}
B.~Bollob\'{a}s, \emph{The chromatic number of random graphs}, Combinatorica \textbf{8} (1988), no.~1, 49--55.

\bibitem{BM25}
C.~Bowtell and R.~Montgomery, \emph{Almost every {L}atin square has a decomposition into transversals}, arXiv:2501.05438.

\bibitem{Bre73}
L.~M. Br\`egman, \emph{Certain properties of nonnegative matrices and their permanents}, Dokl. Akad. Nauk SSSR \textbf{211} (1973), 27--30.

\bibitem{Cam02}
P.~J. Cameron, \emph{Permutations}, Paul {E}rd\H{o}s and his mathematics, {II} ({B}udapest, 1999), Bolyai Soc. Math. Stud., vol.~11, J\'{a}nos Bolyai Math. Soc., Budapest, 2002, pp.~205--239.

\bibitem{Cam15b}
P.~J. Cameron, \emph{Asymmetric {L}atin squares, {S}teiner triple systems, and edge-parallelisms}, arXiv:1507.02190.

\bibitem{Cam92}
P.~J. Cameron, \emph{Almost all quasigroups have rank {$2$}}, Discrete Math. \textbf{106/107} (1992), 111--115.

\bibitem{Cam01}
P.~J. Cameron, \emph{{BCC} problem list}, \url{https://webspace.maths.qmul.ac.uk/p.j.cameron/bcc/allprobs.pdf}, 2001.

\bibitem{Cam03}
P.~J. Cameron, \emph{Random {L}atin squares}, slides for a presentation at the RAND-APX meeting at Oxford in 2003, \url{https://webspace.maths.qmul.ac.uk/p.j.cameron/slides/rand-apx.pdf}, 2003.

\bibitem{Cam05}
P.~J. Cameron, \emph{Problems from ``{P}ermutations''}, \url{https://webspace.maths.qmul.ac.uk/p.j.cameron/permgps/permutations.html}, 2005.

\bibitem{Cam07}
P.~J. Cameron, \emph{Peter {C}ameron's {BCC} problems}, \url{https://webspace.maths.qmul.ac.uk/p.j.cameron/pjcatbcc.html}, 2007.

\bibitem{Cam13}
P.~J. Cameron, \emph{Problems}, \url{https://webspace.maths.qmul.ac.uk/p.j.cameron/oldprob.html}, 2013.

\bibitem{Cam15}
P.~J. Cameron, \emph{A niggling problem}, posted on Peter Cameron's blog, \url{https://cameroncounts.wordpress.com/2015/01/24/a-niggling-problem/}, 2015.

\bibitem{CR17}
N.~Cavenagh and R.~Ramadurai, \emph{On the distances between {L}atin squares and the smallest defining set size}, J. Combin. Des. \textbf{25} (2017), no.~4, 147--158.

\bibitem{CGW08}
N.~J. Cavenagh, C.~Greenhill, and I.~M. Wanless, \emph{The cycle structure of two rows in a random {L}atin square}, Random Structures Algorithms \textbf{33} (2008), no.~3, 286--309.

\bibitem{CW16}
N.~J. Cavenagh and I.~M. Wanless, \emph{There are asymptotically the same number of {L}atin squares of each parity}, Bull. Aust. Math. Soc. \textbf{94} (2016), no.~2, 187--194.

\bibitem{DP}
M.~Delcourt and L.~Postle, \emph{Refined absorption: A new proof of the existence conjecture}, arXiv:2402.17855.

\bibitem{DZ10}
A.~Dembo and O.~Zeitouni, \emph{Large deviations techniques and applications}, Stochastic Modelling and Applied Probability, vol.~38, Springer-Verlag, Berlin, 2010, Corrected reprint of the second (1998) edition.

\bibitem{Ego81}
G.~P. Egorychev, \emph{The solution of van der {W}aerden's problem for permanents}, Adv. in Math. \textbf{42} (1981), no.~3, 299--305.

\bibitem{Fal81}
D.~I. Falikman, \emph{Proof of the van der {W}aerden conjecture on the permanent of a doubly stochastic matrix}, Mat. Zametki \textbf{29} (1981), no.~6, 931--938, 957.

\bibitem{Fel68}
W.~Feller, \emph{An introduction to probability theory and its applications. {V}ol. {I}}, third ed., John Wiley \& Sons, Inc., New York-London-Sydney, 1968.

\bibitem{ferber2020almost}
A.~Ferber and M.~Kwan, \emph{Almost all {S}teiner triple systems are almost resolvable}, Forum of Mathematics, Sigma, vol.~8, Cambridge University Press, 2020, p.~e39.

\bibitem{Fre75}
D.~A. Freedman, \emph{On tail probabilities for martingales}, Ann. Probability \textbf{3} (1975), 100--118.

\bibitem{FM19}
B.~Friedman and S.~McGuinness, \emph{The {A}lon-{T}arsi conjecture: a perspective on the main results}, Discrete Math. \textbf{342} (2019), no.~8, 2234--2253.

\bibitem{GKLO23}
S.~Glock, D.~K\"{u}hn, A.~Lo, and D.~Osthus, \emph{The existence of designs via iterative absorption: hypergraph {$F$}-designs for arbitrary {$F$}}, Mem. Amer. Math. Soc. \textbf{284} (2023), no.~1406, v+131.

\bibitem{GK23}
S.~Gould and T.~Kelly, \emph{Hamilton transversals in random {L}atin squares}, Random Structures Algorithms \textbf{62} (2023), no.~2, 450--478.

\bibitem{HJ96}
R.~H\"{a}ggkvist and J.~C.~M. Janssen, \emph{All-even {L}atin squares}, Proceedings of the 6th {C}onference on {F}ormal {P}ower {S}eries and {A}lgebraic {C}ombinatorics ({N}ew {B}runswick, {NJ}, 1994), vol. 157, 1996, pp.~199--206.

\bibitem{HR94}
R.~Huang and G.-C. Rota, \emph{On the relations of various conjectures on {L}atin squares and straightening coefficients}, Discrete Math. \textbf{128} (1994), no.~1-3, 225--236.

\bibitem{JM96}
M.~T. Jacobson and P.~Matthews, \emph{Generating uniformly distributed random {L}atin squares}, J. Combin. Des. \textbf{4} (1996), no.~6, 405--437.

\bibitem{janson2011random}
S.~Janson, T.~{\L}uczak, and A.~Rucinski, \emph{Random graphs}, Wiley-Interscience Series in Discrete Mathematics and Optimization, Wiley-Interscience, New York, 2000.

\bibitem{janson2002infamous}
S.~Janson and A.~Ruci\'{n}ski, \emph{The infamous upper tail}, Random Structures Algorithms \textbf{20} (2002), no.~3, 317--342, Probabilistic methods in combinatorial optimization.

\bibitem{Jan95}
J.~C.~M. Janssen, \emph{On even and odd {L}atin squares}, J. Combin. Theory Ser. A \textbf{69} (1995), no.~1, 173--181.

\bibitem{KD15}
A.~D. Keedwell and J.~D\'{e}nes, \emph{Latin squares and their applications}, second ed., Elsevier/North-Holland, Amsterdam, 2015, With a foreword to the previous edition by Paul Erd\"{o}s.

\bibitem{Kee14}
P.~Keevash, \emph{The existence of designs}, arXiv:1401.3665.

\bibitem{Kee18c}
P.~Keevash, \emph{The existence of designs {II}}, arXiv:1802.05900.

\bibitem{Kee24}
P.~Keevash, \emph{A short proof of the existence of designs}, arXiv:2411.18291.

\bibitem{Kee18}
P.~Keevash, \emph{Counting designs}, J. Eur. Math. Soc. (JEMS) \textbf{20} (2018), no.~4, 903--927.

\bibitem{kwan2020almost}
M.~Kwan, \emph{Almost all {S}teiner triple systems have perfect matchings}, Proc. Lond. Math. Soc. (3) \textbf{121} (2020), no.~6, 1468--1495.

\bibitem{KSS21}
M.~Kwan, A.~Sah, and M.~Sawhney, \emph{Note on random latin squares and the triangle removal process}, arXiv:2109.15201.

\bibitem{KSSS22}
M.~Kwan, A.~Sah, and M.~Sawhney, \emph{Large deviations in random {L}atin squares}, Bull. Lond. Math. Soc. \textbf{54} (2022), no.~4, 1420--1438.

\bibitem{KSSS23}
M.~Kwan, A.~Sah, M.~Sawhney, and M.~Simkin, \emph{Substructures in {L}atin squares}, Israel J. Math. \textbf{256} (2023), no.~2, 363--416.

\bibitem{KS18}
M.~Kwan and B.~Sudakov, \emph{Intercalates and discrepancy in random {L}atin squares}, Random Structures Algorithms \textbf{52} (2018), no.~2, 181--196.

\bibitem{LL13}
N.~Linial and Z.~Luria, \emph{An upper bound on the number of {S}teiner triple systems}, Random Structures Algorithms \textbf{43} (2013), no.~4, 399--406.

\bibitem{LL16}
N.~Linial and Z.~Luria, \emph{Discrepancy of high-dimensional permutations}, Discrete Anal. (2016), Paper No. 11, 8.

\bibitem{LS18}
N.~Linial and M.~Simkin, \emph{Monotone subsequences in high-dimensional permutations}, Combin. Probab. Comput. \textbf{27} (2018), no.~1, 69--83.

\bibitem{LS87}
J.~Lynch and J.~Sethuraman, \emph{Large deviations for processes with independent increments}, Ann. Probab. \textbf{15} (1987), no.~2, 610--627.

\bibitem{McD89}
C.~McDiarmid, \emph{On the method of bounded differences}, Surveys in combinatorics, 1989 ({N}orwich, 1989), London Math. Soc. Lecture Note Ser., vol. 141, Cambridge Univ. Press, Cambridge, 1989, pp.~148--188.

\bibitem{MW99}
B.~D. McKay and I.~M. Wanless, \emph{Most {L}atin squares have many subsquares}, J. Combin. Theory Ser. A \textbf{86} (1999), no.~2, 322--347.

\bibitem{Pit97}
A.~O. Pittenger, \emph{Mappings of {L}atin squares}, Linear Algebra Appl. \textbf{261} (1997), 251--268.

\bibitem{Rad97}
J.~Radhakrishnan, \emph{An entropy proof of {B}regman's theorem}, J. Combin. Theory Ser. A \textbf{77} (1997), no.~1, 161--164.

\bibitem{RT96}
V.~R\"{o}dl and L.~Thoma, \emph{Asymptotic packing and the random greedy algorithm}, Random Structures Algorithms \textbf{8} (1996), no.~3, 161--177.

\bibitem{Ros11}
N.~Ross, \emph{Fundamentals of {S}tein's method}, Probab. Surv. \textbf{8} (2011), 210--293.

\bibitem{Spe95}
J.~Spencer, \emph{Asymptotic packing via a branching process}, Random Structures Algorithms \textbf{7} (1995), no.~2, 167--172.

\bibitem{vLW01}
J.~H. van Lint and R.~M. Wilson, \emph{A course in combinatorics}, 2nd ed., Cambridge University Press, Cambridge, 2001.

\bibitem{warnke2016method}
L.~Warnke, \emph{On the method of typical bounded differences}, Combinatorics, Probability and Computing \textbf{25} (2016), no.~2, 269--299.

\bibitem{Zap96}
P.~Zappa, \emph{Triplets of {L}atin squares}, Boll. Un. Mat. Ital. A (7) \textbf{10} (1996), no.~1, 63--69.

\end{thebibliography}

\end{document}